\newcommand\inner[2]{{\langle #1, #2 \rangle}}
\newcommand{\normW}[1]{{\vert\kern-0.25ex\vert\kern-0.25ex\vert #1 
		\vert\kern-0.25ex\vert\kern-0.25ex\vert}}
\newcommand{\normWbig}[1]{{\Big\vert\kern-0.25ex\Big\vert\kern-0.25ex\Big\vert #1 
		\Big\vert\kern-0.25ex\Big\vert\kern-0.25ex\Big\vert}}
\newcommand{\C}{\mathbb{C}}
\newcommand{\N}{\mathbb{N}}
\newcommand{\R}{\mathbb{R}}
\newcommand{\boA}{\mathcal{A}}
\newcommand{\boB}{\mathcal{B}}
\newcommand{\boC}{\mathcal{C}}
\newcommand{\boD}{\mathcal{D}}
\newcommand{\boE}{\mathcal{E}}
\newcommand{\boH}{\mathcal{H}}
\newcommand{\boL}{\mathcal{L}}
\newcommand{\boN}{\mathcal{N}}
\newcommand{\boQ}{\mathcal{Q}}
\newcommand{\boS}{\mathcal{S}}
\newcommand{\boV}{\mathcal{V}}
\newcommand{\boW}{\mathcal{W}}
\newcommand{\boX}{\mathcal{X}}
\newcommand{\gq}{\mathfrak{q}}
\newcommand{\gc}{\mathfrak{c}}
\newcommand{\NV}{\mathcal{NV}(\mathbb{R})}
\newcommand{\W}{\mathcal{W}}
\renewcommand{\k}{\textup{k}}
\newcommand{\p}{\textup{p}}
\newcommand{\wh}{\widehat }
\newcommand{\Emin}{E_\textup{min}}
\DeclareMathOperator{\supp}{{\rm supp}}
\newcommand{\loc}{\operatorname{loc}}
\providecommand{\abs}[1]{|#1 |}
\providecommand{\norm}[1]{\lVert#1 \rVert}
\renewcommand{\Re}{\operatorname{Re}}
\renewcommand{\Im}{\operatorname{Im}}
\newcommand{\wto}{\rightharpoonup}
\newcommand{\wstar}{\stackrel{\ast}{\rightharpoonup}}
\theoremstyle{plain}
\newtheorem{theorem}{Theorem}[section]
\newtheorem{proposition}[theorem]{Proposition}
\newtheorem{lemma}[theorem]{Lemma}
\newtheorem{corollary}[theorem]{Corollary}
\theoremstyle{definition}
\newtheorem{remark}[theorem]{Remark}
\theoremstyle{remark}
\newtheorem*{merci}{Acknowledgments}
\numberwithin{equation}{section}
\begin{document}
	\title{Existence and decay of traveling waves  for  the nonlocal Gross--Pitaevskii equation}                                 
	\author{
		\renewcommand{\thefootnote}{\arabic{footnote}}
		Andr\'e de Laire\footnotemark[1]~ and Salvador  L\'opez-Mart\'inez\footnotemark[2]}
	\footnotetext[1]{
		Univ.\ Lille, CNRS, Inria, UMR 8524 - Laboratoire Paul Painlev\'e, Inria, F-59000 Lille, France.\\
		E-mail: {\tt andre.de-laire@univ-lille.fr}}
	\footnotetext[2]{
		Departamento de Matem\'aticas, Universidad Aut\'onoma de Madrid, Ciudad Universitaria de Cantoblanco, 28049, Madrid, Spain.\\
		E-mail: {\tt salvador.lopez@uam.es}}
	\date{}
	\maketitle

	\begin{abstract}
		We consider the nonlocal  Gross--Pitaevskii  equation that models  a Bose gas with general nonlocal interactions between particles in  one spatial dimension, with constant density far away.  We  address the problem of the existence of 
		traveling waves with nonvanishing conditions at infinity, i.e.\ dark solitons. 
		Under general conditions on the interactions, we prove 
		existence of dark solitons for almost every subsonic speed. Moreover, 
		we show existence in the whole subsonic regime for a family of 
		potentials. The proofs rely on a Mountain Pass argument combined with the so-called ``monotonicity trick'',
		as well as on a priori estimates for the Palais--Smale sequences. 
		Finally, we establish properties of the solitons such as exponential decay at infinity and analyticity. 
	\end{abstract}   
	
	\maketitle
	
	\medskip
	\noindent{{\em Keywords:}
		Nonlocal Schr\"odinger equation, Gross--Pitaevskii equation, traveling waves, dark solitons,		nonzero conditions at infinity, decay, analyticity.
		
		\medskip
		\noindent{2010 \em{Mathematics Subject Classification}:}
		35Q55; 
		35J20; 
		35C07; 
		37K05; 
		35C08; 
		35A01, 
		37K40 
		\section{Introduction}\label{intro}
		\subsection{The problem}
		In order to describe the dynamics of a weakly interacting Bose gas,
		Gross \cite{gross} and 
		Pitaevskii \cite{pitaevskii} found 
		that the wavefunction $\Psi$ governing the condensate satisfies 
		a Schr\"odinger equation, that in dimension one and in its dimensionless form, is given by 
		\begin{equation}
			\label{GP-full}
			i\partial_t\Psi=-\partial_{xx}\Psi+\Psi \int_{\R} \abs{\Psi(y,t)}^2V(x-y)\,dy, 
			\quad 	\text{in } \R\times \R.
		\end{equation}
		Here, $\Psi: \R\times \R\to \C$ and  $V$ describes the interaction between bosons. In their works, they are interested in a function  $\Psi$ satisfying the nonzero condition at infinity:
		\begin{equation}
			\label{nonzero}
			\lim_{\abs{x}\to \infty}\abs{ \Psi(x,\cdot)}=1,
		\end{equation}
		representing the fact that the density is constant far away.
		
		Equation \eqref{GP-full} also appears as the model for the evolution of a one-dimensional optical beam of intensity $\abs{\Psi}^2$ in a self-defocusing nonlocal Kerr-like medium, where $V$ characterizes the nonlocal response of the medium \cite{nikolov2004,krolikowski2000}. 
		In this case, the condition \eqref{nonzero} is natural when studying dark optical solitons.  
		In all of these physical situations, $V$  is assumed to be real-valued and symmetric. Moreover, in the most typical first approximation, $V$ is considered as a Dirac delta function, which leads to the standard  Gross--Pitaevskii equation  with nonvanishing condition at infinity, that  has been intensively investigated (see e.g.\ \cite{JPR1,JPR2,kivshar,coste}).

		To provide a clear mathematical context to the problem, it is useful to perform the change of variables  $\Psi\to e^{-it}\overline{\Psi} $, which leads to the equation 
		\begin{equation}
			\label{NGP}
			i\partial_{t}\Psi={\partial_{xx}\Psi}+\Psi(\W*(1-|\Psi|^{2}))\quad \text{ in }~\mathbb{R}\times\mathbb{R},
		\end{equation}
		where we assumed that $V*1=1$ and denoted by  $\boW$ the potential to make our following assumptions more clear.
		Here $*$ denotes the convolution in $\R$.  
		We assume from now on that  $\W$ is a real-valued even  tempered distribution. In this manner, \eqref{NGP} 
		is Hamiltonian and its energy
		\begin{equation*}
			E(\Psi(t))=\frac12 \int_{\R}\abs{\partial_x\Psi(t)}^2\,dx +\frac 14 \int_{\R}(\W*(1-\abs{\Psi(t)}^2))(1-\abs{\Psi(t)}^2)\,dx,
		\end{equation*}
		is formally conserved. The (renormalized) momentum 
		\begin{equation*}
			p(\Psi(t))=\int_{\mathbb{R}}\langle i \partial_x \Psi(t),\Psi(t) \rangle_{ \C}\left(1-\frac{1}{|\Psi(t)|^2}\right)dx,
		\end{equation*}
		is formally conserved too whenever $\inf_{x\in\R}\abs{\Psi(x,t)}>0$, where $\langle z_1,z_2 \rangle_{\C} =\Re(z_1 \bar z_2)$, for $z_1$, $z_2\in \C$ (see \cite{de2010global}). 
		
		We will be interested in special solutions to \eqref{NGP} with boundary condition \eqref{nonzero}, the so-called dark solitons. Roughly speaking, these are localized density notches that propagate  without spreading \cite{kartashov07}. They have been observed for example in Bose--Einstein condensates \cite{denschlag2000,becker2008}. More precisely, dark solitons in our context will be nontrivial finite energy solutions to \eqref{NGP} of the form
		$$\Psi_c(x,t)=u(x-ct),$$
		which represents a traveling wave with profile $u:\R\to\C$ propagating at speed $c\in\R$. Hence, the soliton $u$ satisfies
		\begin{equation}
			\label{TWc}
			\tag{$S({\W,c})$}
			icu'+u''+u(\mathcal{W}\ast (1-|u|^2))=0\quad\text{ in }\mathbb{R}.
		\end{equation}
		Notice that taking the complex conjugate of $u$ in equation \eqref{TWc}, we are reduced to the case $c\geq 0$. 
		
		By finite energy solution to \eqref{TWc} we mean a solution belonging to the energy space
		$$\mathcal{E}(\mathbb{R})=
		\{v \in H^{1}_{\loc}(\mathbb{R}) : 1-|v|^{2}\in L^{2}(\mathbb{R}), \ v' \in L^{2}(\mathbb{R})\}.$$
		This is justified by assuming that the Fourier transform of $\W$ is bounded, i.e.\ that   $\wh\W\in L^\infty(\R)$. Indeed, by Plancherel's identity, 
		$$E(u)\leq \frac12 \norm{u'}_{L^2(\R)}^2+\frac14 \norm{\wh\W}_{L^\infty(\R)}\norm{1-\abs{u}^2}_{L^2(\R)}^2.$$
		We point out that any function in the energy space satisfies \eqref{nonzero} (see Theorem 1.8 in \cite{gerard3}).
		

		The simplest case for \eqref{TWc} that one may consider corresponds to the {\em contact} interaction $\W=\delta_0$. In this way, $(S(\delta_0,c))$ becomes the classical Gross--Pitaevskii equation, which is a local equation. In our one-dimensional case, $(S(\delta_0,c))$ can be solved explicitly. More precisely, as explained in \cite{bethuel2008existence}, if  $c\geq \sqrt{2}$ the only solutions in $\boE(\R)$
		are the trivial ones 
		(i.e.\ the constant functions 
		of modulus one). On the contrary, if  \mbox{$0\leq c<\sqrt{2}$}, the nontrivial solutions in $\boE(\R)$ are given, 
		up to invariances (translations and  multiplication by constants of modulus one), by
		\begin{equation}
			\label{sol:1D}
			u_{c}(x)=\sqrt{\frac{2-c^2}{2}}\tanh\left(\frac{\sqrt{2-c^2}}{2}x\right)-i\frac{c}{\sqrt{2}}.
		\end{equation}
		Thus there is a family of dark solitons belonging to the nonvanishing energy space 
		$$ \boN\boE(\R)=\{v \in \boE(\R)   :  \inf_{\R}\abs{v}>0\},$$
		for $c\in (0,\sqrt 2)$. We refer to them as the {\em vortexless} solutions, as usual in nonlinear optics.
		There is also one stationary  soliton that vanishes at exactly one point, associated with the speed $c=0$,
		that is called the black soliton. 
		Notice also that the values of $u_c(\infty)$ and $u_c(-\infty)$ are different if $c\neq 0$, 
		and thus we cannot relax the condition \eqref{nonzero} to $\lim_{\abs{x}\to\infty}{\Psi}=1$ (as in the higher dimensional case, see e.g.\ \cite{bethuel}).

		In the case of spatial dimension equal to two or three, the study of traveling waves for the contact interaction $\W=\delta_0$ started with numerical simulations in the Jones--Roberts program  \cite{JPR1,JPR2}. There, it was observed numerically that finite energy traveling waves should exist for every $c\in [0,\sqrt{2})$, and should not otherwise. Rigorous proofs of these conjectures have been established by 
		B\'ethuel and Saut \cite{bethuel2}, B\'ethuel, Gravejat and Saut \cite{bethuel}, Mari\c{s} \cite{maris2013}, Ruiz and Bellazzini \cite{ruiz-bellazzini}, among others.

		Despite the physical interest of the most realistic case where $\W$ is a more general distribution, there are very few mathematical results concerning  nonlocal interactions with nonzero conditions at infinity. To our knowledge, most of the mathematical results concerning the existence of traveling waves deal with functions  vanishing at infinity (see e.g.\ \cite{lahaye2009,carles2008,antonelli2011,jacopo2016,luo2018}) and  
		the techniques used in these works cannot be adapted to include solutions satisfying \eqref{nonzero}.
		Recently, de Laire and Mennuni \cite{delaire-mennuni} proved the existence of a branch of solutions to \eqref{TWc}, 
		by using a  minimization approach.
		For $\gq\geq 0$, they consider the  (nondecreasing) minimization curve
		\begin{equation}
			\label{Emin}
			E_\text{min}(\gq):=\inf\{E(v)  :   v\in \boE(\R),\ p(v)=\gq \},
		\end{equation}
		and  set 
		\begin{equation}
			\label{q_*}
			\gq_*=\sup\{\gq>0~|~\forall v \in \mathcal{E}(\mathbb{R}), \  E(v)\leq E_{\min}(\gq)\Rightarrow \underset{\mathbb{R}}\inf|v|>0\}.
		\end{equation}
		Under certain technical conditions on $\W$, they show that $\gq_*>0.027$ and that for any $\gq\in(0,\gq_*)$, 
		the minimum  associated with $\Emin(\gq)$  is attained
		and  the corresponding 		 Euler--Lagrange equation 
		satisfied by the   minimizers 
		is exactly \eqref{TWc}, where $c\in (0,\sqrt 2)$ appears as a Lagrange multiplier.
		In addition, their solutions are orbitally stable. Therefore, their result establishes the existence of a family of solutions to \eqref{TWc} parametrized by the momentum.
		This theorem applies for instance to  the potential 
		$\W_{\alpha,\beta}=\frac{\beta}{\beta-2\alpha}(\delta_0-\alpha e^{-\beta\abs{x}})$,
		for  $\beta >2\alpha>0$, which describes a strong repulsion force when two particles are in the same place,  but an attractive force otherwise. However, the results in \cite{delaire-mennuni} do not apply 
		to an interaction of the form $\W(x)=\exp(-\alpha x^2)$, $\alpha> 0$, since its Fourier transform grows  exponentially  in the complex plane.  
		The first goal of this paper is to provide simple conditions on $\W$ that guarantee the existence  of nontrivial finite energy solutions to \eqref{TWc}, covering a large variety of relevant nonlocal interactions, such as the Gaussian potential. The second one is to determine an optimal range for $c$ (depending on $\W$) for which there exist finite energy traveling waves. Finally, a third goal is to establish the regularity and the decay of these solutions, and their nonexistence at critical speed.
		
		\subsection{Main results}
		From now on we  assume that $\wh\W$ satisfies the following minimal regularity  assumption: 
		\begin{enumerate}[label=(H\arabic*),ref=\textup{({H\arabic*})}]
			\setcounter{enumi}{-1}
			\item\label{H0} $\W$ is an even tempered distribution such that  $\wh\W\in L^\infty(\R)$.
		\end{enumerate}
		Let us remark  that the condition $\wh\W\in L^\infty(\R)$ is equivalent to the continuity of the application $\eta\in L^2(\R) \mapsto  \W*\eta \in L^2(\R)$ (see e.g.\ \cite{grafakos}). The parity assumption is necessary  to have a variational formulation (see Lemma~\ref{lemma:Jsmooth}).   
		
		As explained in \cite{delaire-mennuni}, the Bogoliubov dispersion relation \cite{bogo} is given by
		
		\begin{equation}\label{bogo}
			w(\xi)=\sqrt{\xi^4+2\widehat \W(\xi) \xi^2}.
		\end{equation}
		We formally  get $ w(\xi)\approx {(2\wh \W(0))}^{1/2}\abs{\xi}$, for $\xi\approx 0$. The critical speed
		$c_*(\W)=(2\wh \W(0))^{1/2}$ corresponds to the so-called speed of sound. It is conjectured that there is no nontrivial  solution to \eqref{TWc} with finite energy when $c(\boW)\geq c_*(\W)$.
		Observe also that if $\wh \W$ is continuous at the origin, then we can assume without loss of generality that $\W$ fulfills the normalization condition (see \cite{delaire-mennuni}) 
		$	\wh \W(0)=1,$
		so that the speed of sound is well-defined and equal to $c_*(\W)=\sqrt 2$.


		
		Here and in what follows we use the convention that the Fourier transform of an integrable function is 
		\begin{equation}
			\label{fourier}
			\widehat{ f}(\xi)=\int_{\R} e^{-ix\xi}f(x)dx.
		\end{equation}In particular,  the Fourier transform of the Dirac delta is $\wh \delta_0=1$, 
		so that  $c_*(\delta_0)=\sqrt{2},$ 
		and the nonexistence conjecture holds when $\W=\delta_0$, as explained before.

		Our first result establishes the existence of dark solitons under the following assumption: 
		\begin{enumerate}[label=(H\arabic*),ref=\textup{({H\arabic*})}]
			\item \label{h:lowerboundWbis}
			There exist $\sigma\in (0,1]$ and  $\kappa\in [0,1/2)$ such that  $\wh\W(\xi)\geq \sigma-\kappa \xi^2$ a.e.\ on $\R$.
		\end{enumerate}
		\begin{theorem}\label{thm:existenceae}
			Assume that $\W$ satisfies \ref{H0} and  \ref{h:lowerboundWbis}. Then there exists a nontrivial solution to \eqref{TWc} in $\boE(\R)$, for almost every $c\in (0,\sqrt{2\sigma})$.
		\end{theorem}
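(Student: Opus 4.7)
The plan is to obtain solutions of \eqref{TWc} as nontrivial critical points of the action functional
\[
J_c(u) := E(u) - c\, p(u),
\]
via a mountain pass argument combined with the Struwe--Jeanjean monotonicity trick. The parameter $c$ plays the dual role of the traveling speed in the equation and of a coupling constant in $J_c$, and obtaining solutions for almost every $c$ will come from the almost-everywhere differentiability of the mountain pass level.

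I would first fix a convenient functional framework. Since the momentum $p$ as written in the introduction requires $\inf|u|>0$, I would reformulate it on a closed affine subspace of $\boE(\R)$ (typically writing $u = 1 + v$ with $v$ modelled on $H^1(\R)$ after a phase adjustment) where it extends to a smooth functional. Under \ref{H0}, $J_c$ is then of class $C^1$ and its critical points satisfy \eqref{TWc}. I would then verify the mountain pass geometry for each $c \in (0,\sqrt{2\sigma})$: expanding $E$ on the Fourier side and using \ref{h:lowerboundWbis}, the quadratic part of $E$ controls a positive-definite form with constants $\tfrac12(1-2\kappa) > 0$ on $v'$ and $\sigma > 0$ on $\Re v$, and a Cauchy--Schwarz bound absorbs the momentum term precisely when $c^2 < 2\sigma$, yielding $J_c(v) \geq \alpha(c) > 0$ on a small sphere around the origin. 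A second point $u_\star$ with $J_c(u_\star) < 0$ is produced by evaluating $J_c$ on a rescaled family of test profiles, for instance rescalings of the explicit local Gross--Pitaevskii solitons \eqref{sol:1D}, chosen so that the momentum contribution dominates.

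Because Palais--Smale sequences at the mountain pass level are not a priori bounded in this nonlocal, nonzero-at-infinity setting, I would then invoke the monotonicity trick with respect to $c$ (after a standard reformulation ensuring that the $c$-dependent part of $J_c$ has a definite sign along the relevant paths). The mountain pass level $m(c)$ is then monotone in $c$ and hence differentiable for almost every $c \in (0,\sqrt{2\sigma})$; Jeanjean's theorem produces, at each point of differentiability, a bounded PS sequence of $J_c$ at level $m(c)$.

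The main obstacle is the compactness analysis of such a bounded PS sequence $(v_n)$. One extracts a weak limit $v_\infty$ which, by standard elliptic estimates and by the continuity of the convolution $\eta \mapsto \W * \eta$ guaranteed by $\wh\W \in L^\infty$, solves \eqref{TWc}. Two degeneracies then have to be excluded: vanishing of $v_n$, which I would rule out by a concentration-compactness argument exploiting translations and the lower bound $\wh\W(\xi) \geq \sigma - \kappa \xi^2$ (which yields a uniform spectral gap near the origin and therefore precludes diffusion to zero); and triviality of $v_\infty$, which is discarded because the mountain pass level is strictly positive while trivial critical points have zero action. Passing to the strong limit in the nonlocal term uses the $L^\infty$ bound on $\wh\W$ together with local Sobolev compactness, yielding a nontrivial $u = 1 + v_\infty \in \boE(\R)$ solving \eqref{TWc} for almost every $c \in (0,\sqrt{2\sigma})$.
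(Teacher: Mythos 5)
Your high-level strategy (mountain pass geometry, Struwe--Jeanjean monotonicity trick, bounded Palais--Smale sequences at points of differentiability of the level, translations to rule out vanishing) is the same as the paper's, but your functional framework has a genuine gap that the paper's proof is specifically designed to avoid. You propose to work with $J_c(u)=E(u)-c\,p(u)$ on an affine space $u=1+v$, $v\in H^1(\R;\C)$, ``after a phase adjustment''. In one dimension this is structurally wrong: subsonic dark solitons have different phase limits at $+\infty$ and $-\infty$ (see the explicit family \eqref{sol:1D}), so they do \emph{not} belong to $1+H^1(\R;\C)$, and a constant phase adjustment cannot remove a phase winding between the two ends; moreover the renormalized momentum $p$ is only defined where $\inf|u|>0$ and is not a $C^1$ functional on the energy space. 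In addition, the monotonicity trick requires the $c$-dependent part of the functional to have a definite sign along paths, and $-c\,p(u)$ has none; the ``standard reformulation'' you invoke is precisely the nontrivial step. The paper resolves all of this by first proving that for $c>0$ every finite-energy solution is vortexless (Proposition~\ref{prop:etakidentities}) and that \eqref{TWc} is equivalent to the scalar equation \eqref{eq:rho} for $\rho=|u|$ (Proposition~\ref{prop:hydrodynamic}), and then working with the real-valued functional $J_c(1-\rho)=\boA(1-\rho)-c^2\boB(1-\rho)$, where $\boB\geq 0$ gives the required monotonicity in $c$ and $1-\rho\in H^1(\R)$ is a legitimate ansatz since $\rho\to 1$ at both ends.

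A second, related omission is the singular character of the reduced functional: $\boB$ involves $1/\rho^2$, so $J_c$ is only defined and smooth on the open set $\NV$, and along Palais--Smale sequences one must prevent the modulus from touching zero. The paper needs a deformation lemma adapted to this open set (Lemma~\ref{lemma:deformation}), the key fact that a bound on $\|1-\rho\|_{H^1(\R)}+\boB(1-\rho)$ forces $\rho\geq\delta>0$ (Lemma~\ref{lemma:singularity}), and a corresponding refinement of Jeanjean's argument (Lemma~\ref{lemma:jeanjeanestimate}, Proposition~\ref{prop:key}); your proposal does not address this issue at all. Finally, your argument against vanishing via a ``uniform spectral gap'' is not a proof as stated; the paper instead uses the identity for $2J_c(v)-J_c'(v)(v)$ (Lemma~\ref{lem:J:iden}), which bounds it by $\|v\|_{L^4(\R)}^2$, together with the strict positivity of the mountain pass level and a Lions-type lemma to find translations with a nonzero weak limit. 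These are the concrete ingredients missing from your outline.
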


		As an easy consequence of Theorem~\ref{thm:existenceae}, we prove the following existence result for nonnegative potentials satisfying \ref{h:lowerboundWbis} in the critical case $\kappa=1/2$. This critical lower bound was already considered in \cite{delaire-mennuni}.
		
		\begin{corollary}\label{cor:condition12}
			Assume that $\W$ satisfies \ref{H0}, with 	 $\wh \W \geq 0$ a.e. on $\R$,  and that there is  $\sigma\in (0,1]$
			such that  $\wh\W(\xi)\geq \sigma- \xi^2/2$, for a.e.\ $\abs{\xi}\leq \sqrt{2\sigma}$.
			Then there exists a nontrivial solution to \eqref{TWc} in $\boE(\R)$, for almost every $c\in (0,\sqrt{2\sigma})$.
		\end{corollary}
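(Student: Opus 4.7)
The plan is to reduce Corollary~\ref{cor:condition12} to Theorem~\ref{thm:existenceae} by showing that, although the hypothesis only bounds $\wh\W$ quadratically on a compact interval, one can upgrade it to a global bound of the form required by \ref{h:lowerboundWbis} at the expense of shrinking the constants; then one exhausts $(0,\sqrt{2\sigma})$ by the resulting sub-intervals.

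Concretely, for each parameter $\kappa'\in(0,1/2)$ I would set $\sigma':=2\kappa'\sigma$, which lies in $(0,\sigma)\subseteq(0,1]$, and verify the global lower bound
\[
\wh\W(\xi)\geq \sigma'-\kappa'\xi^2\qquad\text{for a.e.\ }\xi\in\R.
\]
On $|\xi|\leq\sqrt{2\sigma}$ the assumption gives $\wh\W(\xi)\geq \sigma-\xi^2/2$, and a direct computation using $\sigma'=2\kappa'\sigma$ yields
\[
\sigma-\tfrac{\xi^2}{2}-(\sigma'-\kappa'\xi^2)=\bigl(\tfrac{1}{2}-\kappa'\bigr)(2\sigma-\xi^2)\geq 0.
\]
On $|\xi|>\sqrt{2\sigma}$ the nonnegativity of $\wh\W$ suffices, since $\sigma'-\kappa'\xi^2<\sigma'-2\kappa'\sigma=0$. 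Thus \ref{h:lowerboundWbis} holds for $\W$ with the pair $(\sigma',\kappa')$, and Theorem~\ref{thm:existenceae} provides a full-measure subset $A_{\kappa'}\subseteq(0,\sqrt{2\sigma'})=(0,2\sqrt{\kappa'\sigma})$ on which \eqref{TWc} admits a nontrivial finite-energy solution.

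To conclude, I would fix a sequence $\kappa_n\in(0,1/2)$ with $\kappa_n\uparrow 1/2$, so that $2\sqrt{\kappa_n\sigma}\uparrow\sqrt{2\sigma}$, and set
\[
N:=\bigcup_{n\in\N}\bigl((0,2\sqrt{\kappa_n\sigma})\setminus A_{\kappa_n}\bigr),
\]
which is a countable union of Lebesgue-null sets, hence negligible. For any $c\in(0,\sqrt{2\sigma})\setminus N$ we may pick $n$ with $c<2\sqrt{\kappa_n\sigma}$, and then $c\in A_{\kappa_n}$ delivers the desired nontrivial solution.

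I do not anticipate a genuine obstacle in this argument: the reduction is essentially algebraic. The only delicate point is choosing the correct relationship $\sigma'=2\kappa'\sigma$, which is the precise value that makes the two bounds ($\sigma-\xi^2/2$ for $|\xi|\leq\sqrt{2\sigma}$ and $0$ for $|\xi|>\sqrt{2\sigma}$) match the affine-in-$\xi^2$ lower bound at the transition point $|\xi|=\sqrt{2\sigma}$; any larger choice of $\sigma'$ would violate the bound at infinity, and any smaller one would waste room and fail to exhaust $(0,\sqrt{2\sigma})$ in the limit.
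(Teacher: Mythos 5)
Your proof is correct and follows essentially the same route as the paper: you upgrade the hypothesis to the global bound \ref{h:lowerboundWbis} with the pair $(\sigma',\kappa')$ satisfying $\sigma'=2\kappa'\sigma$ (the paper parametrizes by $\tilde\sigma\in(0,\sigma)$ with $\kappa=\tilde\sigma/(2\sigma)$, which is the same relation), apply Theorem~\ref{thm:existenceae}, and exhaust $(0,\sqrt{2\sigma})$ by a countable union, discarding a null set. The only difference is that you spell out the measure-theoretic exhaustion explicitly, which the paper leaves implicit.
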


		Notice that there is no assumption on the continuity of $\W$. For instance, 
		Theorem~\ref{thm:existenceae} applies to the potential 
		$$
		\W=3\delta_{0}-\frac{J_1(2 {\abs \cdot }^{1/2})}{ 2{\abs \cdot }^{1/2}},
		\quad \text { so that }\quad
		\wh\W(\xi)=2+\cos(1/\xi),\quad \text{for all }\xi\in \R\setminus\{0\},$$
		where $J_1$ is the Bessel function of  first kind,  with $\sigma=1$ and $\kappa=0$. This gives us  the existence of  nontrivial finite energy solutions to \eqref{TWc} for a.e.\ $c\in (0,\sqrt{2})$.
		
		
		We can also apply Theorem~\ref{thm:existenceae} to the potential
		\begin{equation}
			\label{pot:berloff}
			\wh \W_{a,b,\lambda}(\xi)=(1+a\xi^2+b\xi^4)e^{-\lambda\xi^2}, 
		\end{equation}
		that has been proposed in   \cite{berloff0,reneuve2018}
		to describe a quantum fluid exhibiting a roton-maxon spectrum such as Helium~4.
		Indeed, as predicted by the Landau theory, in such a fluid, the dispersion curve \eqref{bogo}
		cannot be monotone, and it should have a local maximum  and a local minimum, the so-called maxon and roton, respectively. In \cite{delaire-mennuni}, some numerical simulations were done for 
		$a=-36$, $b=2687$, $\lambda=30$, and a branch of solitons was found with speeds in $(0,\sqrt 2)$.  These values are relevant because they provide the existence of a  maxon  and a roton.  However, the existence theorem in \cite{delaire-mennuni} does not apply to this potential.  On the other hand, it can be checked that, for these values of $a$, $b$ and $\lambda$, condition \ref{h:lowerboundWbis} is fulfilled with $\sigma=0.175$ and any $\kappa\in (0,1/2)$. Consequently,  Theorem~\ref{thm:existenceae} provides the existence of nontrivial finite energy solutions to $(S(\W_{a,b,\lambda},c))$ for a.e.\ $c\in (0,\sqrt{0.35})$.

		
		The first step to prove Theorem~\ref{thm:existenceae} is to show that, if $c>0$, then any solution $u\in \boE(\R)$
		must belong to $\boN\boE(\R)$, which allows us to  lift the function as $u=\rho e^{i\theta}$, for some real-valued functions $\rho$ and $\theta$. Then, we prove in Section~\ref{sec:identities} the crucial fact that \eqref{TWc} is actually equivalent to the nonlocal singular equation 
		\begin{equation}
			\label{eq:rho:intro}
			-\rho''+\frac{c^2}{4}\frac{1-\rho^4}{\rho^3}=\rho\left(\mathcal{W}*(1-\rho^2)\right)\quad\text{in }\mathbb{R}.
		\end{equation}	 
		Therefore, we only need to look for a {\em real} solution $\rho \in \boN\boE(\R)$. Moreover, we know now that $\rho(x)=\abs{u(x)}\to 1$ as $\abs{x}\to\infty$, so that we can
		suppose that $\rho$ recasts as $\rho=1+v$, for some $v\in H^1(\R)$.
		The drawback is that we have introduced a singularity in the equation, we need thus to take care of the possible vanishing of functions on the variational approximation. Indeed, in Section~\ref{sec:aec} we will show  that the solutions to \eqref{eq:rho:intro}
		correspond to critical points of the functional 
		$J_c:H^1(\R)\to\R\cup\{-\infty\}$ given by 
		$$
		J_c(1-\rho)=\boA(1-\rho)-c^2\boB(1-\rho),\quad \text{for } \rho\in 1+H^1(\R),$$
		where
		$$
		\displaystyle\boA(1-\rho)=\frac 1 2 \int_\R (\rho')^2+\frac 1 4 \int_\R (\W\ast(1-\rho^2))(1-\rho^2)
		\quad \text{and}\quad 
		\displaystyle\boB(1-\rho)=\frac 1 8\int_\R\frac{(1-\rho^2)^2}{\rho^2}.
		$$
		More precisely, it will be obtained as a mountain pass point. However, we cannot directly apply the classical version of the mountain pass theorem for several reasons. First, 
		to handle the singularity of $J_c$, we do not work in $H^1$, but in the 
		nonvanishing open set 
		$$ \boN\boV(\R)=\{ v \in H^1(\R)  : 1-v>0 \text{ in } \R \},$$
		i.e.\ we consider  $\rho=1-v$ with $v\in\boN\boV(\R)$. Hence we need to verify that we can adapt the deformation lemma in this setting. Second, our formulation does not give the boundedness of  Palais--Smale sequences. Nevertheless, we can apply the 
		monotonicity trick introduced by Struwe \cite{struwe88} and generalized by Jeanjean \cite{jeanjean99}, 
		that, roughly speaking, will provide bounded Palais--Smale sequences for almost every 
		$c\in (0,\sqrt{2\sigma})$.
		
		%
		
		In order to prove existence in the whole subsonic regime, we have to restrict the potentials we work with; in particular, they will satisfy \ref{h:lowerboundWbis} with $\sigma=1$, so the subsonic interval becomes $(0,\sqrt{2})$. One of the new assumptions on the potentials implies  some a priori estimates on the solutions. To see how this hypothesis is used, observe that, for any $c\in(0,\sqrt2)$, one can apply Theorem \ref{thm:existenceae} to get the existence of  a sequence of speeds $\{c_n\}$ and a sequence of associated solutions $\{u_n\}\subset \boE(\R)$ to $(S(\W,c_n))$, such that $c_n\to c$. At this point, some a priori estimates allow us to conclude that $u_n\to u \in C^k_\text{loc}(\R)$, so that $u$ is a solution to \eqref{TWc}. However, from our estimates, it is not clear that  $u$ has finite energy. Indeed, there are solutions with infinite energy to \eqref{TWc} that we need to avoid, for instance
		\begin{equation*}
			u_r^\pm(x)=r \exp\Big( ix \Big( {\frac{-c\pm\sqrt{c^2+4(1-r^2)}}{2}}\Big) \Big),\quad\text{ for all }r\in (0,1].
		\end{equation*}
		By imposing more restrictive conditions on the potential $\W$, we  by-pass this difficulty in Section~\ref{sec:allc} by performing a more refined study of the Palais--Smale sequence and using the profile decomposition theorem for bounded functions in $H^1(\R)$. 
		
		For the sake of simplicity, we only state in this introduction the existence result in the whole interval $(0,\sqrt 2)$ for  potentials of the form
			\begin{equation}
				\label{Wf}
				\W_\mu=A_\mu(\delta_0+ \mu),\ \text{ where } \mu \text{ is an even Borel measure with }   \ \norm{\mu^-}<1,\,\,A_\mu=(1+\wh \mu(0))^{-1}.
			\end{equation}
			Here and it what follows, $\mu^-$ and $\mu^+$ denote the negative and positive variations of $\mu$, i.e. $\mu=\mu^+-\mu^-$ for some (unique) positive Borel measures such that $\mu^+\perp\mu^-$ (see \cite{folland}). Also, $\norm{\cdot}$ stands for the total variation of a Borel measure. It can be justified that $\wh\mu(0)=\norm{\mu^+}-\norm{\mu^-}$, so that $1+\wh\mu(0)>0$ and the normalization constant $A_\mu$ is well-defined.  
			
			\begin{theorem}
				\label{thm:existence:Wf}
				Let  $\W_\mu$ be as in \eqref{Wf}. Assume that
				$\wh \mu$ is nondecreasing  on $\R^+$  and that $\wh \mu\in W^{1,\infty}(\R)$.
				Then there exists a nontrivial solution to \eqref{TWc} in $\boN\boE(\R)$ for all $c\in (0,\sqrt{2})$. 
			\end{theorem}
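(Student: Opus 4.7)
The plan is to realize Theorem~\ref{thm:existence:Wf} as a limit of the solutions produced by Theorem~\ref{thm:existenceae}, the stronger hypotheses on $\mu$ being precisely what is needed to pass to the limit safely. First I would check that $\W_\mu$ fits the framework of Theorem~\ref{thm:existenceae} with $\sigma=1$. Since $\mu$ is even, $\wh\mu$ is even; combined with the monotonicity of $\wh\mu$ on $\R^+$, this yields $\wh\mu(\xi)\geq \wh\mu(0)$ for every $\xi\in\R$, and therefore
\[
\wh\W_\mu(\xi)=A_\mu\bigl(1+\wh\mu(\xi)\bigr)\geq A_\mu\bigl(1+\wh\mu(0)\bigr)=1.
\]
Together with $\wh\mu\in W^{1,\infty}(\R)\subset L^\infty(\R)$, this gives \ref{H0} and \ref{h:lowerboundWbis} with $\sigma=1$, $\kappa=0$, so for a.e.\ $c\in(0,\sqrt{2})$ there is a nontrivial $u\in\boE(\R)$ solving $(S(\W_\mu,c))$.

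Now fix $c_0\in(0,\sqrt 2)$, pick $c_n\to c_0$ in this full-measure set, and let $u_n\in\boE(\R)$ be the corresponding mountain-pass solutions. Since $c_n>0$, the results of Section~\ref{sec:identities} imply $\inf_\R|u_n|>0$, so I can lift $u_n=\rho_n e^{i\theta_n}$, set $v_n:=1-\rho_n\in H^1(\R)$ and view $v_n$ as a critical point of $J_{c_n}$. The first substantive step is to establish a uniform $H^1$ bound on $\{v_n\}$. Here the extra hypotheses on $\mu$ enter crucially: the Lipschitz control of $\wh\mu$ makes $c\mapsto J_c$ continuous in the appropriate sense, keeping the mountain-pass values $J_{c_n}(v_n)$ bounded as $c_n\to c_0$; combining this with the Pohozaev-type identities from Section~\ref{sec:identities} produces joint bounds on $\boA(v_n)$ and $\boB(v_n)$, and the inequality $\wh\W_\mu\geq 1$ then converts the bound on $\boA(v_n)$ into a uniform $H^1$ bound.

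The main obstacle is to prevent $v_n$ from drifting towards an infinite-energy profile such as the plane waves $u_r^\pm$ mentioned in the discussion above. To handle this I would apply the profile decomposition theorem of G\'erard for bounded sequences in $H^1(\R)$: up to a subsequence,
\[
v_n(x)=\sum_{j=1}^k w^{(j)}\bigl(x-x_n^{(j)}\bigr)+r_n^{(k)}(x),
\]
with $|x_n^{(i)}-x_n^{(j)}|\to\infty$ when $i\neq j$, $w^{(j)}\in H^1(\R)$, and $\limsup_n\norm{r_n^{(k)}}_{L^\infty(\R)}\to 0$ as $k\to\infty$. The Lipschitz property of $\wh\mu$ yields the asymptotic orthogonality of the local and nonlocal pieces of $J_{c_n}$ along the profiles. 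Together with a positive lower bound on the mountain-pass level, inherited from the limit equation, this forces at least one profile $w^{(j_0)}$ to be nontrivial. Translating $u_n$ by $x_n^{(j_0)}$ and passing to the distributional limit in $(S(\W_\mu,c_n))$ then yields a nontrivial $u_\infty\in\boE(\R)$ solving $(S(\W_\mu,c_0))$, which in turn belongs to $\boN\boE(\R)$ since $c_0>0$.

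The hardest part will be precisely this exclusion of trivial profiles: one has to prove that if every $w^{(j)}$ vanished, the mountain-pass values $J_{c_n}(v_n)$ would necessarily tend to zero, contradicting the positivity of the limiting mountain-pass level. This requires a sharp asymptotic splitting of the nonlocal term $\int_\R(\W_\mu\ast(1-\rho_n^2))(1-\rho_n^2)$ along the decomposition, which is exactly where the assumptions $\wh\mu\in W^{1,\infty}(\R)$ and the monotonicity of $\wh\mu$ on $\R^+$ work in tandem: the former controls the error made in the splitting and the latter forbids a partial cancellation of the quadratic-in-$v$ nonlocal contribution.
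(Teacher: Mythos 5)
Your overall outline does follow the paper's strategy for this theorem (the paper deduces Theorem~\ref{thm:existence:Wf} from Theorem~\ref{thm:existenceallc} with $m=0$, and that theorem is proved exactly by taking $c_n\to c$, solutions $u_n$ of $(S(\W,c_n))$ from Theorem~\ref{thm:existenceae}, uniform estimates, and a compactness/profile argument), and your verification of \ref{H0}--\ref{h:lowerboundWbis} with $\sigma=1$, $\kappa=0$ is correct. However, the two steps you treat as routine are precisely where the work lies, and as stated they contain genuine gaps. First, the uniform bound. (i) Boundedness of $J_{c_n}(1-\rho_n)$ is not a consequence of any ``continuity of $c\mapsto J_c$'': the solutions produced by Theorem~\ref{thm:existenceae} are only weak limits of Palais--Smale sequences, and nothing identifies their $J_{c_n}$-value with the mountain-pass level. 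The paper obtains $J_{c_n}(1-\rho_n)\le\gamma_{\gc}(c_n)$ only through the splitting theorem for Palais--Smale sequences (Theorem~\ref{prop:splitting}) combined with the nonnegativity of $J_{c_n}$ on solutions (Lemma~\ref{lemma:positiveJc}, which uses $(\wh\mu)'\ge 0$); this is Corollary~\ref{cor:solutionwithboundedgrad}. (ii) The claim that Pohozaev-type identities give ``joint bounds on $\boA(v_n)$ and $\boB(v_n)$'' is circular: by \eqref{eq:quadratic} and \eqref{eq:pohozaev}, $\boB(1-\rho_n)$ (equivalently the momentum) is only controlled by $\int_\R\eta_n^2$, which is the very $L^2$-quantity you are trying to bound, and a bound on $J_{c_n}$ alone does not bound $\|v_n\|_{L^2}$ because the term $\tfrac14\int_\R\big(1-\tfrac{c^2}{2\rho_n^2}\big)\eta_n^2$ in \eqref{J-elliptica} has no sign where $\rho_n<c/\sqrt 2$.

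The paper closes exactly this gap with the $S_n^r$ argument (Lemma~\ref{lemma:Snr} together with the contradiction via the constant-limit equation $c^2(1+\rho^2)=4\rho^4$), and that argument requires uniform $W^{k,\infty}$ bounds and the uniform lower bound $\rho_n\ge\delta_c$, i.e.\ hypothesis \ref{W:apriori}, which for $\W_\mu$ is established by the maximum-principle estimate of Proposition~\ref{prop:universalestimate} — this is the one place where the specific structure $\W_\mu=A_\mu(\delta_0+\mu)$ with $\|\mu^-\|<1$ is actually used, and your sketch never produces any such a priori $L^\infty$ or lower bound. Finally, your nontriviality criterion (``if all profiles vanish then $J_{c_n}(v_n)\to 0$, contradicting the positive mountain-pass level'') again presupposes the unproven lower link between $J_{c_n}(v_n)$ and $\gamma_{\gc}(c_n)$; the paper instead obtains nontriviality of the limit from the quantitative nonvanishing estimate $\|\W\ast\eta_n\|_{L^\infty(\R)}\ge(2-c_n^2)/4$ of Proposition~\ref{prop:nonvanishing}. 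So while your plan points in the right direction, the a priori estimates of Section~\ref{sec:allc} (Propositions~\ref{prop:universalestimate}--\ref{prop:lowerbound}), the Palais--Smale splitting theorem, and the exclusion of spreading towards the constant/plane-wave solutions are missing, and these are the substance of the proof.
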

			
			We will show below (see Theorem~\ref{thm:ex2}, case \ref{case:deltas}) that the hypothesis of monotonicity in Theorem~\ref{thm:existence:Wf} can be  relaxed.
		
		In Section~\ref{sec:decay} we study further properties of 
		the solutions 
		by considering the variable $\eta =1-\abs{u}^2$, that satisfies the equation
		\begin{equation}
			\label{eq:ellipticequation:intro}
			-\eta''+2\mathcal{W}\ast\eta-c^2\eta=2\abs{u'}^2+2\eta(\W\ast\eta):= F.
		\end{equation}
		From \eqref{eq:ellipticequation:intro} we can deduce that every finite energy solution is smooth and that, 
		if $c>0$, then $\abs{u}$ does not vanish, i.e.\  $\eta<1$ on $\R$ (see Proposition~\ref{prop:etakidentities}).
		On the other hand, equation  \eqref{eq:ellipticequation:intro} can be written as  
		\begin{equation}
			\label{eq:conv:intro}
			M_c(\xi)\wh \eta(\xi)= \wh F(\xi),\quad \text{with } 
			M_c(\xi)=\xi^2+2\wh \W(\xi)-c^2.
		\end{equation}
		If $c\in [0,\sqrt 2)$ and $\W$ satisfies \ref{h:lowerboundWbis}, then $M_c$ is strictly positive, so \eqref{eq:ellipticequation:intro}
		is an elliptic equation. In this case, \eqref{eq:ellipticequation:intro} can be written as the convolution equation:
		$$\eta =\boL_c *F, \quad \text{ where } \wh \boL_c=M_c^{-1}.$$
		In this way, $\wh \boL_c$
		appears as a Fourier multiplier.	
		
		Let us remark that this kind of convolution formulation has been used in several contexts to get further properties of the solutions, see e.g.\
		\cite{gravejat-decay,delaire2009,deLaire4}. In our case, we can adapt and extend the classical theory of Bona--Li
		\cite{BonaLi1,BonaLi2} to handle the nonlocal function $F$, and to deduce the algebraic or exponential decay, and analyticity of the solutions depending on the properties of $\W$.
		For instance, our main result concerning the exponential decay reads as follows.
		\begin{theorem}\label{thm:exponentialdecay}
			Assume that $\W$ satisfies \ref{H0}.	Let $c\geq 0$ and let $u\in\mathcal{E}(\mathbb{R})$ be a solution to \eqref{TWc}. Suppose that
			\begin{equation}\label{expLccondition}
				e^{m|\cdot|}\mathcal{L}_c\in L^p(\mathbb{R})\,\,\text{ for some } m>0,\,\, p\in (1,\infty].
			\end{equation}
			Then, for all $\ell\in [0,m)$,  the function  $\eta=1-|u|^2$  has the  exponential decay:  
			$$\lim_{\abs{x}\to\infty}e^{\ell|x|}D^k \eta(x)=0, \quad \text{for all }k\in\N.$$
		\end{theorem}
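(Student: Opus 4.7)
The plan is to rely on the convolution representation $\eta = \mathcal{L}_c * F$ recalled in~\eqref{eq:conv:intro}, with
\[
F = 2|u'|^2 + 2\eta(\mathcal{W}*\eta),
\]
and to run a weighted bootstrap in the spirit of Bona--Li. From the preceding results in Section~\ref{sec:decay} I assume that $\eta \in H^\infty(\R)$, that $\eta$ and all its derivatives tend to $0$ at infinity, and that $u' \in L^2(\R)\cap L^\infty(\R)$ tends to $0$ as well (so that $u$ stabilizes to a constant of modulus one at $\pm\infty$). In particular, $\mathcal{W}*\eta \in L^\infty(\R)$ by Sobolev embedding (using $\widehat\W\in L^\infty$ together with the fast decay of $\widehat\eta$), whence $F \in L^1(\R)\cap L^\infty(\R)$.

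For a fixed $\ell \in [0,m)$ and $N\geq 1$, I would introduce the truncated weight $w_N(x) = \min(e^{\ell|x|}, e^{\ell N})$, which is bounded and satisfies the submultiplicative bound $w_N(x)\leq e^{\ell|x-y|}\,w_N(y)$ for all $x,y\in\R$. Multiplying $\eta = \mathcal{L}_c * F$ by $w_N$, absorbing $w_N$ into the integral via this inequality, and applying Young's convolution inequality yields
\[
\|w_N\,\eta\|_{L^\infty} \;\leq\; \|e^{\ell|\cdot|}\mathcal{L}_c\|_{L^p}\,\|w_N F\|_{L^{p'}}.
\]
The first factor is finite for $\ell<m$, since $e^{\ell|\cdot|}\mathcal{L}_c = e^{(\ell-m)|\cdot|}\bigl(e^{m|\cdot|}\mathcal{L}_c\bigr)$ with $e^{(\ell-m)|\cdot|}\in L^\infty(\R)$ and $e^{m|\cdot|}\mathcal{L}_c\in L^p(\R)$ by \eqref{expLccondition}.

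It remains to bound $\|w_N F\|_{L^{p'}}$ in a way that closes the loop. For the nonlocal piece I would use
\[
\|w_N\eta(\mathcal{W}*\eta)\|_{L^{p'}}\leq \|\mathcal{W}*\eta\|_{L^\infty}\,\|w_N\eta\|_{L^{p'}},
\]
and interpolate $\|w_N\eta\|_{L^{p'}}$ between $\|w_N\eta\|_{L^\infty}$ and $\|\eta\|_{L^2}$. For the quadratic piece, I would first show by the same weight argument, applied to an equation for $u'$ obtained by differentiating $(S(\mathcal{W},c))$ (or via a first-integral identity between $|u'|^2$ and $\eta$, as derived in Section~\ref{sec:identities}), that $u'$ inherits the exponential decay established for $\eta$. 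Iterating the estimate in $\ell$, and starting from an initial polynomial decay step obtained with the weight $w_N(x)=\min((1+|x|)^k,(1+N)^k)$, one deduces $\sup_N\|w_N\eta\|_{L^\infty}<\infty$ for every $\ell<m$, hence $e^{\ell|\cdot|}\eta\in L^\infty(\R)$. The limit $\lim_{|x|\to\infty} e^{\ell|x|}\eta(x)=0$ then follows by writing $e^{\ell|x|}\eta(x)=e^{-\varepsilon|x|}\cdot e^{(\ell+\varepsilon)|x|}\eta(x)$ with $\ell+\varepsilon<m$. For the derivatives, differentiating \eqref{eq:ellipticequation:intro} yields $-D^{k+2}\eta+2\mathcal{W}*D^k\eta-c^2 D^k\eta=D^kF$, and an inductive application of the same weighted Young estimate transfers the exponential decay to each $D^k\eta$.

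The main obstacle will be closing the bootstrap for $\|w_N F\|_{L^{p'}}$: since $\mathcal{W}$ is only a tempered distribution, $\mathcal{W}*\eta$ need not decay but is only bounded, and the quadratic term $|u'|^2$ must be controlled by exponential decay of $u'$ itself, which is not immediate without leveraging either a differentiated traveling-wave equation or the first-integral identities of Section~\ref{sec:identities}. Keeping the decay rates of $\eta$, $u'$ and $\mathcal{W}*\eta$ aligned through the iteration, while ensuring that each improvement strictly increases $\ell$ up to the limit $m$, is the delicate step.
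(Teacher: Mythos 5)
Your overall strategy (weighted estimates on the convolution identity $\eta=\boL_c\ast F$, truncated weights, then a limiting argument) is the same family as the paper's Bona--Li argument, but the decisive step that makes it work is missing, and the two concrete devices you propose for closing the bootstrap do not function as stated. First, your premise that ``$\W\ast\eta$ need not decay but is only bounded'' is wrong: since $\wh\W\in L^\infty(\R)$ and, by Lemma~\ref{lemma:regularity}, $\eta\in W^{k,2}(\R)$ for all $k$, the functions $\W\ast\eta$ and $\W\ast\eta'$ lie in $H^k(\R)$ and hence vanish at infinity. This is not a side remark; it is precisely the source of smallness in the paper. There, one splits the weighted integrals into $\{|x|\le R\}$ and $\{|x|>R\}$ and uses that, outside a large compact set, $|F|^q\le\delta(|\eta|^q+|\eta'|^q)$ and $|4\eta'(\W\ast\eta)+2\eta(\W\ast\eta')|^q\le\delta(|\eta|^q+|\eta'|^q)$ with $\delta$ arbitrarily small, the quadratic term $|u'|^2$ being absorbed through the pointwise first integral \eqref{eq:localequation}, which gives $|u'|^2\le C(\eta^2+(\eta')^2)$ for $|x|>R$. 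The small $\delta$ is what lets one absorb the weighted norms of $\eta$ and $\eta'$ (estimated \emph{simultaneously}, since $F$ involves $\eta'$ and the equation for $\eta'$, obtained from \eqref{eq:kprime}, involves $\W\ast\eta'$), after which Fatou's lemma and Sobolev embedding finish the proof. Your substitute for this mechanism fails: bounding $\|w_N\,\eta(\W\ast\eta)\|_{L^{p'}}\le\|\W\ast\eta\|_{L^\infty}\|w_N\eta\|_{L^{p'}}$ produces a constant that is in no way small, and the proposed interpolation of $\|w_N\eta\|_{L^{p'}}$ between $\|w_N\eta\|_{L^\infty}$ and the \emph{unweighted} $\|\eta\|_{L^2}$ is false, since $w_N\ge 1$ (the weight cannot be dropped from the $L^2$ factor, and keeping it makes the argument circular).

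Second, your treatment of the quadratic term through ``exponential decay of $u'$ itself'' is both unnecessary and not carried out: $u$ does not decay, so $u'$ satisfies no convolution equation with a decaying kernel that you can feed into the same scheme, and you give no argument for it. The identity \eqref{eq:localequation} (which you mention only as an alternative) is the correct and essentially unavoidable tool: it converts $|u'|^2$ into a superlinear expression in $(\eta,\eta')$ near infinity, so no separate decay for $u'$ is ever needed. Since you yourself flag the closure of the bootstrap as the unresolved obstacle, and the specific routes you sketch for it either rest on a false inequality or on an unproved decay of $u'$, the proposal as written has a genuine gap at the heart of the proof; the missing idea is the large-$R$ splitting combined with the vanishing at infinity of $\eta$, $\eta'$, $\W\ast\eta$, $\W\ast\eta'$ and the first integral \eqref{eq:localequation}, which together yield an absorbable small constant (this is also where the regularized weights and Lemma~\ref{lemma:Pei} enter in the paper).
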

		\noindent We refer to Section~\ref{sec:decay} for the precise statements for algebraic decay and the real analyticity of $u$, in the sense that $\Re (u)$ and $\Im (u)$ are real analytic functions.

		We now discuss the nonexistence conjecture of nontrivial finite energy solution to \eqref{TWc} for $c\geq \sqrt 2$. In the case $\W=\delta_0$, the proof in  \cite{bethuel2008existence}
		uses the Cauchy--Lipschitz theorem for ODEs. Thus, this argument seems difficult to apply  to the nonlocal equations \eqref{TWc} or \eqref{eq:rho:intro}. In the limit case $c=\sqrt{2}$, we can use \eqref{eq:conv:intro} again to get the following nonexistence result.
		
		\begin{theorem}\label{thm:nonexistence:intro}
			Assume that $\W$ satisfies \ref{H0} and  that   $\wh \W$ 	is of class $\boC^2$
			in a neighborhood of the origin, with  $\wh\W\geq 0$ a.e.\ on $\R$ and
			$\wh \W(0)=1$. Suppose that either   $(\wh \W)''(0)\neq -1$,  or $(\wh \W)''= -1$ on 
			a neighborhood of the origin. Then  $(S(\W,\sqrt 2))$ admits no nontrivial solution in $\mathcal{E}(\mathbb{R})$.
		\end{theorem}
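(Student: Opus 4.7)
The plan is to argue by contradiction, exploiting the Fourier multiplier formulation \eqref{eq:conv:intro} at the critical value $c=\sqrt{2}$, where $M_{\sqrt 2}$ degenerates at the origin. Suppose $u\in\boE(\R)$ is a nontrivial solution to $(S(\W,\sqrt 2))$, and set $\eta=1-|u|^2\in L^2(\R)$. From equation~\eqref{eq:ellipticequation:intro}, the identity
\[
M_{\sqrt 2}(\xi)\wh\eta(\xi)=\wh F(\xi),\qquad F=2|u'|^2+2\eta(\W\ast\eta),
\]
holds in $\boS'(\R)$. Using $u'\in L^2$, $\eta\in L^2$ and the $L^2\to L^2$ continuity of the operator $\eta\mapsto\W\ast\eta$ (granted by \ref{H0}), one gets $F\in L^1(\R)$, so $\wh F$ is continuous and bounded. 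Moreover, by Plancherel together with the reality and evenness of $\W$, and the assumption $\wh\W\geq 0$,
\[
\wh F(0)=\int_\R F=2\int_\R|u'|^2\,dx+\frac{1}{\pi}\int_\R\wh\W(\xi)|\wh\eta(\xi)|^2\,d\xi\geq 0,
\]
a sum of two nonnegative terms.

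The heart of the proof is then to show $\wh F(0)=0$. In the case $(\wh\W)''(0)\neq -1$, I combine the $\boC^2$ regularity and evenness of $\wh\W$ near the origin with $\wh\W(0)=1$ to Taylor-expand
\[
M_{\sqrt 2}(\xi)=\bigl(1+(\wh\W)''(0)\bigr)\xi^2+o(\xi^2),\qquad \xi\to 0,
\]
so that $M_{\sqrt 2}$ vanishes exactly to order two at $\xi=0$ with nonzero coefficient. If $\wh F(0)$ were nonzero, the continuity of $\wh F$ would force $|\wh\eta(\xi)|\gtrsim|\xi|^{-2}$ on a neighborhood of $0$, contradicting $\wh\eta\in L^2(\R)$; hence $\wh F(0)=0$. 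In the remaining case $(\wh\W)''\equiv -1$ on some neighborhood $U$ of $0$, combining this identity with $\wh\W(0)=1$ and $(\wh\W)'(0)=0$ (from evenness) gives $\wh\W(\xi)=1-\xi^2/2$ throughout $U$, so $M_{\sqrt 2}\equiv 0$ on $U$. The distributional identity above then yields $\wh F\equiv 0$ on $U$, and in particular $\wh F(0)=0$.

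With $\wh F(0)=0$ in hand, the nonnegativity of both summands in the expression for $\wh F(0)$ forces $u'\equiv 0$, so $u$ is a constant $z_0\in\C$. But membership $u\in\boE(\R)$ requires the constant function $1-|z_0|^2$ to lie in $L^2(\R)$, which forces $|z_0|=1$; thus $u$ is trivial, a contradiction. I expect the main technical obstacle to lie in the first case: rigorously passing from the distributional identity $M_{\sqrt 2}\wh\eta=\wh F$ to a pointwise lower bound on $|\wh\eta|$ near the origin sharp enough to rule out $\wh F(0)\neq 0$. This is precisely where the $\boC^2$ hypothesis on $\wh\W$ is essential, and why the intermediate situation $(\wh\W)''(0)=-1$ without $M_{\sqrt 2}$ vanishing identically on a neighborhood is excluded from the statement.
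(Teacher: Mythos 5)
Your argument is correct and is essentially the paper's own proof: the paper likewise derives $M_{\sqrt 2}\,\wh\eta=\wh F$ from \eqref{eq:ellipticequation}, shows $F\in L^1(\R)$ so that $\wh F$ is continuous with $\wh F(0)=2\int_\R|u'|^2+\frac1\pi\int_\R\wh\W|\wh\eta|^2>0$ for nontrivial $u$, and then reaches a contradiction because the degeneracy of $M_{\sqrt 2}$ at $\xi=0$ (either $M_{\sqrt 2}\equiv 0$ near the origin, or $1/M_{\sqrt 2}\notin L^2$ near the origin) forces $\wh F(0)=0$. The only cosmetic differences are that the paper factors this through the intermediate Theorem~\ref{thm:nonexistence} (stated under mere differentiability of $\wh\W$, with the Taylor expansion done afterwards) and asserts the strict positivity of $\wh F(0)$ upfront, whereas you defer it to the final step; these variants are equivalent.
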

		
		As a consequence of the real analyticity of the solutions to \eqref{TWc}, we can deduce that the solutions obtained by
		minimization by de Laire and Mennuni in \cite{delaire-mennuni} are symmetric.
		By combining the analyticity with a reflection argument, we get the following result.
		
		\begin{corollary}\label{thm:symmetry}
			Let $\W$ be a potential satisfying the hypotheses in Theorem~1 in \cite{delaire-mennuni}. Let $\gq\in (0,\gq_*)$ and let  $u=\rho e^{i\theta}\in \boN\boE(\R)$ be 
			the nontrivial solution to  \eqref{TWc}, for some $c \in (0,\sqrt 2)$,
			satisfying  $p(u)=\gq$, given by Theorem~1 in \cite{delaire-mennuni}.
			Then, up to translations, $\rho$ is an even function and, up to multiplying $u$ by a constant of modulus one, $\theta$ is an odd function. 
		\end{corollary}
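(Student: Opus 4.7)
The plan is to exploit the reflection symmetry $u\mapsto \tilde u$, where $\tilde u(x):=\overline{u(-x)}$, to produce a second minimizer of the constrained variational problem \eqref{Emin}; then to use the uniqueness (up to the invariances of $(S(\W,c))$) of the minimizer provided by Theorem~1 of \cite{delaire-mennuni} in order to obtain an algebraic identity between $u$ and $\tilde u$; and finally to invoke the analyticity of $\Re u$ and $\Im u$ established in Section~\ref{sec:decay} to turn a modulo-$2\pi$ identity for the phase into a genuine oddness statement.

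First I would check that $\tilde u$ solves $(S(\W,c))$ with the same speed as $u$, and has the same energy and momentum. Since $\W$ is real and even, $(\W\ast(1-|\tilde u|^2))(x)=(\W\ast(1-|u|^2))(-x)$, and taking the complex conjugate of \eqref{TWc} evaluated at $-x$ gives $(S(\W,c))$ for $\tilde u$. A direct change of variables, together with the identity $\langle i\tilde u'(x),\tilde u(x)\rangle_\C=\langle iu'(-x),u(-x)\rangle_\C$ coming from $\tilde u'(x)=-\overline{u'(-x)}$, yields $E(\tilde u)=E(u)$ and $p(\tilde u)=p(u)=\gq$. By the characterization of minimizers of $E_{\min}(\gq)$ provided by Theorem~1 of \cite{delaire-mennuni}, there must exist $x_0\in\R$ and $\alpha\in\R$ with
\[
\overline{u(-x)}=e^{i\alpha}\,u(x-x_0)\qquad\text{for all }x\in\R.
\]

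Replacing $u$ by $u(\cdot+x_0/2)$ and then by $e^{i\alpha/2}u$, both of which are allowed invariances, I may normalize so that $\overline{u(-x)}=u(x)$ on $\R$. Taking moduli at once gives $\rho(-x)=\rho(x)$. For the oddness of $\theta$, since $\rho>0$ on $\R$ by Proposition~\ref{prop:etakidentities}, a continuous lift $\theta$ with $u=\rho e^{i\theta}$ exists; in fact $\theta$ is real analytic thanks to the analyticity of $\Re u$ and $\Im u$ proved in Section~\ref{sec:decay}. The pointwise identity $\bar u(-x)=u(x)$ then reads $\theta(x)+\theta(-x)\in 2\pi\Z$ for every $x\in\R$, and continuity forces this integer-valued quantity to equal the constant $2\theta(0)$. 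Since $u(0)=\overline{u(0)}\in\R$ we have $\theta(0)\in\pi\Z$, and multiplying $u$ by $-1$ if necessary (again an allowed phase) we get $\theta(0)=0$, so that $\theta(-x)=-\theta(x)$.

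The main obstacle is the uniqueness step: one needs to know that any two minimizers of \eqref{Emin} at the same momentum level $\gq\in(0,\gq_*)$ differ only by a translation and a multiplication by a constant of modulus one. If Theorem~1 of \cite{delaire-mennuni} is stated merely as an existence result, this orbit characterization must be extracted from the concentration-compactness argument used in its proof; such a statement is in any case a prerequisite for the orbital stability also asserted there. Once the uniqueness-up-to-invariances is in hand, the remainder of the argument is algebraic, with analyticity being needed only to rigorously define a global smooth lift of $\theta$ and to exclude a pathological modification of the phase that could spoil the oddness.
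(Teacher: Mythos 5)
There is a genuine gap at the step you yourself flag as the ``main obstacle'': your argument hinges on knowing that any two minimizers of \eqref{Emin} at the same momentum level $\gq$ coincide up to a translation and a constant phase, so that $\overline{u(-\cdot)}$ lies on the orbit of $u$. Theorem~1 of \cite{delaire-mennuni} does not provide this. It gives existence of minimizers and orbital stability of the \emph{set} of minimizers, and stability of a set never requires that set to be a single orbit, so your remark that single-orbit uniqueness is ``a prerequisite for the orbital stability also asserted there'' is not correct. In fact the present paper states explicitly that uniqueness of solutions to \eqref{TWc} up to invariances is unknown except for $\W=\delta_0$; with the uniqueness step unavailable, the identity $\overline{u(-x)}=e^{i\alpha}u(x-x_0)$ cannot be obtained, and the rest of your argument (which is otherwise fine: the computations showing $\tilde u(x)=\overline{u(-x)}$ solves $(S(\W,c))$ with $E(\tilde u)=E(u)$, $p(\tilde u)=p(u)$, and the lifting/analyticity bookkeeping for $\theta$) has nothing to feed on.

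The paper avoids uniqueness altogether. It first translates and rotates $u$ so that the phase contribution to the momentum splits evenly at the origin, then builds the two \emph{reflection competitors} $u^{\pm}=(T^{\pm}\rho)e^{iS^{\pm}\theta}$ obtained by mirroring the right (resp.\ left) half of $u$. The momentum normalization gives $p(u^{\pm})=\gq$, and the splitting inequalities $E(u^+)+E(u^-)=2E_{\mathrm{k}}(u)+E_{\mathrm{p}}(u^+)+E_{\mathrm{p}}(u^-)\leq 2E(u)$ force $E(u^{\pm})=E_{\min}(\gq)$, so $u^{\pm}$ are themselves minimizers and hence solutions to \eqref{TWc}. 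The symmetry then comes from Theorem~\ref{thm:analyticity}: $|u^{+}|^2$ and $|u|^2$ are real analytic and agree on a half-line, hence agree on all of $\R$, which is exactly the evenness of $\rho$; oddness of $\theta$ follows from \eqref{eq:theta} and $\theta(0)=0$. So the analyticity is used not merely to lift the phase, as in your sketch, but as the substitute for the missing uniqueness, via the identity theorem applied to the reflected minimizer. If you want to salvage your route, you would have to prove the single-orbit structure of the minimizing set, which is precisely the open problem the paper sidesteps.
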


		The uniqueness of solutions to \eqref{TWc} is a difficult problem due to the nonlocal potential. Actually, the uniqueness for nonlocal equations such as \eqref{TWc} can be hard to establish (see e.g.~\cite{albert95,lieb77}).
		We do not know if the solutions to \eqref{TWc} are unique (up to invariances) except in the case $\W=\delta_0$, where the solutions are explicitly given in \eqref{sol:1D}. However, we think that the uniqueness holds, at least for the potentials in the examples in the next subsection.
		
		Another interesting open question is whether the solutions obtained via  Theorem~\ref{thm:existenceae} are orbitally stable. 
		Unlike in \cite{delaire-mennuni}, our solutions are not minimizers but mountain pass critical points.
		This makes the analysis of the stability in our context a nontrivial task. 
		Actually, since uniqueness is not guaranteed, our solutions might in principle be different from those in \cite{delaire-mennuni} and, in consequence, there might exist potentials $\W$ that provide unstable solutions. On the other hand, we performed in \cite{delaire-salva-guillaume} numerical computations for the potentials in the Subsection~\ref{sub:examples}, that suggest the orbital stability
		of the traveling waves in Theorem~\ref{thm:ex2}.

		%
		%

		
		\subsection{Examples}
		\label{sub:examples}
		
		For $\beta>2\alpha>0$, we consider the potential 
		\begin{equation}
			\label{ex:alphabeta}
			\W_{\alpha,\beta}=\frac{\beta}{\beta-2\alpha}(\delta_0-\alpha e^{-\beta|\cdot|}), \quad \text { so that }\quad
			\wh\W_{\alpha,\beta}(\xi)=\frac{\beta}{\beta-2\alpha}\left(1-\frac{2\alpha\beta}{\xi^2+\beta^2}\right),\ \xi\in\R.
		\end{equation}
		This kind of potential has been used in \cite{nikolov2004} for the study of dark solitons in a self-defocusing nonlocal Kerr-like
		medium. The kernel $\W_{\alpha,\beta}$ represents a strong repulsive interaction between particles that coincide in space, while the interaction becomes attractive otherwise, being this attraction significant at short distances.
		
		From a mathematical point of view, this potential satisfies all the conditions 
		to apply Theorems~\ref{thm:existence:Wf}, \ref{thm:exponentialdecay}, \ref{thm:nonexistence:intro} and \ref{thm:analyticity}. The result reads as follows.
		
		\begin{theorem}
			\label{thm:ex1}
			Let  $\W_{\alpha,\beta}$ be given by \eqref{ex:alphabeta} with $\beta>2\alpha>0$.
			Then for every $c\in (0,\sqrt{2})$, there exists a nontrivial solution $u_c\in \boN\boE(\R)$ to $(S({\W_{\alpha,\beta},c}))$. 	In addition,  $u_c$ is real-analytic,
			the limits
			$u_c(\pm \infty)$ exist, 
			and there exists $\ell>0$, depending only on $c$, $\alpha$ and $\beta$, such that  $\eta_c=1-\abs{u_c}^2$ satisfies
			\begin{equation}
				\label{eq:decay1}
				\lim_{\abs{x}\to\infty}e^{\ell |x|}D^k\eta_c(x)=0,\quad \text{ for all }k\in\N.
			\end{equation}
			Furthermore,  there is no nontrivial solution to $(S({\W_{\alpha,\beta},\sqrt{2}}))$ in $\boE(\R)$. 
			
		\end{theorem}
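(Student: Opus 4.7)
The proof is essentially a verification exercise: one must check that $\W_{\alpha,\beta}$ fits into the framework of each of the earlier theorems. The plan is therefore to check the hypotheses of Theorems~\ref{thm:existence:Wf}, \ref{thm:exponentialdecay}, \ref{thm:nonexistence:intro} and \ref{thm:analyticity} in sequence, with the main technical work being a careful analysis of the Fourier multiplier $M_c$.

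For existence, I would write $\W_{\alpha,\beta}=A_\mu(\delta_0+\mu)$ by taking $\mu$ to be the absolutely continuous signed measure with density $-\alpha e^{-\beta\abs{\cdot}}$. Then $\mu^+=0$, $\mu^-=\alpha e^{-\beta\abs{\cdot}}dx$, and a direct integration gives $\norm{\mu^-}=2\alpha/\beta<1$ and $1+\wh\mu(0)=(\beta-2\alpha)/\beta$, so that $A_\mu=\beta/(\beta-2\alpha)$ matches the normalization in \eqref{ex:alphabeta}. The Fourier transform $\wh\mu(\xi)=-2\alpha\beta/(\xi^2+\beta^2)$ is clearly $C^\infty$ with bounded derivative, hence in $W^{1,\infty}(\R)$, and since $\xi\mapsto -1/(\xi^2+\beta^2)$ is increasing on $\R^+$ (its derivative equals $2\xi/(\xi^2+\beta^2)^2\geq 0$), $\wh\mu$ is nondecreasing on $\R^+$. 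Theorem~\ref{thm:existence:Wf} then produces a nontrivial $u_c\in\boN\boE(\R)$ for every $c\in(0,\sqrt 2)$.

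The heart of the argument is the exponential decay. From $\wh\W_{\alpha,\beta}(\xi)=\frac{\beta}{\beta-2\alpha}(1-\frac{2\alpha\beta}{\xi^2+\beta^2})$, one sees that $\wh\W_{\alpha,\beta}\geq 1$ on $\R$, so $M_c(\xi)=\xi^2+2\wh\W_{\alpha,\beta}(\xi)-c^2\geq \xi^2+(2-c^2)>0$ for all $\xi\in\R$ and any $c\in[0,\sqrt 2)$. Thus $\wh\boL_c=1/M_c$ is well defined. The key observation is that, after clearing the denominator, $(\xi^2+\beta^2)M_c(\xi)$ is a polynomial of degree four in $\xi$, so $\wh\boL_c$ extends to a rational function on $\C$ whose poles are precisely the four (complex) roots of $(\xi^2+\beta^2)M_c(\xi)=0$, minus the cancellations; none of these poles lie on the real axis by the estimate above. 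Let $\ell_0>0$ denote the minimum of $\abs{\Im(z)}$ over these poles. For any $\ell<\ell_0$ one can shift the contour in the Fourier inversion formula to $\R+i\,\mathrm{sgn}(x)\ell$ and use partial fractions to conclude that $\boL_c$ is a finite sum of functions of the form $P(x)e^{-\lambda_j\abs{x}}$ with $\Re(\lambda_j)>\ell$, so $e^{\ell\abs{\cdot}}\boL_c\in L^\infty(\R)$. Then \eqref{expLccondition} holds with $p=\infty$ and Theorem~\ref{thm:exponentialdecay} yields \eqref{eq:decay1}. Real-analyticity follows from Theorem~\ref{thm:analyticity} since $\wh\W_{\alpha,\beta}$ is a rational function analytic in a strip around $\R$, and the existence of the limits $u_c(\pm\infty)$ is a direct consequence of the exponential convergence of $\abs{u_c}^2$ to $1$ combined with integration of the lifted phase equation for $u_c=\rho e^{i\theta}$.

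Finally, for the nonexistence at $c=\sqrt 2$ it suffices to verify the hypotheses of Theorem~\ref{thm:nonexistence:intro}: $\wh\W_{\alpha,\beta}$ is real-analytic, nonnegative, with $\wh\W_{\alpha,\beta}(0)=1$, and a short computation gives
\[
(\wh\W_{\alpha,\beta})''(0)=\frac{4\alpha}{(\beta-2\alpha)\beta^2}>0\neq-1.
\]
Hence Theorem~\ref{thm:nonexistence:intro} applies and rules out nontrivial finite-energy solutions at the critical speed. The only delicate step in the whole argument is the careful location of the poles of $\wh\boL_c$ to extract an explicit exponential rate $\ell>0$; the rest is bookkeeping checking that the structural hypotheses on $\mu$ and $\wh\W$ are met.
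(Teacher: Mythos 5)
Your proposal is correct and follows essentially the same route as the paper: you verify that $\W_{\alpha,\beta}$ has the form \eqref{Wf} with $\mu^-=\alpha e^{-\beta\abs{\cdot}}dx$, $\norm{\mu^-}=2\alpha/\beta<1$ and $\wh\mu$ nondecreasing on $\R^+$ (so Theorem~\ref{thm:existence:Wf} gives existence for all $c\in(0,\sqrt2)$), check $(\wh\W_{\alpha,\beta})''(0)=4\alpha\beta^{-2}(\beta-2\alpha)^{-1}\neq-1$ for the nonexistence at $c=\sqrt2$, and obtain \eqref{expLccondition} with $p=\infty$ from the rational structure of $\wh\boL_c=1/M_c$. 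The only cosmetic difference is that the paper computes $\boL_c$ explicitly as $\alpha_1e^{-\beta_1\abs{x}}+\alpha_2e^{-\beta_2\abs{x}}$, whereas you locate the (non-real) poles of the rational function and shift the contour, an equivalent residue computation that even spares you checking that the poles are purely imaginary.
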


		The following three examples will provide similar mathematical results, so we will gather them in a single theorem after some comments. The first one
		was proposed in \cite{veskler2014} as simple model for interactions in a Bose--Einstein condensate. For $\lambda>0$, it is given  by a contact interaction $\delta_0$ and two Dirac delta functions centered at $\pm \lambda$, as
		\begin{equation}
			\label{ex:deltas}
			\W_\lambda=2\delta_0-\frac{1}{2}(\delta_{-\lambda}+\delta_{\lambda}),
			\quad 	\text{ so that } \quad	\wh\W_\lambda(\xi)=2-\cos(\lambda \xi),\ \xi\in\R.
		\end{equation}
		Notice that, as well as \eqref{ex:alphabeta}, \eqref{ex:deltas} models a competition between repulsive and attractive interactions. 
		
		Another interesting example proposed in \cite{Lopez-Aguayo} is the Gaussian function,
		\begin{equation}
			\label{ex:gaussian}
			\W_\lambda(x)=\frac{1}{2\lambda\sqrt{\pi}}e^{-\frac{x^2}{4\lambda^2}},
			\quad 	\text{ so that }\quad	\wh\W_\lambda(\xi)=e^{-\lambda\xi^2},\  \xi\in\R,
		\end{equation}
		where $\lambda>0$. In fact, for $\lambda>0$ small, this potential represents a smooth approximation of the Dirac delta.

		Finally, we introduce the so-called soft core potential,  which was used in \cite{rica,aftalion} to  study  supersolids. It  can be seen as a nonsmooth approximation of the Dirac delta when $\lambda>0$ is small,
		\begin{equation}
			\label{ex:softcore}
			\W_\lambda(x)=\begin{cases}
				\displaystyle\frac{1}{2\lambda}, & \ \text{ if }\abs{x}<\lambda,\\
				0, & \ \text{otherwise},
			\end{cases}
			\quad 	\text{ so that } \quad	\wh\W_\lambda(\xi)=\frac{\sin(\lambda\xi)}{\lambda\xi}, \ \xi\in\R.
		\end{equation}
		
		Unlike Theorem~\ref{thm:ex1}, for these three potentials we prove existence of nontrivial finite energy traveling waves for {\em almost every} $c\in (0,\sqrt{2})$. We summarize our main results for \eqref{ex:deltas}, \eqref{ex:gaussian} and \eqref{ex:softcore} as follows.
		
		\begin{theorem}
			\label{thm:ex2}
			Assume that one of the following cases holds.
			\begin{enumerate}
				\item\label{case:deltas}  $\W_\lambda$ is given by \eqref{ex:deltas} with $0<\lambda$.
				\item\label{case:gaussian}  $\W_\lambda$ is given by \eqref{ex:gaussian} with $0<\lambda<1/2$.
				\item\label{case:sofcore} $\W_\lambda$ is given by		 \eqref{ex:softcore} with $0<\lambda<\sqrt{3}$.
			\end{enumerate}
			Then, for almost every $c\in (0,\sqrt{2})$,  there exists a nontrivial solution $u_c\in \boN\boE(\R)$ to $(S({\W_\lambda,c}))$. In addition, $u_c$ is  real-analytic, the limits
			$u_c(\pm \infty)$ exist,
			and  there exists $\ell>0$, depending only on $c$  and $\lambda$, such that 
			the function
			$\eta_c=1-|u_c|^2$ satisfies
			\begin{equation}
				\label{eq:decay2}
				\lim_{\abs{x}\to\infty}e^{\ell |x|}D^k\eta_c(x)=0,\quad \text{ for all }k\in\N.
			\end{equation}
			Moreover, in the case \ref{case:deltas}, there exists $\lambda_0\geq \sqrt{2/3}$
				such that if $\lambda\in (0,\lambda_0)$, then  the existence and properties of $u_c$ hold \textbf{for all} $c\in (0,\sqrt{2})$. 
			Finally, in the cases \ref{case:deltas} and \ref{case:gaussian},  there is no nontrivial solution to $(S({\W_\lambda,\sqrt{2}}))$ in $\boE(\R)$. 
		\end{theorem}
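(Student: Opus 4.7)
\textbf{Proof proposal for Theorem~\ref{thm:ex2}.} The plan is to verify, case by case, the hypotheses of the four earlier main results (Theorems~\ref{thm:existenceae}, \ref{thm:exponentialdecay}, \ref{thm:nonexistence:intro}, and the analyticity theorem referenced as Theorem~\ref{thm:analyticity}), together with an appropriate extension of Theorem~\ref{thm:existence:Wf} for the statement about all $c\in(0,\sqrt2)$ in case~\ref{case:deltas}. First I would check \ref{H0} and \ref{h:lowerboundWbis} with $\sigma=1$: for \eqref{ex:deltas} one has $\wh\W_\lambda(\xi)=2-\cos(\lambda\xi)\geq 1$, so $\kappa=0$ works; for \eqref{ex:gaussian} the elementary bound $e^{-t}\geq 1-t$ gives $\wh\W_\lambda(\xi)\geq 1-\lambda\xi^2$, and the hypothesis $\lambda<1/2$ yields $\kappa=\lambda<1/2$; for \eqref{ex:softcore}, Taylor's estimate $\sin t\geq t-t^3/6$ gives $\wh\W_\lambda(\xi)\geq 1-\lambda^2\xi^2/6$, and $\lambda<\sqrt3$ yields $\kappa=\lambda^2/6<1/2$. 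Applying Theorem~\ref{thm:existenceae} then produces a nontrivial solution $u_c\in\boE(\R)$ for a.e.\ $c\in(0,\sqrt2)$, and the nonvanishing (that is, $u_c\in\boN\boE(\R)$) follows from the fact already established in the text that every solution of \eqref{TWc} with $c>0$ belongs to $\boN\boE(\R)$.

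Next, to obtain real-analyticity and exponential decay of $\eta_c=1-|u_c|^2$, I would apply the decay and analyticity results of Section~\ref{sec:decay}. The central computation is to locate the zeros of the symbol $M_c(\xi)=\xi^2+2\wh\W_\lambda(\xi)-c^2$ in a strip $\{|\Im\xi|<m\}$ and estimate the resulting $\boL_c=\mathcal{F}^{-1}(1/M_c)$. In all three cases $\wh\W_\lambda$ extends to an entire function of $\xi$ (resp.\ of exponential, Gaussian, or exponential type), so $M_c$ is entire; since $\wh\W_\lambda\geq \sigma-\kappa\xi^2$ with $\kappa<1/2$ on $\R$ and $c<\sqrt{2}$, $M_c$ is bounded below by a positive constant on $\R$, and a perturbation argument in a horizontal strip yields a positive $m>0$ (depending on $c$ and $\lambda$) such that $1/M_c$ is holomorphic on $\{|\Im\xi|<m\}$ with good decay at $\pm\infty$, giving $e^{m|\cdot|}\boL_c\in L^p(\R)$ as required by Theorem~\ref{thm:exponentialdecay}. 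The analyticity of $u_c$ then comes from the analytic-decay companion theorem. Once $\eta_c$ decays exponentially, so does $\rho_c-1=\sqrt{1-\eta_c}-1$, and the equation satisfied by $\theta_c$ together with $c>0$ implies that $\theta_c$ tends to finite limits at $\pm\infty$, so $u_c(\pm\infty)$ exist.

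For the nonexistence claim at $c=\sqrt2$ in cases \ref{case:deltas} and \ref{case:gaussian}, I would verify the hypotheses of Theorem~\ref{thm:nonexistence:intro}: both $\wh\W_\lambda(\xi)=2-\cos(\lambda\xi)$ and $\wh\W_\lambda(\xi)=e^{-\lambda\xi^2}$ are nonnegative, smooth near the origin, satisfy $\wh\W_\lambda(0)=1$, and have respective values $(\wh\W_\lambda)''(0)=\lambda^2$ and $-2\lambda$, neither of which can equal $-1$ (using $\lambda>0$ and $\lambda<1/2$, respectively); the theorem then rules out nontrivial finite energy solutions at the critical speed. Case~\ref{case:sofcore} is excluded from this statement because $\wh\W_\lambda$ changes sign.

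The main obstacle is the remaining assertion in case~\ref{case:deltas}: existence of $\lambda_0\geq\sqrt{2/3}$ such that, for $\lambda<\lambda_0$, nontrivial solutions exist for \textbf{every} $c\in(0,\sqrt2)$. The potential $\W_\lambda$ can be written in the form \eqref{Wf} with $\mu=-\tfrac14(\delta_{-\lambda}+\delta_\lambda)$, $\|\mu^-\|=1/2<1$ and $A_\mu=2$, so $\wh\mu(\xi)=-\tfrac12\cos(\lambda\xi)$ lies in $W^{1,\infty}(\R)$; however, $\wh\mu$ is \emph{not} monotone on $\R^+$, so Theorem~\ref{thm:existence:Wf} does not apply directly. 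I would therefore invoke the promised relaxed version (to be stated and proved in Section~\ref{sec:allc}) in which the monotonicity hypothesis is replaced by a quantitative condition controlling the Palais--Smale sequences and the non-concentration part of the profile decomposition; the constraint $\lambda<\lambda_0$ with $\lambda_0\geq\sqrt{2/3}$ would arise from ensuring that this condition is satisfied, which in practice amounts to an estimate on $\|(\wh\mu)'\|_{L^\infty}=\lambda/2$ coupled with the subsonic bound $c^2<2$. This is the step where a careful and potentially delicate computation is required, while all other items in the theorem reduce to the direct verification of hypotheses already set up in the previous sections.
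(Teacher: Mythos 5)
Most of your proposal coincides with the paper's own proof: the verification of \ref{H0} and \ref{h:lowerboundWbis} with $\sigma=1$ and $\kappa=0$, $\lambda$, $\lambda^2/6$ via the elementary inequalities, the application of Theorem~\ref{thm:existenceae}, analyticity from Theorem~\ref{thm:analyticity}, nonexistence at $c=\sqrt2$ from Theorem~\ref{thm:nonexistence:intro} using $(\wh\W_\lambda)''(0)=\lambda^2$ (resp. $-2\lambda$), and the exponential decay by showing that the symbol $M_c$ stays away from zero on a complex strip and invoking a Paley--Wiener argument (the paper does exactly this, via Theorem~\ref{thm:reed} and Theorem~\ref{thm:exponentialdecay} with $p=2$); the limits $u_c(\pm\infty)$ then follow from $\eta_c\in L^1(\R)$ and Proposition~\ref{prop:limitsatinfinity}. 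These parts are correct and essentially the paper's route.

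The gap is the last assertion in case~\ref{case:deltas}. The ``relaxed version'' you appeal to is not an unspecified promise: it is Theorem~\ref{thm:existenceallc}, and proving the claim amounts to checking its hypotheses \ref{h:derivative}, \ref{h:restrictive}, \ref{W:infty}, \ref{W:apriori} together with $mV_0(c)^2<1$ for $\W_\lambda$. Your sketch misidentifies the quantitative input: the relevant constant is not $\|(\wh\mu)'\|_{L^\infty}=\lambda/2$ but the constant $m$ in the one-sided bound of \ref{h:restrictive}, namely $(\wh\W_\lambda)'(\xi)=\lambda\sin(\lambda\xi)\geq -m_\lambda\xi$ for $\xi>0$ with $m_\lambda=-\mathfrak{s}\lambda^2$, where $\mathfrak{s}=\min_{x\in\R}\sin(x)/x\in(-1,0)$; note the quadratic, not linear, dependence on $\lambda$, which is precisely how the threshold $\sqrt{2/3}$ arises. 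The a priori bound \ref{W:apriori} is obtained by writing $\W_\lambda=2(\delta_0+\mu_\lambda)$ as in \eqref{Wf} with $\|\mu_\lambda^-\|=1/2$ and applying Proposition~\ref{prop:universalestimate}, giving $V_0(c)^2=1+c^2/4<3/2$ for $c<\sqrt2$. Then $\lambda^2\leq 2/3$ yields $m_\lambda<2/3$, hence $m_\lambda V_0(c)^2<1$, and Theorem~\ref{thm:existenceallc} applies. As written, your argument leaves this verification open, and the quantity $\lambda/2$ you propose does not enter \ref{h:restrictive} and would not produce the stated bound $\lambda_0\geq\sqrt{2/3}$, so the final claim of the theorem is not established by your proposal.
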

		Some comments on this theorem are in order.
		\begin{itemize}
			\item If the Fourier transform of the potential is nonnegative, we can also apply Corollary~\ref{cor:condition12} 
			to study other ranges of $\lambda$. For instance, if  $\W_\lambda$ is given by \eqref{ex:gaussian}, 
			a simple computation shows that for $\lambda\geq 1/2$, we have the estimate $\wh \W_\lambda(\xi)\geq \sigma_\lambda -\xi^2/2$, for all $\xi\in\R$, where $ \sigma_\lambda=\frac{1+\ln(2\lambda)}{2\lambda}$. Therefore, we can deduce the  existence of nontrivial solitons for  a.e.\ $c\in (0,\sqrt{2\sigma_\lambda})$.
			
			\item
			If  $\W_\lambda$ is given by \eqref{ex:softcore}, then 
			$\wh\W_\lambda$ changes sign. For this reason we cannot guarantee  nonexistence of finite energy traveling waves for the critical speed $c=\sqrt{2}$ in Theorem~\ref{thm:ex2}. 
			
			\item Filling the existence in the complete interval $(0,\sqrt 2)$ in Theorem~\ref{thm:ex2} for cases \ref{case:gaussian} and  \ref{case:sofcore}, and even \ref{case:deltas} for $\lambda$ large, would require to prove
			several estimates, a task far from being trivial without our assumptions \ref{h:restrictive} and \ref{W:apriori} (see Section \ref{sec:allc} for details).
			
			\item  
			The arguments in the proof of Theorem~\ref{thm:ex2} also apply to the potential \eqref{pot:berloff}, with the values $a=-36$, $b=2687$, $\lambda=30$.
			Therefore, for a.e.\ $c\in (0,\sqrt{0.35})$, the solution  $u_c$ is  real-analytic and decays exponentially at infinity. Also, there is no nontrivial solution with critical speed $c=\sqrt{2}$ in $\boE(\R)$.
			
		\end{itemize}

		The last example that we consider is given, for $\kappa\in(0,1/2]$, by
		\begin{equation}
			\label{ex:BochnerRiesz}
			\W_{\kappa}(x)=\frac{2\kappa}{\pi x^2}\left(\frac{\sin(x/\sqrt{\kappa})}{x}-\frac1{\sqrt{\kappa}}\cos( x/\sqrt{\kappa})\right),
			\quad 	\text{ so that } 		\wh\W_{\kappa}(\xi)=(1-\kappa\xi^2)^+,
			\ \xi\in\R.
		\end{equation}
		It is simple to check that  $\W_{\kappa}$ is bounded continuous, with $\W_{\kappa}\in L^1(\mathbb{R})$.  From a mathematical point of view, this is an interesting example since
		it represents the limiting case  among the normalized  potentials (i.e.\ $\wh\W(0)=1$) satisfying \ref{h:lowerboundWbis} with nonnegative Fourier transform. This  kernel also  appears in the Bochner--Riesz means, and it is important in the Fourier multipliers theory.
		
		Due to the lack of regularity of $\wh\W_{\kappa}$, we do not expect an exponential decay of the solution. Nevertheless, we will show that, in this case, $\boL_c$ decays as $1/x^2$, which will lead us to the following result.
		\begin{theorem}
			\label{thm:ex4}
			Let  $\W_{\kappa}$ be given by \eqref{ex:BochnerRiesz}, with $\kappa\in(0,1/2]$.  Then
			for almost every $c\in (0,\sqrt{2})$,  there exists a nontrivial solution $u_c\in \boN\boE(\R)$ to $(S({\W_{\kappa},c}))$. In addition, $u_c$ is real-analytic,  the limits
			$u_c(\pm \infty)$ exist, and the function $\eta_c=1-|u_c|^2$ satisfies the following algebraic decay
			\begin{equation}
				\label{eq:decay3}
				|\cdot|^\ell D^k\eta_c\in L^1(\R),\quad\lim_{\abs{x}\to\infty}|x|^\ell D^k\eta_c(x)=0,\quad\text{ for all }\ell\in [0,1),\,\,\text{ for all }k\in\N.
			\end{equation}
			Moreover, there is no nontrivial solution to $(S({\W_{\kappa},\sqrt{2}}))$ in $\boE(\R)$.
		\end{theorem}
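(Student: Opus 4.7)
The plan is to fit $\W_\kappa$ into the framework already developed in the paper and reduce every assertion to one of the general theorems. First, I would establish existence by verifying the hypotheses of Theorem~\ref{thm:existenceae} (or Corollary~\ref{cor:condition12}). Clearly $\wh \W_\kappa=(1-\kappa \xi^2)^+\in L^\infty(\R)$ and $\W_\kappa$ is real and even, giving \ref{H0}. For $\kappa\in(0,1/2)$ the lower bound $\wh\W_\kappa(\xi)\ge 1-\kappa\xi^2$ holds pointwise (with equality where $1-\kappa\xi^2\ge 0$, and trivially where $1-\kappa\xi^2<0$), so \ref{h:lowerboundWbis} applies with $\sigma=1$ and Theorem~\ref{thm:existenceae} yields a nontrivial $u_c\in\boE(\R)$ for a.e.\ $c\in(0,\sqrt2)$. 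For the critical value $\kappa=1/2$, which falls outside the range of \ref{h:lowerboundWbis}, one uses Corollary~\ref{cor:condition12}: $\wh\W_{1/2}\ge 0$ and $\wh\W_{1/2}(\xi)=1-\xi^2/2$ on $|\xi|\le\sqrt2$, so the hypothesis is met with $\sigma=1$. Once existence is known, Proposition~\ref{prop:etakidentities} (cited in the paragraph following \eqref{eq:ellipticequation:intro}) gives $u_c\in\boN\boE(\R)$ since $c>0$.

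Next, I would obtain the real-analyticity and the existence of the limits $u_c(\pm\infty)$ from Theorem~\ref{thm:analyticity}. The essential input is that $\wh \W_\kappa$ has compact support, so by Paley--Wiener $\W_\kappa$ extends to an entire function of exponential type; this is exactly the kind of regularity hypothesis that feeds the analyticity theorem, exactly as in the proof of Theorem~\ref{thm:ex1}. The existence of the limits at $\pm\infty$ then follows from the decay of $\eta_c=1-|u_c|^2$ proved in the next step (or, equivalently, from the general properties of finite energy solutions in $\boN\boE(\R)$).

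The principal work is the algebraic decay estimate, and this is where I expect the main obstacle. Starting from the convolution identity $\eta_c=\boL_c*F$ with $\wh \boL_c(\xi)=1/M_c(\xi)$ and $M_c(\xi)=\xi^2+2\wh\W_\kappa(\xi)-c^2$, I would first verify the uniform lower bound $M_c(\xi)\ge \min\{2-c^2,\,(1-2\kappa)\xi^2+(2-c^2)\}>0$ on $\R$, so that $\wh\boL_c$ is continuous on $\R$, $\boC^\infty$ off the two points $\xi=\pm 1/\sqrt \kappa$, and behaves like $1/\xi^2$ at infinity. The loss of smoothness occurs only at these two points, where $(\wh\W_\kappa)'$ has a jump of size $2\sqrt\kappa$. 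Splitting $\wh \boL_c$ into a smooth Schwartz-type piece plus a model piece encoding these corner singularities, and then integrating by parts twice, should give the pointwise bound $|\boL_c(x)|\le C/(1+x^2)$, and an analogous bound on derivatives. From there, the Bona--Li type arguments sketched in Section~\ref{sec:decay} transfer the decay of $\boL_c$ to $\eta_c=\boL_c*F$ (using that $F=2|u_c'|^2+2\eta_c(\W_\kappa*\eta_c)\in L^1\cap L^\infty$), producing $|x|^\ell D^k\eta_c\in L^1(\R)$ and $|x|^\ell D^k\eta_c(x)\to 0$ for every $\ell\in[0,1)$ and $k\in\N$, which is exactly \eqref{eq:decay3}. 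The reason $\ell$ cannot reach $1$ is precisely the two corner singularities of $\wh\W_\kappa$ at $\pm 1/\sqrt\kappa$, which obstruct any gain beyond $1/x^2$ decay for $\boL_c$.

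Finally, for nonexistence at $c=\sqrt{2}$, I would apply Theorem~\ref{thm:nonexistence:intro}. One has $\wh\W_\kappa\ge 0$, $\wh\W_\kappa(0)=1$, and $\wh\W_\kappa$ is a polynomial (hence $\boC^2$) on the neighborhood $|\xi|<1/\sqrt\kappa$ of the origin. If $\kappa\in(0,1/2)$ then $(\wh\W_\kappa)''(0)=-2\kappa\neq-1$, and if $\kappa=1/2$ then $(\wh \W_{1/2})''(\xi)\equiv -1$ on $|\xi|<\sqrt 2$; both alternatives in Theorem~\ref{thm:nonexistence:intro} are met, so no nontrivial finite energy solution exists at critical speed. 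Combining these four steps completes the proof.
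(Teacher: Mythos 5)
Your proposal is correct and follows essentially the same route as the paper: existence via Theorem~\ref{thm:existenceae} (and Corollary~\ref{cor:condition12} for $\kappa=1/2$), nonexistence at $c=\sqrt2$ via the second-derivative dichotomy, and the decay by showing $(1+|\cdot|^2)\boL_c\in L^\infty(\R)$ through two integrations by parts in the Fourier inversion of $L_c=1/M_c$ (the paper splits the integral at $\pm1/\sqrt\kappa$ rather than splitting off a corner-singularity model, but the computation is the same) and then invoking Theorem~\ref{thm:algebraicdecay} with $s=2$, $p=\infty$, which gives exactly $\ell\in[0,1)$ and the $L^1$ statement since $q=p'=1$. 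The only slip is your justification of analyticity: what Theorem~\ref{thm:analyticity} needs is not a Paley--Wiener/compact-support property of $\wh\W_\kappa$ but the growth bound \eqref{eq:analyticcond} on $(\wh\W_\kappa)'$, which holds trivially since $(\wh\W_\kappa)'(\xi)=-2\kappa\xi$ for $0<\xi<1/\sqrt\kappa$ and vanishes for $\xi>1/\sqrt\kappa$, so the conclusion stands after this easy correction.
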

		
		We will study numerically the solitons given in this subsection in the forthcoming paper \cite{delaire-salva-guillaume}.
		
		%

\subsection{Notation}


The usual real-valued Sobolev and Lebesgue spaces will be denoted, respectively, by $W^{k,p}(\R)$ and $L^p(\R)$ for $p\in [1,\infty]$ and $k\in\N$. Moreover, $W^{k,2}(\R)=H^k(\R)$. The notation for the Lebesgue spaces of complex-valued functions will be $L^p(\R;\C)$, and analogously for the Sobolev spaces of complex-valued functions, or simply $L^p(\R)$ if there is no ambiguity.
For a real-valued function $f$, we write  $f^+=\max\{f,0\}$ and $f^-=-\min\{f,0\}$, so that $f=f^+-f^-.$





In this paper we {\em always} assume that $\W$
satisfies \ref{H0}. In particular, this implies that 
\begin{equation}
	\label{W-22}
	\norm{\W*f}_{L^2(\R)}\leq  \norm{\wh\W}_{L^\infty(\R)}\norm{f}_{L^2(\R)}, \quad \text{for all }f\in L^2(\R;\C),
\end{equation}
and that Plancherel's identity reads, with our convention for the Fourier transform,
\begin{equation}
	\label{W:even}
	\int_{\R} (\W*f)g=\frac1{2\pi} \int_{\R} \wh \W(\xi) \wh f(\xi)\bar{\wh g}(\xi), \quad \text{for all }f,g\in L^2(\R;\C).
\end{equation}


%
%
%
%
%


\section{Some identities}
\label{sec:identities}
We start recalling that any finite energy solution to \eqref{TWc} is smooth and admits a lifting at infinity (without restriction on $c$). This result  corresponds to Corollary 2.4 in \cite{delaire2009}, where it was proved in dimension greater or equal than two, but the same proof applies in our one-dimensional setting.

\begin{lemma}
	\label{lemma:regularity} 
	Let $c\geq 0$ and let $u\in \boE(\R)$ be a solution to \eqref{TWc}. Then $u$ is bounded and of class $\boC^\infty(\R)$.
	Moreover, $\eta\coloneqq 1-\abs{u}^2$ and $u'$ belong to $W^{k,p}(\R)$,
	for all $k\in\N$ and $p\in[2, \infty]$. Furthermore, there exists a smooth lifting of $u$. More precisely, there exist $R>0$, $\delta\in(0,1)$ and $\theta\in\boC^\infty((-R,R)^c)$ such that
	$u=\rho e^{i\theta}\quad\text{on }(-R,R)^c$, with 
$	\rho\geq \delta$  on $(-R,R)^c$. In particular, 
$\theta',1-\rho\in W^{k,p}((-R,R)^c)$ for all $k\in\N, p\in[2, \infty]$, and 
\begin{equation}
\label{limits-infty}
\rho(\pm \infty)=1,\  D^j u(\pm \infty)=D^j\rho(\pm \infty)=D^j\theta(\pm \infty)=D^j\eta(\pm\infty)=0, \ \ \text{for all } j\geq 1.
\end{equation}
	Finally, if $u\in\boN\boE(\R)$, then the above conclusions still hold true in $\R$, i.e.\  for $R=0$.
\end{lemma}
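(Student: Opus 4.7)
The plan is to follow the strategy of Corollary~2.4 in \cite{delaire2009} adapted to the one-dimensional setting, in three main steps: smoothness of $u$ via a bootstrap on equation \eqref{TWc}, Sobolev estimates with decay at infinity, and construction of the lifting on the region where $|u|$ is bounded away from zero.

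First I would establish that $u$ is bounded and of class $C^\infty$. Since $u\in H^1_\loc(\R)$ and, by Theorem~1.8 in \cite{gerard3}, $|u(x)|\to 1$ as $|x|\to\infty$, $u$ is bounded on $\R$. Inequality \eqref{W-22} together with $1-|u|^2\in L^2(\R)$ yields $\W*(1-|u|^2)\in L^2(\R)$; multiplying by $u\in L^\infty(\R)$ and using $u'\in L^2(\R)$, equation \eqref{TWc} gives $u''\in L^2(\R)$, so $u\in H^2(\R)$. For the bootstrap, the crucial point is that $f\mapsto \W*f$, being a Fourier multiplier with symbol in $L^\infty$, commutes with differentiation and preserves every $H^k(\R)$. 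Writing $\partial_x(1-|u|^2)=-2\langle u',u\rangle_\C$ and combining the product rule with the one-dimensional embedding $H^k(\R)\hookrightarrow W^{k-1,\infty}(\R)$ for $k\ge 1$, one shows inductively that $1-|u|^2\in H^k(\R)$ whenever $u\in H^k(\R)$, hence $\W*(1-|u|^2)\in H^k(\R)$; re-injecting into \eqref{TWc} upgrades $u$ to $H^{k+2}(\R)$. This proves $u\in C^\infty(\R)$ and gives $D^j u\in L^2(\R)$ for every $j\ge 1$ and $D^j\eta\in L^2(\R)$ for every $j\ge 0$. These functions therefore belong to $H^1(\R)$ and, by the one-dimensional embedding $H^1(\R)\hookrightarrow L^\infty(\R)\cap C_0(\R)$, one concludes $D^j u',\, D^j\eta\in L^p(\R)$ for every $p\in[2,\infty]$ together with the vanishing of these derivatives at $\pm\infty$ recorded in \eqref{limits-infty}.

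For the lifting, since $\eta\in C_0(\R)$ there exist $R>0$ and $\delta\in(0,1)$ such that $|u|\ge\delta$ on $(-R,R)^c$. On each of the two half-lines composing this set I would define a smooth real-valued phase $\theta$, unique up to an additive multiple of $2\pi$, by solving $u/|u|=e^{i\theta}$; the identity $|u|=\sqrt{1-\eta}$ then gives $\rho(\pm\infty)=1$. Using the formulas $1-\rho=\eta/(1+\sqrt{1-\eta})$ and $\theta'=\Im(\bar u\, u')/|u|^2$, valid on $(-R,R)^c$, the Sobolev regularity and decay of $1-\rho$ and $\theta'$ are inherited from those of $\eta$ and $u'$, which yields $\theta',1-\rho\in W^{k,p}((-R,R)^c)$ and the corresponding limits at infinity. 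When $u\in\boN\boE(\R)$ we have $\inf_\R|u|>0$, so the same formulas are valid on all of $\R$ and one may take $R=0$.

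The main technical subtlety is propagating regularity through the nonlocal term: since $\W$ is merely a tempered distribution with bounded Fourier transform, convolution with $\W$ provides no classical smoothing, and one must rely systematically on Plancherel and on the $L^\infty$-bound for $\wh\W$ to carry $H^k$-regularity through $\W*(1-|u|^2)$ at every bootstrap step.
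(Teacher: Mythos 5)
Your proposal is correct and follows essentially the same route as the paper: the paper simply invokes Corollary 2.4 of \cite{delaire2009} (whose proof is exactly the bootstrap on \eqref{TWc} you sketch, using that $\wh\W\in L^\infty$ makes $f\mapsto\W*f$ bounded on $H^k$) for the boundedness, smoothness, $W^{k,p}$ bounds on $u'$ and $\eta$, the lifting and the limits \eqref{limits-infty}, and only adds the regularity of $1-\rho$, which you treat the same way, writing $1-\rho$ and $\theta'$ in terms of $\eta$ and $u'$ with $\rho\geq\delta$ bounded below on $(-R,R)^c$. The only caveat is notational: assertions like ``$u\in H^2(\R)$'' are abuses, since $u\notin L^2(\R)$; the induction should be phrased for $u'$ and $\eta$ (e.g.\ $u',u''\in L^2(\R)$), as you implicitly do when you conclude $D^ju\in L^2(\R)$ only for $j\geq 1$.
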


\begin{proof}
	The proof is contained in \cite{delaire2009} except for the regularity of $1-\rho$. Notice that
	\begin{equation}
		\label{id:trivial}
	1-\rho=\frac{1-\rho^2}{1+\rho}=\frac{\eta}{1+\rho}\quad\text{ on }\R.
	\end{equation}
	Therefore, $|1-\rho|\leq|\eta|$, so $1-\rho\in L^p(\R)$ for every $p\in [2,\infty]$. Moreover, using that $|u|\geq\delta$ on $(-R,R)^c$ and also Cauchy--Schwarz inequality, we deduce that
	\[|\rho'|=\frac{|\langle u,u'\rangle|}{|u|}\leq|u'|\quad\text{ on }(-R,R)^c.\]
Thus $1-\rho\in W^{1,p}((-R,R)^c)$ for all $p\in [2,\infty]$. Similarly, using that 
	\[\rho''=\frac{|u'|^2+\langle u,u''\rangle}{|u|}-\frac{|\langle u,u'\rangle|^2}{|u|^3}\quad\text{ on }(-R,R)^c,\]
and that $u,u'\in L^\infty(\R)$, we conclude that $\rho''\in W^{2,p}((-R,R)^c)$ for all $p\in [2,\infty]$. Repeating the previous arguments inductively, we arrive up to $1-\rho\in W^{k,p}((-R,R)^c)$ for all $k\in\N$ and $p\in [2,\infty]$. Finally, if $u\in\boN\boE(\R)$, then $\rho\geq \delta$ on $\R$ for some $\delta\in (0,1)$, so that the previous arguments are valid on $\R$. 
\end{proof}

We now establish some key identities in terms of $\eta=1-\abs{u}^2$. 
In particular, we derive equation \eqref{eq:conv:intro}
and we deduce that, if  $c>0$, the finite energy solutions to \eqref{TWc} do not vanish.
Notice that $\eta \leq 1$ on $\R$, but  $\eta$ could be negative.


\begin{proposition}\label{prop:etakidentities}
	Let $c\geq 0$ and let $u\in\mathcal{E}(\mathbb{R})$ be a solution to \eqref{TWc}. Setting $K=|u'|^2$ and $\eta=1-|u|^2$, the following identities are satisfied on $\R$:
	\begin{align}
		&\frac{c}{2}\eta=-\langle i u',u\rangle, 
		\label{eq:eta1}
		\\
		&-\eta''+2\mathcal{W}\ast\eta-c^2\eta=2K+2\eta(\W\ast\eta),
		\label{eq:ellipticequation}
		\\
		& K'=\eta'(\mathcal{W}\ast\eta),
		\label{eq:kprime}
		\\
		&c^2\eta^2+(\eta')^2=4K(1-\eta).
		\label{eq:localequation}
	\end{align}
	As a consequence, if $c>0$, then $\eta<1$ on $\R$,  $u\in\boN\boE(\R)$  and  
	\begin{equation}\label{eq:quadratic}
		2K=\frac{c^2\eta^2}{2(1-\eta)}+\frac{(\eta')^2}{2(1-\eta)}\quad\text{ on }\R.
	\end{equation}
\end{proposition}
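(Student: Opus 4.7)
My plan is to derive the four pointwise identities in order, each feeding into the next, and then extract the consequence directly from \eqref{eq:localequation}. Throughout I use the regularity from Lemma~\ref{lemma:regularity}, so all pointwise manipulations are justified, and the limits in \eqref{limits-infty} will pin down integration constants.

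\textbf{Step 1 (identity \eqref{eq:eta1}).} I would take the $\C$-inner product of the equation \eqref{TWc} with $iu$. The last term drops out because $\langle u,iu\rangle_{\C}=0$. For the first term, a short computation gives $\langle icu',iu\rangle_\C=c\,\Re(u'\bar u)=-\tfrac{c}{2}\eta'$. For the second, $\langle u'',iu\rangle_\C=\Im(u''\bar u)=\tfrac{d}{dx}\Im(u'\bar u)=-\tfrac{d}{dx}\langle iu',u\rangle_\C$ since $|u'|^2\in\R$. Thus $\tfrac{d}{dx}\bigl(\tfrac{c}{2}\eta+\langle iu',u\rangle_\C\bigr)=0$ on $\R$, and from Lemma~\ref{lemma:regularity} we have $\eta(\pm\infty)=0$ and $u'(\pm\infty)=0$, which forces the constant to vanish.

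\textbf{Step 2 (identity \eqref{eq:ellipticequation}).} Now I take the $\C$-inner product of \eqref{TWc} with $u$ itself. The first term becomes $c\langle iu',u\rangle_\C=-\tfrac{c^2}{2}\eta$ by \eqref{eq:eta1}. For the second term, writing $(|u|^2)''=2\Re(u''\bar u)+2|u'|^2$ gives $\Re(u''\bar u)=-\tfrac{1}{2}\eta''-K$. The third term equals $(1-\eta)(\W\ast\eta)$. Summing and multiplying by $-2$ yields exactly \eqref{eq:ellipticequation}.

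\textbf{Step 3 (identities \eqref{eq:kprime} and \eqref{eq:localequation}).} For \eqref{eq:kprime}, I differentiate $K=|u'|^2$, so $K'=2\Re(u''\bar{u'})$, substitute $u''=-icu'-u(\W\ast\eta)$ from \eqref{TWc}, and observe that the $-ic|u'|^2$ term contributes nothing to the real part, leaving $K'=-2(\W\ast\eta)\,\Re(u\bar{u'})=\eta'(\W\ast\eta)$. For \eqref{eq:localequation}, I decompose $u'\bar u$ into real and imaginary parts: by direct calculation $\Re(u'\bar u)=-\tfrac{1}{2}\eta'$, and by \eqref{eq:eta1} combined with $\langle iu',u\rangle_\C=-\Im(u'\bar u)$ we get $\Im(u'\bar u)=\tfrac{c}{2}\eta$. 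Then $|u'\bar u|^2=|u'|^2|u|^2=K(1-\eta)$ gives $(\eta')^2/4+c^2\eta^2/4=K(1-\eta)$, which is \eqref{eq:localequation}.

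\textbf{Step 4 (consequence).} Suppose $c>0$ and $\eta(x_0)=1$ for some $x_0\in\R$. Then the right-hand side of \eqref{eq:localequation} vanishes at $x_0$ while the left-hand side equals $c^2+(\eta'(x_0))^2\geq c^2>0$, a contradiction. Hence $\eta<1$ on $\R$, i.e.\ $|u|>0$ pointwise; since $|u|$ is continuous and $|u(\pm\infty)|=1$ by Lemma~\ref{lemma:regularity}, the infimum is attained and strictly positive, so $u\in\boN\boE(\R)$. Finally, $1-\eta>0$ lets us divide \eqref{eq:localequation} by $2(1-\eta)$ to obtain \eqref{eq:quadratic}. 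The only mildly delicate point in the whole argument is getting the signs and derivative structure right in Step~1; once \eqref{eq:eta1} is in hand, the remaining identities follow by elementary algebra.
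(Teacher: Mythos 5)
Your proof is correct, and while Steps 1--3 for \eqref{eq:eta1}, \eqref{eq:ellipticequation} and \eqref{eq:kprime} are essentially the paper's argument in complex notation (pairing \eqref{TWc} with $iu$, $u$ and $u'$ is the same bookkeeping as the paper's cross-multiplications of the real and imaginary parts by $u_1,u_2$ and their derivatives), your derivation of \eqref{eq:localequation} is genuinely different and, in fact, slicker. The paper obtains \eqref{eq:localequation} dynamically: it multiplies \eqref{eq:ellipticequation} by $2\eta'$, uses \eqref{eq:kprime} to recognize the total derivative $-\bigl((\eta')^2\bigr)'+4K'-c^2(\eta^2)'=4(K\eta)'$, and then integrates, invoking the decay at infinity a second time to kill the constant of integration. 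You instead compute $|u'\bar u|^2$ in two ways, using only the pointwise relations $\Re(u'\bar u)=-\tfrac12\eta'$ and $\Im(u'\bar u)=\tfrac{c}{2}\eta$ (the latter being \eqref{eq:eta1}), which gives \eqref{eq:localequation} by pure algebra, with no further integration, no boundary conditions beyond those already used for \eqref{eq:eta1}, and no use of \eqref{eq:kprime} at that stage. Your endgame is also marginally cleaner: at a point where $\eta(x_0)=1$ the left-hand side of \eqref{eq:localequation} is at least $c^2>0$ while the right-hand side vanishes, so you do not even need the paper's observation that $x_0$ is a maximum of $\eta$ forcing $\eta'(x_0)=0$. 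Both routes rely on the same regularity and limits from Lemma~\ref{lemma:regularity}; what your algebraic route buys is economy (one fewer integration-by-boundary-values argument), while the paper's route has the side benefit of exhibiting the conserved-quantity structure $-((\eta')^2)'+4K'-c^2(\eta^2)'=4(K\eta)'$ explicitly.
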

Notice that \eqref{eq:ellipticequation} corresponds to equation \eqref{eq:conv:intro}, that we will use in Section~\ref{sec:decay}  to establish the decay and analyticity of solutions, as well as the nonexistence for the critical speed.

\begin{proof}[Proof of Proposition~\ref{prop:etakidentities}]
	Let $u=u_1+iu_2$. Taking real and imaginary parts in \eqref{TWc}, we obtain
	\begin{align}
		u_1''-cu_2'+u_1\left(\mathcal{W}\ast(1-u_1^2-u_2^2)\right)&=0,
		\label{ueq1}
		\\
		u_2''+cu_1'+u_2\left(\mathcal{W}\ast(1-u_1^2-u_2^2)\right)&=0.
		\label{ueq2}
	\end{align}
	Multiplying \eqref{ueq1} by $u_2$ and \eqref{ueq2} by $u_1$, we get
	\[\frac{c}{2}\eta'=(u_1 u_2'-u_2 u_1')'.\]
	Integrating over $\mathbb{R}$ and taking into account \eqref{limits-infty} yields
	\begin{equation*}
		\frac{c}{2}\eta=u_1 u_2'-u_2 u_1',
	\end{equation*}
	which is exactly \eqref{eq:eta1}.
	
	On the other hand, multiplying \eqref{ueq1} by $u_1$ and \eqref{ueq2} by $u_2$, and using \eqref{eq:eta1}, we deduce that
	\begin{equation}\label{etaeq2}
		u_1''u_1+u_2''u_2=\frac{c^2}{2}\eta-(1-\eta)(\mathcal{W}\ast\eta).
	\end{equation}
Hence, by differentiating,
	\begin{equation}\label{etaeq3}
		\eta''=-2|u'|^2-2\langle u,u''\rangle =-2K-2(u_1''u_1+u_2''u_2),
	\end{equation}
which allows us to obtain \eqref{eq:ellipticequation} by combining \eqref{etaeq2} and \eqref{etaeq3}.
	
	Let us now multiply \eqref{ueq1} by $u_1'$ and \eqref{ueq2} by $u_2'$. This gives
	\[u_1'u_1''+u_2'u_2''+(u_1u_1'+u_2u_2')(\mathcal{W}\ast\eta)=0.\]
	Therefore, \eqref{eq:kprime} follows directly by the definitions of $\eta$ and $K$.
	
	In order to show \eqref{eq:localequation}, let us multiply both sides of \eqref{eq:ellipticequation} by $2\eta'$. Taking \eqref{eq:kprime} into account, it is easy to see that
	\[-((\eta')^2)'+4K'-c^2(\eta^2)'=4(K\eta)'.\]
	Hence, \eqref{eq:localequation} follows by integrating this equality. 
	
	Let us now show that if $c>0$, then $\eta<1$ on $\R$. Since 
	$\eta\leq 1$ on $\R$, we assume by contradiction 
that there exists $x_0\in\R$ such that $\eta(x_0)=1$. Then, $x_0$ is a maximum of $\eta$, so that $\eta'(x_0)=0$,
and substituting  into \eqref{eq:localequation} we get $c^2=0$, a contradiction.
	
	Finally, equation \eqref{eq:quadratic} is an immediate consequence of  \eqref{eq:localequation}.
\end{proof}

\begin{remark}
	\label{rem:energymomentum}
	Recall that the energy functional is defined by
	\[E(u)=\frac{1}{2}\int_\R \abs{u'}^2+\frac{1}{4}\int_\R(\W*\eta)\eta\quad\text{ for all }u\in\boE(\R),\]
	where $\eta=1-\abs{u}^2.$
	Moreover, the momentum reads
	\[p(u)=-\frac{1}{2}\int\langle iu',u\rangle\frac{\eta}{1-\eta}.\]
	Taking \eqref{eq:eta1} and \eqref{eq:quadratic} into account, we see that the energy and the momentum of any \emph{solution} $u\in\boN\boE(\R)$ to \eqref{TWc}, with $c>0$, can be written only in terms of $\eta$ and $c$ as
	\begin{align*}
		E(u)=\frac{c^2}{8}\int_\R\frac{\eta^2}{1-\eta}+\frac{1}{8}\int_\R\frac{(\eta')^2}{1-\eta}+\frac{1}{4}\int_\R(\W*\eta)\eta
\quad 		\text{ and }\quad
		p(u)=\frac{c}{4}\int_\R\frac{\eta^2}{1-\eta}.
	\end{align*}
	Furthermore, since $\eta=1-\rho^2$ and $\eta'=-2\rho\rho'$, we also have expressions for the energy and the momentum of a solution $u\in\boN\boE(\R)$ in terms of $\rho$,
	\begin{align*}
		E(u)=\frac{c^2}{8}\int_\R\frac{(1-\rho^2)^2}{\rho^2}+\frac{1}{2}\int_\R(\rho')^2+\frac{1}{4}\int_\R(\W*(1-\rho^2))(1-\rho^2)
		\quad 		\text{ and }\quad
		p(u)=\frac{c}{4}\int_\R\frac{(1-\rho^2)^2}{\rho^2}.
	\end{align*}
	That is the key observation in order to establish the variational framework in Section~\ref{sec:aec}.
\end{remark}

The next result gives an essential reformulation of the complex-valued \eqref{TWc} for vortexless solutions.
In this case, we can reduce the problem to a single real-valued equation. 
	
\begin{proposition}\label{prop:hydrodynamic}
	Let $c\geq 0$. If $u=\rho e^{i\theta}\in \boN\boE(\R)$ is a solution to \eqref{TWc}, then 
	\begin{align}
		&\theta'=\frac{c}{2}\left(\frac{1}{\rho^2}-1\right)\quad\text{on }\mathbb{R},
		\label{eq:theta}
		\\
		-&\rho''+\frac{c^2}{4}\frac{1-\rho^4}{\rho^3}=\rho\left(\mathcal{W}\ast(1-\rho^2)\right)\quad\text{on }\mathbb{R}.
		\label{eq:rho}
	\end{align}
	
	Reciprocally, let $\rho\in \boC^2(\R)$ be such that $\rho>0$ on $\R$ and assume that it satisfies \eqref{eq:rho}. For any $a\in\R$, let us define 
	\begin{equation}\label{eq:deftheta}
		\theta(x)=\frac{c}{2}\int_a^x\left(\frac{1}{\rho(y)^2}-1\right)dy\quad\text{ for all } x\in\R.
	\end{equation}
	Then, the function $u=\rho e^{i\theta}$ belongs to $\boC^2(\R;\C)$ and is a solution to \eqref{TWc}. If in addition $1-\rho\in H^1(\R)$, then $u\in\boN\boE(\R)$. 
\end{proposition}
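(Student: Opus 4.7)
The proof splits into the direct implication, where the equation \eqref{TWc} is reduced to a pair of scalar ODEs for $\rho$ and $\theta$, and the converse, where one reconstructs $u$ and checks the required regularity.

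\textbf{Forward direction.} Given a solution $u = \rho e^{i\theta} \in \mathcal{NE}(\R)$, Lemma~\ref{lemma:regularity} ensures that $u$, $\rho$, $\theta$ are of class $\mathcal{C}^\infty$ on $\R$, with $\inf_\R \rho > 0$ and the limits in \eqref{limits-infty}. I would first obtain \eqref{eq:theta} directly from identity \eqref{eq:eta1}: a short computation with $u = \rho e^{i\theta}$ gives $\langle iu',u\rangle_\C = -\rho^2\theta'$, hence $\theta' = \frac{c}{2}\eta/\rho^2 = \frac{c}{2}(\rho^{-2}-1)$. Then, writing $u' = (\rho' + i\rho\theta')e^{i\theta}$ and $u'' = (\rho'' - \rho(\theta')^2 + i(2\rho'\theta' + \rho\theta''))e^{i\theta}$ and plugging into \eqref{TWc}, the imaginary part becomes $c\rho' + 2\rho'\theta' + \rho\theta'' = \frac{1}{2\rho}(\rho^2(c+2\theta'))'$, which is consistent with \eqref{eq:theta}. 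The real part reads
\begin{equation*}
\rho'' - c\rho\theta' - \rho(\theta')^2 + \rho\bigl(\W\ast(1-\rho^2)\bigr) = 0.
\end{equation*}
Substituting $\theta' = \frac{c}{2}(\rho^{-2}-1)$ gives
\begin{equation*}
-c\rho\theta' - \rho(\theta')^2 = -\frac{c^2(1-\rho^2)}{2\rho} - \frac{c^2(1-\rho^2)^2}{4\rho^3} = -\frac{c^2}{4\rho^3}\bigl(2\rho^2(1-\rho^2) + (1-\rho^2)^2\bigr) = -\frac{c^2(1-\rho^4)}{4\rho^3},
\end{equation*}
using the factorization $(1-\rho^2)(1+\rho^2) = 1-\rho^4$, which yields exactly \eqref{eq:rho}.

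\textbf{Converse direction.} Now assume $\rho \in \mathcal{C}^2(\R)$, $\rho > 0$, satisfies \eqref{eq:rho}, and define $\theta$ by \eqref{eq:deftheta}. Then $\theta \in \mathcal{C}^2(\R)$ with $\theta' = \frac{c}{2}(\rho^{-2}-1)$ and $\theta'' = -c\rho'/\rho^3$, so $u = \rho e^{i\theta} \in \mathcal{C}^2(\R;\C)$. Plugging $u', u''$ as computed above into $icu' + u'' + u(\W\ast(1-|u|^2))$, factor out $e^{i\theta}$; the imaginary part reduces to $\rho(c + 2\theta') + \rho\theta'' \cdot \tfrac{\rho}{\rho} = c\rho' + 2\rho'\theta' + \rho\theta''$, which vanishes by direct substitution of $\theta'$ and $\theta''$. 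The real part becomes $\rho'' - c\rho\theta' - \rho(\theta')^2 + \rho(\W\ast(1-\rho^2))$, which, by the same algebraic identity as in the forward direction, collapses to $\rho'' - \frac{c^2(1-\rho^4)}{4\rho^3} + \rho(\W\ast(1-\rho^2))$, and this vanishes by hypothesis \eqref{eq:rho}. Hence $u$ solves \eqref{TWc}.

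\textbf{Finite energy under $1-\rho \in H^1(\R)$.} In one dimension $H^1(\R) \hookrightarrow \mathcal{C}_0(\R) \cap L^\infty(\R)$, so $1-\rho$ is continuous, bounded and vanishes at infinity; together with $\rho>0$ on $\R$, this forces $\delta \coloneqq \inf_\R \rho > 0$. Then $1-|u|^2 = (1-\rho)(1+\rho) \in L^2(\R)$, and from $|u'|^2 = (\rho')^2 + \rho^2(\theta')^2 = (\rho')^2 + \frac{c^2}{4}\frac{(1-\rho^2)^2}{\rho^2}$ with $\rho \geq \delta$ and $\rho', 1-\rho^2 \in L^2(\R)$, one concludes $u' \in L^2(\R)$. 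Hence $u \in \mathcal{E}(\R)$ and $\inf_\R |u| = \delta > 0$, i.e.\ $u \in \mathcal{NE}(\R)$.

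The calculation is essentially routine; the only subtle point is the algebraic identity $2\rho^2(1-\rho^2) + (1-\rho^2)^2 = 1-\rho^4$ that lets the two $c^2$-terms coming from $c\rho\theta'$ and $\rho(\theta')^2$ combine into the single expression $\frac{c^2(1-\rho^4)}{4\rho^3}$ appearing in \eqref{eq:rho}, and using the decay at infinity from Lemma~\ref{lemma:regularity} to fix the integration constant when extracting \eqref{eq:theta} from the imaginary part.
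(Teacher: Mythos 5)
Your proof is correct and takes essentially the same route as the paper: reduce \eqref{TWc} to the real/imaginary-part system for $(\rho,\theta)$, extract \eqref{eq:theta} (you cite \eqref{eq:eta1} of Proposition~\ref{prop:etakidentities}, while the paper equivalently integrates $(c\rho^2+2\theta'\rho^2)'=0$ using \eqref{limits-infty}), substitute into the real part to obtain \eqref{eq:rho}, and verify membership in $\boN\boE(\R)$ via the Sobolev embedding exactly as the paper does. One typo only: in the converse direction the intermediate expression for the imaginary part should read $\rho'(c+2\theta')+\rho\theta''$ rather than $\rho(c+2\theta')+\rho\theta''$; the final identity you actually use is the correct one.
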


\begin{proof}
Let  $u=\rho e^{i\theta}$, with  $|u|>0$ on $\R$  and $\theta\in\boC^2(\R)$. By computing the derivatives of $u$
and taking  real and imaginary parts, we check that  $u$ satisfies \eqref{TWc} if and only if the couple $(\rho,\theta)$ satisfies
	\begin{equation}\label{eq:rhothetasystem}
		\begin{cases}
			-c\theta'\rho+\rho''-\rho(\theta')^2+\rho\left(\W\ast(1-\rho^2)\right)=0 &\text{on }\mathbb{R},
			\\
			c\rho'+2\theta'\rho'+\theta''\rho=0 &\text{on }\mathbb{R}.
		\end{cases}
	\end{equation}

Let $u=\rho e^{i\theta}\in \boN\boE(\R)$ be a solution to \eqref{TWc}. By Lemma~\ref{lemma:regularity}, we have $\rho,\theta\in \boC^\infty(\R)$, so that $(\rho,\theta)$ satisfies \eqref{eq:rhothetasystem}. By multiplying the second equation in \eqref{eq:rhothetasystem} by $\rho$,  we obtain 
	\begin{equation}\label{rhothetaderivative}
		(c\rho^2+2\theta'\rho^2)'=0.
	\end{equation}
Bearing in mind \eqref{limits-infty}, we can integrate \eqref{rhothetaderivative} and obtain \eqref{eq:theta}. Plugging \eqref{eq:theta} into the first equation of \eqref{eq:rhothetasystem}, we get \eqref{eq:rho}. 
	
	We turn now to the second part of the result. Indeed, let $\rho\in\boC^2(\R)$ be such that $\rho>0$ on $\R$. Assume that $\rho$ satisfies \eqref{eq:rho} and consider $\theta$ defined by \eqref{eq:deftheta}. Then, one may immediately check that the equations \eqref{eq:rhothetasystem} are satisfied. Hence, $u=\rho e^{i\theta}$ is a solution to \eqref{TWc}.
	It only remains to verify that  $u\in\boN\boE(\R)$ if  $1-\rho\in H^1(\R)$. 
	Indeed, by the Sobolev embedding theorem, we get  $\rho \in L^\infty(\R)$,
 with $\rho(\pm \infty)=1$ and 	 $\rho\geq\delta$ on $\R$ for some $\delta\in(0,1)$.
	Thus
	$$ 1-\abs{u}^2=1-\rho^2=(1-\rho)(1+\rho) \in L^2(\R).$$
	Moreover, by definition of $\theta$, 
$$\abs{u'}^2=(\rho')^2+\rho^2 (\theta')^2=(\rho')^2+\frac{c^2}{4\rho^2}(1-\rho^2)^2,$$
which also belongs to $L^2(\R)$, since $(1-\rho^2)^2=(1-\rho)^2(1+\rho)^2$ and $\rho \in L^\infty(\R).$
\end{proof}

In view of Proposition~\ref{prop:hydrodynamic}, the problem of existence of vortexless finite energy solution to \eqref{TWc} is reduced to the existence of positive solution to \eqref{eq:rho} with  $1-\rho\in H^1(\R)$.  Abusing of the concept of energy, we will say that a solution to \eqref{eq:rho} has finite if $\rho\in 1+H^1(\R)$.


\section{The variational formulation }
\label{sec:aec}
In this section we introduce a variational formulation that will lead to the proof of Theorem~\ref{thm:existenceae}. Formally speaking, it is showed in \cite{delaire-mennuni} that critical points of the functional $E(u)-cp(u)$ are (complex-valued) solutions to \eqref{TWc}. Thanks to Proposition~\ref{prop:hydrodynamic}, we may simplify the setting and work in a space of \emph{real-valued} functions. More precisely, we will find solutions to \eqref{eq:rho} as critical points of the functional 
$J_c:H^1(\R)\to\R\cup\{-\infty\}$ formally defined by
$$
	J_c(1-\rho)=\boA(1-\rho)-c^2\boB(1-\rho),\quad \text{ for  }\rho\in 1+H^1(\R),$$ where
	$$\boA(1-\rho)=\frac 1 2 \int_\R (\rho')^2+\frac 1 4 \int_\R
	(\W\ast(1-\rho^2))(1-\rho^2)
\quad \text{ and }\quad
	\boB(1-\rho)=\frac 1 8\int_\R\frac{(1-\rho^2)^2}{\rho^2}.
$$
It is easy to see, thanks to Remark~\ref{rem:energymomentum}, that for every solution $u\in\boN\boE(\R)$ to \eqref{TWc}, the equality $J_c(1-\rho)=E(u)-cp(u)$ holds, where $\rho=\abs{u}$.

Notice that if $\rho\in 1+H^1(\R)$ with $\rho\geq 0$, then $1-\rho\in L^2(\R)$ iff $1-\rho^2\in L^2(\R)$ by \eqref{id:trivial}.
To avoid ambiguities in the definition, we will restrict $J_c$ to 
the nonvanishing set 
\[\NV=\{v\in H^1(\R):\,\, 1-v>0\ \text{ in }\R\},\]
which is an open in $H^1(\R)$ due to the continuous embedding $H^1(\R)\subset L^\infty(\R)$. Thus $J_c$ is defined in the variable $v=1-\rho\in \NV$ by
\[J_c(v)=\frac{1}{2}\int_\R (v')^2+\frac{1}{4}\int_\R (\W\ast(v(2-v)))v(2-v)-\frac{c^2}{8}\int_\R\frac{v^2(2-v)^2}{(1-v)^2}.\]

It is  not difficult to show that the functional satisfies a mountain pass geometry (see Lemmas~\ref{lemma:localminimum} and \ref{lemma:Jnegative}). However, it is not clear at all that the Palais--Smale sequences are bounded. In order to overcome this issue, we take advantage of  the ``monotonicity trick'' of Struwe \cite{struwe88}. More precisely, we are deeply inspired by the work of Jeanjean \cite{jeanjean99}. We adapt some of his results, since several nontrivial modifications are needed due to the singular behavior of $J_c$. This way, we are able to obtain \emph{bounded} Palais--Smale sequences for \emph{almost every} speed $c\in (0,\sqrt{2})$.

We start by showing that $J_c$ is smooth on $\NV$ and that the critical points provide solutions to \eqref{TWc}.
\begin{lemma}\label{lemma:Jsmooth}
Let $c>0$. The functional $J_c$ is of class $\boC^2(\NV)$. Moreover, for any $v\in\NV$,
its Fr\'echet derivatives are given by
	\begin{align}
		\label{eq:Jderivative1}
		J_c'(v)(\phi)&=\int_\R v'\phi'+\int_\R (\W\ast f(v) )(1-v)\phi
		-{c^2}\int_\R h(v)\phi, \\
						\label{eq:Jderivative2}
		J_c''(v)(\phi,\psi)&=\int_\R\phi'\psi' -c^2 \int_\R h'(v)\phi\psi
+ \int_\R \big(\W\ast(f'(v)\psi)\big)(1-v)\phi - \int_\R\big(\W\ast f(v)\big)\phi\psi, 
	\end{align}
for all $\phi,\psi\in H^1(\R)$,	where
$f(s)=s(2-s)$ and $h(s)=\frac{s(2-s)(s^2-2s+2)}{4(1-s)^3}$ for all $s<1$.

\end{lemma}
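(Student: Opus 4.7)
The plan is to decompose $J_c = T_1 + T_2 - T_3$ where $T_1(v) = \tfrac12\int_\R (v')^2$, $T_2(v) = \tfrac14 \int_\R (\W\ast f(v))f(v)$, and $T_3(v) = c^2\int_\R G(v)$, with $f(s)=s(2-s)$ and $G(s)=\frac{s^2(2-s)^2}{8(1-s)^2}$. A direct computation gives $G'=h$, so by linearity the claimed formulas for $J_c'$ and $J_c''$ will follow once each $T_i$ is shown to be $\boC^2$ on $\boN\boV(\R)$.

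The term $T_1$ is the squared $H^1$-seminorm, trivially $\boC^\infty$ on $H^1(\R)$. For $T_2$, I would exploit the Sobolev embedding $H^1(\R)\hookrightarrow L^\infty(\R)\cap C_0(\R)$: the map $v\mapsto f(v)=2v-v^2$ is a degree-$2$ polynomial from $H^1(\R)$ into $L^2(\R)$, hence $\boC^\infty$ as a map between Banach spaces. Composing with the bounded linear operator $\W\ast\cdot$ on $L^2(\R)$ from \eqref{W-22}, and using the evenness of $\W$ together with \eqref{W:even}, one rewrites $T_2(v)=\tfrac14\langle \W\ast f(v),f(v)\rangle_{L^2}$, a smooth composition whose first two derivatives yield the $\W$-terms in \eqref{eq:Jderivative1}--\eqref{eq:Jderivative2}; the factors $1-v$ and $-1$ come from $f'(s)=2(1-s)$ and $f''(s)=-2$.

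The essential point concerns the singular term $T_3$. First, I would check that $\boN\boV(\R)$ is open in $H^1(\R)$: every $v_0\in\boN\boV(\R)$ is continuous with $v_0(\pm\infty)=0$ (Sobolev), so there exists $\delta\in(0,1)$ with $v_0\leq 1-\delta$ on $\R$; the embedding $\|\cdot\|_{L^\infty}\leq C\|\cdot\|_{H^1}$ then forces $v\leq 1-\delta/2$ on $\R$ whenever $\|v-v_0\|_{H^1}$ is small. On such an $H^1$-ball the functions $G, h, h', h''$ are smooth and uniformly bounded; moreover $G(0)=G'(0)=0$ and $h(0)=0$, so Taylor expansion gives
\[|G(s)|\leq Cs^2, \quad |h(s)|\leq C|s|, \quad |h'(s)|\leq C\qquad\text{for all } s\leq 1-\delta/2.\]
These pointwise bounds ensure the integrability of $G(v)$, $h(v)\phi$ and $h'(v)\phi\psi$ and the continuity of the candidate (bi)linear forms on $H^1(\R)\times H^1(\R)$ via H\"older estimates.

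The main technical step is justifying the Fr\'echet expansions for $T_3$ together with continuity of $v\mapsto T_3''(v)$ in operator norm. I would use Taylor's formula with integral remainder pointwise: for instance $h(v+\phi)-h(v)-h'(v)\phi = \phi^2\int_0^1 (1-t)h''(v+t\phi)\,dt$, so that
\[\left|\int_\R [h(v+\phi)-h(v)-h'(v)\phi]\chi\right| \leq C\|\phi\|_{L^\infty}\|\phi\|_{L^2}\|\chi\|_{L^2} = o(\|\phi\|_{H^1})\|\chi\|_{H^1},\]
using $\|\phi\|_{L^\infty}\leq C\|\phi\|_{H^1}$. This identifies the Fr\'echet derivative of $T_3'$, and an analogous expansion identifies $T_3'$ itself. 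Continuity of $T_3''$ then follows from dominated convergence, using uniform boundedness of $h'$ on the ball and its pointwise continuity. Assembling the formulas for $T_1', T_2', T_3'$ and their second derivatives yields exactly \eqref{eq:Jderivative1} and \eqref{eq:Jderivative2}.
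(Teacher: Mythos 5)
Your proposal is correct and follows essentially the same route as the paper: direct computation of the Fr\'echet derivatives (using the evenness of $\W$ and the boundedness of $\eta\mapsto\W\ast\eta$ on $L^2$ for the nonlocal quadratic term, and the uniform bound $v\leq 1-\delta/2$ on an $H^1$-neighborhood inside $\NV$ to control the singular term, whose density $G$ indeed satisfies $G'=h$), with continuity of $J_c''$ obtained via dominated-convergence-type arguments exactly as in the paper. The Taylor-remainder estimates you spell out are the details the paper dismisses as ``straightforward''/``standard'', so no gap.
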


\begin{remark}
	\label{rem:eq:v}
Observe that if $ v \in\NV\setminus\{0\}$ satisfies $J_c'(v)=0$, then 
\begin{equation}
	\label{eq:v}
	-v''+\big( \mathcal{W}*f(v) \big)(1-v)-{c^2}h(v)=0 \quad\text{on }\mathbb{R}.
\end{equation}
Hence, setting $\rho=1-v$ and noticing that $h(1-\rho)=(1-\rho^4)/(4\rho^3)$
and that $f(1-\rho)=1-\rho^2$, we conclude that $\rho$ is a nontrivial solution 
to \eqref{eq:rho}. Therefore,  by
Proposition~\ref{prop:hydrodynamic}, this provides a nontrivial finite energy solution $u$ to \eqref{TWc}.
\end{remark}

\begin{proof}[Proof of Lemma~\ref{lemma:Jsmooth}]
First, we recall that since $\W$ is even, we have 
\begin{equation}
	\label{eq:commutative}
	\int_\R (\W*g_1)g_2=\int_\R (\W*g_2)g_1,\quad  
\text{ for all }g_1,g_2\in L^2(\R).
\end{equation}
 In order to differentiate the nonlocal term of $J_c$, 	by using the dominated convergence theorem, we  conclude that
 the functional $v \in H^1(\R)\mapsto \int_\R\left(\W\ast f(v)\right)f(v)\in\R$ admits a Fr\'echet derivative, given by
	\[2\int_\R \left(\W\ast f(v)\right)f'(v)\phi, \quad\text {for all }\phi\in H^1(\R).\]
	Therefore, we easily deduce \eqref{eq:Jderivative1}. Computing \eqref{eq:Jderivative2} is also straightforward.
	
	In order to prove the continuity of $J''_c$ in $\NV$, let $v\in\NV$ and consider a sequence $\{v_n\}\subset H^1(\R)$ such that $v_n\to v$  in $H^1(\R)$. Then, $v_n\to v$ in $L^\infty(\R)$. In particular, since there exists $\delta\in (0,1)$ such that $v\leq 1-\delta$ on $\R$, it follows that $v_n\leq 1-{\delta}/{2}$ on $\R$, for  $n$ large enough. Thus, $h'(v_n)\to h'(v)$ in $L^2(\R)$ by using the dominated convergence theorem. The continuity of the other terms is standard.
\end{proof}

To apply a mountain pass argument, we need to invoke a deformation lemma.
Although there are many versions of this classical lemma, we did not find one that fits in our framework since our functional is well-defined only in the open set $\NV$, and not in the whole space. For this reason, we give here a modification of Lemma 2.3 in \cite{willem96}
 that can be applied to our purposes. Furthermore, such a version does not require any Palais--Smale condition. For the sake of completeness, we also include its proof in the appendix.

\begin{lemma}\label{lemma:deformation}
	Let $c>0$. For some $R>0$ and for every $\delta\in (0,1)$, let us consider the set
	\begin{equation}\label{Z}
		Z_\delta=\{v\in\NV: \|v\|_{H^1(\R)}\leq R+1-\delta,\ v\leq 1-\delta\text{ in }\R\}.
	\end{equation}
	Assume that there exist constants  $0<\delta_1<\delta_2<\delta_3<1$, $\varepsilon>0$ and $\gamma\in\R$ such that 
\begin{equation*}
			\|J'_c(v)\|_{H^{-1}(\R)}\geq \frac{2\varepsilon }{\delta_3-\delta_2},\quad\text{ for all } v\in J_c^{-1}([\gamma-2\varepsilon,\gamma+2\varepsilon])\cap Z_{\delta_1}.
	\end{equation*}
	Then there exists a continuous function $h:[0,1]\times \NV\to \NV$ such that
	\begin{enumerate}
		\item \label{def1} $h(0,v)=v,\quad\text{ for all } v\in \NV,$		
		\item \label{def2} $h(t,v)=v,\quad\text{ for all } v\in \NV\setminus\left( J_c^{-1}([\gamma-2\varepsilon,\gamma+2\varepsilon])\cap Z_{\delta_1}\right),
		\text{ for all }t\in [0,1],$	
		\item \label{def2bis} $h(t,Z_{\delta_3})\subset Z_{\delta_2},\quad\text{ for all } t\in [0,1]$,	
		\item \label{def3} $J_c(h(t,v))\leq J_c(v),\quad\text{ for all } v\in \NV,\quad\text{ for all } t\in [0,1],$
		\item \label{def4} $h(1, J_c^{\gamma+\varepsilon}\cap Z_{\delta_3})\subset J_c^{\gamma-\varepsilon}\cap Z_{\delta_2},$
	\end{enumerate}
	where $J_c^d=J_c^{-1}((-\infty,d])$ for every $d\in\R$.
\end{lemma}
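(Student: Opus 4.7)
The proof follows the classical pseudo-gradient flow scheme for deformation lemmas (Lemma 2.3 of \cite{willem96}), with two adaptations: the flow must stay inside the open set $\NV$ where $J_c$ is defined (Lemma~\ref{lemma:Jsmooth}), and property \ref{def2bis} demands a refined control along the nested family $Z_{\delta_3}\subset Z_{\delta_2}\subset Z_{\delta_1}$. Both are achieved by multiplying a pseudo-gradient vector field by a product of Lipschitz cutoffs that force it to vanish before the flow can exit the admissible region.

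First I would invoke the standard pseudo-gradient construction on the open set $\{v\in\NV:J'_c(v)\neq 0\}$, producing a locally Lipschitz $V$ with $\|V(v)\|_{H^1}\leq 1$ and $\langle J'_c(v),V(v)\rangle\geq \alpha\|J'_c(v)\|_{H^{-1}}$ for a constant $\alpha$ that can be taken arbitrarily close to $1$. Next I would build three Lipschitz cutoffs on $H^1(\R)$: $\chi_J$, a piecewise-affine function of $J_c(v)$ vanishing outside $[\gamma-2\varepsilon,\gamma+2\varepsilon]$ and equal to $1$ on $[\gamma-\varepsilon,\gamma+\varepsilon]$; $\chi_H$, a piecewise-affine function of $\|v\|_{H^1}$ with transition between $R+1-\delta_2$ and $R+1-\delta_1$; and $\chi_\infty$, a piecewise-affine function of the Lipschitz map $v\mapsto\sup_\R v$ with transition between $1-\delta_2$ and $1-\delta_1$. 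Setting $W=\chi_J\chi_H\chi_\infty V$, extended by zero to all of $\NV$, yields a globally Lipschitz bounded vector field supported in the active region. I would then define $\sigma(t,v)$ as the unique global-in-time solution of $\partial_t\sigma=-W(\sigma)$, $\sigma(0,v)=v$, which stays in $\NV$ because $\chi_\infty$ kills $W$ before $\sup\sigma$ can reach $1-\delta_1$. Setting $h(t,v)=\sigma(tT,v)$ for a parameter $T>0$ of order $\delta_3-\delta_2$ (divided by the Sobolev constant $C$ of $H^1(\R)\hookrightarrow L^\infty(\R)$ if needed) completes the construction.

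Properties \ref{def1} and \ref{def2} are immediate. For \ref{def2bis}, the bound $\|W\|_{H^1}\leq 1$ gives $\|\sigma(tT,v)-v\|_{H^1}\leq tT$ and, by Sobolev embedding, $\|\sigma(tT,v)-v\|_\infty\leq CtT$; hence, starting from $Z_{\delta_3}$, the trajectory remains in $Z_{\delta_2}$ for all $t\in[0,1]$. Property \ref{def3} follows from $\frac{d}{dt}J_c(\sigma)=-\chi_J\chi_H\chi_\infty\langle J'_c,V\rangle\leq 0$. For \ref{def4}, as long as $J_c(\sigma)\in[\gamma-\varepsilon,\gamma+\varepsilon]$ and $\sigma\in Z_{\delta_2}$ (both guaranteed by \ref{def2bis} and \ref{def3}) the three cutoffs equal $1$, so the hypothesis yields $\frac{d}{dt}J_c(\sigma)\leq -\frac{2\alpha\varepsilon}{\delta_3-\delta_2}$; integrating over the total time $T$ then drives $J_c$ below $\gamma-\varepsilon$.

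The main obstacle is the tight calibration of constants: the admissible $H^1$-displacement of the flow and the available decrease of $J_c$ are both governed by the single scale $\delta_3-\delta_2$, so achieving a drop of $2\varepsilon$ while remaining in $Z_{\delta_2}$ requires $\alpha$ close to $1$ in the pseudo-gradient and, possibly, the insertion of auxiliary intermediate thresholds between $\delta_1,\delta_2,\delta_3$ in the cutoffs $\chi_H$ and $\chi_\infty$ to absorb the Sobolev embedding constant $C$. Once this bookkeeping is in place, the five properties follow from standard ODE estimates along the flow of $W$.
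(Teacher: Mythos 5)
Your overall scheme --- a normalized descent direction multiplied by Lipschitz cutoffs that vanish outside $J_c^{-1}([\gamma-2\varepsilon,\gamma+2\varepsilon])\cap Z_{\delta_1}$, the resulting flow staying in $\NV$ because the field dies before the constraint $v<1$ can be violated, displacement control from $\|W\|_{H^1(\R)}\leq 1$ plus the Sobolev embedding, and integration of the decay rate over the total flow time --- is essentially the paper's proof. The paper uses a single $\boC^1$ cutoff $\psi$ (equal to $1$ on $J_c^{-1}([\gamma-2\varepsilon,\gamma+2\varepsilon])\cap Z_{\delta_2}$, zero off the corresponding set with $\delta_1$) instead of your product $\chi_J\chi_H\chi_\infty$, and it rules out contact with $\partial\NV$ by the same ``the field vanishes there, so the trajectory is constant'' argument that you use.

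There is, however, a genuine quantitative gap in your last step, and the remedies you suggest do not close it. The constants in the lemma are tight: the trajectory starts in $Z_{\delta_3}$ and must stay in $Z_{\delta_2}$ (item \ref{def2bis}), and since $\|W\|_{H^1(\R)}\leq 1$ the sup-norm displacement per unit time is at most the embedding constant $C$, so the admissible total time is at most $(\delta_3-\delta_2)/C$; on the stretch where all cutoffs equal $1$, a pseudo-gradient with parameter $\alpha<1$ only gives $\frac{d}{dt}J_c(\sigma)\leq-\alpha\,\frac{2\varepsilon}{\delta_3-\delta_2}$. The guaranteed total decrease is therefore $2\alpha\varepsilon/C$, which is strictly smaller than the $2\varepsilon$ needed to go from $\gamma+\varepsilon$ down to $\gamma-\varepsilon$ unless $\alpha=C=1$: ``$\alpha$ arbitrarily close to $1$'' is not enough, and inserting intermediate thresholds between $\delta_2$ and $\delta_3$ cannot help, since they only subdivide the same budget $\delta_3-\delta_2$ without creating extra time. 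The fix is available inside your own construction and is exactly what the paper does: because $J_c\in\boC^2(\NV)$ (Lemma~\ref{lemma:Jsmooth}), no pseudo-gradient is needed --- flow along $-\psi(v)\,J_c'(v)/\|J_c'(v)\|_{H^{-1}(\R)}$ (Riesz-identified), which yields the decay rate with $\alpha=1$; and with the sharp embedding \eqref{sobolev:sharp} of Remark~\ref{remark:burenkov} one has $C=1$, so the time $T=\delta_3-\delta_2$ is admissible and item \ref{def4} follows with the budget used at equality.
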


\begin{remark}\label{remark:burenkov}
To simplify the statement and the proof of Lemma~\ref{lemma:deformation}, we used the sharp constant in the Sobolev embedding (see \cite[p.\ 138, Theorem 4]{burenkov}), so that
\begin{equation}
	\label{sobolev:sharp}
\|v\|_{L^\infty(\R)}\leq  \frac12 (\norm{v}_{L^2(\R)}+\norm{v'}_{L^2(\R)}) \leq 
\sqrt{  \norm{v}_{L^2(\R)}^2 + \norm{v'}_{L^2(\R)}^2}=
\|v\|_{H^1(\R)}, 
\end{equation}
for all $v\in H^1(\R)$.	
\end{remark}

The following two lemmas provide the mountain pass geometry of $J_c$. 

\begin{lemma}\label{lemma:localminimum}
	Assume that $\W$ satisfies  \ref{h:lowerboundWbis}, and let $c\in (0,\sqrt{2\sigma})$. Then, there is a constant $r_c>0$ such that, for every $r\in(0,r_c]$, there exists $l_r>0$,
	depending on $\kappa$, $\sigma$, $c$ and $r$, such that $J_c(1-\rho)\geq l_r$ for every $\rho\in 1+H^1(\R)$ with $\|1-\rho\|_{H^1(\R)}=r$.
\end{lemma}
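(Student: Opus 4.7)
The plan is to exploit the smallness of $\|v\|_{L^\infty(\R)}$ given by the Sobolev embedding, expand the quadratic and singular parts of $J_c$ in $v = 1-\rho$, and then use the Fourier-side lower bound \ref{h:lowerboundWbis} on $\wh\W$ to cancel the dangerous terms. Concretely, since $\|v\|_{H^1(\R)} = r$ is assumed small, Remark~\ref{remark:burenkov} gives $\|v\|_{L^\infty(\R)} \leq r$, so $1 - v \geq 1 - r > 0$, placing $v \in \NV$. With $f(s) = s(2-s)$, the elementary bounds
\[
(2-r)|v| \leq |f(v)| \leq (2+r)|v|, \qquad |(f(v))'| = 2(1-v)|v'| \leq 2(1+r)|v'|,
\]
and the pointwise estimate $\frac{v^2(2-v)^2}{(1-v)^2} \leq \frac{(2+r)^2}{(1-r)^2}v^2$ will all be at my disposal.

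Next, I would treat the nonlocal quadratic term by Plancherel's identity \eqref{W:even} together with \ref{h:lowerboundWbis}:
\[
\frac14 \int_\R (\W \ast f(v)) f(v) = \frac{1}{8\pi}\int_\R \wh\W(\xi)\,|\wh{f(v)}(\xi)|^2\,d\xi \geq \frac{\sigma}{4}\|f(v)\|_{L^2(\R)}^2 - \frac{\kappa}{4}\|(f(v))'\|_{L^2(\R)}^2.
\]
Inserting the bounds above and combining with the kinetic and singular parts of $J_c(v)$ gives
\[
J_c(v) \geq \Bigl(\tfrac12 - \kappa(1+r)^2\Bigr)\|v'\|_{L^2(\R)}^2 + \Bigl(\tfrac{\sigma(2-r)^2}{4} - \tfrac{c^2(2+r)^2}{8(1-r)^2}\Bigr)\|v\|_{L^2(\R)}^2.
\]

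Finally, I would observe that as $r \to 0^+$ the first coefficient tends to $\tfrac12 - \kappa > 0$ (using $\kappa < 1/2$ from \ref{h:lowerboundWbis}) and the second tends to $\sigma - c^2/2 > 0$ (using $c < \sqrt{2\sigma}$). By continuity, both coefficients remain strictly positive on some interval $r \in (0, r_c]$; denoting by $\alpha_r > 0$ their minimum, one concludes $J_c(v) \geq \alpha_r \|v\|_{H^1(\R)}^2 = \alpha_r r^2 =: l_r$, which is the desired inequality. The only delicate point is ensuring that the two necessary cancellations, one from absorbing the $-\kappa\xi^2$ against the kinetic term $\tfrac12\|v'\|_{L^2}^2$ and the other from absorbing the singular term against the mass $\sigma \|f(v)\|_{L^2}^2/4$, occur simultaneously; this is exactly what \ref{h:lowerboundWbis} and the subsonic hypothesis guarantee, so no further structural obstacle arises.
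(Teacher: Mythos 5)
Your proposal is correct and follows essentially the same route as the paper's proof: Plancherel's identity combined with \ref{h:lowerboundWbis} to bound the nonlocal term from below, the sharp Sobolev embedding to control $\|v\|_{L^\infty(\R)}$ by $r$, absorption of the $-\kappa\xi^2$ contribution into the kinetic term and of the singular term into the mass term, with both resulting coefficients strictly positive for $r$ small since $2\kappa<1$ and $c^2<2\sigma$. The only cosmetic difference is that you write the pointwise bounds in terms of $v=1-\rho$ and $f(v)$, while the paper keeps $\rho$ and $\eta=1-\rho^2$ (using $\eta'=-2\rho\rho'$ and $\eta^2\geq(1-\rho)^2$); the estimates are equivalent.
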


\begin{proof}
	Let $\rho\in 1+H^1(\R)$ be such that $\|1-\rho\|_{H^1(\R)}\leq r_c$ for some $r_c\in (0,1)$ to be chosen later, and let $\eta=1-\rho^2.$ By applying  Plancherel's identity and \ref{h:lowerboundWbis}, we deduce that
		\begin{equation*}
		J_c(1-\rho)\geq\frac{1}{2}\int_\R (\rho')^2+\frac{1}{8\pi}\int_\R (\sigma-\kappa \xi^2)|\wh{\eta}(\xi)|^2 d\xi-\frac{c^2}{8}\int_\R\frac{\eta^2}{\rho^2}.
	\end{equation*} 
Using that 
	$$
\frac{1}{2\pi}\int_\R (\sigma-\kappa\xi^2)|\wh{\eta}(\xi)|^2 d\xi=\sigma \int_\R \eta^2-\kappa \int_\R(\eta')^2, 
$$
and that $\eta'=-2\rho\rho'$, we get the lower bound  
	\begin{align*}
		J_c(1-\rho)&\geq\frac{1}{2}\int_\R (\rho')^2(1-2\kappa\rho^2)+\frac{1}{4}\int_\R \left(\sigma-\frac{c^2}{2\rho^2}\right)\eta^2.
	\end{align*} 
By invoking \eqref{sobolev:sharp}, we see that $1-r_c\leq \rho \leq 1+r_c$ on $\R$,
Thus, recalling that $2\kappa<1$ and $c^2<\sigma$, we choose $r_c>0$ small enough so that $1-2\kappa(1+r_c)^2>0$ and $\sigma-\frac{c^2}{2(1-r_c)^2}>0$.
Consequently, using that $\eta^2=(1-\rho)^2(1+\rho)^2\geq (1-\rho)^2$, we finally obtain
	\begin{equation*}
		J_c(1-\rho)\geq \ell_r\|1-\rho\|^2_{H^1(\R)}, \quad 
		\text{with }
\ell_r=		 \min\left\{\frac{1-2\kappa(1+r_c)^2}{2},\frac{1}{4}\left(\sigma-\frac{c^2}{2(1-r_c)^2}\right)\right\}.	\end{equation*} 
 The result follows by choosing $l_r=\ell_r r^2$.
\end{proof}

\begin{lemma}\label{lemma:Jnegative}
	For every $\gc>0$, there exists $\phi_{\gc}\in 1+H^1(\R)$ such that  $J_c(1-\phi_{\gc})\in(-\infty,0)$ for every $c\geq \gc$.
\end{lemma}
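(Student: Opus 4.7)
The plan is to exploit the negative sign in front of $-c^2\boB(v)$ and to choose a single test function whose $\boB$--value is very large, while the $c$--independent part $\boA$ stays uniformly bounded. Concretely, fix $\chi\in C_c^\infty(\R)$ with $0\leq\chi\leq 1$, $\chi\equiv 1$ on $[-1,1]$ and $\supp\chi\subset [-2,2]$. For each $\delta\in(0,1/2)$ I put
\[
v_\delta:=(1-\delta)\chi,\qquad g_\delta:=v_\delta(2-v_\delta),\qquad \phi_\delta:=1-v_\delta.
\]
Since $v_\delta\leq 1-\delta<1$, the function $v_\delta$ belongs to $\NV$; equivalently $\phi_\delta\in 1+H^1(\R)$ with $\phi_\delta\equiv \delta$ on $[-1,1]$ and $\phi_\delta\equiv 1$ outside $[-2,2]$.

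I would then check that $\boA(v_\delta)$ remains bounded as $\delta\to 0$. The kinetic term is simply $\tfrac{(1-\delta)^2}{2}\int(\chi')^2$, and Plancherel together with hypothesis \ref{H0} gives, using $0\leq g_\delta\leq 2\chi$,
\[
\frac{1}{4}\int_\R (\W\ast g_\delta)\,g_\delta\;\leq\;\frac{1}{4}\norm{\wh\W}_{L^\infty(\R)}\int_\R g_\delta^2\;\leq\;\norm{\wh\W}_{L^\infty(\R)}\int_\R \chi^2.
\]
Hence there is a constant $M>0$, independent of $\delta$, such that $\boA(v_\delta)\leq M$. The key observation is that $\boB(v_\delta)$ blows up: on $[-1,1]$ one has $g_\delta=1-\delta^2$ and $1-v_\delta=\delta$, so
\[
\boB(v_\delta)\;=\;\frac{1}{8}\int_\R\frac{g_\delta^2}{(1-v_\delta)^2}\;\geq\;\frac{(1-\delta^2)^2}{4\delta^2}\;\longrightarrow\;+\infty\quad\text{as }\delta\to 0^+.
\]

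Given $\gc>0$, I would then choose $\delta=\delta_\gc$ so small that $\gc^2\,\boB(v_{\delta_\gc})>M$, forcing $J_\gc(v_{\delta_\gc})<0$. For any $c\geq\gc$, since $\boB(v_{\delta_\gc})>0$,
\[
J_c(v_{\delta_\gc})\;=\;\boA(v_{\delta_\gc})-c^2\,\boB(v_{\delta_\gc})\;\leq\;\boA(v_{\delta_\gc})-\gc^2\,\boB(v_{\delta_\gc})\;=\;J_\gc(v_{\delta_\gc})\;<\;0,
\]
while $J_c(v_{\delta_\gc})>-\infty$ because all three integrands are bounded and compactly supported (the denominator $(1-v_{\delta_\gc})^2$ is bounded below by $\delta_\gc^2$ on the support of $g_{\delta_\gc}$). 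Setting $\phi_\gc:=\phi_{\delta_\gc}=1-v_{\delta_\gc}$ finishes the argument. I do not see any real obstacle here: the whole proof rests on the fact that $\boB$ is unbounded on any neighborhood of the singular set $\{v=1\}$ in $\NV$, whereas $\boA$ is controlled by the $H^1$--norm and $\norm{\wh\W}_{L^\infty(\R)}$.
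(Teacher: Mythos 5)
Your proof is correct: $\boA(v_\delta)$ is bounded uniformly in $\delta$ by the Plancherel bound $\int_\R(\W\ast g)g\leq\norm{\wh\W}_{L^\infty(\R)}\norm{g}_{L^2(\R)}^2$, $\boB(v_\delta)\geq(1-\delta^2)^2/(4\delta^2)\to\infty$, and the monotonicity $J_c(v)\leq J_{\gc}(v)$ for $c\geq\gc$ (valid since $\boB(v)>0$) gives a single $\phi_\gc$ working for all $c\geq\gc$; moreover $1-\phi_\gc=v_{\delta_\gc}\in\NV$, which is what is needed later for the paths $\Gamma(\gc)$. The route differs from the paper's in the limiting mechanism. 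The paper fixes a small depth, taking $\phi^2=\delta$ on a plateau $[0,r]$ with $\delta$ chosen (in terms of $\gc$ and $\norm{\wh\W}_{L^\infty(\R)}$) so that the pointwise energy density on the plateau, $\tfrac12(1-\delta)^2\big(\norm{\wh\W}_{L^\infty(\R)}-\gc^2/(2\delta)\big)$, is negative, and then sends the plateau length $r\to\infty$: negativity comes from the \emph{width} of the well, the transition layers contributing a fixed, $r$-independent cost (hence the auxiliary profile $\psi$). You instead fix the width and send the depth parameter $\delta\to 0$, so negativity comes from the blow-up of the singular term $\boB$ near $\partial\NV$, with $\boA$ controlled uniformly. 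Your construction is a bit shorter (no $r$-independent transition profile is needed, and no splitting of the integral into plateau and layer), while the paper's makes explicit that negativity is already achieved by long, fixed-depth wells whose energy decreases linearly in the length, a picture closer to the dark-soliton profiles themselves; for the way the lemma is used (only $J_c(1-\phi_\gc)<0$ and $1-\phi_\gc\in\NV$ matter), either construction serves equally well.
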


\begin{proof}
	For any $\delta\in (0,1)$ and $r>0$ to be chosen later, let us consider a nonnegative even function $\phi$,  with $0\leq 1-\phi^2\leq 1-\delta \text{ in } \R,$  satisfying the following properties:
	\begin{equation*}
\phi^2=\delta \text{ in } [0,r], \qquad \phi=1 \text{ in } [r+1,\infty), \qquad 
		\phi(x+r)=\psi(x),\text{ for all } x\in [0,1],
\end{equation*} 
where $\psi:\R\to\R$ is a function  independent of $r$ that we choose such that 
$\phi\in\boC^\infty(\R)$.
In particular, $1-\phi\in H^1(\R)$. Thus, by Plancherel's identity, we 
get for all $c\geq \gc$, 
	\begin{align*}
		J_c(1-\phi)&\leq \frac{1}{2}\int_\R (\phi')^2+\frac{\|\wh\W\|_{L^\infty(\R)}}{4}\int_\R(1-\phi^2)^2-\frac{\gc^2}{8}\int_\R\frac{(1-\phi^2)^2}{\phi^2}
		\\
		&=\frac{(1-\delta)^2}{2}\left(\|\wh\W\|_{L^\infty(\R)}-\frac{\gc^2}{2\delta}\right)r+\int_r^{r+1}\left[(\phi')^2+\left(\frac{\|\wh\W\|_{L^\infty(\R)}}{2}-\frac{\gc^2}{4\phi^2}\right)(1-\phi^2)\right].
	\end{align*}
Let us choose $\delta\in (0,1)$ so that $\|\wh\W\|_{L^\infty(\R)}-{\gc^2}/{2\delta}<0$.
Notice that the last integral depends on $\delta$ and $\gc$, but not on $r$,   since $\phi(x+r)=\psi(x)$ for all $x\in [0,1]$. Therefore we may take $r>0$ large enough so that $J_c(1-\phi)<0$. In this way, $\delta$ and $r$ depend on only $\gc$ and $\|\wh\W\|_{L^\infty(\R)}$. The proof concludes by taking $\phi_{\gc}=\phi$. 
\end{proof}

In the rest of this section we assume that $\W$ satisfies \ref{h:lowerboundWbis}, we fix  $\gc\in (0,\sqrt{2\sigma})$ and we focus on speeds $c$ on the interval $(\gc,\sqrt{2\sigma})$.
We consider the paths connecting the origin
with $1-\phi_{\gc}$ given by Lemma~\ref{lemma:Jnegative}, as follows
\[\Gamma(\gc)=\{g\in \boC([0,1],\NV):\,\, g(0)=0,\,\, g(1)=1-\phi_{\gc}\}.\]
Thanks to the mountain pass geometry (Lemma~\ref{lemma:localminimum} and Lemma~\ref{lemma:Jnegative}) and to the continuity of $J_c$ in $\NV$, the so-called mountain pass level is well-defined and  is positive:
\[\gamma_\gc(c)\coloneqq\inf_{g\in\Gamma(\gc)}\max_{t\in [0,1]} J_c(g(t))>0,\quad\text{ for all } c\in [\gc,\sqrt{2\sigma}).\]
	
	
Notice that the $\Gamma(\gc)$ and $\gamma_\gc(c)$ are not standard  since the paths take values on  the set $\NV$ (and not on a vector space). We recall that  the advantage of working only on $\NV$ is  that  $J_c$ is smooth. However, the drawback of  this setting if that one needs to control $J_c$ near $\partial\NV$ in some sense. In fact, in principle there might exist a sequence $\{\rho_n\}\subset 1-\NV$ such that $\inf_\R \rho_n$ tends to zero but $J_c(1-\rho_n)$ remains finite for all $n$. The next result provides some properties of the functional $\boB$ that prevent undesirable phenomena to happen, so we can deal with the singular behavior near $\partial\NV$.

\begin{lemma}\label{lemma:singularity}
	Given $\rho\in 1+H^1(\R)$, it holds that
	\[\boB(1-\rho)<+\infty \iff 1-\rho\in\NV.\]
	More precisely, if $\rho\in 1+H^1(\R)$ satisfies
	\begin{equation}\label{rhoestimate}
		\|1-\rho\|_{H^1(\R)}+ \boB(1-\rho)\leq R.
	\end{equation}	
for some $R>0$, then there exists $\delta\in(0,1)$ such that
$\rho\geq\delta$ on $\R.$
\end{lemma}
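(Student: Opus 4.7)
\medskip
\noindent\textbf{Proof plan.} The easier direction is the implication $1-\rho \in \NV \Rightarrow \boB(1-\rho)<\infty$. Since $\rho \in 1+H^1(\R)$, the Sobolev embedding gives $\rho-1 \in C_0(\R)$, so $\rho$ is continuous with $\rho(\pm\infty)=1$. If $\rho>0$ everywhere, then $\rho$ attains its positive infimum at some finite point (unless $\inf\rho \ge 1$), hence $\rho \ge \delta$ for some $\delta\in(0,1]$. Factoring $1-\rho^2=(1-\rho)(1+\rho)$ and bounding $1+\rho$ by $2+\|1-\rho\|_\infty$, one gets
\[
\boB(1-\rho) \le \frac{(2+\|1-\rho\|_\infty)^2}{8\delta^{2}}\,\|1-\rho\|_{L^2(\R)}^{2}<\infty.
\]

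For the converse $\boB(1-\rho)<\infty \Rightarrow 1-\rho \in \NV$, I would argue by contradiction. If $\rho$ vanished (or became nonpositive) somewhere, then by continuity of $\rho$ and the fact that $\rho(\pm\infty)=1$, there would exist $x_0\in\R$ with $\rho(x_0)=0$. The one-dimensional Sobolev estimate
\[
|\rho(x)-\rho(x_0)| \le \|\rho'\|_{L^2(\R)}\,|x-x_0|^{1/2}
\]
(Cauchy--Schwarz applied to $\rho(x)-\rho(x_0)=\int_{x_0}^{x}\rho'$) would force $\rho^2(x)\le \|\rho'\|_{L^2}^2\,|x-x_0|$ near $x_0$, while $(1-\rho^2)^2$ is continuous and equals $1$ at $x_0$, hence is bounded below by $1/2$ on a small neighborhood. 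This makes the integrand of $\boB$ behave like $c/|x-x_0|$ and forces $\boB(1-\rho)=\infty$, a contradiction.

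The quantitative version is the heart of the statement. Assume \eqref{rhoestimate} and set $m=\inf_\R \rho$. If $m\ge 1/2$ we take $\delta=1/2$, so assume $m<1/2$; by the behavior at infinity, $m$ is attained at some $x_0$. The H\"older estimate above (with $\|\rho'\|_{L^2}\le R$) gives $\rho(x)\le m+R|x-x_0|^{1/2}$, so choosing $r=1/(16R^2)$ ensures $\rho(x)\le 3/4$ on $I:=[x_0-r,x_0+r]$, hence $1-\rho^2(x)\ge 7/16$ on $I$. I would then bound
\[
8R\ge 8\,\boB(1-\rho)\ge \Bigl(\tfrac{7}{16}\Bigr)^{2}\int_{I}\frac{dx}{(m+R|x-x_0|^{1/2})^{2}}.
\]
The substitution $s=\sqrt{|x-x_0|}$ reduces the right-hand integral to an explicit primitive whose leading order as $m\to 0^+$ is $\tfrac{4}{R^{2}}\log(1+R\sqrt{r}/m)$. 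This logarithmic divergence is the key mechanism: it converts the upper bound $\boB(1-\rho)\le R$ into $\log(1/m)\le C(R)$, so $m\ge \delta(R)>0$.

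\medskip
\noindent\textbf{Main obstacle.} The genuine work is the quantitative step, and more specifically verifying that the only possible failure of $\rho$ being bounded away from zero is caused by a square-root-type degeneracy at a minimum, and that the $H^1$-norm bound really controls the H\"older constant used in this degeneracy. Everything else (continuity, attainment of the infimum via the limits at infinity, integrability of $(1-\rho^2)^2$) is routine Sobolev bookkeeping.
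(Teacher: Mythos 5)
Your proof is correct, and the two easy parts (the equivalence in both directions) coincide with the paper's argument: the same factorization $1-\rho^2=(1-\rho)(1+\rho)$ for the forward bound, and the same $C^{0,1/2}$-type estimate $\rho(x)^2\leq \|\rho'\|_{L^2(\R)}^2|x-x_0|$ near a zero of $\rho$ for the converse. Where you genuinely diverge is the quantitative statement. The paper argues by contradiction with a sequence $\{\rho_n\}$ satisfying \eqref{rhoestimate} and $\min_\R\rho_n\to 0$, and resolves the delicate scenario in which $(1-\rho_n^2)^2$ might also degenerate near the minimum by a dichotomy: either $\min_{[0,\delta]}(1-\rho_n^2)^2$ stays bounded away from zero (then $\boB(1-\rho_n)\to\infty$), or $\rho_n$ climbs back to values near $1$ within distance $\delta$, which violates the uniform H\"older bound once $\delta<1/(CR)^2$. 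You avoid the dichotomy altogether: restricting to the window $I=[x_0-r,x_0+r]$ with $r=1/(16R^2)$, the H\"older bound forces $\rho\leq 3/4$ on $I$, so $(1-\rho^2)^2\geq (7/16)^2$ there automatically, and the explicit computation of $\int_I (m+R|x-x_0|^{1/2})^{-2}dx$ produces the logarithmic divergence $\tfrac{4}{R^2}\log(1+R\sqrt r/m)$, hence an explicit, effective $\delta(R)$ rather than a mere existence statement. Both proofs rest on the same mechanism (the $H^1$-controlled square-root degeneracy of $\rho$ at its minimum); yours is more direct and quantitative, the paper's is softer but needs no explicit primitive. One small point to make explicit in your write-up: before passing from $\rho\leq m+R|x-x_0|^{1/2}$ to $\rho^2\leq (m+R|x-x_0|^{1/2})^2$ you need $\rho\geq 0$ (equivalently $m\geq 0$); this is exactly what the already-proved equivalence gives you, since $\boB(1-\rho)\leq R<\infty$ forces $1-\rho\in\NV$, so $m=\min_\R\rho>0$. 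Without that remark the displayed inequality would have to be stated with $|m|$, and the conclusion would only bound $|\inf_\R\rho|$ away from zero.
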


\begin{proof}
	We first prove the equivalence. Let $1-\rho\in \NV$, so that  $\rho$ is continuous, $\rho>0$ on $\R$ and $\rho(\pm\infty)=1$. Thus, taking $x_0\in\R$ such that $\rho(x_0)=\min_\R \rho>0$, we have
	\[8\boB(1-\rho)=\int_\R \frac{(1-\rho^2)^2}{\rho^2}=\int_\R \frac{(1-\rho)^2 (1+\rho)^2}{\rho^2}\leq\frac{\|1-\rho\|_{L^2(\R)}^2\|1+\rho\|_{L^\infty(\R)}^2}{\rho(x_0)^2}<+\infty.\]
For the converse implication, we consider 
$\rho\in 1+ H^1(\R)$ 
such that  $\min_{\R}\rho\leq 0$. Since $\rho(\pm\infty)=1$, we deduce that there is $x_0\in \R$ such that $\rho(x_0)=0$ and $\rho(x)>0$ for all $x>x_0$. 
Thus, the continuous embedding $H^1(\R)\subset\boC^{0,{1}/{2}}(\R)$ implies that
	\[\rho(x)^2=(\rho(x)-\rho(x_0))^2\leq \|1-\rho\|^2_{\boC^{0,\frac{1}{2}}(\R)}(x-x_0),
	\quad \text{for all } x>x_0,\]
so that 	 $\int_{x_0}^{x_0+\delta}{1}/{\rho^2}=+\infty$, for every $\delta>0$. Now we choose $\delta>0$ such that $\min_{[x_0,x_0+\delta]}(1-\rho^2)^2>0$. Hence,
	\[8\boB(1-\rho)\geq\int_{x_0}^{x_0+\delta}\frac{(1-\rho^2)^2}{\rho^2}\geq \min_{[x_0,x_0+\delta]}(1-\rho^2)^2\int_{x_0}^{x_0+\delta}\frac{1}{\rho^2}=+\infty.\]
	Thus the equivalence $\boB(1-\rho)<+\infty \iff 1-\rho\in\NV$ holds true.
	
	We turn now to the proof of the fact that $\rho\geq\delta$ provided that \eqref{rhoestimate} holds. First, we have already proved that any $\rho\in 1+H^1(\R)$ satisfying \eqref{rhoestimate} belongs to $1-\NV$. We argue now by contradiction and assume that there exist $R>0$ and a sequence $\{\rho_n\}\subset 1-\NV$ such that $\rho_n$ satisfies \eqref{rhoestimate} for all $n$ but $\min_\R\rho_n\to 0$ as $n\to\infty$. Since $\{\rho_n(\cdot+x_n)\}\subset 1+H^1(\R)$ still satisfies \eqref{rhoestimate} for any sequence $\{x_n\}\subset\R$, then we may assume without loss of generality that $\rho_n(0)=\min_\R\rho_n\to 0$ as $n\to\infty$. 
By the  embedding $H^1(\R)\subset\boC^{0,{1}/{2}}(\R)$, we deduce that there is a constant $C>0$ such that 
$\|1-\rho_n\|_{\boC^{0,\frac{1}{2}}(\R)}\leq C\|1-\rho_n\|_{H^1(\R)}\leq CR$ for all $n$, so that 
$$
\rho_n(x)\leq \rho_n(0)+CR\sqrt{x},\quad\text{ for all } x>0, \ \text{for all }n.
$$
We conclude as before that  $\lim_{n\to\infty}\int_0^\delta{1}/{\rho_n^2}=+\infty$, for every $\delta>0$. If there exists $\delta>0$ such that the sequence $\{\min_{[0,\delta]}(1-\rho_n^2)^2\}$ is bounded away from zero, then we may argue as above to conclude that $\lim_{n\to\infty}\boB(1-\rho_n)=+\infty$, a contradiction. Otherwise, for every $\delta>0$,  up a subsequence (that depends on $\delta$), $\lim_{n\to\infty}\min_{[0,\delta]}(1-\rho_n^2)^2=0$. For some fixed $\delta>0$ to be chosen below, we consider the mentioned subsequence, which we do not relabel, and we take $\{x_n\}\subset[0,\delta]$ such that $\rho_n(x_n)=\max_{[0,\delta]}\rho_n$. Observe that $\rho_n(x_n)\to 1$ as $n\to\infty$ and
	\[\rho_n(x_n)-\rho_n(0)\leq CR\sqrt{x_n}\leq CR\sqrt{\delta},\quad\text{ for all } n.\]
 As the left hand side of the previous inequality tends to one as $n\to\infty$, choosing $0<\delta<1/(CR)^2$ leads one more time to a contradiction. The proof is now concluded.
\end{proof}

Following the ideas of \cite{jeanjean99}, we observe that $\gamma_\gc( c)$ is a nonincreasing function of $c$. Therefore, its derivative $\gamma_\gc'( c)$ exists for almost every $c\in [\gc,\sqrt{2\sigma}]$. The points of differentiability of $\gamma_\gc$ will be crucial in our arguments. For this reason
we introduce the set 
\[\boD_{\mathfrak{c}}=\{c\in (\mathfrak{c},\sqrt{2\sigma}) :\  \gamma_{\mathfrak{c}}\text{ is differentiable at }c\}.\]
As we have pointed out, 
\begin{equation}
	\label{Dcmeasure}
	|\boD_\gc|=|(\gc,\sqrt{2\sigma})|=\sqrt{2\sigma}-\gc.
\end{equation}

Now we can state the following result due to Jeanjean \cite{jeanjean99} adapted to our setting.
\begin{lemma}\label{lemma:jeanjeanestimate}
	Assume that $\W$ satisfies  \ref{h:lowerboundWbis}. Let $c\in \boD_\gc$
and let $\{c_n\}$ be an increasing sequence such that $c_n\to c$. Then there exist a sequence $\{g_n\}\subset\Gamma(\gc)$ and a constant $R=R(\gamma'_\gc(c))>0$ such that the following holds:
	\begin{enumerate}
		\item For any $t\in [0,1]$ such that $J_c(g_n(t))\geq \gamma_\gc(c)-(c-c_n)$, we have the estimate \[\|g_n(t)\|_{H^1(\R)}+\boB(g_n(t))\leq R.\]
		\item $\max_{t\in [0,1]} J_c(g_n(t))\leq \gamma_\gc(c)+(-\gamma'_\gc(c)+2)(c-c_n).$
	\end{enumerate}
\end{lemma}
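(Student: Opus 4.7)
The plan is to implement the monotonicity trick of Struwe--Jeanjean, adapted to our singular functional. The key structural observation is that $\boB \geq 0$ on $\NV$, so that $c \mapsto J_c(v) = \boA(v) - c^2\boB(v)$ and, in turn, $c \mapsto \gamma_\gc(c)$ are nonincreasing on $(\gc, \sqrt{2\sigma})$, which is the reason for the a.e.\ differentiability underlying the definition of $\boD_\gc$. By the very definition of $\gamma_\gc(c_n)$ as an infimum, for each sufficiently large $n$ I pick a path $g_n \in \Gamma(\gc)$ satisfying
\[\max_{t \in [0,1]} J_{c_n}(g_n(t)) \leq \gamma_\gc(c_n) + (c - c_n).\]

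The whole argument hinges on the elementary identity $J_c(v) = J_{c_n}(v) - (c^2 - c_n^2)\boB(v)$, valid on $\NV$. Since $c_n < c$, this forces $J_c \leq J_{c_n}$ pointwise, and combining it with the previous choice and the expansion $\gamma_\gc(c_n) = \gamma_\gc(c) - \gamma_\gc'(c)(c - c_n) + o(c - c_n)$ from differentiability at $c$ immediately yields (ii) for $n$ large. For (i), given $t \in [0,1]$ with $J_c(g_n(t)) \geq \gamma_\gc(c) - (c - c_n)$, the same identity yields
\[(c - c_n)(c + c_n)\boB(g_n(t)) \leq \gamma_\gc(c_n) - \gamma_\gc(c) + 2(c - c_n),\]
and dividing by $(c - c_n)(c + c_n) \geq (c - c_n)(c + \gc) > 0$ and invoking differentiability once more gives $\boB(g_n(t)) \leq R_1$, where $R_1$ depends only on $\gamma_\gc'(c)$ and the fixed parameters $c,\gc$.

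The hard part is to upgrade the $\boB$ bound to an $H^1$ bound, because $\boA$ alone does not control $\|v\|_{H^1}$. A pointwise check shows that $s^2(2-s)^2/(1-s)^2 \geq s^2$ for every $s<1$, since $(2-s)/(1-s) > 1$; hence $\|v\|_{L^2}^2 \leq 8\boB(v)$, and $\|g_n(t)\|_{L^2}$ is uniformly bounded on the relevant subset. Combining the bounds on $\boB$ and $J_c$, we deduce $\boA(g_n(t)) = J_c(g_n(t)) + c^2\boB(g_n(t)) \leq C_\boA$. Using $\wh\W \in L^\infty$ together with Plancherel's identity,
\[\tfrac{1}{2}\|v'\|_{L^2}^2 = \boA(v) - \tfrac{1}{4}\int_\R (\W * \eta)\eta \leq C_\boA + \tfrac{\|\wh\W\|_{L^\infty}}{4}\|\eta\|_{L^2}^2, \qquad \eta = v(2-v).\]
Since $v < 1$ gives $\|\eta\|_{L^2}^2 \leq 8\|v\|_{L^2}^2 + 2\|v\|_{L^4}^4$, the one-dimensional Gagliardo--Nirenberg inequality $\|v\|_{L^4}^4 \leq C\|v\|_{L^2}^3\|v'\|_{L^2}$, together with the uniform bound on $\|v\|_{L^2}$, reduces the right-hand side to $C_1 + C_2\|v'\|_{L^2}$. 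Substituting back leaves a quadratic inequality in $\|v'\|_{L^2}$ which produces the desired uniform $H^1$ bound, with $R$ ultimately depending only on $\gamma_\gc'(c)$.
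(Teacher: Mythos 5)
Your argument is correct and takes essentially the same route as the paper, which simply invokes Jeanjean's Proposition 2.1 (almost-optimal paths for $J_{c_n}$, the identity $J_{c_n}-J_c=(c^2-c_n^2)\boB$, and differentiability of $\gamma_\gc$ at $c$) and then concludes "by coerciveness of $\boA$". The only step the paper leaves implicit---upgrading the bounds on $J_c$ and $\boB$ to the $H^1$ bound---you supply correctly, via $\|v\|_{L^2(\R)}^2\le 8\,\boB(v)$, Plancherel, and the one-dimensional Gagliardo--Nirenberg inequality leading to a quadratic inequality in $\|v'\|_{L^2(\R)}$.
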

\begin{proof}
	The proof is exactly as the one of \cite[Proposition 2.1]{jeanjean99}. We only point out that, in our case, we conclude using the coerciveness of $\boA$. We also stress that the estimate $\boB(g_n(t))\leq R$ follows directly from the proof of \cite[Proposition 2.1]{jeanjean99}, as it is also observed in \cite[Lemma 4.5]{ruiz-bellazzini}.
\end{proof}

The next result is also mainly due to Jeanjean \cite{jeanjean99}, but some crucial modifications are needed since $J_c$ is not of class $\boC^2$ in the whole space $H^1(\R)$. Thus we adapt the proof thanks to Lemmas~\ref{lemma:deformation} and \ref{lemma:singularity}.

\begin{proposition}\label{prop:key}
	Assume that $\W$ satisfies  \ref{h:lowerboundWbis}. Let  $c\in \boD_\gc$, let $R_c\coloneqq R(\gamma'_\gc(c))>0$ be given by Lemma~\ref{lemma:jeanjeanestimate} and let $\delta_c\coloneqq \delta(R_c)\in (0,1)$ be given by Lemma~\ref{lemma:singularity}.  For any $\alpha>0$ and $\delta\in (0,1)$, let us consider the set
	\[Y_{\alpha,\delta}= J_c^{-1}([\gamma_\gc(c)-\alpha,\gamma_\gc(c)+\alpha])\cap Z_\delta,\]
	where $Z_\delta$ is defined by \eqref{Z}. Then, $Y_{\alpha,\delta_c}$ is nonempty for every $\alpha>0$. Moreover,
	\begin{equation}
	\label{positiveinf}
		\inf\{\|J'_c(v)\|_{H^{-1}(\R)}: v\in Y_{\alpha,\delta_c}\}=0,\quad\text{ for all } \alpha>0.
	\end{equation}
\end{proposition}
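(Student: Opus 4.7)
The plan is to obtain the nonemptyness by direct construction using Lemmas~\ref{lemma:jeanjeanestimate} and~\ref{lemma:singularity}, and to establish \eqref{positiveinf} by a contradiction argument based on the deformation lemma.

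For the nonemptyness of $Y_{\alpha,\delta_c}$, I fix $\alpha>0$, choose an increasing sequence $\{c_n\}$ with $c_n\to c$, and consider the paths $g_n\in\Gamma(\gc)$ produced by Lemma~\ref{lemma:jeanjeanestimate}. Picking $t_n\in[0,1]$ with $J_c(g_n(t_n))=\max_t J_c(g_n(t))$, one has $J_c(g_n(t_n))\geq\gamma_\gc(c)>\gamma_\gc(c)-(c-c_n)$ by the definition of the mountain pass level, so property~(1) of Lemma~\ref{lemma:jeanjeanestimate} applies and gives $\|g_n(t_n)\|_{H^1(\R)}+\boB(g_n(t_n))\leq R_c$. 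Lemma~\ref{lemma:singularity} then yields $1-g_n(t_n)\geq\delta_c$ on $\R$, placing $g_n(t_n)$ in $Z_{\delta_c}$. Since property~(2) forces $J_c(g_n(t_n))\to\gamma_\gc(c)$, for $n$ large one has $g_n(t_n)\in Y_{\alpha,\delta_c}$.

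For \eqref{positiveinf}, I argue by contradiction. Suppose $\|J_c'(v)\|_{H^{-1}(\R)}\geq\eta$ for every $v\in Y_{\alpha_0,\delta_c}$, for some $\alpha_0,\eta>0$. The strategy is to apply Lemma~\ref{lemma:deformation} with $\gamma=\gamma_\gc(c)$, $\delta_3=\delta_c$, some $0<\delta_1<\delta_2<\delta_3$, and $\varepsilon>0$ small enough that $2\varepsilon<\alpha_0$, $2\varepsilon<\gamma_\gc(c)$, and $2\varepsilon/(\delta_3-\delta_2)\leq\eta$. The constraint $2\varepsilon<\gamma_\gc(c)$ ensures that the endpoints $0$ and $1-\phi_\gc$ of paths in $\Gamma(\gc)$ have $J_c$-values outside $[\gamma_\gc(c)-2\varepsilon,\gamma_\gc(c)+2\varepsilon]$, so they are fixed by the deformation $h$ by property~(2) of the lemma, and $\tilde g_n(t)\coloneqq h(1,g_n(t))$ defines a path in $\Gamma(\gc)$. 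Taking $n$ large enough that $c-c_n<\varepsilon$ and $\max J_c\circ g_n\leq\gamma_\gc(c)+\varepsilon$ (from property~(2) of Lemma~\ref{lemma:jeanjeanestimate}), I split the analysis at each $t$: if $J_c(g_n(t))\geq\gamma_\gc(c)-(c-c_n)$, then Lemmas~\ref{lemma:jeanjeanestimate} and~\ref{lemma:singularity} place $g_n(t)$ in $Z_{\delta_c}=Z_{\delta_3}$, and since $J_c(g_n(t))\leq\gamma_\gc(c)+\varepsilon$, property~(5) yields $J_c(\tilde g_n(t))\leq\gamma_\gc(c)-\varepsilon$; otherwise property~(4) gives $J_c(\tilde g_n(t))\leq J_c(g_n(t))<\gamma_\gc(c)-(c-c_n)$. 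In both cases $J_c(\tilde g_n(t))<\gamma_\gc(c)$, contradicting the definition of $\gamma_\gc(c)$.

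The principal obstacle is verifying the hypothesis of Lemma~\ref{lemma:deformation}, namely $\|J_c'\|_{H^{-1}(\R)}\geq 2\varepsilon/(\delta_3-\delta_2)$ on $J_c^{-1}([\gamma_\gc(c)-2\varepsilon,\gamma_\gc(c)+2\varepsilon])\cap Z_{\delta_1}$, a set that is a priori strictly larger than $Y_{\alpha_0,\delta_c}$ since $\delta_1<\delta_c$. The key observation is that any $v$ in this set satisfies $\|v\|_{H^1(\R)}\leq R_c+1-\delta_1$, which together with assumption \ref{h:lowerboundWbis} bounds $\boA(v)$, hence also $\boB(v)=c^{-2}(\boA(v)-J_c(v))$; Lemma~\ref{lemma:singularity} then provides a uniform positive lower bound on $1-v$. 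By choosing the parameter $R$ appearing in the definition \eqref{Z} of $Z_\delta$ large enough and taking $\delta_1$ sufficiently close to $\delta_c$, one arranges that every such $v$ lies in fact in $Y_{\alpha_0,\delta_c}$, whereby the $\eta$-bound transfers and $\varepsilon$ can be further shrunk to make $2\varepsilon/(\delta_3-\delta_2)\leq\eta$ compatible with all previous requirements.
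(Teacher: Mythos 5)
Your construction of an element of $Y_{\alpha,\delta_c}$ and the overall deformation/contradiction skeleton coincide with the paper's argument, and those parts are correct. The gap lies exactly at the point you identify as ``the principal obstacle'', and your proposed resolution of it does not work. The hypothesis of Lemma~\ref{lemma:deformation} requires the lower bound on $\|J_c'\|_{H^{-1}(\R)}$ on the set $J_c^{-1}([\gamma_\gc(c)-2\varepsilon,\gamma_\gc(c)+2\varepsilon])\cap Z_{\delta_1}$ with $\delta_1<\delta_2<\delta_3=\delta_c$, while the contradiction hypothesis gives it only on $Y_{\alpha_0,\delta_c}$, i.e.\ on the set built from $Z_{\delta_c}$. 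Since $\delta_1<\delta_c$, the inclusion goes the wrong way: $Z_{\delta_c}\subsetneq Z_{\delta_1}$, both because the norm threshold $R_c+1-\delta_1$ exceeds $R_c+1-\delta_c$ and because the pointwise constraint $v\leq 1-\delta_1$ is weaker than $v\leq 1-\delta_c$. Your attempt to reverse this by bounding $\boB(v)=c^{-2}(\boA(v)-J_c(v))$ and invoking Lemma~\ref{lemma:singularity} fails quantitatively: for $v\in Z_{\delta_1}$ in the prescribed $J_c$-band one only gets $\|v\|_{H^1(\R)}+\boB(v)\leq R'$ with $R'$ of the order of $R_c+1+c^{-2}\boA$-bound, which is in general strictly larger than $R_c$, so Lemma~\ref{lemma:singularity} yields $1-v\geq\delta(R')$ with $\delta(R')$ possibly strictly smaller than $\delta_c=\delta(R_c)$; this does not place $v$ in $Z_{\delta_c}$, and taking $\delta_1$ close to $\delta_c$ does not help because $R'$ does not tend to $R_c$. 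Enlarging the parameter $R$ in \eqref{Z} is not available either: the sets $Z_\delta$ and $Y_{\alpha,\delta_c}$ are fixed in the statement (with $R=R_c$, consistently with Proposition~\ref{prop:boundedPSsequence}), and enlarging $R$ only enlarges both sets without reversing the inclusion. There can always be functions $v$ with, say, $1-\delta_c<\max_\R v\leq 1-\delta_1$, on which your hypothesis gives no information about $\|J_c'(v)\|_{H^{-1}(\R)}$.

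The paper closes this gap by a genuinely different device, the technical Lemma~\ref{lemma:technical} in the appendix: assuming the infimum of $\|J_c'\|_{H^{-1}(\R)}$ over $X_{2\alpha,\delta_c}$ is positive, one uses that $J_c'$ is Lipschitz on each $Z_\delta$ (the second derivative \eqref{eq:Jderivative2} is uniformly bounded there, since elements stay away from the singularity) together with the dilation $v\mapsto\lambda v$ to show that every $v\in X_{\alpha,\bar\delta}$, for $\bar\delta$ slightly below $\delta_c$, lies within a fixed small $H^1$-distance of a point $\lambda v\in X_{\alpha,\delta_c}$, and hence inherits at least half of the gradient lower bound. Only after this quantitative transfer can one choose $\delta_1=\delta_c-2\tilde\delta$, $\delta_2=\delta_c-\tilde\delta$, $\delta_3=\delta_c$ and run the deformation argument as you describe. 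Without an argument of this type (or some substitute proving a uniform lower bound on the strictly larger set), your proof is incomplete at its decisive step.
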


\begin{proof}
	We first show that $Y_{\alpha,\delta_c}\not=\emptyset$ for any $\alpha>0$. Indeed, let $\{c_n\}\subset\R$ be an increasing sequence such that $c_n\to c$, and let $\{g_n\}\subset \Gamma(\gc)$ be the sequence given by Lemma~\ref{lemma:jeanjeanestimate}. For some $n$ to be chosen later, let $t_n\in [0,1]$ be such that 
	\[J_c(g_n(t_n))=\max_{t\in[0,1]}J_c(g_n(t)).\]
	We check now that $g_n(t_n)\in Y_{\alpha,\delta_c}$. On the one hand, by definition of $\gamma_\gc(c)$, it follows that 
	\begin{equation}
	\label{lowerineq}
		J_c(g_n(t_n))\geq\gamma_\gc(c)-(c-c_n).
	\end{equation} 
	In consequence, Lemmas~\ref{lemma:jeanjeanestimate} and \ref{lemma:singularity} imply that $g_n(t_n)\in Z_{\delta_c}$. On the other hand, from Lemma~\ref{lemma:jeanjeanestimate} we deduce that 
	\begin{equation}
	\label{upperineq}
		J_c(g_n(t_n))\leq\gamma_\gc(c)+(-\gamma'_\gc(c)+2)(c-c_n).
	\end{equation}
	Hence, bearing \eqref{lowerineq} and \eqref{upperineq} in mind, $n$ can be chosen large enough so that $g_n(t_n)\in Y_{\alpha,\delta_c}$.
	
	We turn now to the proof of \eqref{positiveinf}. Arguing by contradiction, let us assume that $\inf\{\|J'_c(v)\|_{H^{-1}(\R)}: v\in Y_{2\alpha,\delta_c}\}>0$ for some $\alpha>0$. Let us denote \[X_{2\alpha,\delta_c}= J_c^{-1}((\gamma_\gc(c)-2\alpha,\gamma_\gc(c)+2\alpha))\cap Z_{\delta_c}.\]
	Of course, $\inf\{\|J'_c(v)\|_{H^{-1}(\R)}: v\in X_{2\alpha,\delta_c}\}>0$ too. By Lemma~\ref{lemma:technical} in the appendix, there exists $\tilde{\delta}\in (0,\delta_c/2)$ such that
	\[\inf\{\|J'_c(v)\|_{H^{-1}(\R)}: v\in  X_{2\alpha,\delta_c-2\tilde{\delta}}\}>0.\]
	Therefore, we immediately have that
	\[\inf\{\|J'_c(v)\|_{H^{-1}(\R)}: v\in  Y_{\alpha,\delta_c-2\tilde{\delta}}\}>0.\]
	Let us choose $\varepsilon>0$ small enough so that 
	\begin{equation*}
	\label{lowerestimateJprime}
		2\varepsilon<\alpha,\quad \|J'_c(v)\|_{H^{-1}(\R)}\geq\frac{2\varepsilon}{\tilde\delta},\quad\text{ for all } v\in  Y_{\alpha,\delta_c-2\tilde{\delta}}.
	\end{equation*}
	Thus, we may apply Lemma~\ref{lemma:deformation} with $\delta_1=\delta_c-2\tilde{\delta}$, $\delta_2=\delta_c-\tilde{\delta}$ and $\delta_3=\delta_c$, so that there exists a continuous function $h:[0,1]\times \NV \to \NV$ satisfying items \ref{def1}-\ref{def4}. In the rest of the proof we will abuse of the notation and write $h(v)=h(1,v)$ for simplicity.
	
	Recall that $\{c_n\}$ is an increasing sequence such that $c_n\to c$ and $\{g_n\}\subset \Gamma(\gc)$ is given by Lemma~\ref{lemma:jeanjeanestimate}. We show now that $h\circ g_n\in \Gamma(\gc)$ for all $n$. Notice that, since $\gamma_\gc(c)>0$, for every $v\in\NV$ with $J_c(v)\leq 0$ one may choose $\varepsilon>0$ small enough so that $J_c(v)\not\in [\gamma_\gc(c)-2\varepsilon,\gamma_\gc(c)+2\varepsilon]$. In consequence, recalling that $J_c(0)=0$ and $J_c(1-\phi_\gc)<0$, we deduce that
	\[0, 1-\phi_\gc\in\NV\setminus J_c^{-1}([\gamma_\gc(c)-2\varepsilon,\gamma_\gc(c)+2\varepsilon]).\]
	Therefore, item \ref{def2} of Lemma~\ref{lemma:deformation} implies that $h(g_n(0))=h(0)=0$ and $h(g_n(1))=h(1-\phi_{\gc})=1-\phi_{\gc}$ for all $n$. In sum, $h\circ g_n\in \Gamma(\gc)$ for all $n$.
	
	We now claim that 
	\begin{equation}
	\label{maxJ}
		\max_{t\in[0,1]} J_c(h(g_n(t)))\leq \gamma_\gc(c)-(c-c_n)
	\end{equation}
	for some $n$ large enough. In order to prove the claim, let us fix $t\in [0,1]$. On the one hand, if $J_c(g_n(t))\leq\gamma_\gc(c)-(c-c_n)$, then item \ref{def3} of Lemma~\ref{lemma:deformation} implies that $J_c(h(g_n(t)))\leq \gamma_\gc(c)-(c-c_n)$. 
	
	On the other hand, if $J_c(g_n(t))>\gamma_\gc(c)-(c-c_n)$, then Lemma~\ref{lemma:jeanjeanestimate} implies that $\|g_n(t)\|_{H^1(\R)}+\boB(g_n(t))\leq R_c$. In turn, Lemma~\ref{lemma:singularity} yields that $1-g_n(t)\geq \delta_c$ on $\R$. In particular, $g_n(t)\in Z_{\delta_c}$. Moreover, by Lemma~\ref{lemma:jeanjeanestimate}, we have  $J_c(g_n(t))\leq \gamma_\gc(c)+(-\gamma'_\gc(c)+2)(c-c_n)$. Thus, by taking $n$ large enough (independent of $t$) so that $(-\gamma'_\gc(c)+2)(c-c_n)\leq\varepsilon$, we derive that $g_n(t)\in J_c^{\gamma_\gc(c)+\varepsilon}$. Therefore, item \ref{def4} of Lemma~\ref{lemma:deformation} implies that $J_c(h(g_n(t)))\leq \gamma_\gc(c)-\varepsilon\leq \gamma_\gc(c)-(c-c_n)$.
	
	In any case, since $t$ was arbitrary, we have shown that 
	\eqref{maxJ} holds for some $n$ large enough. This contradicts the definition of $\gamma_\gc(c)$.
\end{proof}

Next result shows the existence of a bounded Palais--Smale sequence of $J_c$ in $\NV$, for  $c\in \boD_\gc$.

\begin{proposition}\label{prop:boundedPSsequence}
	Assume that $\W$ satisfies \ref{h:lowerboundWbis}. For any  $c\in\boD_\gc$, there exist $R_c>0$, $\delta_c\in (0,1)$ and a sequence $\{v_n\}\subset\NV$ such that 
	\begin{equation}\label{eq:boundedPSsequence}
	\{v_n\}\subset Z_{\delta_c}, \quad J_c(v_n)\to \gamma_\gc(c),\quad
	\text{and }\quad  \|J'_c(v_n)\|_{H^{-1}(\R)}\to 0,
	\end{equation}
	where $Z_{\delta_c}$ is defined by \eqref{Z}. Moreover, up a subsequence,
	 $v_n\rightharpoonup v$ weakly in $H^1(\R)$, for some $v\in Z_{\delta_c}$ with $v\not\equiv 0$.
\end{proposition}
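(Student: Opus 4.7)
The plan is to extract the sequence directly from Proposition~\ref{prop:key} and then to ensure the weak limit is nontrivial via a Lions-type vanishing dichotomy. For the construction, for every $n\in\N$ one applies \eqref{positiveinf} with $\alpha=1/n$ to pick $v_n\in Y_{1/n,\delta_c}$ satisfying $\|J'_c(v_n)\|_{H^{-1}(\R)}\leq 1/n$. By the very definition of $Y_{1/n,\delta_c}$, this yields $v_n\in Z_{\delta_c}$ together with $|J_c(v_n)-\gamma_{\gc}(c)|\leq 1/n$, which is precisely \eqref{eq:boundedPSsequence}. Since $Z_{\delta_c}$ is bounded in $H^1(\R)$, a subsequence converges weakly in $H^1(\R)$ to some $v$, and the lower semicontinuity of the norm together with the local uniform convergence coming from the compact Sobolev embedding give $\|v\|_{H^1(\R)}\leq R+1-\delta_c$ and $v\leq 1-\delta_c$ on $\R$, so that $v\in Z_{\delta_c}$.

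The delicate point is to ensure $v\not\equiv 0$. Since $J_c$ and $Z_{\delta_c}$ are translation invariant, one is free to replace $\{v_n\}$ by translates without affecting \eqref{eq:boundedPSsequence}. I would then invoke the classical vanishing lemma of Lions to get the dichotomy: either (i) $\sup_{y\in\R}\int_{y-1}^{y+1}v_n^2\to 0$, which forces $v_n\to 0$ in $L^q(\R)$ for every $q\in(2,\infty)$; or (ii) there exist $\delta>0$ and $\{x_n\}\subset\R$ such that $\int_{x_n-1}^{x_n+1}v_n^2\geq\delta$ for all $n$. In case (ii), the translates $\tilde v_n(\cdot)=v_n(\cdot+x_n)$ still belong to $Z_{\delta_c}$ and still satisfy \eqref{eq:boundedPSsequence}; any weak limit $v\in Z_{\delta_c}$ of a subsequence fulfills $\int_{-1}^{1}v^2\geq\delta$ by strong $L^2_{\loc}$ convergence, and therefore $v\not\equiv 0$.

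The main obstacle is ruling out (i). The idea is to establish the algebraic identity
\[
J_c(v)-\tfrac{1}{2}J'_c(v)(v)=\tfrac{1}{4}\int_\R (\W\ast f(v))\,v^2+\tfrac{c^2}{8}\int_\R \frac{v^3(2-v)}{(1-v)^3},\quad v\in\NV,
\]
which follows from the explicit formulas in Lemma~\ref{lemma:Jsmooth} together with the elementary identities $f(v)-2(1-v)v=v^2$ and $4h(v)v-\tfrac{v^2(2-v)^2}{(1-v)^2}=\tfrac{v^3(2-v)}{(1-v)^3}$. Under (i), Lions' conclusion provides $\|v_n\|_{L^4(\R)}\to 0$ and $\|v_n\|_{L^3(\R)}\to 0$; combined with the uniform bound $\|\W\ast f(v_n)\|_{L^2(\R)}\leq\|\wh\W\|_{L^\infty(\R)}\|f(v_n)\|_{L^2(\R)}$ from \eqref{W-22} (which uses $|f(v_n)|\leq C|v_n|$ and $\{v_n\}$ bounded in $L^2(\R)$) and the pointwise bound $1-v_n\geq\delta_c$, both terms on the right-hand side of the identity tend to zero. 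Meanwhile, $|J'_c(v_n)(v_n)|\leq\|J'_c(v_n)\|_{H^{-1}(\R)}\|v_n\|_{H^1(\R)}\to 0$. Combining these facts gives $J_c(v_n)\to 0$, which contradicts $\gamma_{\gc}(c)>0$ provided by Lemma~\ref{lemma:localminimum}. Once (i) is discarded, case (ii) produces, after translation and extraction of a subsequence, the desired nontrivial weak limit $v\in Z_{\delta_c}$.
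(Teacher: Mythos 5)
Your argument is correct and follows essentially the same route as the paper: you extract the sequence from Proposition~\ref{prop:key} with $\alpha=1/n$, rule out vanishing via the identity $J_c(v)-\tfrac12 J_c'(v)(v)$ (which is exactly the identity of Lemma~\ref{lem:J:iden} up to a factor of $2$, correctly re-derived), and then translate to produce a nontrivial weak limit in $Z_{\delta_c}$. The only cosmetic difference is that you invoke Lions' vanishing dichotomy directly, whereas the paper uses the equivalent characterization in Lemma~\ref{lemma:wellknown}.
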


Before proving this proposition, let us recall a  well-known result 
(see e.g.\ Lemma~3.3 in \cite{tintarev-fieseler}).
\begin{lemma}\label{lemma:wellknown}
	Let $\{v_n\}\subset H^1(\R)$ be a bounded sequence and let $q\in (2,\infty)$.
	Then  
 $v_n\to 0$ in $L^q(\R)$ if and only if 
 $v_n(\cdot +y_n)\wto 0$, for every sequence $\{y_n\}\subset \R$.
\end{lemma}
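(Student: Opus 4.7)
The plan is to prove the two directions separately. The forward implication $(\Rightarrow)$ is a short consequence of translation invariance of $L^q$ and the Rellich--Kondrachov theorem, while the reverse implication $(\Leftarrow)$ is a one-dimensional version of Lions' vanishing lemma, which I would prove by a covering argument paired with the Sobolev embedding on unit intervals.

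For $(\Rightarrow)$, assume $v_n\to 0$ in $L^q(\R)$ and fix an arbitrary sequence $\{y_n\}\subset\R$. By translation invariance of the $L^q$ norm, $\|v_n(\cdot+y_n)\|_{L^q(\R)}=\|v_n\|_{L^q(\R)}\to 0$. The sequence $\{v_n(\cdot+y_n)\}$ is bounded in $H^1(\R)$, so from any subsequence I may extract a further subsequence that converges weakly in $H^1(\R)$ to some limit $w$. Compactness of $H^1(I)\hookrightarrow L^q(I)$ on any bounded interval $I$ yields $v_n(\cdot+y_n)\to w$ strongly in $L^q_{\loc}(\R)$, which combined with the $L^q(\R)$ convergence to $0$ forces $w\equiv 0$. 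Since every subsequence of $\{v_n(\cdot+y_n)\}$ has a further subsequence converging weakly to the same limit $0$, the whole sequence satisfies $v_n(\cdot+y_n)\wto 0$ in $H^1(\R)$.

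For $(\Leftarrow)$, I argue by contrapositive. Assume $\|v_n\|_{L^q(\R)}\not\to 0$, and, after passing to a subsequence, $\|v_n\|_{L^q(\R)}\geq\delta_0>0$ for all $n$. Decomposing $\R=\bigcup_{k\in\Z}I_k$ with $I_k=[k,k+1]$ and using $q>2$, I split
\[\|v_n\|_{L^q(\R)}^q=\sum_{k\in\Z}\|v_n\|_{L^q(I_k)}^q\leq\Big(\sup_{k\in\Z}\|v_n\|_{L^q(I_k)}\Big)^{q-2}\sum_{k\in\Z}\|v_n\|_{L^q(I_k)}^2.\]
By translation invariance, the constant in the Sobolev embedding $H^1(I_k)\hookrightarrow L^q(I_k)$ is independent of $k$, so
\[\sum_{k\in\Z}\|v_n\|_{L^q(I_k)}^2\leq C\sum_{k\in\Z}\|v_n\|_{H^1(I_k)}^2=C\|v_n\|_{H^1(\R)}^2,\]
which is uniformly bounded in $n$. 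Therefore $\sup_k\|v_n\|_{L^q(I_k)}\geq\delta_1>0$ for all $n$. Picking $k_n\in\Z$ with $\|v_n\|_{L^q(I_{k_n})}\geq\delta_1/2$ and setting $y_n=k_n$, the translates $\tilde v_n=v_n(\cdot+y_n)$ satisfy $\|\tilde v_n\|_{L^q([0,1])}\geq\delta_1/2$. Extracting a weakly convergent subsequence $\tilde v_n\wto w$ in $H^1(\R)$ and invoking the compactness of $H^1([0,1])\hookrightarrow L^q([0,1])$, I obtain $\|w\|_{L^q([0,1])}\geq\delta_1/2>0$, hence $w\not\equiv 0$. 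This contradicts the hypothesis that $v_n(\cdot+y_n)\wto 0$ for every choice of $\{y_n\}$.

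The main delicate point is ensuring that the Sobolev constant in $H^1(I_k)\hookrightarrow L^q(I_k)$ is genuinely independent of $k$, so that the sum over the covering reassembles into the global $H^1(\R)$ norm without loss; this is immediate from translation invariance but is the crux of the covering estimate. The remaining ingredients, namely routine extraction of a weakly convergent subsequence and Rellich-type upgrading to strong $L^q$ convergence on bounded intervals, are standard.
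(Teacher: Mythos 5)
Your proof is correct. The paper does not prove this lemma itself but simply cites Lemma~3.3 in the book of Tintarev and Fieseler, and your argument (translation invariance plus Rellich for the forward direction; the unit-interval covering estimate $\sum_k\|v_n\|_{L^q(I_k)}^q\leq(\sup_k\|v_n\|_{L^q(I_k)})^{q-2}\,C\,\|v_n\|_{H^1(\R)}^2$ plus local compactness for the converse) is precisely the standard proof of that cited vanishing-type result, so there is nothing to add beyond the trivial bookkeeping of extending $\{y_n\}$ from your subsequence to the full sequence before invoking the hypothesis.
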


We will also need the  following key lemma.
\begin{lemma}
	Let $v\in\boN\boE(\R)$ and $c>0$. Then the following identity holds
	\label{lem:J:iden}
\begin{equation}
	\label{J:iden}
		2 J_c(v) - J_c'(v)(v)= \frac{1}{2}\int_\R (\W\ast(v(2-v)) )v^2+\frac{c^2}{4}\int_\R\frac{v(2-v) v^2}{(1-v)^3}.
\end{equation}
		In particular, if $v\leq 1-\delta$ on $\R$, then 
\begin{equation}
	\label{identidad:L4}
\abs{2 J_c(v) - J_c'(v)(v)}\leq 
\max\left\{\frac{\|\wh\W\|_{L^\infty(\R)}}{2},\frac{c^2}{4\delta^3}\right\}
(2+\|v\|_{L^\infty(\R)} ) \|v\|_{L^2(\R)}\|v\|_{L^4(\R)}^2.
\end{equation}
\end{lemma}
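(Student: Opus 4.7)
The strategy is a direct algebraic computation using the explicit formulas for $J_c$ and $J_c'(v)(v)$ given in Lemma~\ref{lemma:Jsmooth}, followed by elementary estimates. I would split $2J_c(v)-J_c'(v)(v)$ into three pieces according to the three summands defining $J_c$.

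The kinetic piece is trivial: $2\cdot\frac12\int_\R (v')^2 - \int_\R(v')^2=0$.
For the nonlocal piece, I use that $f(v)=v(2-v)$ and would compute
\[
2\cdot\tfrac14\int_\R (\W\ast f(v))f(v)-\int_\R(\W\ast f(v))(1-v)v
=\int_\R (\W\ast f(v))\bigl[\tfrac12 f(v)-(1-v)v\bigr].
\]
The algebraic identity $\tfrac12 f(v)-(1-v)v=\tfrac12 v^2$ gives exactly the first term on the right-hand side of \eqref{J:iden}. For the singular piece, I combine the two contributions over the common denominator $(1-v)^3$:
\[
-\tfrac{c^2}{4}\int_\R \frac{v^2(2-v)^2}{(1-v)^2}+c^2\int_\R h(v)v
=\tfrac{c^2}{4}\int_\R \frac{v^2(2-v)}{(1-v)^3}\bigl[(v^2-2v+2)-(2-v)(1-v)\bigr].
\]
The key algebraic identity here is $(v^2-2v+2)-(2-v)(1-v)=v$, which yields the second term on the right-hand side of \eqref{J:iden}. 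This establishes \eqref{J:iden}.

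For \eqref{identidad:L4}, I would bound each of the two integrals with essentially the same two ingredients: Cauchy--Schwarz and the pointwise estimate $|2-v|\leq 2+\|v\|_{L^\infty(\R)}$, giving $\|v(2-v)\|_{L^2(\R)}\leq (2+\|v\|_{L^\infty(\R)})\|v\|_{L^2(\R)}$, together with $\|v^2\|_{L^2(\R)}=\|v\|_{L^4(\R)}^2$. For the first term I invoke \eqref{W-22} to write
\[
\Bigl|\tfrac12\int_\R(\W\ast (v(2-v)))v^2\Bigr|
\leq \tfrac{\|\wh\W\|_{L^\infty(\R)}}{2}\|v(2-v)\|_{L^2(\R)}\|v\|_{L^4(\R)}^2.
\]
For the singular term, the hypothesis $v\leq 1-\delta$ on $\R$ gives $(1-v)^{-3}\leq \delta^{-3}$, and Cauchy--Schwarz applied to $|v|\cdot|v(2-v)|$ yields
\[
\Bigl|\tfrac{c^2}{4}\int_\R \tfrac{v(2-v)v^2}{(1-v)^3}\Bigr|
\leq \tfrac{c^2}{4\delta^3}\|v(2-v)\|_{L^2(\R)}\|v\|_{L^4(\R)}^2.
\]
Adding the two estimates and factoring out $(2+\|v\|_{L^\infty(\R)})\|v\|_{L^2(\R)}\|v\|_{L^4(\R)}^2$ concludes the proof (bounding the sum of the two prefactors by a constant multiple of their maximum as in the statement).

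No step here is a substantial obstacle: the identity is purely algebraic and the inequality is a textbook application of Cauchy--Schwarz together with the Fourier-multiplier bound \eqref{W-22}. The only minor subtlety is the algebraic simplification in the singular piece, which must be carried out carefully to recognize the common factor $v(2-v)/(1-v)^3$ and the cancellation $(v^2-2v+2)-(2-v)(1-v)=v$.
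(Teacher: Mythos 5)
Your proof is correct and follows essentially the same route as the paper's: the identity \eqref{J:iden} comes from the cancellation of the kinetic term together with the algebraic simplifications $\tfrac12 f(v)-(1-v)v=\tfrac12 v^2$ and $(v^2-2v+2)-(2-v)(1-v)=v$ (the paper carries out the same computation written in the variables $\rho=1-v$, $\eta=v(2-v)$), and the bound \eqref{identidad:L4} is obtained exactly as in the paper by estimating each term via \eqref{W-22}, Cauchy--Schwarz and $(1-v)^{-3}\le\delta^{-3}$. The only caveat concerns your final parenthetical: the term-by-term estimate yields the \emph{sum} of the two prefactors, hence at most twice the stated maximum rather than the maximum itself; the paper's own proof does the same term-by-term bound and simply records the max, and since \eqref{identidad:L4} is only used qualitatively (to deduce that $2J_c(v_n)-J_c'(v_n)(v_n)\to 0$ whenever $\|v_n\|_{L^4(\R)}\to 0$), this factor is immaterial.
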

\begin{proof}
Let $\rho=1-v$ and $\eta=1-\rho^2$, so that $\eta=v(2-v)$. From \eqref{eq:Jderivative1}, we have
\begin{align*}
2 J_c(v) - J_c'(v)(v)=&
	\frac{1}{2}\int_\R (\W\ast \eta )(\eta-2(1-v) v) -\frac{c^2}{4}\int_\R\left(\frac{\eta^2}{\rho^2}-\frac{(1-\rho^4)(1-\rho)}{\rho^3}\right)
	\\
	&= \frac{1}{2}\int_\R (\W\ast\eta)v^2+\frac{c^2}{4}\int_\R\frac{\eta v^2}{\rho^3},
\end{align*}
which is \eqref{J:iden}. Finally, the right-hand side
can be bounded from above by
\begin{align*}
	&\max\left\{\frac{\|\wh\W\|_{L^\infty(\R)}}{2},\frac{c^2}{4\delta^3}\right\}\|\eta\|_{L^2(\R)}\|v\|_{L^4(\R)}^2,
\end{align*}
which, using that $\|\eta\|_{L^2(\R)}\leq \|2-v\|_{L^\infty(\R)}\|v\|_{L^2(\R)}$, yields \eqref{identidad:L4}.
\end{proof}

\begin{proof}[Proof of Proposition~\ref{prop:boundedPSsequence}]
	Applying Proposition~\ref{prop:key} with $\alpha=1/n$, we deduce the existence of $R_c$, $\delta_c$ and a sequence   $\{w_n\}\subset\NV$ satisfying 
	 \eqref{eq:boundedPSsequence}.
Since $\{w_n\}$  is  bounded  in $H^1(\R)$ and $\gamma_\gc(c)\neq 0$,	 we infer from \eqref{identidad:L4} that  
  $w_n\not\to 0$ strongly in $L^4(\R)$. Then we deduce from Lemma~\ref{lemma:wellknown}
the existence of  a  sequence $\{y_n\}\subset \R$ such that $ v_n:= {w_n(\cdot +y_n)}$ does not converge weakly to $0$ in $H^1(\R)$. Of course, $\{{v}_n\}\subset  Z_{\delta_c}$ still satisfies \eqref{eq:boundedPSsequence}. Thus, there exists a subsequence (that we do not relabel) such that $v_n\rightharpoonup v$ in $H^1(\R)$ for some $v\not\equiv 0$. In addition, since the $\|\cdot\|_{H^1(\R)}$-norm is weakly lower semicontinuous, then $\|v\|_{H^1(\R)}\leq R_c+1-\delta_c$. Moreover, since $v_n\to v$ pointwise on $\R$, it follows that $v\leq 1-\delta_c$ on $\R$, so that $v\in Z_{\delta_c}$. 
\end{proof}

Finally, we proceed to prove Theorem~\ref{thm:existenceae}, that establishes the existence of solitons 
for a.e.\ $c\in (0,\sqrt {2\sigma}).$ 

\begin{proof}[Proof of Theorem \ref{thm:existenceae}]
		Let us fix $c\in \boD_\gc$ and let $\{v_n\}\subset H^1(\R)$ be the sequence given by Proposition~\ref{prop:boundedPSsequence}. We set as usual  $\rho_n=1-v_n$ and $\eta_n=1-\rho_n^2$. Then, by Remark~\ref{rem:eq:v},
	\[J_c'(v_n)(\phi)=\int_\R v_n'\phi'+\int_\R (\W\ast \eta_n)\rho_n\phi -\frac{c^2}{4}\int_\R\frac{1-\rho_n^4}{\rho_n^3}\phi\to 0,\]
for all $\phi\in \boC_0^\infty(\R)$, and $v_n\rightharpoonup v$ weakly in $H^1(\R)$ for some $v\in Z_\delta$ with $v\not\equiv 0$.
Observe that, by the Sobolev embedding \eqref{sobolev:sharp}, 
${\eta_n}$ and $1-\rho_n^4$ are also bounded in $H^1(\R)$.
 Thus, by setting $\rho=1-v$ and 
$\eta=1-v^2$,
we deduce by the uniqueness of the limit that, up to subsequences, 
\begin{align}
		\label{conv:weak}
\eta_n \rightharpoonup \eta
\quad\text{and}\quad 
1-\rho_n^4\rightharpoonup 1-\rho^4 
 \quad &\text{ in }H^1(\R),\\
 	\label{conv:loc}
 v_n\to v
 \quad\text{and}\quad 
  \quad \rho_n\to \rho,\quad &\text{ in }L^\infty_{\text{loc}}(\R).
\end{align}

We will show now that  we can pass to the limit the three integral terms. The first one is trivial. For the second term, observe that by \eqref{W-22}, the convolution is continuous
in $L^2(\R)$, which combined with \eqref{conv:weak} and the Rellich theorem
 implies that, 
  \begin{equation*}
 	\W\ast \eta_n\to \W\ast \eta \quad \text{ in }L^\infty_{\text{loc}}(\R).
 \end{equation*}
     Since $\phi\in \boC_0^\infty(\R)$, using also \eqref{conv:loc}, we conclude that   $(\W\ast\eta_n)\rho_n\phi\to (\W\ast\eta)\rho \phi$ in $L^1(\R)$.
     
For the last term, using that  $(1-v_n)^3\geq\delta^3$ on $\R$ for all $n$, we similarly deduce that 
$$
\int_\R\frac{1-\rho_n^4}{\rho_n^3}\phi\to 
\int_\R\frac{1-\rho^4}{\rho^3}\phi.
$$
	
	Gathering all together, we have proved that $J'(v)=0$, so that $\rho=1-v\in 1+H^1(\R)$ is a nontrivial positive solution to \eqref{eq:rho}. Moreover, by elliptic regularity, we infer that  $\rho\in\boC^\infty(\R)$. Therefore, by virtue of Proposition~\ref{prop:hydrodynamic}, it follows that $u=\rho e^{i\theta}$, with $\theta$ defined by \eqref{eq:deftheta}, belongs to $\boN\boE(\R)$ and is a nontrivial solution to \eqref{TWc}. 
	

In conclusion, we have shown that there exists a solution $u\in\boN\boE(\R)$ to \eqref{TWc} for any $c\in\boD_\mathfrak{c}$.  Thus, the same holds for every $c\in\boD$, where \[\boD:=\bigcup_{\mathfrak{c}\in (0,\sqrt{2\sigma})} \boD_\mathfrak{c}\subset (0,\sqrt{2\sigma}).\]  
By \eqref{Dcmeasure} we have
\[\sqrt{2\sigma}-\mathfrak{c}=|\boD_\mathfrak{c}|\leq |\boD|\leq \sqrt{2\sigma},\quad\text{for all } \mathfrak{c}\in (0,\sqrt{2\sigma}).\]
Taking limits as $\mathfrak{c}\to 0$, we conclude that   $|\boD|=\sqrt{2\sigma}$,
which proves the theorem.
\end{proof}

\begin{proof}[Proof of Corollary \ref{cor:condition12}]
Since  $\wh \W\geq 0$ a.e.\  and $\wh \W(\xi)	\geq \sigma- \xi^2/2$, for a.e.\ $\abs{\xi}\leq \sqrt{2\sigma}$,  it is immediate to check that for every $\tilde \sigma\in (0,\sigma)$, we have
$\wh \W(\xi)	\geq \tilde \sigma- \kappa_\sigma \xi^2/2$, for a.e.\ $\xi\in\R$, where $\kappa_\sigma=\tilde\sigma/(2\sigma)$. By Theorem~\ref{thm:existenceae}, we conclude the existence for a.e.\ speed in
every the interval $]0, \sqrt{2\tilde \sigma}[$, for any $\tilde \sigma\in (0,\sigma)$, which yields the existence 
for a.e. $c\in (0, \sqrt{2 \sigma})$.
\end{proof}

\section{Existence in the whole subsonic regime}
\label{sec:allc}
Our next goal is to provide conditions on $\W$ in order to extend Theorem~\ref{thm:existenceae} and conclude the existence of solution for every subsonic speed. For this reason, we introduce the following assumptions.

\begin{enumerate}[label=(H\arabic*),ref=\textup{({H\arabic*})}]
	\setcounter{enumi}{1}
	\item \label{h:derivative}
	$\wh\W\in W^{1,\infty}(\R)$. In addition either $\wh\W\in W^{2,\infty}(\R)$, or the map $\xi\mapsto \xi\big(\wh\W\big)'(\xi)$ is bounded and continuous a.e.\ on $\R$. 
	\item 
	\label{h:restrictive} $\wh\W \in W_{\loc}^{1,\infty}(\R)$  and there exists $m\in [0,1)$ such that $\big(\wh\W\big)'(\xi)\geq-m\xi$ for a.e.\  $\xi>0$. Moreover,  $\wh\W(0)=1$ and $\widehat{ \mathcal  W}\geq 0$ on $\mathbb R$. 
	\item 
	\label{W:infty} 
	$\W$ is given by a (signed) finite Borel measure.  In particular, there is a constant $\norm{\W}$  such that 
$\norm{\W*f}_{L^p(\R)}\leq \norm{\W} \norm{f}_{L^p(\R)},$ for all $f\in {L^p(\R)},\ p\in [1,\infty]$. 
\item\label{W:apriori} 
There exists a continuous function $V_0: (0,\sqrt{2})\to (0,\infty)$ such that 
for any $u\in \boN\boE(R)$ solution to \eqref{TWc}, with  $c\in (0,\sqrt{2})$,  we have
$\norm{u}_{L^\infty(\R)}\leq V_0(c)$.
\end{enumerate}

			Recall that we are always assuming that \ref{H0} holds.
	Observe that if  $\wh\W \in W_{\loc}^{1,\infty}(\R)$, we can assume that $\wh  \W$ is continuous by the Sobolev embedding theorem, so that the condition $\wh \W(0)=1$ in \ref{h:restrictive} is meaningful. By integration, we also deduce that	if  \ref{h:restrictive} holds, then 
	\begin{equation}\label{desigualdad-W}
		\wh \W(\xi)\geq 1-m\xi^2/2, \quad \text{ for all }\xi\in\R.
	\end{equation}	
	In particular, \ref{h:lowerboundWbis} is satisfied with $\sigma=1$ and $\kappa=m/2$, and  Theorem~\ref{thm:existenceae} gives the existence of solitons for a.e.\ $c\in(0,\sqrt 2)$.
	Finally,  let us remark that the condition $\wh \W(0)=1$ is only a normalization of the potential, so that the speed of sound is from now on fixed and equal to $c_*(\W)=\sqrt 2$. Indeed, if $\wh\W(0)\not=0$, making a change of variable we can replace $\wh \W(\xi)$ with $\wh \W(\xi)/\wh \W(0)$ as in \cite{delaire-mennuni}, which gives the normalization.



Concerning  \ref{W:infty},
invoking   the results in  \cite[Section 2]{grafakos} and \cite{folland},
we see that if $f\mapsto \W*f$ is a bounded linear operator from 
$L^1(\R)$ to itself, then $\W$ is given by a  finite Borel measure $\mu$, i.e.
\begin{equation}
	(\W*f)(x)=\int_\R f(x-y)d\mu(y), \quad \text{for all }f\in L^p(\R), \ p\in[1,\infty].
\end{equation}
Thus $\norm{\W}=\int_\R d\abs{\mu (y)}$ and \eqref{W:even} holds for $f\in L^p(\R)$, $g\in L^{p'}(\R)$. In addition, $\wh \mu$ is continuous, with 
$\wh \mu(\xi)=\int_\R e^{-ix\xi} d\mu(x)$. Consequently, if \ref{h:restrictive} also holds, 
then 
\begin{equation}
	\label{Wconv1}
	\wh \W(0)= \wh \mu(0)=\W*1=1.
\end{equation}

Now we can state our main theorem concerning the existence of solitons  for every subsonic speed.
\begin{theorem}
	\label{thm:existenceallc}
	Let $c\in (0,\sqrt{2})$ and assume that $\W$ satisfies   \ref{h:derivative}, \ref{h:restrictive}, \ref{W:infty} and \ref{W:apriori}. Assume in addition that $mV_0(c)^2<1$, where $m$ and $V_0(c)$ are given by \ref{h:restrictive} and \ref{W:apriori} respectively. Then there exists a nontrivial solution $u\in\boN\boE(\R)$ to \eqref{TWc}.
\end{theorem}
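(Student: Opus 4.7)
I would construct the solution at the fixed speed $c\in(0,\sqrt{2})$ as a limit of the solutions at nearby speeds provided by Theorem~\ref{thm:existenceae}, combined with a profile decomposition argument in $H^{1}(\R)$ to compensate for the loss of compactness.

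First, fix $\gc\in(0,c)$. Since $\boD_\gc$ has full measure in $(\gc,\sqrt{2})$, one can pick $\{c_n\}\subset \boD_\gc$ with $c_n\nearrow c$. For each $n$, the proof of Theorem~\ref{thm:existenceae} yields a nontrivial solution $u_n=\rho_n e^{i\theta_n}\in\boN\boE(\R)$ of $(S(\W,c_n))$, where $v_n:=1-\rho_n$ is a mountain pass critical point of $J_{c_n}$ at the level $\gamma_\gc(c_n)$. The monotonicity of $\gamma_\gc$ gives a uniform upper bound $\gamma_\gc(c_n)\leq \gamma_\gc(\gc^+)$, and monotone convergence gives $\gamma_\gc(c_n)\to\gamma_\gc(c^-)\geq \gamma_\gc(c)>0$.

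Next, I would establish uniform $H^{1}(\R)$-bounds on $\{v_n\}$. By \ref{W:apriori}, $\|u_n\|_{L^\infty(\R)}\leq V_0(c_n)\to V_0(c)$, so $\rho_n$ is bounded and $(\eta_n')^2=4\rho_n^2(v_n')^2\leq (4V_0(c)^2+o(1))(v_n')^2$, where $\eta_n=1-\rho_n^2$. The lower bound $\wh\W(\xi)\geq 1-m\xi^2/2$ of \ref{h:restrictive} combined with Plancherel's identity yields
\[
\int_\R(\W*\eta_n)\eta_n \geq \int_\R\eta_n^2-\frac{m}{2}\int_\R(\eta_n')^2 \geq \int_\R\eta_n^2-(2mV_0(c)^2+o(1))\int_\R(v_n')^2.
\]
Inserting this into $J_{c_n}(v_n)$ and using Lemma~\ref{lem:J:iden} with $J_{c_n}'(v_n)=0$ to absorb the singular term $\int_\R\eta_n^2/\rho_n^2$, the hypothesis $mV_0(c)^2<1$ renders the resulting quadratic expression coercive in $v_n$, producing a uniform bound on $\|v_n\|_{H^{1}(\R)}+\boB(v_n)$. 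Lemma~\ref{lemma:singularity} then supplies a uniform $\delta\in(0,1)$ with $v_n\leq 1-\delta$ on $\R$, i.e.\ $\rho_n\geq \delta$.

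Third, using the identities
\[
J_c(v_n)-J_{c_n}(v_n)=(c_n^2-c^2)\boB(v_n),\qquad J_c'(v_n)-J_{c_n}'(v_n)=-(c^2-c_n^2)\,h(v_n),
\]
together with the $L^2(\R)$-boundedness of $h(v_n)$ (which follows from $\rho_n\geq\delta>0$ and the $H^1$-bound on $v_n$), the sequence $\{v_n\}$ is a bounded Palais--Smale sequence for $J_c$ at the level $\gamma_\gc(c^-)>0$. I would then invoke the profile decomposition theorem in $H^{1}(\R)$: up to a subsequence, there exist translations $\{y_n^j\}$ with $|y_n^i-y_n^j|\to\infty$ for $i\neq j$, profiles $\phi_j\in H^1(\R)$, and remainders with $\lim_k\limsup_n\|r_n^k\|_{L^p(\R)}=0$ for every $p\in(2,\infty)$, such that $v_n(x)=\sum_{j=1}^k \phi_j(x-y_n^j)+r_n^k(x)$. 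Assumption \ref{W:infty} (boundedness of convolution by $\W$ on every $L^p$) and the regularity of $\wh\W$ from \ref{h:derivative} let one pass to the limit in both the nonlocal and the singular terms, so each nontrivial profile $\phi_j$ is a critical point of $J_c$ in $\NV$ (with $\phi_j\leq 1-\delta$ from pointwise convergence). Since $J_c(0)=0<\gamma_\gc(c^-)$, at least one profile must be nonzero; taking $v$ to be such a profile, Remark~\ref{rem:eq:v} and Proposition~\ref{prop:hydrodynamic} produce the desired nontrivial $u=(1-v)e^{i\theta}\in\boN\boE(\R)$ solving \eqref{TWc}.

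The main obstacle is the interplay between the uniform $H^1$-bound, which crucially relies on $mV_0(c)^2<1$ to offset the negative contribution $-\tfrac{m}{2}\int(\eta_n')^2$ arising from \ref{h:restrictive}, and the nonvanishing step of the profile decomposition, which must rule out the pathological infinite-energy solutions $u_r^{\pm}$ of constant modulus. The latter is automatic once one works inside the $H^1$-framework, since these solutions correspond to nonzero constant values of $1-\rho$, which do not lie in $H^1(\R)$; and the strict positivity of $\gamma_\gc(c^-)$ guarantees that at least one nontrivial profile survives.
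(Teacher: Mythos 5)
Your overall strategy (solutions at nearby speeds plus a compactness argument) is the same as the paper's, but the step on which everything hinges --- the uniform bound on $\|v_n\|_{H^1(\R)}+\boB(v_n)$ --- is asserted rather than proved, and the argument you sketch for it does not work. Inserting \ref{h:restrictive} into $J_{c_n}$ and using $mV_0(c)^2<1$ gives at best (this is \eqref{J-elliptica}) $J_{c_n}(1-\rho_n)\geq \frac12\int_\R(1-m\rho_n^2)(\rho_n')^2+\frac14\int_\R\bigl(1-\tfrac{c_n^2}{2\rho_n^2}\bigr)\eta_n^2$, which controls the gradient but not $\int_\R\eta_n^2$: the coefficient $1-c_n^2/(2\rho_n^2)$ is negative wherever $\rho_n<c_n/\sqrt2$, and a bounded mountain-pass level gives no control of how much of $\eta_n^2$ sits on that region. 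The identity of Lemma~\ref{lem:J:iden} cannot ``absorb'' the singular term either: with $J_{c_n}'(v_n)(v_n)=0$ it only says $2J_{c_n}(v_n)=\frac12\int_\R(\W*\eta_n)v_n^2+\frac{c_n^2}{4}\int_\R\eta_n v_n^2/\rho_n^3$, and the right-hand side has no sign (both $v_n$ and $\eta_n$ change sign), so no coercive quadratic expression results. Note also that the paper never obtains a uniform bound on $\boB$ along the family of solutions: the $\boB$-bound in Lemma~\ref{lemma:jeanjeanestimate} comes from Jeanjean's monotonicity trick along a Palais--Smale sequence at a \emph{fixed} $c\in\boD_\gc$ and does not transfer to varying speeds. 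A secondary inaccuracy: the critical point produced in the proof of Theorem~\ref{thm:existenceae} is only a weak limit of the Palais--Smale sequence, so you do not know $J_{c_n}(v_n)=\gamma_\gc(c_n)$; the paper recovers the needed one-sided bound $J_{c_n}(1-\rho_n)\le\gamma_\gc(c_n)$ only through the splitting Theorem~\ref{prop:splitting} combined with the nonnegativity coming from Lemma~\ref{lemma:positiveJc}. Finally, your closing remark that the infinite-energy constant-modulus solutions are excluded ``automatically'' by working in $H^1$ begs the question: the danger is precisely that $v_n$ may fail to stay bounded in $H^1(\R)$, with $\rho_n$ flattening locally to a subsonic constant, and this cannot be dismissed a priori.

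This is exactly where the paper's proof diverges from yours. It applies the profile decomposition not at the limit speed but at each fixed $c_n\in\boD_\gc$ (Theorem~\ref{prop:splitting}), which yields genuine solutions with $J_{c_n}(1-\rho_n)\le\gamma_\gc(c_n)$; the Pohozaev identity then gives the uniform gradient bound (Lemma~\ref{lemma:positiveJc}, where $mV_0(c)^2<1$ is really used), and Propositions~\ref{est:derivative}--\ref{prop:lowerbound} give the uniform lower bound $\rho_n\ge\delta_c$. The missing $L^2$ control of $\eta_n$ is obtained by a separate argument: $E(u_n)$ is bounded provided $|\{\rho_n<r\}|$ stays bounded for some $r>c/\sqrt2$ (Lemma~\ref{lemma:Snr}), and unboundedness of these sets is ruled out by translating to points of $\{\rho_n<r\}$ near which the gradient vanishes in the limit (Lemma~\ref{lemma:L1}), passing to the limit in the equation, and arriving at the explicit constant $\rho^2=\frac{c^2+c\sqrt{c^2+16}}{8}>\frac{c^2}{2}$, a contradiction for $r$ close to $c/\sqrt2$. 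Only then does one pass to the limit in $(S(\W,c_n))$ in $W^{2,\infty}_{\mathrm{loc}}$, get finite energy by Fatou's lemma, and nontriviality from Proposition~\ref{prop:nonvanishing}. Until you supply an argument of comparable strength for your claimed uniform $H^1$ bound, your proof has a genuine gap at its central step.
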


Notice that assumption \ref{W:apriori} can be seen as an alternative way of imposing 
that the  equation \eqref{TWc} satisfies some type of maximum principle. Clearly, 
given a potential $\W$, this is the only hypothesis difficult to verify. 
Remark that  if one we can show  the existence of a constant $C>0$ such that 
any solution $u\in \boN\boE(R)$  to \eqref{TWc}, with  $c\in (0,\sqrt{2})$, satisfies
$\norm{u}_{L^\infty(\R)}\leq C$, then \ref{W:apriori} holds true.

In Proposition~\ref{prop:universalestimate}, we will prove that \ref{W:apriori}
holds for a potential of the form \eqref{Wf}. Moreover, in this case 
$(\wh \W_\mu)'=A_\mu(\wh \mu)'$, so that Theorem~\ref{thm:existence:Wf} follows immediately from
Theorem~\ref{thm:existenceallc}  by taking $m=0$ in \ref{h:restrictive}.

As explained in the introduction, for the proof of Theorem~\ref{thm:existenceallc},
we can take $c\in(0,\sqrt2)$ and apply Theorem \ref{thm:existenceae} to get the existence of  a sequence of speeds $\{c_n\}$ and a sequence of associated solutions $\{v_n\} \in \boN\boE(\R)$ to $(S(\W,c_n))$ such that $c_n\to c$. To conclude that $\{v_n\}$ converges to a {\em finite energy}
solution $v$ to \eqref{TWc}, we need to obtain uniform  estimates for $\{v_n\}$, 
and to get a more precise information of $v$, using the fact that each $v_n$ is the limit of a Palais--Smale sequence for $J_c$. We deal with these problems in the following subsections.



\subsection{Uniform estimates}

We start by recalling  a Pohozaev identity that was proved by the first author in \cite{delaire2009} in a more general framework.
\begin{proposition}\label{prop:pohozaev}
	Let $c\geq 0$ and assume that $\W$ satisfies \ref{H0} and \ref{h:derivative}.
		Let  $u\in\boE(\R)$ be a solution to $S({\W,c})$. Then 
	\begin{equation}\label{eq:pohozaev}
		\int_\R \abs{u'}^2=\frac{1}{4\pi}\int_\R\left(\wh\W(\xi)-\xi\big(\wh\W\big)'(\xi)\right)|\wh\eta(\xi)|^2 d\xi,	
	\end{equation}
	where $\eta=1-\abs{u}^2$.
\end{proposition}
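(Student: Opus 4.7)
The natural approach is a Pohozaev-type identity obtained by testing the equation $S(\W,c)$ against the multiplier $x\bar u'$ and taking real parts. First I would multiply $icu'+u''+u(\W\ast\eta)=0$ by $x\bar u'$, take the real part, and integrate over $[-R,R]$ (or against a smooth cutoff $\chi_R$) so that every step is justified before passing to the limit $R\to\infty$. Note that $\Re(icu'\cdot x\bar u')=cx\,\Re(i|u'|^2)=0$, while $\Re(u''\bar u')=\tfrac12(|u'|^2)'$ and $\Re(u\bar u')=-\tfrac12\eta'$. After integration by parts in the second term we obtain
\begin{equation*}
-\tfrac12\int_\R |u'|^2-\tfrac12\int_\R x\eta'(\W\ast\eta)=0,
\end{equation*}
up to boundary contributions that I will deal with below. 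Thus
\begin{equation*}
\int_\R|u'|^2=-\int_\R x\eta'(\W\ast\eta).
\end{equation*}

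The next step is to evaluate the nonlocal integral on the right in Fourier variables. Using \ref{H0} and \ref{h:derivative} (so that $\wh\W$ and $\xi(\wh\W)'$ are bounded), together with $\eta\in L^2(\R)$, Lemma~\ref{lemma:regularity} giving $\eta\in H^k(\R)$, and the symmetry $\wh\W(-\xi)=\wh\W(\xi)$, I would compute $\wh{x\eta'}(\xi)=-\wh\eta(\xi)-\xi(\wh\eta)'(\xi)$ and apply Plancherel \eqref{W:even}. The parity analysis for a real $\eta$ shows $\Re(\wh\eta(\xi)\overline{(\wh\eta)'(\xi)})=\tfrac12(|\wh\eta|^2)'$ while the imaginary cross-term is odd against the odd factor $\xi\wh\W(\xi)$, hence vanishes. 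A single integration by parts in $\xi$ on the term containing $(\wh\eta)'$ then gives
\begin{equation*}
-\int_\R x\eta'(\W\ast\eta)=\tfrac{1}{4\pi}\int_\R\bigl(\wh\W(\xi)-\xi(\wh\W)'(\xi)\bigr)|\wh\eta(\xi)|^2\,d\xi,
\end{equation*}
which, combined with the previous displayed equation, yields \eqref{eq:pohozaev}.

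The main obstacle is the rigorous justification of the multiplier argument, since $u$ has unit modulus at infinity and $xu'\notin L^2(\R)$ in general. I would therefore test the equation against $\chi_R(x)\, x\bar u'$ for a smooth cutoff $\chi_R$ supported on $[-2R,2R]$, equal to $1$ on $[-R,R]$, and control the commutator terms generated by $\chi_R'$. The limits \eqref{limits-infty} from Lemma~\ref{lemma:regularity} guarantee that $|u'(x)|^2,|\eta'(x)|,|\eta(x)|\to 0$ sufficiently fast (in particular $u',\eta',\eta\in W^{k,p}(\R)$ for all $k$, $p\geq 2$), so the boundary contributions $R|u'(\pm R)|^2$ and the truncated nonlocal term both go to zero along a suitable sequence $R_n\to\infty$ (this is a standard argument; alternatively, one can use that $u'\in L^2$ combined with the integrability of $\eta$ to extract such a sequence). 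The integration by parts in the Fourier step is legitimate because $\wh\eta$ and its derivative decay in the appropriate sense: the second alternative in \ref{h:derivative} is exactly what ensures that $\xi(\wh\W)'(\xi)|\wh\eta(\xi)|^2$ is integrable so that the manipulations are valid. Once both truncation arguments go through, the identity \eqref{eq:pohozaev} follows.
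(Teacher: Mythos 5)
Your formal computation is exactly the Pohozaev argument that underlies the paper's proof (which, for this step, essentially defers to Propositions 5.1 and 5.3 and p.~1473 of \cite{delaire2009}): multiplying by $x\bar u'$, taking real parts, and evaluating $-\int_\R x\eta'(\W*\eta)$ via Plancherel does produce \eqref{eq:pohozaev}, and your identity $\wh{x\eta'}=-\wh\eta-\xi(\wh\eta)'$ and the parity argument are correct at the formal level. The problem is that the entire difficulty of the proposition lies in the limiting procedures you dispatch in the last paragraph, and the justifications you give there do not hold under the stated hypotheses \ref{H0} and \ref{h:derivative}.

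Concretely: (i) Lemma~\ref{lemma:regularity} and \eqref{limits-infty} give only qualitative decay ($\eta,\eta',u'\in W^{k,p}$ and vanishing at infinity), with no rate; this does not make $x\eta'(\W*\eta)$ integrable, nor does it force $\int_{R\le|x|\le 2R}|x\,\eta'(\W*\eta)|\,dx\to 0$ even along a subsequence --- with only $\eta',\,\W*\eta\in L^2$ these functions may decay like $|x|^{-1/2-\varepsilon}$, making the integrand of size $|x|^{-2\varepsilon}$. (ii) The alternative you invoke, ``integrability of $\eta$'', is not available here: $\eta\in L^1$ is the content of Lemma~\ref{lemma:finitemass} and requires $\boL_c\in L^1(\R)$, which needs \ref{h:lowerboundWbis} and a subsonic speed, neither of which is assumed in the proposition. (iii) Your Fourier-side step uses $(\wh\eta)'$ as a function and an integration by parts in $\xi$, which presupposes $x\eta\in L^2(\R)$ --- again a weighted decay estimate that must be proved, not assumed. (iv) You also misattribute the role of the dichotomy in \ref{h:derivative}: integrability of $\xi(\wh\W)'(\xi)|\wh\eta(\xi)|^2$ already follows from $\wh\W\in W^{1,\infty}(\R)$ together with $\eta\in H^1(\R)$ (as the paper notes at the start of its proof); the two alternatives are there because the rigorous passage to the limit is done by two different mechanisms --- when $\wh\W\in W^{2,\infty}(\R)$ the paper first derives the weighted estimates $|\cdot|\eta,\,|\cdot|\eta'\in L^2(\R)$ from Corollary~\ref{cor:algebraicdecay} (via the kernel $\boL_c$) and then applies dominated convergence, while when $\xi(\wh\W)'(\xi)$ is merely bounded and continuous a.e.\ one must run the finer cutoff argument of \cite{delaire2009}. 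Supplying one of these two mechanisms is the actual content of the proof, and it is missing from your proposal.
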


\begin{proof}
Let us remark that since $\eta \in H^1(\R)$, we have 
$$
\int_{\R}\abs{\xi} \abs{\wh \eta(\xi)}^2d\xi \leq \int_{B(0,1)} \abs{\wh \eta(\xi)}^2d\xi 
+\int_{B(0,1)^c} \xi^2 \abs{\wh \eta(\xi)}^2d\xi \leq 2\pi\norm{f}_{H^1(\R)}^2,
$$
so that $\xi\abs{\wh\eta(\xi)}^2\in L^1(\R)$ and  the integral in \eqref{eq:pohozaev} is well-defined when $(\wh\W\big)'\in L^\infty(\R).$
		
In the case that  the map $\xi\mapsto \xi\big(\wh\W\big)'(\xi)$ is bounded and continuous a.e.\ on $\R$, identity \eqref{eq:pohozaev}  is given by Propositions 5.1 and 5.3 in \cite{delaire2009}, taking $N=1$. 
	
Let us suppose now that $\wh\W\in W^{2,\infty}(\R)$. By invoking Corollary~\ref{cor:algebraicdecay}, with $s=2$ and $\ell=1$, we deduce that 	$|\cdot| \eta, |\cdot| \eta' \in L^2(\R)$. Then we can 
multiply $S({\W,c})$ by a test function, integrate by parts and apply the dominated convergence theorem, as explained in \cite{delaire2009}, p.\ 1473, to obtain \eqref{eq:pohozaev}.
\end{proof}

Notice that Proposition \ref{prop:pohozaev} can be applied to the focusing case $\W=-\delta_0$, and it is immediate to deduce that then the only finite energy solution to $S({\W,c})$ are the constants, since $\wh \W=-1$.

More general examples can be constructed by 
defining an even function such that $$\wh \W(\xi;g)=-\xi \int_\xi^\infty \frac{g(y)}{y^2}dy,\quad \text{ for }\xi>0,$$
with $g$ a bounded nonnegative function, since $\wh\W(\xi;g)-\xi\big(\wh\W\big)'(\xi;g)\leq 0$   a.e.\ on $\R$. For instance, if $g=1$, then we obtain $\wh \W(\xi;g)=-1$. Consequently,
we can construct potentials with arbitrary $L^1$-norm, such that the only finite energy solution to \eqref{TWc} are the trivial ones.

\begin{corollary}
	For any  $\alpha>0$, there exists a  function   $\W_\alpha\in \boC^\infty(\R)$ satisfying \ref{H0} and   \ref{h:derivative}, with $\norm{\W_\alpha}_{L^1(\R)}=\alpha$ and $\wh \W_\alpha \leq  0$ on $\R$, such that if $u\in \boE(\R)$ is a solution to $(S(\W_\alpha,c))$ for some $c\geq 0$, then $u$ is constant.
\end{corollary}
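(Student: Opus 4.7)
The plan is to construct an explicit $\W_\alpha$ following the recipe already indicated just before the statement, and then apply the Pohozaev identity of Proposition~\ref{prop:pohozaev}. More precisely, suppose one can exhibit an even, real-valued $\W_\alpha\in \boC^\infty(\R)\cap L^1(\R)$ satisfying \ref{H0} and \ref{h:derivative}, with $\norm{\W_\alpha}_{L^1(\R)}=\alpha$, $\wh\W_\alpha\leq 0$ on $\R$, and the pointwise sign condition
\begin{equation}\label{eq:plan-sign}
\wh\W_\alpha(\xi)-\xi\bigl(\wh\W_\alpha\bigr)'(\xi)\leq 0\quad\text{for a.e.\ }\xi\in\R.
\end{equation}
Then the corollary follows immediately: for any $u\in\boE(\R)$ solving $(S(\W_\alpha,c))$ with $c\geq 0$, Proposition~\ref{prop:pohozaev} yields
$$0\leq \int_\R |u'|^2=\frac{1}{4\pi}\int_\R \bigl(\wh\W_\alpha(\xi)-\xi(\wh\W_\alpha)'(\xi)\bigr)|\wh\eta(\xi)|^2\,d\xi\leq 0,$$
so $u'\equiv 0$ in $L^2(\R)$ and, since $u\in H^1_\loc(\R)$ is continuous, $u$ is constant on $\R$. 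The whole task thus reduces to constructing such a $\W_\alpha$ for each $\alpha>0$.

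For the construction, fix a nonzero $g\in \boC_0^\infty(\R)$ with $g\geq 0$ and $\supp g\subset [a,b]$ for some $0<a<b$, and consider the even function on $\R$ defined by $\Phi(0):=0$, $\Phi(-\xi):=\Phi(\xi)$, and
$$\Phi(\xi):=-\xi\int_\xi^\infty \frac{g(y)}{y^2}\,dy\quad\text{for }\xi>0.$$
Then $\supp\Phi\subset [-b,b]$ and $\Phi\leq 0$ on $\R$, and a direct computation gives, for $\xi>0$,
$$\Phi'(\xi)=-\int_\xi^\infty\frac{g(y)}{y^2}\,dy+\frac{g(\xi)}{\xi},\qquad \Phi(\xi)-\xi\Phi'(\xi)=-g(\xi)\leq 0.$$
Since $\Phi-\xi\Phi'$ is even, the sign persists on $\R$. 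As $g$ vanishes near $0$, $\Phi'$ is bounded on $\R\setminus\{0\}$ (with a finite jump at the origin), so $\Phi\in W^{1,\infty}(\R)$; moreover $\xi\Phi'(\xi)=-\xi\int_\xi^\infty g/y^2\,dy+g(\xi)$ for $\xi>0$ extends continuously through $\xi=0$ (with value $0$), is even, and vanishes outside $[-b,b]$, hence is bounded and continuous on $\R$. Thus \ref{H0} and the second alternative in \ref{h:derivative} are satisfied by any positive real multiple of $\Phi$.

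Next, let $\W_0:=\mathcal F^{-1}(\Phi)$. Since $\Phi$ is compactly supported and bounded, the Paley--Wiener theorem shows that $\W_0$ extends to an entire function, whence $\W_0\in\boC^\infty(\R)$; it is real and even because $\Phi$ is. Because $\Phi,\Phi'\in L^2(\R)$, Plancherel gives $\W_0,\,x\W_0\in L^2(\R)$, and Cauchy--Schwarz together with $(1+\abs{x})^{-1}\in L^2(\R)$ yields $\W_0\in L^1(\R)$. As $\Phi\not\equiv 0$, one has $\W_0\not\equiv 0$, so $\norm{\W_0}_{L^1(\R)}\in(0,\infty)$, and the rescaling
$$\W_\alpha:=\frac{\alpha}{\norm{\W_0}_{L^1(\R)}}\,\W_0$$
delivers the required potential: $\norm{\W_\alpha}_{L^1(\R)}=\alpha$, $\W_\alpha\in\boC^\infty(\R)$ is real and even, and $\wh\W_\alpha$ is a positive multiple of $\Phi$, preserving $\wh\W_\alpha\leq 0$ as well as the sign condition \eqref{eq:plan-sign}. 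The first paragraph then concludes the proof.

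The main obstacle I anticipate is ensuring that $\W_0\in L^1(\R)$, not merely $L^2(\R)$: the mild nonsmoothness of $\Phi$ at $\xi=0$ (a Lipschitz corner, not a $C^1$ point) forces only algebraic decay of $\W_0$ at infinity, so the integrability must be justified carefully. The $(1+\abs{x})\W_0\in L^2$ argument sketched above is the cleanest option, and it is the one step in which the behavior of $\Phi$ at the origin genuinely intervenes; everything else is either a formal computation or a transparent rescaling.
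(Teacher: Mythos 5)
Your proof is correct and follows essentially the same route as the paper: the Pohozaev identity of Proposition~\ref{prop:pohozaev} applied to a potential from the family $\wh\W(\xi;g)=-\xi\int_\xi^\infty g(y)y^{-2}\,dy$, for which $\wh\W-\xi\big(\wh\W\big)'=-g\leq 0$, followed by a positive rescaling to prescribe the $L^1$-norm. The only difference is the choice of $g$: the paper takes $g(y)=y^3e^{-y^2}$, so that $\wh\W$ is explicit and the smoothness and integrability of $\W$ are immediate, whereas your compactly supported $g$ needs the short additional verification that $(1+|\cdot|)\W_0\in L^2(\R)$ implies $\W_0\in L^1(\R)$, which you carry out correctly.
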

\begin{proof}
	Let us take $g(y)= y^3\exp(-y^2)$, which gives
	$\wh \W(\xi)=-\abs{ \xi} e^{-\xi^2}$, for $\xi\in\R$. In this manner, 
	$\W$ is a smooth function with exponential decay, so that it suffices to consider 
	$\W_\alpha=\alpha \W/\norm{\W}_{L^1(\R)}.$
\end{proof}


Identity \eqref{eq:pohozaev}, together with \eqref{eq:ellipticequation} and \eqref{eq:localequation}, allow us to prove the following nonvanishing property of   nontrivial finite energy  solutions to \eqref{TWc}.

\begin{proposition}\label{prop:nonvanishing}
	Let $c\geq 0$. Assume that $\W$ satisfies \ref{h:derivative} and that
	\begin{equation}
		\label{assumption:Linfty}
3\xi^2+2\wh \W(\xi)+2\xi (\wh \W)'(\xi)\geq 2 \quad \text{  a.e.\ on } \R.
	\end{equation}
	Then every nontrivial solution $u\in\boE(\R)$ to \eqref{TWc} satisfies the estimate
	\begin{equation}\label{eq:nonvanishing}
		\|\W\ast\eta\|_{L^\infty(\R)}\geq\frac{2-c^2}{4}.
	\end{equation}

			In particular, if  $\W$ satisfies \ref{h:restrictive}, then \eqref{eq:nonvanishing} holds.
		
\end{proposition}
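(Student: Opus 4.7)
The strategy is to combine the three pointwise identities available for a solution (the elliptic equation \eqref{eq:ellipticequation}, the first integral \eqref{eq:localequation}, and the Pohozaev identity \eqref{eq:pohozaev}) in order to convert the sign hypothesis \eqref{assumption:Linfty} into a lower bound on $\int_\R(\W\ast\eta)\eta^2$ controlled by $\|\W\ast\eta\|_{L^\infty(\R)}\int_\R\eta^2$. By Lemma~\ref{lemma:regularity}, $\eta$ and all its derivatives lie in $L^2\cap L^\infty$ and vanish at infinity (see \eqref{limits-infty}), so every integration by parts performed below is legitimate and boundary terms drop.

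The first step is to integrate \eqref{eq:localequation} on $\R$ to obtain $4\int_\R K\eta=4\int_\R K-\int_\R(\eta')^2-c^2\int_\R\eta^2$. Multiplying \eqref{eq:ellipticequation} by $\eta$ and integrating by parts yields $\int_\R(\eta')^2+2\int_\R(\W\ast\eta)\eta-c^2\int_\R\eta^2=2\int_\R K\eta+2\int_\R \eta^2(\W\ast\eta)$. Eliminating $\int K\eta$ between these two relations produces the identity
\begin{equation*}
2\int_\R K=\tfrac{3}{2}\int_\R(\eta')^2-\tfrac{c^2}{2}\int_\R\eta^2+2\int_\R(\W\ast\eta)\eta(1-\eta).
\end{equation*}
Now I would invoke Proposition~\ref{prop:pohozaev} to rewrite $2\int K$ as $\frac1{2\pi}\int(\wh\W-\xi(\wh\W)')|\wh\eta|^2 d\xi$, and apply Plancherel's identity \eqref{W:even} to the remaining quadratic terms $\int(\eta')^2$ and $\int\eta^2$. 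Rearranging gives the clean formula
\begin{equation*}
2\int_\R(\W\ast\eta)\eta(1-\eta)=\frac{1}{4\pi}\int_\R\bigl[2\wh\W(\xi)-2\xi(\wh\W)'(\xi)-3\xi^2+c^2\bigr]\,|\wh\eta(\xi)|^2\,d\xi.
\end{equation*}

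At this point, the hypothesis $3\xi^2+2\wh\W+2\xi(\wh\W)'\geq 2$ rearranges as $-3\xi^2-2\xi(\wh\W)'\leq 2\wh\W-2$, and adding $2\wh\W+c^2$ to both sides bounds the integrand above by $4\wh\W(\xi)-2+c^2$. Translating back by Plancherel, this reads $2\int(\W\ast\eta)\eta(1-\eta)\leq 2\int(\W\ast\eta)\eta-\tfrac{2-c^2}{2}\int\eta^2$, which simplifies at once to
\begin{equation*}
2\int_\R(\W\ast\eta)\eta^2\geq\tfrac{2-c^2}{2}\int_\R\eta^2.
\end{equation*}
A short direct argument shows that a nontrivial $u\in\boE(\R)$ must satisfy $\eta\not\equiv 0$: otherwise $|u|\equiv 1$ and $(S(\W,c))$ reduces to $icu'+u''=0$, whose only solutions of finite energy are constants. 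Hence $\int\eta^2>0$, and estimating the left-hand side by $2\|\W\ast\eta\|_{L^\infty(\R)}\int\eta^2$ yields \eqref{eq:nonvanishing}.

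For the ``in particular'' claim, I would verify that \ref{h:restrictive} implies \eqref{assumption:Linfty} pointwise. By \eqref{desigualdad-W} we have $2\wh\W(\xi)\geq 2-m\xi^2$; and since $(\wh\W)'$ is odd, the bound $(\wh\W)'(\xi)\geq -m\xi$ valid for $\xi>0$ self-symmetrizes into $\xi(\wh\W)'(\xi)\geq -m\xi^2$ for every $\xi\in\R$. Adding these gives $3\xi^2+2\wh\W+2\xi(\wh\W)'\geq 2+3(1-m)\xi^2\geq 2$ because $m<1$. The main technical point to watch is the applicability of the Pohozaev identity in the generality allowed by \ref{h:derivative}: the integrand $\xi(\wh\W)'(\xi)|\wh\eta(\xi)|^2$ must be integrable, which is precisely what Proposition~\ref{prop:pohozaev} secures under its hypotheses, so no additional care beyond citing it is required.
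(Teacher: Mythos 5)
Your proof is correct and follows essentially the same route as the paper: multiplying \eqref{eq:ellipticequation} by $\eta$, using \eqref{eq:localequation} to eliminate $\int K\eta$, invoking the Pohozaev identity \eqref{eq:pohozaev} together with Plancherel, and then applying \eqref{assumption:Linfty} to reach $(2-c^2)\int_\R\eta^2\leq 4\int_\R\eta^2(\W\ast\eta)$, with the same verification that \ref{h:restrictive} implies \eqref{assumption:Linfty}. The only differences are cosmetic bookkeeping, plus your explicit (and welcome) justifications that $\eta\not\equiv 0$ for nontrivial $u$ and that the bound $(\wh\W)'(\xi)\geq -m\xi$ symmetrizes by oddness of $(\wh\W)'$.
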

\begin{proof}
	Multiplying \eqref{eq:ellipticequation} by $2\eta'$ and integrating by parts leads to
	\[2\int_\R (\eta')^2 + 4\int_\R \eta(\W\ast\eta) - 2c^2\int_\R \eta^2 = 4\int_\R \abs{u'}^2\eta + 4\int_\R \eta^2(\W\ast\eta).\]
	Using now \eqref{eq:localequation}, we deduce that
	\[3\int_\R (\eta')^2 + 4\int_\R \eta(\W\ast\eta) - c^2\int_\R \eta^2 = 4\int_\R \abs{u'}^2 + 4\int_\R \eta^2(\W\ast\eta).\]
	Combining with \eqref{eq:pohozaev} and applying Plancherel's identity, we derive
	\[3\int_\R (\eta')^2 + 2\int_\R \eta(\W\ast\eta) - c^2\int_\R \eta^2 + \frac{1}{\pi}\int_\R \xi\big(\wh\W\big)'(\xi)|\wh\eta(\xi)|^2 d\xi = 4\int_\R \eta^2(\W\ast\eta).\]
	Again by Plancherel's identity, this equality can be recast as
	$$
	\frac1{2\pi}\int_\R \big( 3\xi^2+2\wh \W(\xi)+2\xi (\wh \W)'(\xi)\big) |\wh\eta(\xi)|^2 d\xi  - c^2\int_\R \eta^2
	= 4\int_\R \eta^2(\W\ast\eta).
	$$
	Therefore, inequality \eqref{assumption:Linfty} implies that
	\[(2-c^2)\int_\R\eta^2\leq 4\int_\R\eta^2(\W\ast\eta)\leq 4\|\W\ast\eta\|_{L^\infty(\R)}\int_\R \eta^2.\]
	Thus  result  \eqref{eq:nonvanishing}  follows by taking into account that $\eta$ is nontrivial, i.e.\ $\norm{\eta}_{L^2(\R)}>0$.

		Finally, we remark that if \ref{h:restrictive} is satisfied, then  \eqref{desigualdad-W} holds true, which 
	implies that
		\[3\xi^2+2\wh\W(\xi)+2\xi\big(\wh\W\big)'(\xi)-2\geq 3\xi^2+2(1-m\xi^2/2)-2m\xi^2-2=3(1-m)\xi^2\geq 0.\]
	This completes the proof.
\end{proof}

The next proposition shows that the potential in  \eqref{Wf} satisfies \ref{W:apriori}.

{
\begin{proposition}\label{prop:universalestimate}
	Let $c>0$ and assume that $\W_\mu=A_\mu(\delta_0+\mu)$ is as in  \eqref{Wf}. Then, 
	for every solution $u\in\boE(\R)$ to \eqref{TWc}, the following estimates hold:
	\begin{align}
		\label{eq:universalestimate}
		\|u\|_{L^\infty(\R)}^2 &\leq B_0(\mu)\Big(1+\frac{c^2}{4}\Big),
		\\
		\label{eq:universalestimateprime}
		\|u'\|_{L^\infty(\R)} &\leq B_1(\mu)\Big(1+\frac{c^2}{4}\Big)^2,
	\end{align}
	where $B_0(\mu)=1+\frac{\norm{ \mu^+}}{1-\norm{ \mu^-}}$  and $B_1(\mu)$
	is a constant depending only on  $\norm{\mu^+}$ and $\norm{\mu^-}$.
	Moreover, for any $k\geq 2$ there is a constant $C_k(c)>0$, depending only on $c$ and $k$, 
	and $B_k(\mu)>0$, depending only on $\norm{\mu^+}$, $\norm{\mu^-}$ and $k$, 
	such that
	\begin{equation}
		\label{der:higher}
		\|D^k u\|_{L^\infty(\R)}\leq B_k(\mu)C_k(c).
	\end{equation}
\end{proposition}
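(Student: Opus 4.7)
\medskip

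The plan is to prove (i) by a maximum-principle argument exploiting the identities of Proposition~\ref{prop:etakidentities}, then to bootstrap (ii) from the equation via an interpolation inequality, and finally to obtain (iii) by iterated differentiation.

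For (i), set $M=\|u\|_{L^\infty(\R)}^2$ and $\eta=1-|u|^2$. The bound is trivial if $M\leq 1$ since $B_0(\mu)\geq 1$; otherwise, because $|u|(\pm\infty)=1$ by Lemma~\ref{lemma:regularity}, $|u|^2$ attains its maximum at some $x_0\in\R$, where $\eta'(x_0)=0$, $\eta''(x_0)\geq 0$ and $\eta(x_0)=1-M<0$. I would combine \eqref{eq:localequation} at $x_0$, which gives $|u'(x_0)|^2=c^2(M-1)^2/(4M)$, with \eqref{eq:ellipticequation} at $x_0$ (dropping the nonnegative $\eta''(x_0)$) to obtain, after simplification,
\[
2M\,(\W_\mu*\eta)(x_0)\;\geq\;-\,\frac{c^2(M^2-1)}{2M}.
\]
Decomposing $\W_\mu*\eta=A_\mu(\eta+\mu*\eta)$ and using $1-M\leq\eta\leq 1$ on $\R$ to bound $(\mu*\eta)(x_0)\leq\|\mu^+\|+(M-1)\|\mu^-\|$, the inequality rearranges into
\[
c^2(M^2-1)\;\geq\;4M^2 A_\mu(1-\|\mu^-\|)\bigl(M-B_0(\mu)\bigr).
\]
The key algebraic identity $A_\mu(1-\|\mu^-\|)=1/B_0(\mu)$, together with $(M^2-1)/M^2\leq 1$, then yields $M-B_0(\mu)\leq c^2 B_0(\mu)/4$, which is \eqref{eq:universalestimate}.

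For (ii), I would rewrite \eqref{TWc} as $u''=-icu'-u(\W_\mu*\eta)$. Since $\W_\mu$ is a finite Borel measure with $\|\W_\mu\|=A_\mu(1+\|\mu^+\|+\|\mu^-\|)$ and $\|\eta\|_{L^\infty(\R)}\leq 1+M$, one obtains
\[
\|u''\|_{L^\infty(\R)}\;\leq\;c\,\|u'\|_{L^\infty(\R)}+\sqrt M\,\|\W_\mu\|\,(1+M).
\]
Combining with Landau's interpolation inequality on $\R$, applied componentwise to $\Re u$ and $\Im u$, namely $\|u'\|_{L^\infty(\R)}^2\leq C\,\|u\|_{L^\infty(\R)}\|u''\|_{L^\infty(\R)}$, produces a quadratic inequality in $\|u'\|_{L^\infty(\R)}$ whose solution, after substituting $M\leq B_0(\mu)(1+c^2/4)$ from (i), gives \eqref{eq:universalestimateprime} with an explicit $B_1(\mu)$ depending only on $\|\mu^\pm\|$.

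For (iii), the plan is induction on $k$. Differentiating $(S(\W_\mu,c))$ and using $(\W_\mu*g)^{(j)}=\W_\mu*g^{(j)}$ together with Leibniz applied to $\eta=1-u\bar u$ (so that each $D^j\eta$ is a polynomial in $\{D^i u,D^i\bar u:i\leq j\}$), one represents $D^{k+2}u$ as $-ic\,D^{k+1}u$ plus a finite sum of terms of the form $D^i u\cdot (\W_\mu*P_i)$, where $P_i$ is a polynomial in derivatives of $u$ of order at most $k-i$. Estimating each such term in $L^\infty$ via $\|\W_\mu\|$ and the inductive bounds on lower-order derivatives produces \eqref{der:higher} with the claimed factorized form $B_k(\mu)C_k(c)$, since the dependence on $\mu$ enters only through $\|\W_\mu\|$ and the dependence on $c$ only through the constants inherited from (i)--(ii).

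The main obstacle is (i): the sharp constant $B_0(\mu)$ only emerges from careful bookkeeping — in particular the identity $A_\mu(1-\|\mu^-\|)=1/B_0(\mu)$ — and from using the tight estimate on $(\mu*\eta)(x_0)$ that exploits \emph{both} endpoints of the range $[1-M,1]$ of $\eta$. Without this care one would obtain only a non-sharp bound of the form $M\leq C(\mu)(1+c^2)$, which would then propagate to worse constants in (ii) and (iii).
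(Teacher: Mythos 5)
Your proposal is correct, and it reaches the stated constants by a route that differs from the paper's in the two key steps, so a comparison is worthwhile. For \eqref{eq:universalestimate}, the paper first passes to the real equation \eqref{eq:rho} for $\rho=|u|$ via Propositions~\ref{prop:etakidentities} and \ref{prop:hydrodynamic} (using $c>0$ to lift), bounds $\W_\mu\ast(1-\rho^2)$ by splitting $\mu=\mu^+-\mu^-$, and then applies the maximum principle (Proposition 2.1 in \cite{farina}) to the resulting differential inequality, evaluating at the maximum point of $\rho$. You instead stay with $\eta=1-|u|^2$ and evaluate the identities \eqref{eq:ellipticequation} and \eqref{eq:localequation} at the interior minimum of $\eta$; your bookkeeping checks out (the value $K(x_0)=c^2(M-1)^2/(4M)$, the bound $(\mu\ast\eta)(x_0)\leq\norm{\mu^+}+(M-1)\norm{\mu^-}$, the rearrangement, and the identity $A_\mu(1-\norm{\mu^-})=1/B_0(\mu)$ are all correct), and you land on exactly the same estimate $M\leq B_0(\mu)(1+c^2/4)$. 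A small advantage of your version is that it needs neither the lifting nor an external maximum-principle lemma, only the identities of Proposition~\ref{prop:etakidentities}, which hold for any $u\in\boE(\R)$. For \eqref{eq:universalestimateprime}, the paper removes the first-order term by the gauge change $v(x)=u(x)e^{icx/2}$, so that \eqref{eq:simply} bounds $\|v''\|_{L^\infty(\R)}$ directly and Landau--Kolmogorov applies with no quadratic inequality; you keep $-icu'$ and absorb it through the quadratic inequality coming from $\|u'\|_{L^\infty(\R)}^2\leq C\|u\|_{L^\infty(\R)}\|u''\|_{L^\infty(\R)}$, which is legitimate since all quantities are finite by Lemma~\ref{lemma:regularity}; carried out, it even yields a bound linear in $1+c^2/4$, which of course implies the stated $(1+c^2/4)^2$, with $B_1$ depending only on $\norm{\mu^\pm}$ through $\norm{\W_\mu}$ and $A_\mu$. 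For \eqref{der:higher}, the paper differentiates the gauged equation and uses higher-order Landau--Kolmogorov, whereas your induction on the ungauged equation with Leibniz is equally valid; the only bookkeeping point is that sums of factorized bounds refactorize (e.g.\ $BC+B'C'\leq(B+B')(C+C')$), so the form $B_k(\mu)C_k(c)$ is indeed obtained.
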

}
\begin{proof}
	{ 
	Since $c>0$, by Propositions \ref{prop:etakidentities} and \ref{prop:hydrodynamic}, the function 	
	$\rho=|u|$ satisfies the equation \eqref{eq:rho}. 
	By using Young's inequality and  the fact that  $\mu\ast 1=\int_\R d\mu(x)=\widehat{\mu}(0)$, we estimate the term on the right-hand side of the equation
	as follows, where we drop the subscript $\mu$ for simplicity,
	\begin{align*}
		\W*(1-\rho^2)&=A(1-\rho^2)+A \mu\ast(1-\rho^2)=A(1-\rho^2)+A\widehat{\mu}(0)-A\mu\ast(\rho^2)
		\\
		&=1-A\rho^2-A \mu\ast(\rho^2)=1-A\rho^2-A \mu^+\ast(\rho^2)+A \mu^-\ast(\rho^2),
	\end{align*}		
	where we used that 	$A(1+\widehat{\mu}(0))=1$. Therefore 		
	\begin{equation}\label{est1}	\W*(1-\rho^2)	\leq 1-A\rho^2+A(\mu^-)\ast(\rho^2)\leq 1-A\rho^2+A\|\mu^-\|\|\rho\|^2_{L^\infty(\R)},
\end{equation}
	and 
	\begin{equation}\label{est2}
	\norm{\W*(1-\rho^2)}_{L^\infty(\R)}	\leq 1+A\norm{\rho}_{L^\infty(\R)}^2+A \|\mu^-\|\|\rho\|^2_{L^\infty(\R)}.
\end{equation}
	Plugging \eqref{est1} into \eqref{eq:rho} leads to
	\[-\rho''+\rho\Big(A\rho^2-1-\frac{c^2}{4}-A\|\mu^-\|\|\rho\|^2_{L^\infty(\R)}\Big)\leq 0\quad\text{on }\R.\]
	By applying the maximum principle or Proposition 2.1 in \cite{farina}, we conclude that 
	\begin{equation}\label{eq:cota}
	\rho(x)^2\leq\frac{1}{A}\big(1+\frac{c^2}{4}\big)+\|\mu^-\|\|\rho\|^2_{L^\infty(\R)},
	\quad \text{for all }x\in\R.
	\end{equation}
}
	Let us assume that there exists some $\bar{x}\in\R$ such that $\rho(\bar{x})>1$; otherwise, the result is trivial.
	Since $\rho(\pm\infty)=1$, there exists $\tilde{x}\in\R$ such that $\rho(\tilde{x})=\|\rho\|_{L^\infty(\R)}$. Thus, using \eqref{eq:cota} in $\tilde{x}$, we get
	\[(1-\|\mu^-\|)\|\rho\|^2_{L^\infty(\R)}\leq\frac{1}{A}\Big(1+\frac{c^2}{2}\Big),\]
	which proves \eqref{eq:universalestimate}.

	In order to establish \eqref{eq:universalestimateprime}, we follow \cite{farina,bethuel}  and define $v(x)= u(x) e^{\frac{ic}{2}x}$, for $x\in\R$. It is immediate to verify that $v\in\boE(\R)$ and that it solves the equation
	\begin{equation}
		\label{eq:simply}
		-v''=\Big(\frac{c^2}{4} +\W\ast(1-\abs{v}^2)\Big)v\quad\text{ on }\R.
	\end{equation}
	From \eqref{eq:universalestimate} and \eqref{est2}, it follows that
	\begin{align*}
		\|v''\|_{L^\infty(\R)} &\leq \norm{v}_{L^\infty(\R)}  \Big(\frac{c^2}{4}+1+A\norm{\rho}_{L^\infty(\R)}^2+A \|\mu^-\|\|\rho\|^2_{L^\infty(\R)} \Big)
		\\
		&\leq \Big(1+\frac{c^2}{4}\Big)^\frac{3}{2}B_0^{1/2}\big(
		1+AB_0+AB_0\norm{\mu^-}).
	\end{align*}
	Recalling that $A(1+\|\mu^+\|-\|\mu^-\|)=1$, it is clear that
	\[\|v''\|_{L^\infty(\R)}\leq 2\Big(1+\frac{c^2}{4}\Big)^\frac{3}{2}B_0^{1/2}.\]
	Thus, using the Landau--Kolmogorov interpolation inequality (see e.g.\ p.133 in\cite{burenkov})
	\begin{equation*}
		\|v'\|_{L^\infty(\R)}\leq \sqrt{2}	\|v\|_{L^\infty(\R)}	\|v''\|_{L^\infty(\R)},
	\end{equation*}
	we infer that 
	\begin{equation*}
		\|v'\|_{L^\infty(\R)}\leq 2\sqrt{2}B_0 \Big(1+\frac{c^2}{4}\Big)^2.
	\end{equation*}	
	Therefore, by definition of $v$ and using that $c/2\leq 1+c^2/4$, we deduce that
	\[\|u'\|_{L^\infty(\R)}\leq \frac{c}{2}\|u\|_{L^\infty(\R)}+\|v'\|_{L^\infty(\R)}\leq  \left(1+\frac{c^2}{4}\right)^2 B_0^{1/2}\big(1+ 2\sqrt{2} B_0^{1/2}\big).
	\]
	Hence, taking $B_1(\mu):=B_0^{1/2}\big(1+ 2\sqrt{2} B_0^{1/2}\big)$, we have
	\eqref{eq:universalestimateprime}. Differentiating \eqref{eq:simply} and using 
	the higher order  Landau--Kolmogorov inequalities, we finally conclude the proof of \eqref{der:higher}.
\end{proof}

Next two propositions show that, for general potentials satisfying the continuity property \ref{W:infty}, an $L^\infty$ estimate for the solutions (i.e. condition \ref{W:apriori}) implies  a priori estimates also for the derivatives as well as a uniform lower bound.

\begin{proposition}
	\label{est:derivative}
	Assume that $\W$ satisfies  \ref{W:infty} and \ref{W:apriori}. Then, for every $k\in \N$,
	there exist  continuous functions $V_k: (0,\sqrt{2})\to (0,\infty)$  such that
	for any $u\in \boN\boE(\R)$ solution to \eqref{TWc}, with  $c\in (0,\sqrt{2})$,  we have
	$\norm{D^k u}_{L^\infty(\R)}\leq V_k(c)$.  { In particular, 	if  $\W=\W_\mu$ is given by  \eqref{Wf},  then $V_1(c)=B_1(\mu)(1+{c^2}/{4})^2$, where $B_1(\mu)$ is the constant in Proposition~\ref{prop:universalestimate}.}
\end{proposition}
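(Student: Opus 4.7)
\medskip

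\noindent\textbf{Proof plan.} The strategy is to reduce to the second-order equation without first-order term that already appeared in the proof of Proposition~\ref{prop:universalestimate} and then to iterate. Given $u\in\boN\boE(\R)$ a solution to \eqref{TWc}, define $v(x)=u(x)e^{icx/2}$, so that $|v|=|u|$ and
\[-v''=\Big(\frac{c^2}{4}+\W\ast(1-\abs{u}^2)\Big)v\quad\text{on }\R.\]
By \ref{W:apriori}, $\norm{u}_{L^\infty(\R)}\leq V_0(c)$, hence $\norm{1-\abs{u}^2}_{L^\infty(\R)}\leq 1+V_0(c)^2$. Combining this with \ref{W:infty}, we obtain
\[\norm{\W\ast(1-\abs{u}^2)}_{L^\infty(\R)}\leq \norm{\W}\bigl(1+V_0(c)^2\bigr),\]
which in turn yields a bound of the form $\norm{v''}_{L^\infty(\R)}\leq \Phi_2(c)$ with $\Phi_2$ continuous on $(0,\sqrt 2)$. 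Applying the Landau--Kolmogorov inequality $\norm{v'}_{L^\infty(\R)}\leq \sqrt 2\,\norm{v}_{L^\infty(\R)}^{1/2}\norm{v''}_{L^\infty(\R)}^{1/2}$, we control $\norm{v'}_{L^\infty(\R)}$ by a continuous function of $c$. Undoing the change of variables via $u'=(v'-(ic/2)v)e^{-icx/2}$ produces $\norm{u'}_{L^\infty(\R)}\leq V_1(c)$ with $V_1$ continuous. In the particular case $\W=\W_\mu$, the same computation (with the sharper bound on $\W_\mu\ast(1-\rho^2)$ used in the proof of Proposition~\ref{prop:universalestimate}) gives exactly $V_1(c)=B_1(\mu)(1+c^2/4)^2$.

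For $k\geq 2$, I proceed by induction. Assume $\norm{D^j u}_{L^\infty(\R)}\leq V_j(c)$ for $0\leq j\leq k-1$ with continuous $V_j$. Differentiating the equation for $v$ a total of $k-2$ times gives, via the Leibniz rule,
\[v^{(k)}=-\sum_{j=0}^{k-2}\binom{k-2}{j}\frac{d^{k-2-j}}{dx^{k-2-j}}\Big(\frac{c^2}{4}+\W\ast(1-\abs{u}^2)\Big)\,v^{(j)}.\]
Each derivative $(\W\ast(1-\abs{u}^2))^{(\ell)}=\W\ast((1-\abs{u}^2)^{(\ell)})$ can be bounded in $L^\infty(\R)$ by $\norm{\W}$ times a finite sum of products of the form $\norm{D^{i_1}u}_{L^\infty(\R)}\norm{D^{i_2}u}_{L^\infty(\R)}$ with $i_1+i_2=\ell\leq k-2$, and the relation $v^{(j)}=\sum_{i\leq j}a_{ij}(c)D^{i}u\,e^{icx/2}$ with polynomial $a_{ij}(c)$ lets us estimate $\norm{v^{(j)}}_{L^\infty(\R)}$ by the inductive hypothesis for $j\leq k-1$. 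Thus $\norm{v^{(k)}}_{L^\infty(\R)}$ is controlled by a continuous function of $c$, and reverting to $u$ via $D^k u=e^{-icx/2}\sum_{j=0}^{k}b_{kj}(c)v^{(j)}$ yields the desired bound $\norm{D^k u}_{L^\infty(\R)}\leq V_k(c)$ with $V_k$ continuous on $(0,\sqrt 2)$.

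The argument is essentially a bootstrap: \ref{W:apriori} gives the seed estimate on $u$, the elliptic equation together with \ref{W:infty} lifts it to $v''$, Landau--Kolmogorov provides the intermediate derivative, and differentiation of the equation iterates the procedure. No additional estimate is needed; the main mild nuisance is just keeping track of the combinatorial Leibniz expansion in the induction, but each term that appears is a product of quantities already bounded at the previous step times $\norm{\W}$, so continuity in $c$ is automatic.
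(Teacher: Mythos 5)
Your proof is correct and follows essentially the same route as the paper: the paper's proof simply says that, using the transformed equation \eqref{eq:simply} and \ref{W:infty}, one argues as in Proposition~\ref{prop:universalestimate}, i.e.\ exactly your scheme of bounding $\W\ast(1-|u|^2)$ via \ref{W:apriori} and \ref{W:infty}, applying Landau--Kolmogorov, and differentiating the equation for higher derivatives.
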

\begin{proof}
	By using \eqref{eq:simply}	and   \ref{W:infty}, the proof follows the same line
	as  Proposition~\ref{prop:universalestimate}. 
\end{proof}

\begin{proposition}\label{prop:lowerbound}
	Assume that $\W$ satisfies  \ref{W:infty} and \ref{W:apriori}.
	Let $c\in(0,\sqrt 2)$	and let $u\in\boN\boE(\R)$ be a solution to \eqref{TWc}.
	Then 
\begin{equation}
	\label{cota:minora}
|u(x)|\geq \frac{\sqrt{1+4c^2/V_1(c)}-1}{\sqrt{1+4c^2/V_1(c)}+1},\quad \text{for all }x\in\R,
\end{equation}	
	where $V_1$ is the function given by Proposition~\ref{est:derivative}.
\end{proposition}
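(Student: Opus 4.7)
My plan is to localize the argument at a minimum of $\rho := |u|$ and combine there the pointwise algebraic identity \eqref{eq:localequation} with the uniform derivative bound of Proposition~\ref{est:derivative}.

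First, Lemma~\ref{lemma:regularity} applied to $u\in\boN\boE(\R)$ ensures that $\rho\in\boC^{\infty}(\R)$ is strictly positive, with $\rho(\pm\infty)=1$; moreover the nontriviality of $u$ forces $\rho\not\equiv 1$, since otherwise \eqref{eq:theta} would make $\theta$ constant and $u$ would reduce to a trivial constant of modulus one. If $\min_{\R}\rho\geq 1$, the inequality \eqref{cota:minora} is immediate because its right-hand side lies strictly between $0$ and $1$. Otherwise $\rho$ attains its minimum at some $x_0\in\R$ with $m:=\rho(x_0)\in(0,1)$, and $\eta=1-\rho^{2}$ attains there a positive maximum, so $\eta'(x_0)=0$.

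Next, evaluating \eqref{eq:localequation} at $x_0$ kills the $(\eta')^{2}$ term and yields $c^{2}(1-m^{2})^{2}=4|u'(x_0)|^{2}m^{2}$. I then invoke $|u'(x_0)|\leq V_{1}(c)$ from Proposition~\ref{est:derivative}, factor $1-m^{2}=(1-m)(1+m)$, and use the AM--GM-type estimate $(1+m)^{2}\geq 4m$ (equivalent to $(1-m)^{2}\geq 0$), which together give
\[
c^{2}(1-m)^{2}\leq V_{1}(c)^{2}\,m,
\]
a quadratic inequality in $m$. A routine rationalization identifies its smaller positive root with the right-hand side displayed in \eqref{cota:minora}. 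Finally, since $|u(x)|=\rho(x)\geq m$ for all $x\in\R$, the bound propagates to the whole line.

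There is no substantial analytic obstacle: the whole argument reduces to a pointwise identity at a minimum together with the $C^{1}$ a priori estimate already established. The two points that deserve some care are verifying that the minimum of $\rho$ is attained at a finite point (a consequence of $\rho(\pm\infty)=1$ and the nontriviality of $u$) and performing the short but somewhat opaque algebra that puts the positive root of the quadratic in the closed form of \eqref{cota:minora}.
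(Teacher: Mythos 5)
Your strategy is the same as the paper's: evaluate the pointwise identity \eqref{eq:localequation} at a point $x_0$ where $\rho=|u|$ attains its minimum (so $\eta'(x_0)=0$), combine with the a priori bound from Proposition~\ref{est:derivative}, and solve a quadratic; the preliminary reductions (attainment of the minimum, triviality of the case $\min_\R\rho\ge 1$) are fine. The problem is the last algebraic step. The smaller root of your quadratic $c^2(1-m)^2\le V_1(c)^2\,m$ is
$m_-=\bigl(2c^2+V_1^2-V_1\sqrt{V_1^2+4c^2}\bigr)/(2c^2)=\bigl(\sqrt{1+4c^2/V_1(c)^{2}}-1\bigr)\big/\bigl(\sqrt{1+4c^2/V_1(c)^{2}}+1\bigr)$,
i.e.\ with $V_1(c)^2$ inside the square root, not $V_1(c)$ as in \eqref{cota:minora}. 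Since $t\mapsto(\sqrt{1+t}-1)/(\sqrt{1+t}+1)$ is increasing, your bound is strictly \emph{weaker} than the stated one whenever $V_1(c)>1$, which is the relevant regime (for the potentials \eqref{Wf} one has $V_1(c)=B_1(\mu)(1+c^2/4)^2>1$). So the claimed ``routine rationalization'' identifying your root with the right-hand side of \eqref{cota:minora} is incorrect, and as written the argument does not prove the proposition.

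The discrepancy comes from the bookkeeping of the factor $4K(x_0)=4|u'(x_0)|^2$. The paper works in the variable $\eta$ and passes from \eqref{eq:localequation} at $x_0$ to the inequality $c^2\eta(x_0)^2\le V_1(c)\,(1-\eta(x_0))$ (see \eqref{ineqn}), i.e.\ the whole factor $4K(x_0)$ is bounded by $V_1(c)$; solving the quadratic in $\eta(x_0)$ and rationalizing gives $|u(x_0)|^2\ge\bigl(\sqrt{1+4c^2/V_1(c)}-1\bigr)/\bigl(\sqrt{1+4c^2/V_1(c)}+1\bigr)$, and since this quantity is less than one, the bound for $|u|$ follows. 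Your more literal use of $\|u'\|_{L^\infty}\le V_1(c)$, which keeps $4|u'(x_0)|^2\le 4V_1(c)^2$, inevitably produces a constant involving $V_1(c)^2$ (your intermediate AM--GM step only loses a further factor); it still yields a positive lower bound depending continuously on $c$, which is all that is needed downstream in Corollary~\ref{cor:solutionwithboundedgrad}, but it is not the inequality \eqref{cota:minora} you were asked to prove. To close the proof of the statement as written you must reproduce the quadratic in $\eta(x_0)$ with coefficient $V_1(c)$, as in the paper.
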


\begin{proof}
	Since $u\in\boN\boE(\R)$, we have  that $\min_{\R} \abs{u}>0$. Let $x_0\in \R$ be such that 
	$u(x_0)=\min_{\R} \abs{u}$. From the identity \eqref{eq:localequation}, we deduce that
	the function $\eta=1-\abs{u}^2$ satisfies
	\begin{equation}\label{ineqn}
		c^2\eta(x_0)^2\leq \norm{u'}_{L^\infty(\R)}(1-\eta(x_0)).
	\end{equation}
	By using the estimate in Proposition \ref{est:derivative}, we get 
	$$c^2\eta(x_0)^2 +V_1(c) \eta(x_0)-V_1(c) \leq 0,$$
	which implies that 
	$$
	\eta(x_0)\leq \frac{-V_1(c)  +\sqrt{V_1(c) ^2+4V_1(c) c^2}}{2c^2}.
	$$
	In terms of $|u(x_0)|$ we get $$
	\abs{u(x_0)}^2\geq 1+\frac{V_1(c)  -\sqrt{V_1(c) ^2+4V_1(c) c^2}}{2c^2}=\frac{\sqrt{V_1(c) ^2+4V_1(c) c^2}-V_1(c) }{\sqrt{V_1(c) ^2+4V_1(c) c^2}+V_1(c) },
	$$
	which completes the proof.
\end{proof}

The following nonvanishing property of the functional $\boA$ will be useful.

\begin{lemma}
	\label{lem:cota:A}
	Assume that $\boW$ satisfies  \ref{h:derivative}, \ref{W:infty}, \ref{W:apriori}
	and \eqref{assumption:Linfty}.
	 Then there exists $C>0$ such that for any  nonzero solution $v\in  H^1(\R)$ to 
	  	\eqref{eq:v}, we have
	\begin{equation}
		\label{cota:A}
		\boA(v)\geq \frac{C(2-c^2)^2}{16}.
	\end{equation}
\end{lemma}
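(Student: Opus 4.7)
My approach is to combine the nonvanishing estimate of Proposition~\ref{prop:nonvanishing} with the Gagliardo--Nirenberg inequality and the a priori $L^\infty$ bounds on $u$ and $u'$. First, given a nonzero $v\in H^1(\R)$ solving \eqref{eq:v}, Remark~\ref{rem:eq:v} together with Proposition~\ref{prop:hydrodynamic} associates to it a nontrivial finite-energy solution $u=(1-v)e^{i\theta}\in\boN\boE(\R)$ of \eqref{TWc} with $\eta=1-|u|^2\in H^1(\R)$. Since \eqref{assumption:Linfty} is exactly the hypothesis of Proposition~\ref{prop:nonvanishing}, the latter delivers the key pointwise estimate $\|\W*\eta\|_{L^\infty(\R)}\geq (2-c^2)/4$.

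Next I would upgrade this pointwise bound to an $L^2$-type estimate on $\eta$. Combining Gagliardo--Nirenberg $\|f\|_{L^\infty}^2\leq 2\|f\|_{L^2}\|f'\|_{L^2}$ applied to $f=\W*\eta$ with the convolution bound from \ref{W:infty} (which yields $\|\W*g\|_{L^2}\leq \|\W\|\|g\|_{L^2}$ and the analogous bound for the derivative via $(\W*\eta)'=\W*\eta'$), one obtains
\[
\|\eta\|_{L^2}\|\eta'\|_{L^2}\geq \frac{(2-c^2)^2}{32\|\W\|^2}.
\]
Using $\eta'=-2\rho\rho'$ together with the a priori bound $\|\rho\|_{L^\infty}\leq V_0(c)$ from \ref{W:apriori} gives $\|\eta'\|_{L^2}\leq 2V_0(c)\|\rho'\|_{L^2}$, so that the product estimate becomes $\|\eta\|_{L^2}\|\rho'\|_{L^2}\geq (2-c^2)^2/(64V_0(c)\|\W\|^2)$, and hence by AM--GM
\[
\|\eta\|_{L^2}^2+\|\rho'\|_{L^2}^2\geq \frac{(2-c^2)^2}{32V_0(c)\|\W\|^2}.
\]

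The coercivity step then shows that both $\|\eta\|_{L^2}^2$ and $\|\rho'\|_{L^2}^2$ are controlled from above by a constant multiple of $\boA(v)$. Integrating \eqref{assumption:Linfty} rewritten as $(\xi\wh\W(\xi))'\geq 1-\tfrac{3}{2}\xi^2$ yields $\wh\W(\xi)\geq 1-\xi^2/2$, so Plancherel gives $\int (\W*\eta)\eta\geq \|\eta\|_{L^2}^2-\tfrac{1}{2}\|\eta'\|_{L^2}^2$. Substituting into $\boA(v)=\tfrac12\|\rho'\|_{L^2}^2+\tfrac14\int (\W*\eta)\eta$ and exploiting the identity $\|\eta'\|_{L^2}^2=4\|\rho'\|_{L^2}^2-4\int\eta(\rho')^2$ (which follows from $\eta'=-2\rho\rho'$ and $\rho^2=1-\eta$) produces the clean bound
\[
\boA(v)\geq \tfrac{1}{4}\|\eta\|_{L^2}^2+\tfrac{1}{2}\int_\R \eta(\rho')^2.
\]
Combining this with the elementary lower bound $\boA(v)\geq \tfrac12\|\rho'\|_{L^2}^2-\tfrac{\|\wh\W\|_{L^\infty}}{4}\|\eta\|_{L^2}^2$ through a convex combination, and using the a priori bounds of \ref{W:apriori} and Proposition~\ref{est:derivative} to control $\int \eta(\rho')^2$ from below by $-(V_0(c)^2-1)^+ \|\rho'\|_{L^2}^2$, one obtains $\boA(v)\geq c_0(c)(\|\eta\|_{L^2}^2+\|\rho'\|_{L^2}^2)$ for a continuous $c_0(c)>0$. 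Merging with the previous $L^2$ estimate delivers the desired bound $\boA(v)\geq C(2-c^2)^2/16$ with $C>0$.

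The main obstacle will be producing a positive $c_0(c)$ in the coercivity step: since $\wh\W\geq 1-\xi^2/2$ is critical (coefficient exactly $1/2$), the convex combination of the two lower bounds succeeds only under a smallness condition of the type $V_0(c)^2\leq 1+1/\|\wh\W\|_{L^\infty}$, which is precisely why \ref{W:apriori} enters and why the a priori upper bound $V_0(c)$ on $|u|$ is indispensable for the whole argument.
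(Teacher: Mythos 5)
Your first two steps are fine and coincide with the paper's starting point: associating to $v$ the solution $u$ and invoking Proposition~\ref{prop:nonvanishing} to get $\|\W\ast\eta\|_{L^\infty(\R)}\geq (2-c^2)/4$, then converting this into $\|\eta\|_{L^2}\|\eta'\|_{L^2}\gtrsim (2-c^2)^2$ via Gagliardo--Nirenberg and the convolution bounds. The genuine gap is the coercivity step. The inequality $\boA(v)\geq c_0(c)\big(\|\eta\|_{L^2(\R)}^2+\|\rho'\|_{L^2(\R)}^2\big)$ that your argument requires is only reached under the smallness condition $V_0(c)^2\leq 1+1/\|\wh\W\|_{L^\infty(\R)}$ (and \eqref{assumption:Linfty} forces $\|\wh\W\|_{L^\infty(\R)}\geq \wh\W(0)\geq 1$, so this essentially demands $\|u\|_{L^\infty(\R)}^2\leq 2$). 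Contrary to your closing claim, hypothesis \ref{W:apriori} does not supply this: it only asserts the existence of \emph{some} continuous bound $V_0(c)$, which may be arbitrarily large. For instance, for $\W_\mu$ as in \eqref{Wf}, Proposition~\ref{prop:universalestimate} gives $V_0(c)^2=B_0(\mu)\big(1+c^2/4\big)$ with $B_0(\mu)=1+\|\mu^+\|/(1-\|\mu^-\|)$ unbounded over the admissible measures, and the lemma is still needed in that regime (Theorem~\ref{thm:existence:Wf} runs through Theorem~\ref{thm:existenceallc} with $m=0$, so no smallness of $V_0$ is available there). Because the bound $\wh\W(\xi)\geq 1-\xi^2/2$ extracted from \eqref{assumption:Linfty} is critical, no convex combination of your two lower bounds yields a positive $c_0(c)$ once $V_0(c)$ is large; the coercivity you want is simply false to hope for under the stated hypotheses, so the proposal does not prove the lemma.

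The paper sidesteps the issue by never seeking coercivity in $\|\eta\|_{L^2}^2+\|\rho'\|_{L^2}^2$. It writes $\tfrac12\int_\R(\rho')^2=\tfrac18\int_\R\frac{(\eta')^2}{1-\eta}\geq \frac{1}{8\|1-\eta\|_{L^\infty(\R)}\|\wh\W\|_{L^\infty(\R)}^2}\int_\R(\W\ast\eta')^2$ and, using the nonnegativity of $\wh\W$ (available where the lemma is applied, cf.\ Theorem~\ref{prop:splitting}), $\tfrac14\int_\R(\W\ast\eta)\eta\geq\frac{1}{4\|\wh\W\|_{L^\infty(\R)}}\int_\R(\W\ast\eta)^2$; both coefficients are positive no matter how large $V_0(c)$ is, a large $V_0(c)$ merely shrinks the constant. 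Then $\boA(v)\geq C\|\W\ast\eta\|_{H^1(\R)}^2\geq C\|\W\ast\eta\|_{L^\infty(\R)}^2\geq C(2-c^2)^2/16$ by the Sobolev embedding \eqref{sobolev:sharp} and \eqref{eq:nonvanishing}. To repair your argument, drop the critical bound $\wh\W\geq 1-\xi^2/2$ and instead bound the two terms of $\boA$ from below by $\|\W\ast\eta'\|_{L^2(\R)}^2$ and $\|\W\ast\eta\|_{L^2(\R)}^2$, applying Sobolev directly to $\W\ast\eta$ rather than passing through $\|\eta\|_{L^2}$ and $\|\eta'\|_{L^2}$.
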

\begin{proof}
	Let $\eta=1-\abs{v}^2$. Then 
	\begin{align*}
		\boA(v)&=\frac{1}{2}\int_\R(v')^2+\frac{1}{4}\int_\R(\W\ast\eta)\eta=\frac{1}{8}\int_\R\frac{(\eta')^2}{1-\eta}+\frac{1}{4}\int_\R(\W\ast\eta)\eta
		\\
		&\geq \frac{1}{8\|1-\eta\|_{L^\infty(\R)}\|\wh\W\|_{L^\infty(\R)}^2}\int_\R (\W\ast\eta')^2+\frac{1}{4\|\wh\W\|_{L^\infty(\R)}}\int_\R (\W\ast\eta)^2.
	\end{align*}
	 By using the Sobolev's embedding, \eqref{eq:nonvanishing} and \eqref{cota:minora}, we conclude that  there exists $C>0$ such that
	\[\boA(v)\geq C\|\W\ast\eta\|^2_{H^1(\R)}\geq C\|\W\ast\eta\|^2_{L^\infty(\R)}\geq\frac{C(2-c^2)^2}{16}.\]
\end{proof}

\subsection{Refined study of Palais--Smale sequences}
We start by recalling a classical result of profile decomposition of a bounded sequence, 
that is a refinement of the Banach--Alaoglu theorem. We use here the version given in Theorem~4.6.5  in  \cite{tintarev} (see also \cite{taoufik-keraani,banica-duyckaerts}).

\begin{theorem}
	\label{thm:decomp}
	Let $\{v_n\}\subset H^1(\R)$ be a bounded sequence. Then 
	there exist a family of concentration profiles $\{w_j\}\subset H^1(\R)$
	and points $\{y_{n,j}\}\subset \R$ such that, on a renumbered subsequence, 
	\begin{equation*}
		y_{n,1}=0, \quad \lim_{n\to\infty}\abs{y_{n,i}-y_{n,j}}\to\infty, \text{ if }i\neq j,
	\end{equation*}
	$v_n(\cdot + y_{n,j})\wto w_{j}$ in $H^1(\R)$ and 	$v_n(\cdot + y_{n,j})\to w_{j}$ in $L^\infty_{\textup{loc}}(\R)$, 
	\begin{equation}
		\label{Sn:conv}
		v_n-S_n\to 0 \text{ in }L^q(\R), \quad \text{ where } S_n=\sum_{j=1}^\infty w_{j}(\cdot -y_{n,j}),
	\end{equation}
	for all $q\in (2,\infty)$. Moreover, the series $S_n$ converges in $H^1(\R)$ unconditionally and uniformly in $n$, and for all $\varphi \in L^2(\R)$, $\{\alpha_n\}\subset \R$  and $q\in (2,\infty)$, we have
	\begin{align}
		\label{lim:resto}
		v_n=\sum_{j=1}^k w_{n,j}+r_{n,k}, 	 \text{ with }	\lim_{k\to\infty }\limsup_{ n\to\infty}\left| \int_\R r_{n,k}(\cdot -\alpha_n)\varphi \right|= \lim_{k\to\infty }\limsup_{n\to\infty}\norm{r_{n,k}}_{L^q(\R)}=0, 
	\end{align}
	where  $ w_{n,j}=w_{j}(\cdot -y_{n,j})$. In addition,
	\begin{equation}
		\label{asym:decomp}
		\norm{D^m v_{n}}^2_{L^2(\R)}=
		\sum_{j=1}^k \norm{D^m w_{j}}_{L^2(\R)}^2+
		\norm{D^m r_{n,k} }_{L^2(\R)}^2+o_n(1), \quad \text{for }m\in\{0,1\}.
	\end{equation}
\end{theorem}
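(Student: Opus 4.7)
The plan is to follow the iterative concentration-compactness extraction of Lions, as refined by Bahouri--G\'erard, Hmidi--Keraani and Tintarev--Fieseler. First I would establish a vanishing lemma (essentially Lemma~\ref{lemma:wellknown}, which the paper already recalls): for a bounded sequence $\{u_n\}\subset H^1(\R)$, if every translated subsequence $u_n(\cdot+y_n)$ converges weakly to $0$, then $u_n\to 0$ in $L^q(\R)$ for every $q\in(2,\infty)$. This is proved by a covering-plus-interpolation argument using the Sobolev embedding $H^1(I)\hookrightarrow L^\infty(I)$ on unit intervals $I$.

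Next I would construct the profiles iteratively. Set $v_n^{(0)}=v_n$ and $y_{n,1}=0$, $w_1$ any weak limit of $v_n$. Having defined $v_n^{(k-1)}$, either $v_n^{(k-1)}\to 0$ in some (hence every) $L^q(\R)$ with $q\in(2,\infty)$, in which case set $w_k=0$ and terminate; or else Lemma~\ref{lemma:wellknown} supplies a sequence $\{y_{n,k}\}\subset\R$ such that $v_n^{(k-1)}(\cdot+y_{n,k})\rightharpoonup w_k\neq 0$ along a subsequence, and we set $v_n^{(k)}=v_n^{(k-1)}-w_k(\cdot-y_{n,k})$. The pairwise divergence $|y_{n,i}-y_{n,j}|\to\infty$ for $i\neq j$ follows by contradiction: if $y_{n,j}-y_{n,i}$ stays bounded, passing to a subsequence it would converge to some $y^*$, and combining the weak-limit identities for $v_n^{(i-1)}$ and $v_n^{(j-1)}$ one would deduce $w_j=0$. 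Local compactness $H^1(K)\hookrightarrow L^\infty(K)$ on compact sets $K$ gives $v_n(\cdot+y_{n,j})\to w_j$ in $L^\infty_{\loc}(\R)$.

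Then I would prove the Pythagorean identity \eqref{asym:decomp}. Writing $r_{n,k}=v_n-\sum_{j=1}^k w_j(\cdot-y_{n,j})$ and expanding $\|v_n\|_{H^1(\R)}^2$, the cross terms
\[
\int_\R D^m w_i(\cdot-y_{n,i})\,D^m w_j(\cdot-y_{n,j})\,dx,\qquad m\in\{0,1\},\ i\neq j,
\]
vanish as $n\to\infty$ by $|y_{n,i}-y_{n,j}|\to\infty$ and integrability of $w_i,w_j$, while $\langle r_{n,k},w_j(\cdot-y_{n,j})\rangle_{H^1}\to 0$ for $j\le k$ by construction of the weak limits. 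This yields $\sum_{j=1}^\infty \|w_j\|_{H^1(\R)}^2\le\limsup_n\|v_n\|_{H^1(\R)}^2<\infty$; in particular $\|w_j\|_{H^1(\R)}\to 0$.

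The principal obstacle, and the reason the full statement requires the Tintarev--Fieseler machinery rather than a naive iteration, is upgrading these almost-orthogonal decompositions to the \emph{uniform} conclusions \eqref{Sn:conv} and \eqref{lim:resto}: unconditional convergence of $\sum_j w_j(\cdot-y_{n,j})$ in $H^1(\R)$ uniformly in $n$, vanishing of $r_{n,k}$ in $L^q(\R)$ uniformly in $n$ as $k\to\infty$, and the weak-$L^2$ statement against translates $\varphi(\cdot-\alpha_n)$. The uniform $L^q$-decay is obtained from a quantitative version of the vanishing lemma via the interpolation
\[
\|r_{n,k}\|_{L^q(\R)}\le C\,\|r_{n,k}\|_{H^1(\R)}^{2/q}\Bigl(\sup_{y\in\R}\|r_{n,k}\|_{L^2(y-1,y+1)}\Bigr)^{1-2/q},
\]
combined with the fact that, by construction, any nontrivial local $L^2$-concentration of $r_{n,k}$ would already have been extracted as $w_{k+1}$, so the supremum on the right vanishes as $k\to\infty$ thanks to $\|w_j\|_{H^1(\R)}\to 0$. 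The weak-$L^2$ statement against $\varphi(\cdot-\alpha_n)$ follows because, up to subsequence, either $\alpha_n$ stays at finite distance from one of the extracted $y_{n,j}$ (and the corresponding term has already been subtracted) or $|\alpha_n-y_{n,j}|\to\infty$ for every $j$, in which case $r_{n,k}(\cdot-\alpha_n)\rightharpoonup 0$ by construction.
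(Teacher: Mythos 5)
You should first be aware that the paper does not prove this statement at all: it quotes it as Theorem~4.6.5 of Tintarev's book \cite{tintarev} (see also \cite{taoufik-keraani,banica-duyckaerts}), and the only added ingredient is the observation that the $L^\infty_{\textup{loc}}$ convergence of $v_n(\cdot+y_{n,j})$ to $w_j$ follows from Rellich's theorem. So your task is really to reprove the profile decomposition, and the route you choose (Lions' vanishing lemma plus iterative extraction of profiles, \`a la Bahouri--G\'erard/Hmidi--Keraani) is the standard and a perfectly viable one; the preliminary steps (vanishing lemma, pairwise divergence of the $y_{n,j}$, Rellich for $L^\infty_{\textup{loc}}$, the almost-orthogonality \eqref{asym:decomp} and the resulting bound $\sum_j\|w_j\|_{H^1(\R)}^2<\infty$) are all correct as sketched.

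There is, however, a genuine gap at the heart of the argument, precisely at the point you identify as ``the principal obstacle''. In your iteration you extract at step $k$ \emph{an arbitrary} nonzero weak limit $w_k$ of translates of $v_n^{(k-1)}$, and then claim that ``any nontrivial local $L^2$-concentration of $r_{n,k}$ would already have been extracted as $w_{k+1}$'', so that the local sup in the interpolation inequality vanishes because $\|w_j\|_{H^1(\R)}\to 0$. This implication is false without a (near-)maximal selection rule: the extracted $w_{k+1}$ is only \emph{one} weak limit of translates of $r_{n,k}$, not a dominant one, so $\|w_{k+1}\|\to 0$ does not control the largest remaining concentration. Concretely, take $v_n=\phi+\phi(\cdot-n)+\sum_{j\ge 1}2^{-j}\psi(\cdot-a_{n,j})$ with suitably diverging, mutually separated $a_{n,j}$; starting from $w_1=\phi$ ($y_{n,1}=0$), your scheme allows one to extract $2^{-1}\psi,2^{-2}\psi,\dots$ forever and never the bump escaping at speed $n$, so that $r_{n,k}$ retains a fixed positive $L^q$ mass for every $k$ and both \eqref{Sn:conv} and \eqref{lim:resto} fail for the decomposition actually constructed. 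The same defect makes the final argument for \eqref{lim:resto} circular: in the regime $|\alpha_n-y_{n,j}|\to\infty$ for all $j$, the assertion $r_{n,k}(\cdot-\alpha_n)\rightharpoonup 0$ (in the $\lim_k\limsup_n$ sense) is exactly the exhaustion property to be proved, not a consequence ``by construction''. The standard fix is to choose $w_{k+1}$ with, say, $\|w_{k+1}\|_{H^1(\R)}\ge\tfrac12\sup\{\|w\|_{H^1(\R)}:\ w \text{ a weak limit of translates of } r_{n,k}\}$ (equivalently, within a factor of the quantity $\limsup_n\sup_{y}\|r_{n,k}\|_{L^2(y-1,y+1)}$); then $\ell^2$-summability of the profiles forces this supremum to vanish as $k\to\infty$ and your interpolation inequality closes the argument. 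Finally, the unconditional convergence of $S_n$ in $H^1(\R)$, uniform in $n$, is asserted but not argued; it does follow from $\sum_j\|w_j\|_{H^1(\R)}^2<\infty$ together with the asymptotic orthogonality of the translations, but a line of proof (or a citation, as the paper does) is needed.
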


In Theorem~\ref{thm:decomp} and for the rest of the article, the notation $o_n(1)$ stands for a sequence in $\R$ such that $o_n(1)\to 0$, as $n\to\infty$. Besides, from now on $o_n(1;L^1)$ will denote a function such that $\norm{o_n(1;L^1)}_{L^1(\R)}\to 0$, 
as $n\to\infty$. 

Notice also that we added to the statement in \cite{tintarev} that $v_n(\cdot + y_{n,j})$ converges  to  $w_{j}$ in $L^\infty_{\textup{loc}}(\R)$, by invoking the Rellich theorem.


We  recall now a version of the Brezis--Lieb lemma given in 
\cite{vanschaftingen-jiankang}.
\begin{lemma}
	\label{lema:decomG}
	Assume that $G\in C^1(\R;\R)$, with $G(0)=0$, and that there exist $a>0$ and $q>1$ such that 
	\begin{equation}
		\label{hyp:g}
		\abs{G'(t)}\leq a(\abs{t}+\abs{t}^q), \quad \text{ for all }t\in \R.
	\end{equation}	
	If the sequence $\{v_n\}$ is bounded in $H^1(\R)$ and converges a.e.\ to $v$, then 
	\begin{equation}
		\label{decomp:G}
		G(v_n)=G(v_n-v)+G(v)+o_n(1;L^1).
	\end{equation}
	Moreover, using the notations in Theorem~\ref{thm:decomp}, if the profile decomposition 
	is finite, i.e.\ there exists $k\geq 1$ such that
	\begin{align}
		\label{finite:decomp}
		v_n=\sum_{j=1}^k w_{n,j}+r_{n},
		\text{ with }
		 r_n\wto 0 \text{ in }H^1(\R) \text{ and }r_n\to 0 \text{ in }
		 L^q(\R) \text{ for all }q\in (2,\infty),
	\end{align}
 then
	\begin{align}
		\label{G:decomp2}
		G(v_n)=\sum_{j=1}^k G(w_{n,j})+
		G(r_{n} )+o_n(1;L^1).
	\end{align}
\end{lemma}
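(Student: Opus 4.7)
The statement is a Brezis--Lieb type decomposition. I would establish \eqref{decomp:G} as a direct adaptation of the classical Brezis--Lieb argument to our $C^1$ nonlinearity satisfying the polynomial growth \eqref{hyp:g}, and then deduce \eqref{G:decomp2} by peeling off one concentration profile at a time along the profile decomposition, applying \eqref{decomp:G} at each step after a suitable translation.

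For \eqref{decomp:G}, the key input is the elementary pointwise inequality: for every $\varepsilon>0$ there exists $C_\varepsilon>0$ such that, for all $a,b\in\R$,
\[
|G(a+b)-G(a)-G(b)| \;\le\; \varepsilon\,\Phi(a) + C_\varepsilon\,\Phi(b), \qquad \Phi(t)=|t|^2+|t|^{q+1}.
\]
This follows by writing $G(a+b)-G(a)-G(b)=\int_0^1 [G'(a+tb)-G'(tb)]\,b\,dt$, bounding $|G'(a+tb)-G'(tb)|\le C(|a|+|b|+|a|^q+|b|^q)$ via \eqref{hyp:g}, and splitting the cross terms $|a|\,|b|$ and $|a|^q|b|$ by Young's inequality. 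Taking $a=v_n-v$ and $b=v$, and introducing the truncations
\[
W_n^{\varepsilon} := \bigl( |G(v_n)-G(v_n-v)-G(v)| - \varepsilon\Phi(v_n-v) \bigr)^+,
\]
one checks that $0 \le W_n^\varepsilon \le C_\varepsilon\Phi(v)\in L^1(\R)$ (since $v\in H^1(\R)\hookrightarrow L^p(\R)$ for every $p\in[2,\infty]$) and that $W_n^\varepsilon\to 0$ a.e.\ by continuity of $G$ and the condition $G(0)=0$. Dominated convergence then gives $\int_\R W_n^\varepsilon\to 0$, and therefore
\[
\limsup_{n\to\infty}\|G(v_n)-G(v_n-v)-G(v)\|_{L^1(\R)} \;\le\; \varepsilon\,\sup_n\int_\R \Phi(v_n-v),
\]
where the supremum is finite because $\{v_n-v\}$ is bounded in $H^1(\R)\subset L^2(\R)\cap L^{q+1}(\R)$. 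Letting $\varepsilon\to 0$ yields \eqref{decomp:G}.

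For \eqref{G:decomp2}, the plan is to iterate \eqref{decomp:G} on a cascade of translated sequences. Suppose that at step $\ell\in\{1,\dots,k\}$ we have already shown
\[
G(v_n)=\sum_{j<\ell} G(w_{n,j}) \;+\; G\bigl(v_n-\textstyle\sum_{j<\ell} w_{n,j}\bigr) \;+\; o_n(1;L^1).
\]
Setting $s_n := v_n-\sum_{j<\ell} w_{n,j}$ and translating by $y_{n,\ell}$, we obtain
\[
s_n(\,\cdot+y_{n,\ell})\;=\; w_\ell \;+\; \sum_{j>\ell} w_j\bigl(\,\cdot-(y_{n,j}-y_{n,\ell})\bigr) \;+\; r_n(\,\cdot+y_{n,\ell}).
\]
Since $H^1(\R)\hookrightarrow C_0(\R)$, each $w_j$ is continuous and vanishes at infinity, so the translated profiles with $j\neq \ell$ tend to $0$ pointwise by the orthogonality $|y_{n,j}-y_{n,\ell}|\to\infty$. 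On the other hand, the assumption $r_n\to 0$ in $L^q(\R)$ provides, along a subsequence, $r_n\to 0$ a.e., hence also $r_n(\,\cdot+y_{n,\ell})\to 0$ a.e. Consequently, along such a subsequence, $s_n(\,\cdot+y_{n,\ell})\to w_\ell$ a.e., and \eqref{decomp:G} applied to this translated sequence with limit $w_\ell$ produces
\[
G(s_n)\;=\; G(s_n-w_{n,\ell}) \;+\; G(w_{n,\ell}) \;+\; o_n(1;L^1),
\]
after undoing the shift, which preserves the $L^1$ norm of the error. Substituting this into the induction hypothesis yields the same decomposition with $\ell+1$ profiles peeled off, and after $k$ iterations we reach \eqref{G:decomp2}.

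The main technical subtlety is that the a.e.\ convergences driving the iteration hold only along subsequences, whereas the target statement is made for the full sequence. This is settled by the standard remark that a sequence converges to $0$ in $L^1(\R)$ if and only if every subsequence admits a further sub-subsequence converging to $0$ in $L^1(\R)$: starting from an arbitrary subsequence of $\{n\}$, extract a common sub-subsequence on which all the needed a.e.\ limits exist simultaneously, run the iterative construction on this sub-subsequence to obtain \eqref{G:decomp2} there, and conclude that the full sequence satisfies \eqref{G:decomp2}.
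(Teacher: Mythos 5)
Your argument is correct, and it differs from the paper mainly in the first step: for \eqref{decomp:G} the paper does not give a proof at all but simply invokes Lemma~2.3 of \cite{vanschaftingen-jiankang}, whereas you reprove that result via the classical Brezis--Lieb truncation argument (the pointwise inequality $|G(a+b)-G(a)-G(b)|\le \varepsilon\Phi(a)+C_\varepsilon\Phi(b)$ with $\Phi(t)=|t|^2+|t|^{q+1}$, the functions $W_n^\varepsilon$, dominated convergence); this makes the lemma self-contained at the cost of reproducing the cited proof. For \eqref{G:decomp2} your peeling-off iteration is essentially the paper's proof: translate by $y_{n,\ell}$, apply \eqref{decomp:G} to the translated sequence with limit $w_\ell$, undo the translation (which preserves the $L^1$ norm of the error), and repeat $k$ times. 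The one structural difference is how the a.e.\ convergence of the translated sequences is obtained: the paper uses directly that $v_n(\cdot+y_{n,j})\to w_j$ in $L^\infty_{\textup{loc}}(\R)$ (part of Theorem~\ref{thm:decomp}) together with $w_i(\cdot+y_{n,j}-y_{n,i})\to 0$ pointwise, so no passage to subsequences is needed, while you reconstruct it from \eqref{finite:decomp} and then repair full-sequence convergence with the ``subsequence of a subsequence'' principle --- valid, but slightly heavier. One small slip to correct: the phrase ``$r_n\to 0$ a.e., hence also $r_n(\cdot+y_{n,\ell})\to 0$ a.e.'' is not a legitimate implication (a.e.\ convergence is not preserved under translations that escape to infinity; think of indicators of moving intervals). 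The conclusion you need is still immediate: by translation invariance $\|r_n(\cdot+y_{n,\ell})\|_{L^q(\R)}=\|r_n\|_{L^q(\R)}\to 0$, so the \emph{translated} sequence admits a subsequence converging to $0$ a.e., which is exactly what your subsequence bookkeeping requires.
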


\begin{proof}
	The decomposition in \eqref{decomp:G} corresponds to Lemma~2.3 in \cite{vanschaftingen-jiankang}.
	To show \eqref{G:decomp2}, 
	let us denote by $\tau_{n,j}$
	the translation by $y_{n,j}$, i.e.\ $\tau_{n,j}v=v(\cdot + y_{n,j})$.
	Using that  $y_{n,1}=0$, we  have by Theorem~\ref{thm:decomp} that  $v_n\rightharpoonup w_1$ in $H^1(\R)$. Then, by \eqref{decomp:G} we obtain
	\begin{equation}
		\label{dem:Gnt}
	G(v_n )= G(w_1)+G(v_n-w_1 )+o_n(1;L^1).
	\end{equation}
	Now, again by Theorem~\ref{thm:decomp} and using that $|y_{n,2}|\to\infty$ as $n\to\infty$, we derive that  $\tau_{n,2}v_n-\tau_{n,2}w_1\to w_2$ a.e.\ on $\R$. Thus \eqref{decomp:G} implies that 
	\begin{equation*}
		G(\tau_{n,2}v_n-\tau_{n,2}w_1)-G(w_2)-G(\tau_{n,2}v_n-\tau_{n,2}w_1- w_2)=o_n(1;L^1).
	\end{equation*}
	Therefore, by a change of variables, 
	$$
	G(v_n-w_1)-G(w_{n,2})-G(v_n-w_1- w_{n,2})=o_n(1;L^1).
	$$
	Combining with \eqref{dem:Gnt}, we conclude that 
	$$G(v_n )= G(w_1)+G(w_{n,2})+G(v_n-w_1- w_{n,2})+o_n(1;L^1).
	$$
	By repeating the same argument $k$ times, we get \eqref{G:decomp2}.
\end{proof}

In the following lemma we deal with the splitting of the singular term  $\boB$.

\begin{lemma}
	\label{lemma:splitting:2}
	Let $\{v_n\}\subset H^1(\R)$ be a bounded sequence such that $v_n\wto v$ in $H^1(\R)$ for some $v\in H^1(\R)$. Assume that there exists $\delta\in (0,1)$ such that $v_n\leq 1-\delta$ on $\R$. Then, there is $N\in\N$ such that,
	\begin{equation}
		\label{vcontroled}
		v\leq 1-\delta,\quad v_n-v\leq 1-{\delta}/{2}\quad\text{on }\R,\,\,\text{for all }n\geq N,
	\end{equation}
	and
	\begin{equation}
		\label{B:split}
		\boB(v_n)=\boB(v_n-v)+ \boB(v)+o_n(1).
	\end{equation}
	Moreover, if the profile decomposition 
	is finite
		as in \eqref{finite:decomp}, then 
	\begin{align}
		\label{eq:wjvortexless}
		w_j&\leq 1-\delta,\quad\text{on }\R,\quad\text{for all }j=1,\dots,k,
		\\
		\label{eq:rnvortexless}
		r_n&\leq 1-\frac{\delta}{2},\quad\text{on }\R,\quad\text{for all }n\geq N,
	\end{align}
	and
	\begin{equation}
		\label{B:decomp}
		\boB(v_n)=\sum_{j=1}^k \boB(w_{j})+
		\boB(r_{n} )+o_n(1).
	\end{equation}
\end{lemma}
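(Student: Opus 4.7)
The plan is to reduce \eqref{B:split} and \eqref{B:decomp} to applications of Lemma~\ref{lema:decomG}, once I have established the pointwise bounds that keep the integrand of $\boB$ away from its singularity at $1$. Writing $\boB(w) = \int_\R g(w)$ with $g(t) = t^2(2-t)^2/(8(1-t)^2)$, I would fix any $C^1$ extension $\tilde g: \R \to \R$ of $g$ restricted to $(-\infty, 1-\delta/2]$, satisfying $\tilde g(0) = 0$ and $|\tilde g'(t)| \leq a(|t| + |t|^2)$ for some $a > 0$; this is easy to construct by gluing $g$ with a quadratic past $t \approx 1 - \delta/2$. Once the arguments $v_n, v, v_n - v, w_{n,j}, r_n$ are known to lie in $(-\infty, 1 - \delta/2]$, the integrals of $\tilde g$ applied to them coincide with the corresponding $\boB$-values, and Lemma~\ref{lema:decomG} becomes directly applicable.

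First I would prove \eqref{vcontroled}. Since $v_n \wto v$ in $H^1(\R)$, the Rellich theorem combined with the Sobolev embedding yields $v_n \to v$ locally uniformly and pointwise, so the bound $v_n \leq 1 - \delta$ passes to $v \leq 1 - \delta$ on $\R$. For the bound on $v_n - v$, I would pick $R$ large enough that $|v| < \delta/4$ on $\R \setminus [-R, R]$, which is possible because $v \in H^1(\R)$ vanishes at infinity; uniform convergence on $[-R, R]$ gives $|v_n - v| < \delta/4$ there for $n$ large, while on the complement $v_n - v \leq (1 - \delta) + \delta/4 < 1 - \delta/2$. With \eqref{vcontroled} secured, Lemma~\ref{lema:decomG} applied to $\tilde g$ and the sequence $\{v_n\}$ gives $\tilde g(v_n) = \tilde g(v_n - v) + \tilde g(v) + o_n(1; L^1)$; replacing $\tilde g$ by $g$ (legal because all three inputs stay $\leq 1 - \delta/2$ for $n$ large) and integrating produces \eqref{B:split}.

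For the finite profile decomposition, \eqref{eq:wjvortexless} follows immediately by passing to the limit in $v_n(\cdot + y_{n,j}) \leq 1 - \delta$ using the $L^\infty_{\loc}$ convergence from Theorem~\ref{thm:decomp}. I expect the main obstacle to be \eqref{eq:rnvortexless}, since it is a pointwise upper bound rather than an $L^p$ statement and does not follow directly from the general profile decomposition; I would handle it by a localization argument exploiting the separation $|y_{n,i} - y_{n,j}| \to \infty$ together with the decay $w_j(\pm\infty)=0$. Concretely, given $\epsilon > 0$, I would pick $R$ so that $|w_j(y)| < \epsilon$ for $|y| > R$ and all $j = 1, \dots, k$; for $n$ large the intervals $I_{n,j} = (y_{n,j} - R, y_{n,j} + R)$ are pairwise disjoint. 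On each $I_{n,j_0}$, only the $j_0$-th profile contributes more than $\epsilon$ to $\sum_j w_j(\cdot - y_{n,j})$, and the uniform convergence $v_n(\cdot + y_{n,j_0}) \to w_{j_0}$ on $[-R, R]$ controls $v_n - w_{j_0}(\cdot - y_{n,j_0})$, yielding $|r_n| \leq k\epsilon$ on $I_{n,j_0}$. Outside $\bigcup_j I_{n,j}$ every term $w_j(\cdot - y_{n,j})$ is less than $\epsilon$ in absolute value, so $r_n \leq v_n + k\epsilon \leq 1 - \delta + k\epsilon$. Taking $\epsilon = \delta/(2k)$ yields \eqref{eq:rnvortexless}. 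Then \eqref{G:decomp2} applied to $\tilde g$ and integration give \eqref{B:decomp}, since all arguments $v_n, w_{n,j}, r_n$ lie in the range where $\tilde g = g$.
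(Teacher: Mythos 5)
Your proof is correct and follows essentially the same strategy as the paper: truncate the singular integrand of $\boB$ away from $s=1$ so that the Brezis--Lieb-type Lemma~\ref{lema:decomG} applies, and then use the pointwise bounds \eqref{vcontroled}, \eqref{eq:wjvortexless}, \eqref{eq:rnvortexless} to revert to $\boB$. The only cosmetic differences are that you modify the primitive of the integrand directly rather than its derivative via the cutoff $\chi_\delta$, and you prove \eqref{eq:rnvortexless} using the decay of the profiles $w_j$ at infinity instead of approximating them by compactly supported functions; both localization arguments are equivalent.
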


\begin{proof}
	We first prove \eqref{vcontroled}. Since $v_n\to v$ a.e.\ on $\R$ and  $v_n\leq 1-\delta$, it follows that $v\leq 1-\delta$. Now, since $v\in H^1(\R)$, we can fix $R>0$ such that $|v|\leq\delta/2$ a.e.\ on $\R\setminus B_R(0)$. Then, for all $n$,
	\[v_n-v\leq 1-\delta+{\delta}/{2}=1-{\delta}/{2}\quad\text{on }\R\setminus B_R(0).\]  Moreover, since $v_n\to v$ in $L^\infty(B_R(0))$, then, for any $n$ large enough, 
	\[v_n-v\leq \|v_n-v\|_{L^\infty(B_R(0))}\leq1-{\delta}/{2}\quad\text{on }B_R(0).\]
	In any case, \eqref{vcontroled} holds.
	
	We turn now to proving \eqref{B:split}. Using the notation in Lemma~\ref{lemma:Jsmooth}, we see that
	$$\boB(v)=\int_\R H(v(x))dx, \quad \text{ where } H(t)=\int_0^t h(s)ds,\quad h(s)=\frac{s(2-s)(s^2-2s+2)}{4(1-s)^3}.$$
	We remark that we can easily construct a bounded function $\chi_\delta\in \boC^1(\R)$ such that 
	\begin{equation}
 \label{xi:delta}
		\chi_\delta(s)=\frac{1}{4(1-s)^3}\quad\text{for all }s\leq 1-\frac{\delta}{2},\quad \|\chi_\delta\|_{L^\infty(\R)}\leq B_\delta,
		\end{equation}
	for some constant $B_\delta>0$ depending only on $\delta$. Then the function $\tilde{h}(s)=s(2-s)(s^2-2s+2)\chi_\delta(s)$ clearly satisfies
	\[\abs{\tilde{h}(s)}\leq C_{\delta}(\abs{s}+s^4),\quad \text{for all }s\in\R,\]
	for some $C_\delta>0$ depending only on $\delta$. Therefore, condition \eqref{hyp:g} holds for $G=\tilde{H}$, being $\tilde{H}(t)=\int_0^t \tilde{h}(s)ds$. Thus we obtain
	\[\tilde{H}(v_n)=\tilde{H}(v_n-v)+\tilde{H}(v)+o_n(1;L^1).\]
	Using now \eqref{vcontroled}, we conclude that  
	\begin{equation}
		\label{decomp:H}
		H(v_n)=H(v_n-v)+H(v)+o_n(1;L^1),	
	\end{equation}
	which gives \eqref{B:split}.
	
	Next, we prove \eqref{eq:wjvortexless} and \eqref{eq:rnvortexless}. In order to do so, let us fix $\varepsilon>0$ to be chosen later. The density of $\boC^\infty_0(\R)$ in $H^1(\R)$ implies that, for every $j=1,\dots,k$, there exist $g_j\in\boC_0^\infty(\R)$ and $\varphi_j\in H^1(\R)$ such that
	\begin{equation}
		\label{dense}
		w_j=g_j+\varphi_j,\quad \|\varphi_j\|_{L^\infty(\R)}<{\varepsilon}/{k}.
	\end{equation}
	Hence, we can take $R>0$ such that 
	$\cup_{j=1}^k\supp(g_j)\subset B_R(0).$
	Let us denote $g_{n,j}=g_j(\cdot-y_{n,j})$. It is clear that 
	\[\supp(g_{n,j})\subset B_R(y_{n,j}),\quad\text{for all }j=1,\dots,k.\]
	In particular, since  $|y_{n,i}-y_{n,j}|\to\infty$ for all $i\not=j$, there is $N\in\N$ such that, for all $n\geq N$, 
	\begin{equation}
		\label{disjoint}
		\supp(g_{n,i})\cap\supp(g_{n,j})=\emptyset\quad\text{for all }i\not=j.
	\end{equation}
	
	On the other hand, by  Theorem~\ref{thm:decomp},  $v_n(\cdot+y_{n,j})\to w_j$ a.e.\ on $\R$, so \eqref{eq:wjvortexless} follows directly from the fact that $v_n\leq 1-\delta$. Moreover,
	$v_n(\cdot+y_{n,j})\to w_j$ in $L^\infty(B_R(0))$, for all $j=1,\dots,k.$
	Thus, we may take $N$ larger if necessary in order to get, for all $n\geq N$ and for all $j=1,\dots,k$,
	\begin{equation}
		\label{sumcontrolled}
		\|v_n(\cdot+y_{n,j})-w_j\|_{L^\infty(B_R(0))}<{\varepsilon}/{k}.
	\end{equation}
	
	To show \eqref{eq:rnvortexless}, fix $x\in\R$ and $n\geq N$. Observe that
	\[r_n(x)=v_n(x)-\sum_{j=1}^k g_{n,j}(x)-\sum_{j=1}^k \varphi_j(x-y_{n,j}).\]
	Now we have two possibilities. On the one hand, if $x\not\in\supp(g_{n,j})$ for any $j=1,\dots,k$, then, using \eqref{dense}, we obtain		\[r_n(x)=v_n(x)-\sum_{j=1}^k \varphi_j(x-y_{n,j})< 1-\delta+\varepsilon.\]
	On the other hand, if $x\in \supp(g_{n,i})$ for some $i=1,\dots,k$, then $i$ is unique by virtue of \eqref{disjoint}. We may assume without loss of generality that $i=1$. Moreover, $x\in B_R(y_{n,1})$, so $z_n\coloneqq x-y_{n,1}\in B_R(0)$. Therefore, using  \eqref{dense} and \eqref{sumcontrolled}, we deduce that
	\begin{align*}
		r_n(x)&=v_n(x)-g_{n,1}(x)-\sum_{j=1}^k \varphi_j(x-y_{n,j})
		=v_n(z_n+y_{n,1})-w_1(z_n)-\sum_{j=2}^k \varphi_j(x-y_{n,j})\\
		&<\|v_n(\cdot+y_{n,1})-w_1\|_{L^\infty(B_R(0))}+\frac{(k-1)\varepsilon}{k}<\varepsilon.
	\end{align*}
	In any case, we can choose $\varepsilon=\min\{\delta/2,1-\delta/2\}=\delta/2$ so that \eqref{eq:rnvortexless} holds.
	
	Once \eqref{eq:wjvortexless} and \eqref{eq:rnvortexless} are proved, \eqref{B:decomp} follows from Lemma~\ref{lema:decomG} by applying the same procedure by truncation described above.
	%
\end{proof}

In order to deal with the splitting of the nonlocal term, we introduce the notation 
$$\inner{u}{v}=\int_\R (\W*u)v, \quad  \normW{u}^2=\inner{u}{u}, \quad  \boQ(u)=\inner{u(2-u)}{u(2-u)}, \quad \text{ for all }u,v\in H^1(\R).$$
Notice that $\inner{\cdot }{\cdot }$ is symmetric and  bilinear, so that 
$ \normW{\cdot}$ defines a norm provided that $\wh\W\gneq 0$.

\begin{lemma}
	\label{lem:A}
	Let $\{v_n\}\subset H^1(\R)$ be a bounded sequence. Using the notation in  Theorem~\ref{thm:decomp}, we  have, up to a subsequence,
	\begin{equation}
		\label{asym:decompQ}
		\boQ(v_n)=	\sum_{j=1}^k \boQ(w_{j}) + \boQ(r_{n,k})+\varepsilon_{n,k},
	\end{equation}
	where $\{\varepsilon_{n,k}\}\subset\R$ satisfies
	\begin{equation}
		\label{epskn}
		\lim_{k\to\infty}\limsup_{n\to\infty}|\varepsilon_{n,k}|=0.
	\end{equation}
\end{lemma}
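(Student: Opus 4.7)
The plan is to reduce the full decomposition to an iterated Brezis--Lieb type identity for $\boQ$. Writing $F(t) = 2t - t^2$, so that $\boQ(u) = \inner{F(u)}{F(u)}$, and using that $\W*\cdot$ is translation-invariant, it is enough to establish a single-split identity: for any bounded sequence $\{u_n\}\subset H^1(\R)$ with $u_n \wto u$ in $H^1(\R)$,
\[\boQ(u_n) = \boQ(u_n - u) + \boQ(u) + o_n(1).\]

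To prove this single split, I compute the algebraic residual $\theta_n := F(u_n) - F(u_n - u) - F(u) = -2u(u_n - u)$ and check two compactness properties. First, passing to a subsequence so that $u_n \to u$ a.e., the boundedness of $u \in H^1(\R) \subset L^\infty(\R)$ combined with dominated convergence yields $\theta_n \to 0$ strongly in $L^2(\R)$. Second, $(u_n - u)^2$ is bounded in $L^2(\R)$ by the Sobolev embedding $H^1 \hookrightarrow L^4$ and converges a.e.\ to $0$, hence $(u_n - u)^2 \wto 0$ in $L^2(\R)$; combined with $u_n - u \wto 0$ in $L^2(\R)$ this gives $F(u_n - u) \wto 0$ in $L^2(\R)$. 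Expanding
\[\boQ(u_n) = \inner{F(u_n-u) + F(u) + \theta_n}{F(u_n-u) + F(u) + \theta_n},\]
the diagonal terms produce $\boQ(u_n-u) + \boQ(u) + \inner{\theta_n}{\theta_n}$, while each off-diagonal term vanishes as $n \to \infty$: those involving $\theta_n$ by its strong $L^2$-convergence together with the continuity bound \eqref{W-22}, and $\inner{F(u_n-u)}{F(u)}$ by the weak $L^2$-convergence of $F(u_n-u)$ paired with the fixed $L^2$-function $\W*F(u)$. The standard ``every subsequence has a sub-subsequence'' argument removes the a.e.\ extraction.

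To conclude, set $u_n^{(0)} = v_n$ and $u_n^{(j)} = u_n^{(j-1)} - w_{n,j}$, so that $u_n^{(k)} = r_{n,k}$. By induction on $j$, one checks that $u_n^{(j-1)}(\cdot + y_{n,j}) \wto w_j$ in $H^1(\R)$: indeed $v_n(\cdot + y_{n,j}) \wto w_j$ by Theorem~\ref{thm:decomp}, while each previously subtracted profile $w_i(\cdot + y_{n,j} - y_{n,i})$ with $i < j$ tends weakly to zero in $H^1(\R)$ because $|y_{n,j} - y_{n,i}| \to \infty$ and $H^1$-functions vanish at infinity. The translation invariance of $\boQ$ and the single-split identity applied to the translated sequence $u_n^{(j-1)}(\cdot + y_{n,j})$ then yield, for each $j \leq k$,
\[\boQ(u_n^{(j-1)}) = \boQ(w_j) + \boQ(u_n^{(j)}) + o_n(1).\]
Summing this telescope over $j = 1, \ldots, k$ produces $\boQ(v_n) = \sum_{j=1}^k \boQ(w_j) + \boQ(r_{n,k}) + \varepsilon_{n,k}$, where $\varepsilon_{n,k}$ is a finite sum of $o_n(1)$ contributions; thus $\limsup_n |\varepsilon_{n,k}| = 0$ for every fixed $k$, which is strictly stronger than \eqref{epskn}. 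The main technical obstacle is the single-split identity: everything rests on the strong $L^2$-convergence of the quadratic residual $\theta_n = -2u(u_n - u)$, which is what allows the nonlinear map $F$ to behave asymptotically like a linear splitter, and which crucially uses the one-dimensional Sobolev embedding $H^1 \hookrightarrow L^\infty$.
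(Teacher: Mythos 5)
Your argument is correct, and it takes a genuinely different route from the paper's. The paper expands $\boQ(u)=4\normW{u}^2+\normW{u^2}^2-4\inner{u}{u^2}$ and performs one global multilinear expansion of $\boQ\big(\sum_{j\le k} w_{n,j}+r_{n,k}\big)$ via \eqref{sums1}--\eqref{sumsQ}, estimating each cross term separately: pairings of two distinct profiles vanish because $|y_{n,i}-y_{n,j}|\to\infty$ (after approximating the profiles by compactly supported functions), while every pairing involving $r_{n,k}$ is controlled through \eqref{lim:resto} (weak pairing against $\W*w_i$, or the smallness of $\|r_{n,k}\|_{L^4(\R)}$), which is exactly why the paper only states the double limit \eqref{epskn}. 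You instead iterate a two-term Brezis--Lieb splitting: since $F(t)=t(2-t)$ is quadratic, the residual $F(u_n)-F(u_n-u)-F(u)=-2u(u_n-u)$ is exactly bilinear and converges strongly in $L^2(\R)$ --- note that the correct domination is $|u(u_n-u)|\le C|u|\in L^2(\R)$ with $C=\sup_n\|u_n-u\|_{L^\infty(\R)}<\infty$, so it is the uniform $L^\infty$ bound on the sequence (via the one-dimensional embedding), not the boundedness of $u$ alone, that dominated convergence uses; this is a wording slip, not a gap. Together with $F(u_n-u)\wto 0$ in $L^2(\R)$, the bound \eqref{W-22}, the translation invariance of $\boQ$, and the weak convergence $u_n^{(j-1)}(\cdot+y_{n,j})\wto w_j$ (the subtracted translates $w_i(\cdot+y_{n,j}-y_{n,i})$, $i<j$, go weakly to zero since the translations diverge), you peel off the profiles one at a time. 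Your approach buys a strictly stronger conclusion, $\varepsilon_{n,k}=o_n(1)$ for each fixed $k$, and avoids the bookkeeping of the cross terms $T_1,T_2,T_3$; the paper's computation is more hands-on but needs only the generic properties \eqref{lim:resto} of the remainder rather than its explicit form $r_{n,k}=v_n-\sum_{j\le k}w_{n,j}$. Since the applications (Remark~\ref{remark:decompA}, hence \eqref{asym:decompA}, and the finite-decomposition step in Theorem~\ref{prop:splitting}) only require \eqref{epskn} or the fixed-$k$ statement, your proof can be substituted without affecting the rest of Section~\ref{sec:allc}.
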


\begin{remark}
	\label{remark:decompA}
	It is clear from \eqref{asym:decomp} and \eqref{asym:decompQ} that 
	\begin{equation*}
		\boA(v_n)=	\sum_{j=1}^k \boA(w_{j}) + \boA(r_{n,k})+\varepsilon_{n,k},
	\end{equation*}
	for some $\{\varepsilon_{n,k}\}\subset\R$ satisfying \eqref{epskn}. Moreover, if $\wh\W\geq 0$ a.e.\ on $\R$, then $\boA(r_{n,k})\geq 0$, so that
	\begin{equation*}
		\sum_{j=1}^k\boA(w_{j})\leq \limsup_{n\to \infty}(\boA(v_n) + |\varepsilon_{n,k}|)\leq 
	\limsup_{n\to \infty}\boA(v_n) +
		\limsup_{n\to \infty} |\varepsilon_{n,k}|.
	\end{equation*}
	Then, \eqref{epskn} implies that
	\begin{equation}
		\label{asym:decompA}
		\sum_{k=1}^\infty \boA(w_{k})\leq 
		\limsup_{n\to\infty}  \boA(v_n).
	\end{equation}
	The inequality \eqref{asym:decompA} will be used below in order to show that, if $\{v_n\}$ is a Palais--Smale sequence of $J_c$ at level $\gamma_\gc(c)\not=0$, then $v_n$ is decomposed only in a finite number of profiles $w_1,\dots, w_k$. 
\end{remark}

\begin{proof}[Proof of Lemma~\ref{lem:A}]
	To prove \eqref{asym:decompQ}, we first remark that
	$$\boQ(u)=4\normW{u}^2+\normW{u^2}^2-4\inner{u}{u^2}.$$
	Observe also that, for  any  $\pmb f=(f_1,f_2,\dots, f_m) \in H^1(\R)^m$, we have
	\begin{align}
		\label{sums1}		
		&	\Big\vert\kern-0.25ex\Big\vert\kern-0.25ex\Big\vert
			\sum_{i=1}^m f_i
		\Big\vert\kern-0.25ex\Big\vert\kern-0.25ex\Big\vert ^2=
		\sum_{i=1}^m \normW{f_i}^2+T_1(\pmb f), \quad T_1(\pmb f)=\sum_{i\not=j}^m \inner{f_i}{f_j},\\
		\label{sums2}
	&	\Big\vert\kern-0.25ex\Big\vert\kern-0.25ex\Big\vert
		\Big(\sum_{i=1}^m f_i\Big)^2
		\Big\vert\kern-0.25ex\Big\vert\kern-0.25ex\Big\vert ^2=
		\sum_{i=1}^m \normW{f_i^2}^2
		+T_2(\pmb f), \ T_2(\pmb f)=
			\sum_{ i\not=j}^m \inner{f_i^2}{f_j^2}+
		2\sum_{k; i\not=j}^m \inner{f_k^2}{f_if_j} + \sum_{i\not=j; k\not=\ell}^m \inner{f_i f_j}{f_kf_\ell},\\
		\label{sums3}
		&\inner{ \sum_{i=1}^m f_i}{ \Big(\sum_{i=1}^m f_i\Big)^2}=
		\sum_{i=1}^m\inner{f_i}{f_i^2}+
		T_3(\pmb f), \quad  T_3(\pmb f)=	\sum_{i\not= j}^m\inner{f_i}{f_j^2}+
		\sum_{k;i\not= j}^m\inner{f_k}{f_if_j}.
	\end{align}
	In sum,
	\begin{align}
		\label{sumsQ}
		\boQ\Big(\sum_{i=1}^m f_i\Big)=
		\sum_{i=1}^m  \boQ(f_i)+ 
		T(\pmb f),\quad 
		\text{ with } T=4T_1+T_2-4T_3.
	\end{align}
	
	
	
	From now on, the 
	notation $X\lesssim Y$ means that there exists a constant $C$ independent of $n$ and $k$ such that $X\leq C Y$. Since we are assuming that $\{v_n\}$ is bounded in $H^1(\R)$, we can write $\norm{v_n}_{H^1(\R)}\lesssim 1$. Thus,
	it follows from \eqref{asym:decomp} and the Sobolev's embedding that 
	\begin{equation}
		\label{suc:acotadas}
		\sum_{i=1}^k\norm{w_{i}}_{L^p(\R)}\lesssim 1, \quad 	\norm{r_{n,k}}_{L^p(\R)}\lesssim 1,\quad \text{for all }p\in[2,\infty].
	\end{equation}

	Now we apply \eqref{sumsQ} with $m=k+1$,
	$f_i= w_{n,i}$, for $1\leq i\leq k$,
	and  $f_{m}=r_{n,k}$. Hence, one obtains \eqref{asym:decompQ} with
	\[\varepsilon_{n,k}=T(w_{n,1},\dots, w_{n,k},r_{n,k}).\]
	We aim to show that $\varepsilon_{n,k}$ satisfies \eqref{epskn}.
	
	Let us start with $T_1(w_{n,1},\dots, w_{n,k},r_{n,k})$, where there are two types of terms.
	The first type is of the form $\inner{ w_{n,i}}{ w_{n,j}}$ with $i\not=j$. This case is simple to handle by using that
	$\abs{y_{n,j}-y_{n,i}}\to \infty$, which leads to
	$$
	\inner{ w_{n,i}}{ w_{n,j}}=\inner{w_{i}}{w_j{(\cdot -y_{n,j}+y_{n,i})}}\to 0,
	\quad \text{ as } n\to\infty.
	$$ 
	The other  terms in the summation $T_1(w_{n,1},\dots, w_{n,k},r_{n,k})$ are of the form $\inner{ w_{n,i}}{r_{n,k}}$. In this case we apply \eqref{lim:resto} with $\varphi=\W\ast w_i$ and $\alpha_n=-y_{n,i}$, so we get
	$$
	\lim_{k\to\infty}\limsup_{n\to\infty}|\inner{ w_{n,i}}{r_{n,k}}|=\lim_{k\to\infty}\limsup_{n\to\infty}|\inner{w_{i}}{r_{n,k}{(\cdot +y_{n,i})}}|= 0.
	$$
	
	Let us study now  $T_2( w_{n,1},\dots, w_{n,k},r_{n,k})$ and 
	$T_3( w_{n,1},\dots, w_{n,k},r_{n,k})$. Here we find several types of terms. We first remark that the terms of the form $\inner{w_{n,i}}{w_{n,j}^2}$
	and $\inner{w_{n,i}^2}{w_{n,j}^2}$,  with $i\not=j$, and $\inner{r_{n,k}}{w_{n,i}^2}$ can be dealt with as we did above for the terms in $T_1( w_{n,1},\dots, w_{n,k},r_{n,k})$. Next we show how to treat the rest of the terms.
	
	First we consider the terms of the form, for $i\neq j$,
	\begin{equation}
		\label{termF}
		F_{n}=\inner{g_{n}}{ w_{n,i}w_{n,j}},
	\end{equation}
	for some $g_{n}$ with $\|g_{n}\|_{L^p(\R)}\lesssim 1$ for every $p\in [2,\infty]$. In view of \eqref{suc:acotadas}, this is the case when considering, for $1\leq\ell\leq m\leq k$, \begin{equation}
		\label{choices}
		g_{n}\in\{r_{n,k}, r_{n,k}^2, r_{n,k}w_{n,\ell}, w_{n,\ell}w_{n,m}, w_{n,\ell}\}.
	\end{equation}
	
	In order to deal with \eqref{termF}, by the density of $\boC_0^\infty(\R)$ in $H^1(\R)$, we may consider two sequences $\{a_m\},\{b_m\}\subset\boC_0^\infty(\R)$ such that 
	$a_m\to w_i$ and  $b_m\to w_j,$  in $H^1(\R).$
	Of course, $\{a_m\},\{b_m\}$ depend on $i,j$ respectively, we do not denote explicitly this dependence for clarity. Notice that
	\begin{equation}
		\label{termFdecomposed}
		F_n=A_{n,k,m}+B_{n,k,m},
	\end{equation}
	where
	\[A_{n,k,m}=\inner{g_{n}}{w_{n,i}[w_{n,j}-b_m(\cdot-y_{n,j})]} 
	+ \inner{g_{n}}{b_m(\cdot-y_{n,j})[w_{n,i}-a_m(\cdot-y_{n,i})]}\]and
	\[B_{n,k,m}=\inner{g_{n}}{b_m(\cdot-y_{n,j})a_m(\cdot-y_{n,i})}.\]
	On the one hand, we have by \eqref{W-22} and H\"older's inequality, 
	\begin{equation*}
		|A_{n,k,m}|\lesssim \|w_j-b_m\|_{L^2(\R)}+\|b_m\|_{L^\infty(\R)}\|w_i-a_m\|_{L^2(\R)}.
	\end{equation*}
	Thus, given $\varepsilon>0$, we may fix $m$, independent of $n$, such that $|A_{n,k,m}|<\varepsilon$ for every $n$. On the other hand, since $a_m$ and $b_m$ have compact support and $|y_{n,i}-y_{n,j}|\to\infty$ as $n\to\infty$, it follows that $B_{n,k,m}=0$ for every $n$ large enough. In sum,
	\[\lim_{n\to\infty}F_n=0.\]
	
	We focus now on the terms of the form
	$G_n=\inner{g_{n}}{ w_{n,i}r_{n,k}},
	$
	with $g_{n}$ satisfying \eqref{choices}. Again by \eqref{W-22}, \eqref{suc:acotadas} and H\"older's inequality, we deduce the estimate 
	$$\abs{G_n}\lesssim \norm{w_{n,i} r_{n,k}}_{L^2(\R)} \lesssim  \norm{r_{n,k}}_{L^4(\R)}.$$
	Using \eqref{lim:resto} with $q=4$, we conclude that
	\[\lim_{k\to\infty}\limsup_{n\to\infty}\abs{G_n}=0.\]
	
	Finally, it remains to consider the terms of the form $\inner{w_{n,i}}{r_{n,k}^2}$ and $\inner{w_{n,i}^2}{r_{n,k}^2}$, which can be  handled as $G_{n}$. Consequently, the proof of is complete.
\end{proof}

Applying the splitting properties that we have proved to bounded Palais--Smale sequences, we obtain the following general theorem. 

\begin{theorem}\label{prop:splitting}
	Assume that $\W$ satisfies \ref{h:derivative}, \ref{W:infty}, \ref{W:apriori}, 
	 \eqref{assumption:Linfty} and $\wh \W\geq 0$ a.e.\ on $\R$. Let $c>0$ and let $\{v_n\}\subset \NV$ be a Palais--Smale sequence of $J_c$ at level $\gamma$, for some $\gamma\not=0$, i.e.\
	\begin{equation}
		\label{hyp:PS:thm}
		J_c(v_n)\to \gamma,\quad \|J_c'(v_n)\|_{H^{-1}(\R)}\to 0.
	\end{equation}
	Assume in addition that there exist $R>0$ and $\delta\in (0,1)$ such that, for all $n$, \begin{equation}
		\label{hyp:suc:vn}
		\|v_n\|_{H^1(\R)}\leq R\quad \text{and}\quad v_n\leq 1-\delta\,\,\text{ on }\R.
	\end{equation}
	Then there exist $k\in \N$ and $w_1,w_2,\dots,w_k \in \NV$ such that 
	\begin{equation}\label{eq:sumJc}
		\sum_{j=1}^{k} J_c(w_j)=\gamma.
	\end{equation}

	In addition, for any $1\leq j\leq k$, the function  $\rho_j=1-w_j$ 
	is a nontrivial finite energy solution to \eqref{eq:rho} and 
	$\rho_j\geq{\delta}$ on $\R.$
\end{theorem}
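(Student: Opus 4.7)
The plan is to combine the profile decomposition (Theorem \ref{thm:decomp}) with the three splitting results already proved (Lemmas \ref{lemma:splitting:2}, \ref{lem:A}, and Remark \ref{remark:decompA}), and then to exploit the coercivity of $J_c''(0)$ in order to force the residual of the decomposition to vanish strongly in $H^1(\R)$.

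First, I apply Theorem \ref{thm:decomp} to $\{v_n\}$ to obtain profiles $\{w_j\}\subset H^1(\R)$ and translations $\{y_{n,j}\}$ with $v_n(\cdot+y_{n,j})\rightharpoonup w_j$. Since $\W$ is even, $J_c$ and $J_c'$ are translation invariant, so $\|J_c'(v_n(\cdot+y_{n,j}))\|_{H^{-1}(\R)}\to 0$ for each $j$. Passing this to the limit against any $\phi\in C^\infty_c(\R)$ (the gradient term by weak $H^1$-convergence, the nonlocal and singular terms by local strong convergence $v_n(\cdot+y_{n,j})\to w_j$ in $L^\infty_{\mathrm{loc}}(\R)$, the uniform bound $v_n\leq 1-\delta$, and \ref{W:infty}) shows that each $w_j$ is a weak, hence (by Remark \ref{rem:eq:v} and elliptic regularity) smooth, solution to \eqref{eq:v}; by \eqref{eq:wjvortexless}, $\rho_j=1-w_j\geq\delta$ on $\R$, so each $w_j\in\NV$. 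Lemma \ref{lem:cota:A} then bounds $\boA(w_j)$ from below by $C(2-c^2)^2/16>0$ for every nontrivial profile; combined with the subadditivity \eqref{asym:decompA} and the boundedness of $\{v_n\}$ in $H^1$, this forces only finitely many nontrivial profiles $w_1,\dots,w_k$. The residual $r_n=v_n-\sum_{j=1}^k w_j(\cdot-y_{n,j})$ satisfies $r_n\rightharpoonup 0$ in $H^1(\R)$ and, by \eqref{lim:resto}, $r_n\to 0$ in $L^q(\R)$ for all $q\in(2,\infty)$; combining Lemma \ref{lemma:wellknown} with the compact embedding $H^1\hookrightarrow L^\infty_{\mathrm{loc}}$ upgrades this to the crucial $L^\infty$-vanishing $r_n\to 0$ in $L^\infty(\R)$, while \eqref{eq:rnvortexless} gives $r_n\leq 1-\delta/2$.

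The main obstacle is now to show $\|r_n\|_{H^1(\R)}\to 0$. Testing the PS condition against $r_n$ (bounded in $H^1$) yields $J_c'(v_n)(r_n)\to 0$. Using the decomposition $v_n=\sum_{j=1}^k w_j(\cdot-y_{n,j})+r_n$, together with the divergence $|y_{n,j}|\to\infty$ (which kills cross terms involving translates of compactly supported approximations of the $w_j$) and the vanishing $r_n\to 0$ in $L^\infty$ (which makes the nonlocal and singular nonlinearities behave perturbatively), I obtain $J_c'(v_n)(r_n)=J_c'(r_n)(r_n)+o(1)$. A Taylor expansion of $f$ and $h$ around $0$ — legitimate because $r_n\leq 1-\delta/2$ stays uniformly away from the singularity at $s=1$ and $\|r_n\|_{L^\infty(\R)}\to 0$ — gives $J_c'(r_n)(r_n)=J_c''(0)(r_n,r_n)+o(\|r_n\|_{H^1(\R)}^2)$. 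A direct Plancherel computation combined with \eqref{desigualdad-W} yields the coercivity
\[J_c''(0)(\phi,\phi)=\frac{1}{2\pi}\int_\R\bigl(\xi^2+2\wh\W(\xi)-c^2\bigr)|\wh\phi(\xi)|^2\,d\xi\geq\min(1-m,\,2-c^2)\,\|\phi\|_{H^1(\R)}^2,\]
for all $\phi\in H^1(\R)$, and this, together with $J_c'(r_n)(r_n)\to 0$, forces $\|r_n\|_{H^1(\R)}\to 0$.

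Finally, Lemma \ref{lemma:splitting:2}, Lemma \ref{lem:A} and Remark \ref{remark:decompA} applied to the finite decomposition give $J_c(v_n)=\sum_{j=1}^k J_c(w_j)+J_c(r_n)+o(1)$; the continuity of $J_c$ on $\NV$ at $0$, combined with the strong convergence $r_n\to 0$ in $H^1(\R)$, yields $J_c(r_n)\to 0$. Letting $n\to\infty$ gives $\sum_{j=1}^k J_c(w_j)=\gamma$, and since $\gamma\neq 0$ we must have $k\geq 1$. The remaining properties of the $\rho_j$ (nontriviality, finite energy, and $\rho_j\geq\delta$) have all been established along the way.
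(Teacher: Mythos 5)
Your overall architecture coincides with the paper's up to the endgame: profile decomposition, identification of each nonzero profile $w_j$ as a critical point of $J_c$ with $1-w_j\geq\delta$, the lower bound of Lemma~\ref{lem:cota:A} combined with \eqref{asym:decompA} to force finitely many nonzero profiles, and the splitting $J_c(v_n)=\sum_{j}J_c(w_j)+J_c(r_n)+o_n(1)$. Where you genuinely diverge is in killing $J_c(r_n)$: the paper proves $J_c'(r_n)(r_n)\to0$ and then uses the identity \eqref{identidad:L4}, which controls $2J_c(r_n)-J_c'(r_n)(r_n)$ by $\|r_n\|_{L^4(\R)}^2$, so it never needs (nor obtains) strong $H^1$ convergence of the remainder; you instead Taylor-expand $J_c'(r_n)(r_n)$ around $0$ and invoke coercivity of $J_c''(0)$ to upgrade to $\|r_n\|_{H^1(\R)}\to0$. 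This endgame is viable and even yields a slightly stronger conclusion: the expansion is legitimate because $r_n\leq 1-\delta/2$ and $\|r_n\|_{L^\infty(\R)}\to0$ (the latter is correct, via the one-dimensional inequality $\|r\|_{L^\infty(\R)}^{1+q/2}\leq C\|r\|_{L^q(\R)}^{q/2}\|r'\|_{L^2(\R)}$, rather than through the local compact embedding you cite). However, your coercivity constant $\min(1-m,2-c^2)$ rests on \eqref{desigualdad-W}, i.e.\ on \ref{h:restrictive}, which is \emph{not} a hypothesis of this theorem. Under the stated assumptions coercivity can still be recovered, since \eqref{assumption:Linfty} makes $\xi\mapsto\xi(\xi^2+2\wh\W(\xi)-2)$ nondecreasing, hence $\xi^2+2\wh\W(\xi)\geq2$, which together with $\wh\W\geq0$ gives $M_c(\xi)\gtrsim 1+\xi^2$ --- but only for $c<\sqrt2$, and this must be argued rather than borrowed from \ref{h:restrictive}.

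The real gap is the one-sentence treatment of the cancellation $J_c'(v_n)(r_n)=J_c'(r_n)(r_n)+o_n(1)$, which is the technical core of the paper's proof (the chain \eqref{derJ:r_n}--\eqref{final2}). The reasons you give do not cover all the terms. Divergence of the $y_{n,j}$ and $\|r_n\|_{L^\infty(\R)}\to0$ say nothing about the gradient cross terms $\int_\R w_{n,j}'r_n'$, since $r_n'$ is merely bounded in $L^2(\R)$; you need either the criticality $J_c'(w_{n,j})=0$ (the paper's device, which converts these terms into nonlinear ones), or the weak vanishing of the translated remainders $r_n(\cdot+y_{n,j})\rightharpoonup0$, which does follow from \eqref{lim:resto} once the decomposition is finite, but has to be invoked explicitly. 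Likewise, the differences $h(v_n)-h(r_n)$ and $f(v_n)-f(r_n)$ are not perturbatively small just because $\|r_n\|_{L^\infty(\R)}\to0$: $v_n$ and $r_n$ differ by the bumps $\sum_j w_{n,j}$, which are not small in any norm, so one must split these differences into $\sum_j h(w_{n,j})$ (resp.\ $\sum_j f(w_{n,j})$) plus an $o_n(1;L^1)$ error via the Brezis--Lieb Lemma~\ref{lema:decomG} (after the truncation $\chi_\delta$ of \eqref{xi:delta} for the singular term), and then pair against $r_n$ using \ref{W:infty} and the $L^q$- or weak vanishing of $r_n$, exactly as in the paper. All these tools are already in your toolbox and the claim is true, but as written this pivotal step is asserted rather than proved.
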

\begin{proof}
	Since $\{v_n\}$ is bounded in $H^1(\R)$, 
 by Theorem~\ref{thm:decomp}, there are profiles $\{w_j\}_{j\geq 1}\subset H^1(\R)$
 and points $\{y_{n,j}\}\subset \R$ such that \eqref{Sn:conv}, \eqref{lim:resto}
 and \eqref{asym:decomp} hold.
  In addition, as in Lemma~\ref{lemma:splitting:2}, 
 we infer that $w_j\leq 1-\delta$ on $\R$, for all $j\geq 1$. 
 Moreover, as in the proof of Theorem~\ref{thm:existenceae}, we conclude that 
$J_c'(w_j)=0$, so that  $w_j$ is a solution to \eqref{eq:v} and $\rho_j=1-w_j$ is  solution to \eqref{eq:rho}.
Furthermore, we see that $\{\boA(v_n)\}$ is bounded. Since  $\wh \W\geq 0$ a.e.\ on $\R$, we deduce from \eqref{asym:decompA} in Remark~\ref{remark:decompA} that
	\begin{equation}
		\label{dem:cotaA}
		\sum_{j=1}^\infty \boA(w_{j}) \leq C,
	\end{equation}
	for some constant $C>0$ depending only  on  $\sup_{n}\norm{v_n}_{H^1(\R)}$ and $\|\wh\W\|_{L^\infty(\R)}$. 
	
	Let us show that there is $j_0\geq 1$ such that 
	$w_{j_0}\neq 0.$ Indeed, assuming otherwise, i.e.\ $w_{j}=0$, for all $j\geq 1$, we deduce from \eqref{Sn:conv} that $S_n=0$, so that 
	$v_n\to 0\text{ in } L^4(\R)$.
	On the other hand, \eqref{hyp:PS:thm} implies that 
	${2 J_c(v_n) - J_c'(v_n)(v_n)}\to 2 \gamma$. This leads to 
	a contradiction with the estimate in 
	\eqref{identidad:L4}, since $\gamma\neq 0$.
	
	In addition, there can only be a finite number of nonzero profiles.
	Indeed,  if $w_j$ is nonzero, then Lemma~\ref{lem:cota:A} provides a positive lower bound for $\boA(w_{j})$, which is independent of $j$. Therefore, \eqref{dem:cotaA} implies that the number of nonzero profiles if finite.
	Consequently, without loss of generality, we can assume that there is $k\geq 1$ such that 
	$w_{j}\not\equiv 0$, for all $j\leq k$, and $w_{j}\equiv 0$, for all $j>k$. In this manner, the profile decomposition is finite, and we can write 
\begin{equation}
	\label{ultima:dem}
	 v_n=\sum_{j=1}^k w_{n,j}+r_{n},
	 \end{equation}
	with $r_n\leq 1-\delta/2$, for $n$ large enough, by Lemma~\ref{lemma:splitting:2}. Also, by \eqref{asym:decomp},  
	\eqref{B:decomp} and \eqref{asym:decompQ}, 
	\begin{align}
		\label{key}
		J_c(v_n)=\sum_{j=1}^k J_c(w_j)+J_c(r_n)+o_n(1).
	\end{align}
	
Therefore, to prove \eqref{eq:sumJc}, it is  enough to show that
	\begin{equation}
		\label{dem:claim}
		J'_c(r_n)(r_n)\to 0.
	\end{equation}
Indeed,	assuming this claim and using that  $\norm{r_n}_{L^4(\R)}\to 0$, we can invoke the estimate in \eqref{identidad:L4} to conclude that 
	$J_c(r_n)$ converges to $0$. Thus, taking the limit in \eqref{key},  we obtain 
	\eqref{eq:sumJc}, which concludes the proof the  theorem.
	
	To establish \eqref{dem:claim},  recall that by \eqref{eq:Jderivative1},  
	\[J'_c(r_n)(r_n)=\int_\R (r_n')^2+\inner{f(r_n)}{(1-r_n)r_n}-c^2\int_\R h(r_n)r_n.\]
	 Remark that, 
	if $\{z_n\}$ is bounded in $H^1(\R)$ and $\{a_n\}$ is bounded in $L^4(\R)$, then 
	\begin{equation}\label{est:o1}
	\inner{f(z_n)}{a_n r_n}=o_n(1).
	\end{equation} Indeed, this follows from the
fact that $\norm{r_n}_{L^4(\R)}\to 0$ and the estimate
	$$\abs{\inner{f(z_n)}{a_n r_n}}\leq \|\wh\W\|_{L^\infty(\R)}\norm{f(z_n)}_{L^2(\R)}\|a_n\|_{L^4(\R)} \norm{r_n}_{L^4(\R)}.$$
	  Therefore,
	 using also \eqref{ultima:dem}, we obtain
	\begin{equation}
		\label{derJ:r_n}
		J'_c(r_n)(r_n)=\int_\R v_n'r_n' -\sum_{j=1}^k \int_\R  w_{n,j}'r_n'
		+\inner{f(r_n)}{r_n}-c^2\int_\R{h(r_n)r_n}+o_n(1).
	\end{equation}
On the other hand, since  $J_c'(w_j)=0$, we deduce that  $J_c'(w_{n,j})=0$, so that,
using also \eqref{est:o1}, we get 
	\begin{equation}
		\label{derJ:w_n}
		0=J'_c(w_{n,j})(r_n)=\int_\R w_{n,j}'r_n' +\inner{f(w_{n,j})}{r_n}-c^2\int_\R{h(w_{n,j})r_n}+o_n(1).
	\end{equation}
	Similarly, using that $\|J_c'(v_n)\|_{H^{-1}(\R)}\to 0$ and \eqref{est:o1}, we obtain
	\begin{equation}
		\label{derJ:v_n}
		o_n(1)=J_c'(v_n)(r_n)=\int_\R  v_n'r_n' +\inner{f(v_{n})}{r_n}-c^2\int_\R{h(v_{n})r_n}+o_n(1).
	\end{equation}
	By putting together \eqref{derJ:r_n}, \eqref{derJ:w_n} and  \eqref{derJ:v_n}, we conclude that
	\begin{equation}
		\label{final}
		J_c'(r_n)(r_n)=\inner{r_n}{f(r_n)-f(v_n)+\sum_{j=1}^k f(w_{n,j})}
		-c^2 \int_\R (h(r_{n})-h(v_{n})+\sum_{j=1}^k h(w_{n,j}) )r_n+o_n(1).
	\end{equation}
	Notice now that, using \eqref{ultima:dem} and that $f(s)=2s-s^2$, 
	\[f(r_n)-f(v_n)+\sum_{j=1}^k f(w_{n,j})=-r_n^2+v_n^2-\sum_{j=1}^k w_{n,j}^2.\]
	Similarly, 
	\[h(r_{n})-h(v_{n})+\sum_{j=1}^k  h(w_{n,j})=g(r_{n})-g(v_{n})+\sum_{j=1}^k g(w_{n,j}), \text{  with }g(s):= h(s)-s=\frac{3s^2-8s+6}{4(1-s)^3}s^2.\]
		In sum,  \eqref{final} can be simplified as
	\begin{equation}
		\label{final2}
		J_c'(r_n)(r_n)=-\inner{r_n}{r_n^2-v_n^2+\sum_{j=1}^k w_{n,j}^2}
		-c^2 \int_\R (g(r_{n})-g(v_{n})+\sum_{j=1}^k g(w_{n,j}) )r_n+o_n(1).
	\end{equation}
Moreover, since  $r_n\leq 1-\delta/2$, $v_n\leq 1-\delta$ and 
$w_{n,j}\leq 1-\delta$, we can replace $g$ with $\tilde{g} \chi_\delta$,
where $\tilde{g}(s)=(3s^2-8s+6)s^2$ and $\chi_\delta$ is defined in \eqref{xi:delta}. Applying Lemma~\ref{lema:decomG} to \eqref{final2} with $G(s)=s^2$ 
	and with $G=\tilde{g} \chi_\delta$, and using \ref{W:infty}, we finally conclude  
	that there is a function $o_n(1;L^1)$ such that 
	$$
	\abs{J_c'(r_n)(r_n)}\lesssim \norm{\W}_{\infty}\norm{r_n}_{L^\infty(\R)}\norm{ o_n(1;L^1) }_{L^1(\R)}.
	$$
	This completes the proof of \eqref{dem:claim}.
\end{proof}



The property \eqref{eq:sumJc} given by Theorem~\ref{prop:splitting} can be seen as an a priori estimate for solutions obtained via splitting of Palais--Smale sequences. However, it is not clear how to use this property if $J_c$ changes sign. The following lemma guarantees that, actually, $J_c(1-\rho)$ is nonnegative if $\rho$ is a finite energy solution with sufficiently small maximum.

\begin{lemma}\label{lemma:positiveJc}

	Let $c>0$ and let $u\in\boN\boE(\R)$ be a solution to \eqref{TWc}. Then we have the following estimates
in terms of $\rho=|u|$ and $\eta=1-|u|^2$.
	\begin{enumerate}
		\item If  \ref{h:derivative} is satisfied, then 
			\begin{equation}\label{JcAfterPohozaev}
			J_c(1-\rho)=\int_\R(\rho')^2+\frac{1}{8\pi}\int_\R\xi\big(\wh\W\big)'(\xi)|\wh\eta|^2.
		\end{equation}
		\item If \ref{h:restrictive} is satisfied, then 
			\begin{equation}\label{J-elliptica}
			J_c(1-\rho)\geq \frac12 \int_\R(1-m\rho^2)(\rho')^2+
			\frac{1}{4} \int_\R \Big(1-\frac{c^2}{2\rho^2}\Big)\eta^2.
		\end{equation}
		\item If \ref{h:derivative} and \ref{h:restrictive} are satisfied, then 
	\begin{equation}\label{new-J}
	J_c(1-\rho)\geq \int_\R(1-m\rho^2)(\rho')^2.
\end{equation}		
	\end{enumerate}

\end{lemma}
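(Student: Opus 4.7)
\medskip

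\noindent\textbf{Proof plan for Lemma~\ref{lemma:positiveJc}.}
My strategy is to rewrite each piece of $J_c(1-\rho)$ in Fourier variables and then invoke the Pohozaev identity (Proposition~\ref{prop:pohozaev}) for (i), plug the pointwise lower bound \eqref{desigualdad-W} into the nonlocal term for (ii), and combine the two for (iii). Throughout, I will repeatedly use the identity $\eta'=-2\rho\rho'$ (so $\int_\R(\eta')^2=4\int_\R\rho^2(\rho')^2$) and Plancherel's formula \eqref{W:even} for the nonlocal term, namely $\frac{1}{4}\int_\R(\W\ast\eta)\eta=\frac{1}{8\pi}\int_\R\wh\W(\xi)|\wh\eta(\xi)|^2\,d\xi$. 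Recall also that $|u'|^2=(\rho')^2+\frac{c^2}{4\rho^2}\eta^2$ from Proposition~\ref{prop:hydrodynamic}, so that with $\rho=|u|$,
\begin{equation*}
J_c(1-\rho)=\tfrac{1}{2}\int_\R(\rho')^2+\tfrac{1}{8\pi}\int_\R\wh\W(\xi)|\wh\eta|^2-\tfrac{c^2}{8}\int_\R\tfrac{\eta^2}{\rho^2}.
\end{equation*}

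For part (i), the Pohozaev identity \eqref{eq:pohozaev} together with $|u'|^2=(\rho')^2+\frac{c^2}{4\rho^2}\eta^2$ gives
\begin{equation*}
\int_\R(\rho')^2+\frac{c^2}{4}\int_\R\frac{\eta^2}{\rho^2}=\frac{1}{4\pi}\int_\R\bigl(\wh\W(\xi)-\xi(\wh\W)'(\xi)\bigr)|\wh\eta|^2\,d\xi.
\end{equation*}
Isolating $\frac{c^2}{8}\int_\R\eta^2/\rho^2$ and substituting into the formula above for $J_c(1-\rho)$ collapses all the $\wh\W(\xi)|\wh\eta|^2$ contributions, leaving exactly the right-hand side of \eqref{JcAfterPohozaev}.

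For part (ii), I use \eqref{desigualdad-W} directly in the Fourier representation of the nonlocal term to obtain
\begin{equation*}
\tfrac{1}{4}\int_\R(\W\ast\eta)\eta\geq\tfrac{1}{8\pi}\int_\R(1-m\xi^2/2)|\wh\eta|^2=\tfrac{1}{4}\int_\R\eta^2-\tfrac{m}{8}\int_\R(\eta')^2=\tfrac{1}{4}\int_\R\eta^2-\tfrac{m}{2}\int_\R\rho^2(\rho')^2,
\end{equation*}
where I converted $\int\xi^2|\wh\eta|^2$ back to $2\pi\int(\eta')^2$ by Plancherel and then used $\eta'=-2\rho\rho'$. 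Grouping the $(\rho')^2$ terms and the $\eta^2$ terms yields \eqref{J-elliptica}.

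For part (iii), I start from the identity \eqref{JcAfterPohozaev} proved in (i). Since $\W$ is even, $\wh\W$ is even and hence $(\wh\W)'$ is odd; combined with \ref{h:restrictive} this gives $\xi(\wh\W)'(\xi)\geq-m\xi^2$ for a.e.\ $\xi\in\R$. Consequently
\begin{equation*}
\tfrac{1}{8\pi}\int_\R\xi(\wh\W)'(\xi)|\wh\eta|^2\geq-\tfrac{m}{8\pi}\int_\R\xi^2|\wh\eta|^2=-\tfrac{m}{4}\int_\R(\eta')^2=-m\int_\R\rho^2(\rho')^2,
\end{equation*}
and inserting this into \eqref{JcAfterPohozaev} yields \eqref{new-J}. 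There is no real obstacle here; the only subtlety worth flagging is the oddness of $(\wh\W)'$, which is what turns the one-sided hypothesis $(\wh\W)'(\xi)\geq-m\xi$ for $\xi>0$ into the symmetric bound $\xi(\wh\W)'(\xi)\geq-m\xi^2$ needed on all of $\R$.
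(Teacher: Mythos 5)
Your proposal is correct and takes essentially the same route as the paper: part (i) combines the Pohozaev identity \eqref{eq:pohozaev} with the pointwise relation $|u'|^2=(\rho')^2+\tfrac{c^2}{4\rho^2}\eta^2$ (the paper's \eqref{eq:quadratic}) and the Plancherel form of $J_c$, while (ii) and (iii) insert \eqref{desigualdad-W}, respectively $\xi(\wh\W)'(\xi)\geq -m\xi^2$, into that representation exactly as in the paper. Your explicit remark on the oddness of $(\wh\W)'$ is a correct (and slightly more careful) justification of a step the paper leaves implicit.
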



\begin{proof}
	Combining \eqref{eq:pohozaev} and \eqref{eq:quadratic} yields
	\[\frac{c^2}{4}\int_\R\frac{\eta^2}{1-\eta}+\frac{1}{4}\int_\R\frac{(\eta')^2}{1-\eta}=\frac{1}{4\pi}\int_\R \left(\wh\W(\xi)-\xi\big(\wh\W\big)'(\xi)\right)|\wh\eta(\xi)|^2 d\xi.\]
	Writing the left-hand side in terms of $\rho$ and multiplying by $1/2$, we arrive at
	\[\frac{c^2}{8}\int_\R\frac{(1-\rho^2)^2}{\rho^2}+\frac{1}{2}\int_\R(\rho')^2=\frac{1}{8\pi}\int_\R \left(\wh\W(\xi)-\xi\big(\wh\W\big)'(\xi)\right)|\wh\eta(\xi)|^2 d\xi.\]
	Observe now that, by Plancherel's identity,
\begin{equation}\label{proof-J-elip}
	J_c(1-\rho)=\frac{1}{2}\int_\R(\rho')^2 + \frac{1}{8\pi}\int_\R \wh\W(\xi)|\wh\eta(\xi)|^2 d\xi-\frac{c^2}{8}\int_\R\frac{\eta^2}{\rho^2}.
\end{equation}

From both previous identities, we conclude the proof of \eqref{JcAfterPohozaev}. Now, using \ref{h:restrictive} and that $\eta'=-2\rho\rho'$, we derive
	\begin{align*}
		J_c(1-\rho)&\geq\int_\R(\rho')^2-\frac{m}{8\pi}\int_\R\xi^2|\wh\eta|^2=\int_\R(\rho')^2-\frac{m}{8\pi}\int_\R|\wh{\eta'}|^2
		\\
		&=\int_\R(\rho')^2-\frac{m}{4}\int_\R(\eta')^2=\int_\R(1-m\rho^2)(\rho')^2,
	\end{align*}	
	which gives \eqref{new-J}.
	
	The proof of \eqref{J-elliptica} follows directly from \eqref{proof-J-elip}, using \eqref{desigualdad-W} and 
	that  $\eta'=-2\rho\rho'$.
\end{proof}


We are now in position to prove a  uniform estimate for solutions obtained from bounded Palais--Smale sequences.

\begin{corollary}\label{cor:solutionwithboundedgrad}
Assume that $\W$ satisfies  \ref{h:derivative}, \ref{h:restrictive}, \ref{W:infty} and \ref{W:apriori}. Assume in addition that $mV_0(c)^2<1$, where $m$ and $V_0(c)$ are given by \ref{h:restrictive} and \ref{W:apriori} respectively. Then, for any  $c\in (0,\sqrt{2})$, there exist  sequences $\{c_n\}\subset (c,\sqrt{2})$ and $\{u_n\}\subset\boN\boE(\R)$ such that $c_n\to c$ and $u_n$ is a nontrivial solution to $(S(\W,c_n))$ for all $n$. In addition, there exist $C_1, C_2>0$ and $\delta_c \in (0,1)$, independent of $n$, such that, denoting $\rho_n=|u_n|$,

	\begin{equation}\label{eq:gradbound2}
	\norm{\rho_n'}_{L^2(\R)}^2\leq C_1J_{c_n}(1-\rho_n)\leq C_2,\quad\text{ for all } n,
	\end{equation}

	and 
	\begin{equation}\label{eq:lowerbound2}
		\rho_n\geq\delta_c\ \text{ on }\R, \ \text{ for all }n.
	\end{equation} 
\end{corollary}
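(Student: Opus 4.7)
The plan is to combine Proposition~\ref{prop:boundedPSsequence}, Theorem~\ref{prop:splitting} and the a priori estimates from Section~4.1 with the nonnegativity bound \eqref{new-J} to produce, for $c \in (0,\sqrt{2})$ fixed, a sequence of approximating speeds $c_n > c$ together with genuine solitons $u_n$ satisfying the required uniform estimates. Fix any $\gc \in (0,c)$. Since $\gamma_\gc$ is nonincreasing on $(\gc,\sqrt{2})$, the set $\boD_\gc$ is of full measure and hence dense. Pick $c_n \in \boD_\gc \cap (c, c+1/n)$ for $n$ large, so that $c_n \to c^+$. For each such $c_n$, Proposition~\ref{prop:boundedPSsequence} supplies a bounded Palais--Smale sequence of $J_{c_n}$ at level $\gamma_\gc(c_n)>0$ sitting in some $Z_{\delta_{c_n}}$. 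Since $\gamma_\gc(c_n)\neq 0$, Theorem~\ref{prop:splitting} then produces finitely many nontrivial profiles $w_1^{(n)},\dots,w_{k_n}^{(n)}\in\NV$ such that $\sum_j J_{c_n}(w_j^{(n)}) = \gamma_\gc(c_n)$, each giving rise via Proposition~\ref{prop:hydrodynamic} and Remark~\ref{rem:eq:v} to a nontrivial solution $u_j^{(n)}\in\boN\boE(\R)$ to $(S(\W,c_n))$.

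The key step is to turn the splitting identity into a bound on a single profile, which requires $J_{c_n}(w_j^{(n)})\geq 0$. By \ref{W:apriori} and the continuity of $V_0$ at $c$, combined with the hypothesis $m V_0(c)^2 < 1$, there exists $\alpha>0$ such that, for $n$ large and every $j$,
\[
m\,\rho_j^{(n)}(x)^2 \leq m\,V_0(c_n)^2 \leq 1-\alpha \quad \text{on } \R,
\]
where $\rho_j^{(n)} = |u_j^{(n)}|$. Applying \eqref{new-J} from Lemma~\ref{lemma:positiveJc} (whose hypotheses \ref{h:derivative} and \ref{h:restrictive} are in force), we get
\[
J_{c_n}(w_j^{(n)}) \geq \int_\R (1-m\,\rho_j^{(n)}(x)^2)\,(\rho_j^{(n)\prime})^2 \geq \alpha\,\|\rho_j^{(n)\prime}\|_{L^2(\R)}^2 \geq 0.
\]
Moreover, the monotonicity of $\gamma_\gc$ gives $\gamma_\gc(c_n)\leq \gamma_\gc(c)<\infty$, since $\gamma_\gc(c)$ is finite for any $c\in(\gc,\sqrt{2})$ (each path in $\Gamma(\gc)$ carries $J_c$ to a finite maximum).

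Setting $u_n = u_1^{(n)}$ and $\rho_n = \rho_1^{(n)}$, the two previous bounds together give
\[
\|\rho_n'\|_{L^2(\R)}^2 \leq \frac{1}{\alpha}\,J_{c_n}(1-\rho_n) \leq \frac{1}{\alpha}\,\gamma_\gc(c_n) \leq \frac{\gamma_\gc(c)}{\alpha},
\]
which establishes \eqref{eq:gradbound2} with $C_1 = 1/\alpha$ and $C_2 = \gamma_\gc(c)/\alpha$. The uniform lower bound \eqref{eq:lowerbound2} follows from Proposition~\ref{prop:lowerbound}: that bound is a positive continuous function of $c_n$ and $V_1(c_n)$; since $c_n \to c > 0$ and $V_1$ is continuous (Proposition~\ref{est:derivative}), the resulting lower bounds admit a uniform positive minimum $\delta_c$.

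The main obstacle is the second step: without the strict hypothesis $mV_0(c)^2 < 1$, the splitting identity does not prevent one of the $J_{c_n}(w_j^{(n)})$ from being large and negative while another compensates, which would destroy any uniform bound on a single profile. The strict inequality is precisely what forces nonnegativity of each term via \eqref{new-J} and makes the extraction of a single well-controlled soliton possible.
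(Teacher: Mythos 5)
Your proposal is correct and follows essentially the same route as the paper: bounded Palais--Smale sequences from Proposition~\ref{prop:boundedPSsequence}, the splitting identity of Theorem~\ref{prop:splitting}, nonnegativity of each term $J_{c_n}(w_j^{(n)})$ via \eqref{new-J} together with $mV_0(c_n)^2<1$ to isolate one profile and bound it by the (monotone, hence uniformly bounded) mountain pass level, and the uniform lower bound from Proposition~\ref{prop:lowerbound} with continuity of $V_1$. The only differences are cosmetic (you take $c_n\downarrow c$ and bound by $\gamma_\gc(c)$, the paper takes a nondecreasing sequence and bounds by $\gamma_\gc(c_1)$), and you helpfully make explicit the nonnegativity of every term in the splitting sum, which the paper uses implicitly.
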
	
\begin{proof}
Notice that $\W$ satisfies \ref{h:restrictive}, so that \eqref{desigualdad-W} and \eqref{assumption:Linfty} hold, and 
\ref{h:lowerboundWbis} is fulfilled with $\sigma=1$ and $\kappa=m/2$.
	
	Let $\gc\in(0,c)$. Consider the set
	$\boD_\gc=\{s\in (\gc,\sqrt{2}) :\  \gamma_\gc\text{ is differentiable at }s\}.$	
	Let $\{c_n\}\in\boD_\gc$ be a nondecreasing sequence such that $c_n\to c$. Recall that such a sequence exists thanks to \eqref{Dcmeasure}. Proposition~\ref{prop:boundedPSsequence} implies that, for every fixed $n$, there exists a sequence $\{v_{n,m}\}\subset\NV$ such that 
	\[\|v_{n,m}\|_{H^1(\R)}\leq R_n,\,\,v_{n,m}\leq 1-\delta_n,\,\, \lim_{m\to\infty} J_{c_n}(v_{n,m})=\gamma_\gc(c_n),\,\,\lim_{m\to\infty}\|J'_{c_n}(v_{n,m})\|_{H^{-1}(\R)}=0,\]
	where $R_n>0$ and $\delta_n\in (0,1)$ do not depend on $m$. Therefore, Theorem~\ref{prop:splitting} yields
	\[\sum_{j=1}^{k_n} J_{c_n}(w_{j,n})=\gamma_\gc(c_n),\]
	being $1-w_{j,n}$ nontrivial finite energy solutions to \eqref{eq:rho} for every $j=1,\dots,k_n$. Let us denote $w_{1,n}=w_n$ and $\rho_n=1-w_n$. The continuity of $V_0$ implies that $mV_0(c_n)^2<1$ for every $n$ large enough. Then, by Lemma~\ref{lemma:positiveJc}, we deduce that
	\[(1-mV_0(c_n)^2)\int_\R (\rho_n')^2\leq J_{c_n}(w_n)\leq \gamma_\gc(c_n).\] 
	Since $\gamma_\gc$ is a nonincreasing function and $\{c_n\}$ is nondecreasing, we have that $\gamma_\gc(c_n)\leq\gamma_\gc(c_1)$. Moreover, again by the continuity of $V_0$, it is clear that the sequence $\{1-mV_0(c_n)^2\}$ is bounded away from zero. In conclusion, we obtain \eqref{eq:gradbound2}.
%

By Lemma~\ref{prop:hydrodynamic}, there is $\theta_n$ such that $u_n=\rho_n e^{i\theta_n}$ is a  finite energy solution to $(S(\W,c_n))$. Finally,
		we deduce  from \eqref{cota:minora} in Proposition~\ref{prop:lowerbound}, that for all $x\in\R$,
	$$	|u_n(x)|\geq \delta_c:=\inf_{s\in[\gc,c]} \frac{\sqrt{1+4s^2/V_1(s)}-1}{\sqrt{1+4c^2/V_1(s)}+1},$$
	which proves \eqref{eq:lowerbound2}.
\end{proof}


\subsection{Passing to the limit}

For any $c\in(0,\sqrt{2})$, Corollary~\ref{cor:solutionwithboundedgrad} provides a sequence of nontrivial finite energy solutions $u_n$ to $(S(\W,c_n))$ with $c_n\to c$. The last step for completing the proof of Theorem~\ref{thm:existenceallc} consists of passing to the limit in $(S(\W,c_n))$, controlling that the limit is nontrivial and has finite energy. To this aim, the estimates proved in the previous subsections will be essential. In this subsection we adapt a technique from  \cite{ruiz-bellazzini} in a similar context.

We start with a lemma that provides a sufficient condition for the boundedness of the sequence of energies. 

\begin{lemma}\label{lemma:Snr}
	Assume that the hypotheses in Corollary~\ref{cor:solutionwithboundedgrad} hold,
and let  $\{\rho_n \}$ be given by Corollary~\ref{cor:solutionwithboundedgrad}. 
Set	$$S_n^r=\{x\in\R: \rho_n(x)<r\}.$$
If for some $r\in ({c}/{\sqrt{2}},1 )$ the sequence $\{|S_n^r|\}$ is bounded, then $\{E(u_n)\}$ is also bounded.	
\end{lemma}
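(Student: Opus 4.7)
The plan is to use the coercivity estimate \eqref{J-elliptica} to bound $\|\eta_n\|_{L^2(\R)}^2$, where $\eta_n=1-\rho_n^2$, and then deduce the boundedness of $E(u_n)$ directly from the hydrodynamic expression of the energy given in Remark~\ref{rem:energymomentum}. By \ref{W:apriori} and the continuity of $V_0$, together with $c_n\to c$, the sequence $\{\rho_n\}$ is uniformly bounded in $L^\infty(\R)$, and combined with $\rho_n\geq \delta_c$ from \eqref{eq:lowerbound2} this also yields a uniform $L^\infty$ bound on $\eta_n$.

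Since $r>c/\sqrt{2}$ and $c_n\to c$, for $n$ large one has $c_n^2/(2r^2)<1$, so there exists $\alpha>0$ such that
\[
1-\frac{c_n^2}{2\rho_n^2}\geq \alpha\quad \text{on }\R\setminus S_n^r.
\]
On $S_n^r$ the coefficient $1-c_n^2/(2\rho_n^2)$ need not be positive, but $|1-c_n^2/(2\rho_n^2)|\leq 1+c_n^2/(2\delta_c^2)$ is uniformly bounded, and together with the $L^\infty$ bound on $\eta_n$ and the assumption that $\{|S_n^r|\}$ is bounded we obtain
\[
\left|\int_{S_n^r}\Big(1-\frac{c_n^2}{2\rho_n^2}\Big)\eta_n^2\,dx\right|\leq C|S_n^r|\leq C'.
\]
Applying \eqref{J-elliptica} from Lemma~\ref{lemma:positiveJc}, and noting that the assumption $mV_0(c)^2<1$ ensures $1-m\rho_n^2\geq 0$ for $n$ large (so the first term there may be dropped), and splitting the integral over $\R=S_n^r\cup(\R\setminus S_n^r)$, we deduce
\[
\frac{\alpha}{4}\int_{\R\setminus S_n^r}\eta_n^2\,dx\leq J_{c_n}(1-\rho_n)+C'.
\]
The uniform bound $J_{c_n}(1-\rho_n)\leq C_2$ provided by \eqref{eq:gradbound2}, combined with the trivial estimate $\int_{S_n^r}\eta_n^2\,dx\leq C|S_n^r|$, then yields a uniform bound on $\|\eta_n\|_{L^2(\R)}^2$.

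To conclude, I would use the formula
\[
E(u_n)=\frac{c_n^2}{8}\int_\R \frac{\eta_n^2}{\rho_n^2}+\frac{1}{2}\int_\R(\rho_n')^2+\frac{1}{4}\int_\R (\W*\eta_n)\eta_n
\]
from Remark~\ref{rem:energymomentum}: the first term is bounded by $\delta_c^{-2}\|\eta_n\|_{L^2(\R)}^2$, the second by \eqref{eq:gradbound2}, and the third by $\|\wh\W\|_{L^\infty(\R)}\|\eta_n\|_{L^2(\R)}^2$ via Plancherel's identity. The only real obstacle is the lack of a sign on $1-c_n^2/(2\rho_n^2)$ on $S_n^r$, which could in principle make the right-hand side of \eqref{J-elliptica} much smaller than the $L^2$-norm of $\eta_n$; this is precisely what the hypothesis on $|S_n^r|$ together with the uniform lower bound $\rho_n\geq \delta_c$ are designed to handle.
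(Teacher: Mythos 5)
Your argument is correct and follows essentially the same route as the paper: drop the nonnegative gradient term in \eqref{J-elliptica} using $mV_0(c_n)^2<1$, split the remaining integral over $S_n^r$ and its complement, control the $S_n^r$-part by $|S_n^r|$ via $\rho_n\geq\delta_c$, use $r>c/\sqrt{2}$ and $J_{c_n}(1-\rho_n)\leq C_2$ to bound $\|\eta_n\|_{L^2(\R)}$, and then bound the energy. The only cosmetic difference is the last step: the paper writes $E(u_n)=J_{c_n}(1-\rho_n)+c_n p(u_n)$ while you bound the three terms of the explicit formula for $E(u_n)$ from Remark~\ref{rem:energymomentum}, which amounts to the same estimates.
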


\begin{proof}
	Let $\{c_n\}$ and $\delta_c\in (0,1)$ be given by Corollary~\ref{cor:solutionwithboundedgrad}, and let us take some $r\in (0,1)$. Since  $mV_0(c_n)^2<1$ for all $n$ large, we have $1-m\rho_n^2$, so by invoking  \eqref{J-elliptica},  we obtain
	\begin{align*}
		C\geq J_{c_n}(1-\rho_n)&\geq \frac{1}{4}\int_{S_n^r}\left(1-\frac{c_n^2}{2\rho_n^2}\right)(1-\rho_n^2)^2+
		\frac{1}{4}\int_{(S_n^r)^c}\left(1-\frac{c_n^2}{2\rho_n^2}\right)(1-\rho_n^2)^2
		\\
		&\geq \frac{1}{4}\left(1-\frac{c_n^2}{2r^2}\right)\int_\R (1-\rho_n^2)^2-\frac{c_n^2(1-\delta_c^2)^2}{8\delta_c^2}|S^r_n|.
	\end{align*}   
Since  $c_n\to c$, choosing $r\in ({c}/{\sqrt{2}},1)$, we infer that 
	\[\int_\R (1-\rho_n^2)^2\leq C_1(1+|S_n^r|),\quad\text{ for all } n,\]
	for some constant $C_1>0$ independent of $n$. Thus, if $\{|S_n^r|\}$ is bounded, we get 
	\[\int_\R(1-\rho_n^2)^2\leq C_2,\]
	for some $C_2>0$.
	
	Recall that $J_{c_n}(1-\rho_n)=E(u_n)-c_n p(u_n)$ (see Remark~\ref{rem:energymomentum}). Then, using \eqref{eq:gradbound2} and \eqref{eq:lowerbound2} again, we conclude
	\[E(u_n)=J_{c_n}(1-\rho_n)+\frac{c_n^2}{4}\int_\R\frac{(1-\rho_n^2)^2}{\rho_n^2}\leq C+\frac{c_n^2}{4\delta_c^2}\int_\R (1-\rho_n^2)^2\leq C_3,\]
for some $C_3>0$.
\end{proof}

Next result is proved as Lemma~{6.6} in \cite{ruiz-bellazzini}.

\begin{lemma}\label{lemma:L1}
	Let $\{f_n\}\subset L^1(\R)$ be a bounded sequence, and consider a sequence $\{S_n\}$ of measurable subsets of $\R$ such that $|S_n|\to\infty$. Then, for every $n$, there exist $x_n\in S_n$ and $R_n>0$ with $R_n\to\infty$ such that
	\[\int_{B(x_n,R_n)}|f_n|\to 0.\]
\end{lemma}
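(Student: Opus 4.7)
My plan is to use a straightforward covering and pigeonhole argument driven by the fact that the total $L^1$-mass of $f_n$ is uniformly bounded while the sets $S_n$ have measures tending to infinity. Set $M = \sup_n \|f_n\|_{L^1(\R)} < \infty$. For any radius $R > 0$ and any fixed $n$, I would partition $\R$ into the disjoint half-open intervals $I_k = [kR,(k+1)R)$, $k \in \Z$, and consider the collection $\mathcal{K}_n(R) = \{k \in \Z : I_k \cap S_n \neq \emptyset\}$. Since $S_n \subset \bigcup_{k \in \mathcal{K}_n(R)} I_k$, one immediately gets the lower bound $|\mathcal{K}_n(R)| \geq |S_n|/R$.

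For each $k \in \mathcal{K}_n(R)$, pick a point $x_k \in I_k \cap S_n$. Then the ball $B(x_k,R)$ is contained in the three-interval block $I_{k-1} \cup I_k \cup I_{k+1}$, so the family $\{B(x_k,R)\}_{k \in \mathcal{K}_n(R)}$ covers $\R$ with multiplicity at most $3$. Summing the integrals of $|f_n|$ over this family, one obtains
\[\sum_{k \in \mathcal{K}_n(R)} \int_{B(x_k,R)} |f_n| \,\leq\, 3\,\|f_n\|_{L^1(\R)} \,\leq\, 3M,\]
whence by pigeonhole there exists some index $k_{*}(n,R) \in \mathcal{K}_n(R)$ such that
\[\int_{B(x_{k_{*}},R)} |f_n| \,\leq\, \frac{3M}{|\mathcal{K}_n(R)|} \,\leq\, \frac{3MR}{|S_n|}.\]

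To finish, I would pick any sequence $R_n \to \infty$ growing slowly enough that $R_n/|S_n| \to 0$, for instance $R_n = \sqrt{|S_n|}$, and set $x_n := x_{k_{*}(n,R_n)} \in S_n$; the previous bound then gives $\int_{B(x_n,R_n)} |f_n| \leq 3MR_n/|S_n| \to 0$, which is the desired conclusion. I do not foresee a real obstacle here: the argument is essentially combinatorial and the only point requiring some care is keeping track of the constant overlap multiplicity ($= 3$) produced by using balls of radius $R$ centered in intervals of length $R$.
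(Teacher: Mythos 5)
Your covering/pigeonhole argument is correct and is essentially the standard proof of this fact; the paper itself gives no proof here, deferring to Lemma~6.6 of the cited work of Bellazzini and Ruiz, which rests on the same idea of playing the uniform mass bound (here $3M$, thanks to the overlap bound) against the number of intervals of length $R$ meeting $S_n$, which is at least $|S_n|/R$. Two cosmetic remarks: the balls $B(x_k,R)$ need not cover $\R$ --- what you actually use, and do prove, is only that their overlap multiplicity is at most $3$ --- and in the degenerate situations where $\mathcal{K}_n(R)$ is infinite (in particular if $|S_n|=\infty$) the pigeonhole step still yields an index with $\int_{B(x_k,R)}|f_n|\leq 3MR/|S_n|$ (interpreting the bound suitably, e.g.\ taking any threshold like $1/n$), since otherwise the sum of the integrals would exceed $3M$; so the conclusion is unaffected.
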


So far, we have only used the condition \ref{W:infty} for $p=2$ or $p=\infty$.
In the  next lemma we will use it also for $p=1$ in order to handle the weak star convergence, denoted by $\wstar$,  in $L^\infty$.

\begin{lemma}\label{lemma:localconvergence}
	Assume that $\W$ satisfies  \ref{W:infty} and let $\{\eta_n\}$ be a bounded sequence in  $W^{1,\infty}(\R)$. Then there exists $\eta\in L^\infty(\R)$ such that, up to a subsequence,
	  $\eta_n\to\eta$ in $L^\infty_{\textup{loc}}(\R)$ and  $\W\ast\eta_n\wstar \W\ast\eta$   in $L^\infty(\R).$
	  In addition, for any sequence $\{f_n\}\subset L^\infty(\R)$ such that 
	   $f_n\to f$ in $L^\infty_{\textup{loc}}(\R)$,
	   we have the following convergence in the sense of distributions,
	  \begin{equation}
	  	\label{W:etan}
	  	f_n(\W\ast\eta_n) \to f(\W\ast\eta) \quad  \text{ in } \boD'(\R).
	  	\end{equation}	 
\end{lemma}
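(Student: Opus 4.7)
The plan is to prove the three conclusions in order: locally uniform convergence of $\{\eta_n\}$, weak-star convergence of $\{\W*\eta_n\}$, and finally distributional convergence of the product $f_n(\W*\eta_n)$.

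First I would apply the Arzel\`a--Ascoli theorem. Boundedness of $\{\eta_n\}$ in $W^{1,\infty}(\R)$ provides a uniform $L^\infty$ bound and a common Lipschitz constant $L=\sup_n\norm{\eta_n'}_{L^\infty(\R)}$, so (modifying on a null set) the sequence is equicontinuous and uniformly bounded. A diagonal extraction then yields a subsequence (not relabelled) converging in $L^\infty_{\textup{loc}}(\R)$ to some $\eta\in L^\infty(\R)$ with the same Lipschitz constant.

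For the second step, I would exploit \ref{W:infty}: since $\W$ is given by a finite Borel measure, convolution with $\W$ maps $L^1(\R)$ into $L^1(\R)$ with operator norm $\norm{\W}$, and by Fubini together with the evenness of $\W$ one has
\[\int_\R(\W*\eta_n)\varphi=\int_\R\eta_n(\W*\varphi),\quad \text{for all }\varphi\in L^1(\R).\]
Since $\{\eta_n\}$ is bounded in $L^\infty(\R)$, up to a further (not relabelled) subsequence one has $\eta_n\wstar\tilde\eta$ in $L^\infty(\R)$ for some $\tilde\eta\in L^\infty(\R)$. Testing this weak-star convergence against characteristic functions of bounded intervals and comparing with the already established pointwise limit $\eta_n(x)\to\eta(x)$ (via dominated convergence on those intervals) forces $\tilde\eta=\eta$ a.e. Because $\W*\varphi\in L^1(\R)$, the displayed identity then gives
\[\int_\R(\W*\eta_n)\varphi\longrightarrow \int_\R\eta(\W*\varphi)=\int_\R(\W*\eta)\varphi,\]
which is exactly $\W*\eta_n\wstar\W*\eta$ in $L^\infty(\R)$.

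Finally, for the distributional convergence, I would take any $\varphi\in C_0^\infty(\R)$ with $K=\supp\varphi$ and split
\[\int_\R\big(f_n(\W*\eta_n)-f(\W*\eta)\big)\varphi=\int_\R(f_n-f)(\W*\eta_n)\varphi+\int_\R f\varphi\,(\W*\eta_n-\W*\eta).\]
The first summand is dominated by $\norm{f_n-f}_{L^\infty(K)}\norm{\W*\eta_n}_{L^\infty(\R)}\norm{\varphi}_{L^1(\R)}$, which tends to zero since $f_n\to f$ in $L^\infty_{\textup{loc}}(\R)$ and $\{\W*\eta_n\}$ is bounded in $L^\infty(\R)$ by \ref{W:infty}. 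The second summand vanishes in the limit because $f\varphi\in L^1(\R)$ (as $f\in L^\infty_{\textup{loc}}(\R)$ and $\varphi$ has compact support) and one invokes the weak-star convergence established in the previous step. The main subtlety, though mild, is ensuring the identification $\tilde\eta=\eta$ and extracting all the subsequences consistently so that pointwise, locally uniform, and weak-star convergence occur simultaneously; everything else is bookkeeping.
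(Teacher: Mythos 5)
Your proposal is correct and follows essentially the same route as the paper: Arzelà--Ascoli with a diagonal extraction for the local uniform limit, duality (evenness of $\W$ plus $\W*\varphi\in L^1(\R)$ from \ref{W:infty}) to transfer the weak-star convergence $\eta_n\wstar\eta$ to $\W*\eta_n\wstar\W*\eta$, and the same splitting of $\int f_n(\W*\eta_n)\phi$ into a term controlled by $\|f_n-f\|_{L^\infty(K)}$ and the uniform $L^\infty$ bound on $\W*\eta_n$, plus a term handled by weak-star convergence against $f\phi\in L^1(\R)$. The identification of the weak-star limit of $\eta_n$ with the locally uniform limit is done in the same way, so there is nothing to add.
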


\begin{proof}
	First, a standard diagonal argument together with Ascoli--Arzela's theorem imply that there exists $\eta\in L^\infty(\R)$ such that, up to a subsequence, 
\begin{equation}
	\label{dem:eta1}
	\eta_n\to\eta\quad\text{ in }L^\infty_{\text{loc}}(\R).
\end{equation}
	
On the other hand,  we also deduce that there is $g\in L^\infty(\R)$ such that, up to a subsequence,  
$\eta_n\wstar g$  in  $L^\infty(\R)$. By \eqref{dem:eta1}, we get $g=\eta$.
Let us now fix a function $\varphi \in L^1(\R)$. By	\ref{W:infty}, we have $\W*\varphi \in L^1(\R)$ and, using that $\W$ is even, we deduce that
$$
\int_\R (\W*\eta_n)\varphi=\int_\R \eta_n (\W*\varphi)\to  \int_\R\eta (\W*\varphi)=\int_\R(\W*\eta) \varphi.
$$
Therefore,  
\begin{equation}
	\label{dem:W:etan}
	\W\ast\eta_n\wstar \W\ast\eta \quad \text{ in }L^\infty(\R).
\end{equation}
To prove \eqref{W:etan}, we consider $\phi \in \boC_0^\infty(\R)$,
with $\supp \phi \subset K$, for some compact set $K$, 
and  notice that 
 \begin{align}
 	\label{dem:weaks}
\int_\R 	(\W\ast\eta_n)f_n\phi =
\int_\R 	(\W\ast\eta_n)f\phi 
+ \int_K	(\W\ast\eta_n)(f_n-f)\eta\phi. 
	\end{align}
The second term in the right-hand side can be bounded by $\norm{\W}_{\infty}\norm{\eta_n}_{L^\infty(\R)}
\norm{f_n-f}_{L^\infty(K)}\norm{\phi}_{L^1(K)},$
which goes to zero by hypothesis. Since   $f\phi \in L^1(\R)$, 
using \eqref{dem:W:etan}
we can thus pass to the limit in \eqref{dem:weaks} and obtain \eqref{W:etan}.
\end{proof}

%

We are now in a position to prove Theorem~\ref{thm:existenceallc}.
\begin{proof}[Proof of Theorem~\ref{thm:existenceallc}]
	Let $\{c_n\}$ and $\{u_n\}$ be the sequences given by Corollary~\ref{cor:solutionwithboundedgrad}. Thus, $\{c_n\}\subset (0,\sqrt{2})$ with $c_n\to c$ and  $u_n\in\boN\boE(\R)$ is a nontivial solution to $(S(\W,c_n))$ for all $n$. We start by proving that $\{E(u_n)\}$ remains bounded. Indeed, arguing by contradiction, assume that $\{|S_n^r|\}$ is unbounded for every $r\in(c/\sqrt{2},1)$, where $S_n^r$ is defined in Lemma~\ref{lemma:Snr}. Let us thus fix $r\in({c}/{\sqrt{2}},1)$ to be chosen later. Observe that, by \eqref{eq:gradbound2}, the sequence $\{(\rho_n')^2\}$ is bounded in $L^1(\R)$. Then, applying Lemma~\ref{lemma:L1}, for every $n$ there exist $x_n\in S_n^r$ and $R_n>0$ such that $R_n\to\infty$ and
	\begin{equation}\label{eq:zerograd}
		\int_{B(0,R_n)}\rho_n'(x+x_n)^2 dx\to 0.
	\end{equation}
	Now we define $\tilde{u}_n=u_n(\cdot+x_n)$ and $\tilde{\rho}_n=|\tilde{u}_n|$. We know from \ref{W:apriori} and Proposition~\ref{prop:universalestimate} that $\{\tilde{u}_n\}$ is bounded in $W^{k,\infty}(\R;\C)$ for every $k\in\N$. As a consequence, arguing as in the proof of Lemma~\ref{lemma:regularity} and taking \eqref{eq:lowerbound2} into account, we deduce that $\{\tilde{\rho}_n\}$ is bounded in $W^{k,\infty}(\R)$ for every $k\in\N$. In particular, there exists $\rho\in W^{2,\infty}(\R)$ such that, up to a subsequence,
	\[\tilde{\rho}_n\to\rho\quad \text{ in }W^{2,\infty}_{\text{loc}}(\R).\]Moreover, thanks to \eqref{eq:zerograd}, we deduce that $\rho'\equiv 0$, so $\rho$ is a constant. Furthermore, the pointwise convergence $\tilde{\rho}_n\to \rho$ leads to
	\[\rho_n(x_n)=\tilde{\rho}_n(0)\to \rho.\]
	Therefore, since $\rho_n(x_n)<r$ for all $n$, it follows that $\rho\leq r$.  
	
	Notice that  $\tilde{\rho}_n\in 1+H^1(\R)$ satisfies the equation
	\begin{equation}\label{rhoneq}
		-\tilde{\rho}_n''+\frac{c_n^2(1-\tilde{\rho}_n^4)}{4\tilde{\rho}_n^3}=\tilde{\rho}_n(\W\ast(1-\tilde{\rho}_n^2))\quad\text{ on }\R.
	\end{equation}
	We aim to pass to the limit in \eqref{rhoneq}. In order to do so we notice that, since $\rho$ is a constant,  
	\[\tilde{\rho}_n''\to 0\quad\text{  in } L^\infty_{\text{loc}}(\R).\]
	Besides, by virtue of Lemma~\ref{lemma:localconvergence} and \eqref{Wconv1}, we have  
$$	\tilde{\rho}_n (\W\ast(1-\tilde{\rho}_n^2))\to 	\rho (\W\ast(1-{\rho}^2))=\rho(1-\rho^2)\wh\W(0)\quad\text{  in }\boD'(\R).$$
	Thus, using that $\wh\W(0)=1$, we can  to pass to the limit in \eqref{rhoneq} in $\boD'(\R)$ to obtain
	\[c^2(1-\rho^4)=4\rho^4(1-\rho^2).\]	
	Using that $1-\rho^4=(1-\rho^2)(1+\rho^2)$, it follows that
	\[c^2(1+\rho^2)=4\rho^4.\]
	From this equation, it is immediate to check that
	\[\rho^2=\frac{c^2+c\sqrt{c^2+16}}{8}.\]
	Observe that $\frac{c^2+c\sqrt{c^2+16}}{8}>\frac{c^2}{2}$ since $c<\sqrt{2}$. Therefore, we can choose $\varepsilon>0$ small enough (independent of $r$) so that
	\[\varepsilon+\frac{c^2}{2}<\frac{c^2+c\sqrt{c^2+16}}{8}=\rho^2\leq r^2.\]
	Finally, if we choose $r=\sqrt{\varepsilon+\frac{c^2}{2}}$ and take $\varepsilon>0$ possibly smaller so that $r\in (c/\sqrt{2},1)$, then we arrive at a contradiction. Therefore, $\{|S_n^r|\}$ must be bounded for some $r\in (c/\sqrt{2},1)$. Consequently, Lemma~\ref{lemma:Snr} implies that $\{E(u_n)\}$ is bounded too.
	
	Arguing as before, there exists $u\in\boC^2(\R;\C)$ such that, up to a subsequence,  
$u_n\to u$ in $W^{2,\infty}_{\text{loc}}(\R).$
	Moreover, Lemma~\ref{lemma:localconvergence} implies that 
$\W\ast(1-|u_n|^2)\wstar\W\ast(1-|u|^2)$ in $L^\infty(\R).$
	Thus, we can pass to the limit in $(S(\W,c_n))$ so that $u$ is a solution to \eqref{TWc}. 
	
Let us now check that $u\in\boE(\R)$. Indeed, as in Lemma~\ref{lemma:positiveJc}, using \eqref{desigualdad-W}, 
 we have
	\begin{align*}
		E(u_n)
		&\geq \frac{1}{2}\int_\R|u_n'|^2+\frac{1}{4}\int_\R(1-|u_n|^2)^2-\frac{m}{16\pi}\int_\R|\xi|^2|\wh\eta_n|^2
		\\
		&=\frac{1}{2}\int_\R|u_n'|^2+\frac{1}{4}\int_\R(1-|u_n|^2)^2-\frac{m}{2}\int_\R \rho_n^2(\rho_n')^2.
	\end{align*}
	Hence, \ref{W:apriori}, \eqref{eq:gradbound2} and the fact that $\{E(u_n)\}$ is bounded, imply that
	\[\frac{1}{2}\int_\R |u'_n|^2+\frac{1}{4}\int_\R (1-|u_n|^2)^2\leq C,\]
	for all $n$ and for some $C>0$ independent of $n$. By virtue of Fatou's lemma,
	\[\frac{1}{2}\int_\R |u'|^2+\frac{1}{4}\int_\R (1-|u|^2)^2\leq C.\]
	That is, $u\in\boE(\R)$.

	Finally, the estimate \eqref{eq:lowerbound2} ensures that $u\in\boN\boE(\R)$, while Proposition~\ref{prop:nonvanishing} implies that $u$ is nontrivial. The proof is concluded.
\end{proof}


\section{Nonexistence and properties of solitons}
\label{sec:decay}
This section is devoted to the study of the Fourier transform of equation \eqref{eq:ellipticequation}, that is 
\begin{equation}
	\label{contradiction}
M_c(\xi)\wh \eta(\xi)= \wh F(\xi),\quad \text{with } \ 
M_c(\xi)=\xi^2+2\widehat{\mathcal{W}}(\xi)-c^2, 
\end{equation}
where 
\[F=2K+2\eta(\W\ast\eta),\quad \eta=1-|u|^2\quad \text{and}\quad K=|u'|^2.\]
 We keep this notation for $F$ and $\eta$  for the rest of the Section, 
 and we assume, as always, that $\W$ satisfies \ref{H0}.
If $M_c>0$ a.e., we can recast \eqref{contradiction} as 
\begin{equation}
	\label{eq:conv}
	\wh \eta(\xi)=L_c(\xi) \wh F(\xi),\quad \text{with } \ 
	L_c(\xi)=\wh	\boL_c(\xi)=\frac{1}{M_c(\xi)}.
\end{equation}
We will see that the operator $\boL_c$ plays an essential role in order to study the regularity and asymptotic behavior at infinity of the solitons given by  Theorems~\ref{thm:existenceae} and \ref{thm:existenceallc}.


 We also stress that \ref{h:lowerboundWbis} is sufficient condition for $\boL_c$ to be well defined, for $c\in [0,\sqrt{2\sigma})$. Indeed,  we have $M_c(\xi)\geq (1-2\kappa)\xi^2+2\sigma-c^2>0$ for a.e.\ $\xi\in\R$, so that
\begin{equation}
	\label{Lc:int}
\int_\R|L_c(\xi)|d\xi\leq\int_\R\frac{d\xi}{(1-2\kappa)\xi^2+2\sigma-c^2}<\infty.
\end{equation}
Thus  $L_c\in L^1(\R)$ and $\boL_c$ is a bounded continuous function on $\R$.

We can now establish our  nonexistence result for solitons with critical speed. 

\begin{theorem}\label{thm:nonexistence}
	Assume that $\wh\W\geq 0$ a.e.\ on $\R$ and that there exists $\delta>0$ such that one of the following holds:
	\begin{enumerate}
		\item $\wh\W(\xi)=1-\xi^2/2$, for all $\xi\in(-\delta,\delta)$.
		\label{nonex1}
		\item $\wh\W$ is differentiable on $(-\delta,\delta)$, $\wh\W(0)=1$ and $\wh\W(\xi)\not=1-\xi^2/2$ for a.e.\ $\xi\in (-\delta,\delta)$.
		\label{nonex2}
	\end{enumerate} 
Then $(S(\W,\sqrt{2}))$ admits no nontrivial solution in $\mathcal{E}(\mathbb{R})$.
\end{theorem}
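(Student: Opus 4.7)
My plan is to reduce both cases to the single conclusion $\int_\R F\,dx=0$, where $F:=2K+2\eta(\W*\eta)$ is the right-hand side of equation~\eqref{eq:ellipticequation}. Indeed, once this is in hand, Plancherel's identity~\eqref{W:even} and the hypothesis $\wh\W\geq 0$ give
\[0=\int_\R F\,dx=2\int_\R|u'|^2\,dx+\frac{1}{\pi}\int_\R\wh\W(\xi)|\wh\eta(\xi)|^2\,d\xi,\]
with both summands nonnegative; hence $u'\equiv 0$, and since $|u(\pm\infty)|=1$, the solution $u$ must be a constant of modulus one, i.e.\ trivial. So the entire difficulty is concentrated in showing that $\wh F(0)=\int_\R F\,dx$ vanishes.

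The preliminary work will be to check that $F\in L^1(\R)$, so that $\wh F$ is continuous and $\wh F(0)=\int_\R F\,dx$ is meaningful. Since $c=\sqrt 2>0$, Proposition~\ref{prop:etakidentities} ensures $u\in\boN\boE(\R)$, so $1-\eta=|u|^2$ is bounded below by a positive constant; combined with identity \eqref{eq:quadratic} this yields $K\in L^1(\R)$, while $\eta(\W*\eta)\in L^1(\R)$ follows from Cauchy--Schwarz and \ref{H0}. Taking the Fourier transform of \eqref{eq:ellipticequation} at $c=\sqrt 2$ then gives, in $L^2(\R)$,
\[\wh F(\xi)=M_{\sqrt 2}(\xi)\,\wh\eta(\xi),\qquad M_{\sqrt 2}(\xi)=\xi^2+2\wh\W(\xi)-2.\]
In case~\ref{nonex1} the function $M_{\sqrt 2}$ vanishes identically on $(-\delta,\delta)$, so $\wh F=0$ almost everywhere there, and continuity gives $\wh F(0)=0$ immediately.

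The hard part will be case~\ref{nonex2}, where $M_{\sqrt 2}$ is only nonzero almost everywhere near $0$ and $\wh\W$ is merely differentiable. I plan to argue by contradiction: supposing $A:=\wh F(0)\neq 0$, continuity of $\wh F$ yields $r\in(0,\delta]$ with $|\wh F(\xi)|\geq|A|/2$ on $|\xi|\leq r$, and the hypothesis $\wh\W(\xi)\neq 1-\xi^2/2$ a.e.\ on $(-\delta,\delta)$ is precisely $M_{\sqrt 2}\neq 0$ a.e.\ there, so one may invert to get $\wh\eta=\wh F/M_{\sqrt 2}$ a.e., whence
\[|\wh\eta(\xi)|^2\geq\frac{A^2}{4\,|M_{\sqrt 2}(\xi)|^2}\quad\text{a.e.\ on }|\xi|\leq r.\]
The crucial observation is that, because $\wh\W$ is even and differentiable at $0$, necessarily $\wh\W'(0)=0$, so $M_{\sqrt 2}(\xi)=\xi^2+2(\wh\W(\xi)-\wh\W(0))=o(\xi)$ as $\xi\to 0$. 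Hence, for any small $\varepsilon>0$, $|M_{\sqrt 2}(\xi)|\leq\varepsilon|\xi|$ on some interval $|\xi|\leq r'$, which produces $|\wh\eta(\xi)|^2\geq A^2/(4\varepsilon^2\xi^2)$ a.e.\ there. Integrating yields a divergent integral of $1/\xi^2$ near the origin, contradicting $\wh\eta\in L^2(\R)$. Thus $\wh F(0)=0$ in case~\ref{nonex2} as well, and the argument of the first paragraph closes the proof.
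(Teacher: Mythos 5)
Your proposal is correct and follows essentially the same route as the paper's proof: the Fourier identity $M_{\sqrt2}\,\wh\eta=\wh F$, the integrability of $F$ giving continuity of $\wh F$, Plancherel with $\wh\W\geq 0$ to control the sign of $\wh F(0)$, and the failure of square-integrability of $\wh F/M_{\sqrt2}\gtrsim 1/|\xi|$ near the origin in case (ii). The only cosmetic differences are that you prove $\wh F(0)=0$ and deduce triviality (rather than assuming nontriviality to get $\wh F(0)>0$ and contradicting it), and you use evenness of $\wh\W$ to get $M_{\sqrt2}(\xi)=o(\xi)$, whereas the bound $|M_{\sqrt2}(\xi)|\leq C|\xi|$ from mere differentiability already suffices.
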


\begin{proof}
	Arguing by contradiction, assume that there exists a nontrivial solution $u\in\mathcal{E}(\mathbb{R})$ to  $(S(\W,\sqrt{2}))$. Then,  Proposition~\ref{prop:etakidentities} implies that \eqref{contradiction} holds, i.e.  
\begin{equation}
	\label{dem:nonexis}
M(\xi)\wh \eta(\xi)= \wh F(\xi),\quad \text{with } \ 
M(\xi)=\xi^2+2\widehat{\mathcal{W}}(\xi)-2.
	\end{equation}
On the other hand, by virtue of Lemma~\ref{lemma:regularity}, $\eta$ and $K$ belong to $W^{k,p}(\mathbb{R})$ for all $k\in\mathbb{N}$ and all $p\in [2,\infty]$.
%

	Let us show that $\wh F$ is continuous and $\wh F(0)>0$. Indeed, from \eqref{eq:localequation} and from the fact that $\eta(\pm \infty)=0$, we deduce the existence of  constants $R,C>0$ such that
	\[ \abs{u'(x)}^2\leq C(\eta(x)^2+\eta'(x)^2)\quad\text{for all } x\in \R\setminus[-R,R].\]
	Hence  $K\in L^1(\R)$ and, in turn, $F\in L^1(\R)$ and $\widehat{F}$ is continuous.
	Also, it follows from \eqref{dem:nonexis} that we can assume that $M\wh\eta$ is also continuous.
	 Moreover, since $u$ is not trivial and $\wh\W\geq 0$ a.e., Plancherel's identity yields
	\begin{equation}
		\label{intFpositive}
		\widehat{F}(0)=\int_\mathbb{R} F(x)dx=2\int_\R \abs{u'}^2+\frac{1}{\pi}\int_\R\wh\W(\xi)|\wh\eta(\xi)|^2d\xi>0.
	\end{equation}

If assumption \ref{nonex1} holds, we deduce that  $M\wh\eta=0$ on $(-\delta,\delta)$, so that, by \eqref{dem:nonexis},  $\wh F(0)=0$, which contradicts \eqref{intFpositive}.
	
If assumption \ref{nonex2} holds, then $M$ is differentiable on $(-\delta,\delta)$, $M(0)=0$ and $M(\xi)\not=0$ for a.e.\ $\xi\in (-\delta,\delta)$. Therefore,
	\begin{equation}
	\label{contradiction2}
		\widehat{\eta}(\xi)=L(\xi)\widehat{F}(\xi)\quad\text{a.e. }\xi\in (-\delta,\delta),
	\end{equation}
where $L=1/M$. Let us show now that $L\not\in L^1((-\tilde\delta,\tilde\delta))$ for every $\tilde\delta\in(0,\delta)$ small enough. Expanding $M$ around zero leads to
	\[M(\xi)=M'(0)\xi+o(\xi^2),\quad\text{ for all } \xi\in (-\tilde{\delta},\tilde{\delta}),\]
	for some $\tilde{\delta}\in (0,\delta)$. Thus, taking $\tilde{\delta}$ even smaller if necessary,
	\[|M(\xi)|\leq (|M'(0)|+1)|\xi|,\quad\text{ for all } \xi\in (-\tilde{\delta},\tilde{\delta}).\]
	In consequence,
	\[\int_{-\tilde{\delta}}^{\tilde{\delta}}L(\xi)d\xi\geq \frac{1}{|M'(0)|+1}\int _{-\tilde{\delta}}^{\tilde{\delta}}\frac{d\xi}{|\xi|}=\infty.\]
	In particular, $L\not\in L^2((-\tilde\delta,\tilde\delta))$. 	
	Hence, taking \eqref{contradiction2} into account, in order $\widehat{\eta}$ to belong to $L^2(\mathbb{R})$, it is necessary that $\widehat{F}(0)=0$. This is again a contradiction with \eqref{intFpositive}.
\end{proof}

We can prove now the nonexistence result stated in the Introduction.
\begin{proof}[Proof of Theorem~\ref{thm:nonexistence:intro}]
	Let $\delta > 0$ be such that $\wh \W\in\boC^2((-\delta,\delta))$.
	Recall that $\wh \W(0)=1$ and $(\wh \W)'(0)=0$.
	If  $(\wh \W)''(\xi)= -1$ for all $(-\delta,\delta)$, then $\wh \W(\xi)=1-\xi^2/2$,
for $\xi \in  (-\delta,\delta)$, so that we are in the case (ii) of Theorem~\ref{thm:nonexistence}.
	
	Assume now that $(\wh \W)''(0)\neq -1$. Then, by decreasing $\delta$ if necessary, we deduce by continuity that 
	$(\wh \W)''>-1$ on $(-\delta,\delta)$, or $(\wh \W)''<-1$ on $(-\delta,\delta)$.
	On the other hand, 
by	Taylor's theorem, and using that $\wh \W$ is even, we deduce that
	for any $\xi\in (-\delta,\delta)$, there exists $\tilde \xi\in (-\delta,\delta)$ such that 
	\begin{equation*}
	\wh \W(\xi)=1+(\wh \W)''(\tilde \xi)\frac{\xi^2}{2}.
	\end{equation*}
	Thus we are in the case (ii) of Theorem~\ref{thm:nonexistence}, and the conclusion follows.
\end{proof}

\subsection{Decay at infinity}
\label{sub:sec:decay}
We assume now that $M_c>0$ a.e.\ on $\R$ so that \eqref{eq:conv} holds a.e.
Notice also that if $L_c\in\mathcal{S}'(\mathbb{R})$, then 
\begin{equation}\label{eq:convolution}
	D^k\eta=\mathcal{L}_c\ast D^k F,\quad  \text{ for all }   k\in\N.
\end{equation}
This equation will be the key for analyzing the decay of the  solutions $u\in\boE(\R)$ to \eqref{TWc} as $x\to\pm\infty$. More precisely, it  will be provided the decay of $\eta=1-|u|^2$ at infinity. We start by showing that we  can also  recover  limits of $u$ at $\pm\infty$.
First, we need to establish the integrability of $\eta$, which means that $u$
has finite mass.


\begin{lemma}\label{lemma:finitemass}
	Let $c\geq 0$ and let $u\in\boE(\R)$ be a solution to \eqref{TWc}. If $\boL_c\in L^1(\R)$, then $\eta\in W^{k,1}(\R)$ for every $k\in\N$. 
\end{lemma}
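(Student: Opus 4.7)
The proof will proceed in three steps, exploiting the convolution formulation \eqref{eq:conv} together with the regularity and integrability properties coming from Lemma~\ref{lemma:regularity} and the identities of Section~\ref{sec:identities}.

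First, I would show that $F\in L^1(\R)$. By Lemma~\ref{lemma:regularity}, both $u'$ and $\eta$ belong to $L^2(\R)$, so $K=|u'|^2\in L^1(\R)$ and, thanks to \eqref{W-22}, $\W*\eta\in L^2(\R)$, which combined with $\eta\in L^2(\R)$ gives $\eta(\W*\eta)\in L^1(\R)$ by the Cauchy--Schwarz inequality. Hence $F=2K+2\eta(\W*\eta)\in L^1(\R)$.

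Second, I would justify the convolution identity $\eta=\boL_c*F$ and conclude that $\eta\in L^1(\R)$. Starting from equation \eqref{eq:ellipticequation}, taking Fourier transform yields \eqref{contradiction}, i.e.\ $M_c\wh\eta=\wh F$ a.e.; since by assumption $L_c=M_c^{-1}=\wh\boL_c$ is well defined as an $L^1$ function, we obtain $\wh\eta=L_c\wh F=\wh\boL_c\,\wh F$. On the other hand, the Young convolution inequality ensures that $\boL_c*F\in L^1(\R)$ with Fourier transform $\wh\boL_c\,\wh F$, so by uniqueness of the Fourier transform of tempered distributions $\eta=\boL_c*F$, and Young's inequality gives the quantitative bound $\|\eta\|_{L^1(\R)}\leq\|\boL_c\|_{L^1(\R)}\|F\|_{L^1(\R)}$.

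Third, I would upgrade the conclusion to $D^k\eta\in L^1(\R)$ for every $k\in\N$ by induction via \eqref{eq:convolution}. The heart of the matter is to show that $D^k F\in L^1(\R)$. Applying Leibniz's rule, $D^k(|u'|^2)$ is a finite sum of products of the form $u_i^{(a)}u_j^{(b)}$ with $a,b\geq 1$; by Lemma~\ref{lemma:regularity} each such factor lies in $L^p(\R)$ for all $p\in[2,\infty]$, so each product is integrable. Similarly,
\begin{equation*}
D^k\bigl(\eta(\W*\eta)\bigr)=\sum_{j=0}^{k}\binom{k}{j}\,D^j\eta\cdot\bigl(\W*D^{k-j}\eta\bigr),
\end{equation*}
where each $D^j\eta\in L^2(\R)$ by Lemma~\ref{lemma:regularity} and $\W*D^{k-j}\eta\in L^2(\R)$ by \eqref{W-22}, so each summand is in $L^1(\R)$. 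Hence $D^k F\in L^1(\R)$. Repeating the Fourier argument of the second step (or simply differentiating the convolution $\eta=\boL_c*F$ under the integral sign, which is justified since $F\in\boC^\infty(\R)$ with all derivatives in $L^1(\R)$) gives $D^k\eta=\boL_c*D^kF$, and Young's inequality delivers $D^k\eta\in L^1(\R)$.

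No single step is delicate; the only point requiring some care is the justification of the identity $\eta=\boL_c*F$ (and its differentiated versions), which relies on the a priori membership of $\eta$ in $L^2$ together with $\boL_c,F\in L^1$ so that the Fourier identity $\wh\eta=\wh\boL_c\,\wh F$ can be inverted unambiguously.
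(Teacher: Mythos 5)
Your proposal is correct and follows essentially the same route as the paper: establish $D^kF\in L^1(\R)$ and apply Young's inequality to the convolution identity $D^k\eta=\boL_c\ast D^kF$ from \eqref{eq:convolution}. The only minor difference is that the paper uses \eqref{eq:kprime} to rewrite $F'=4\eta'(\W\ast\eta)+2\eta(\W\ast\eta')$ so that all derivatives of $F$ are sums of products of $L^2$ functions, whereas you treat $D^k(|u'|^2)$ directly by Leibniz's rule and the $W^{k,p}$ regularity of $u'$ from Lemma~\ref{lemma:regularity}; both arguments are valid.
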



\begin{proof}
	In the case $k=0$, we argue as in the proof of Theorem~\ref{thm:nonexistence} to prove that $F\in L^1(\R)$. Then, if $\boL_c\in L^1(\R)$, Young's inequality applied to \eqref{eq:convolution} with $k=0$ implies that $\eta\in L^1(\R)$. The case $k\geq 1$ may be tackled similarly  by taking into account that $F'=4\eta'\W\ast\eta+2\eta\W\ast\eta'$, so the successive derivatives of $F$ have the form
	$D^k F=\sum_{j=1}^{2k}a_j b_j,$
	where $a_j,b_j\in L^2(\R)$. 
\end{proof}

For any $u\in\boE(\R)$, the limits $\lim_{x\to\pm}u(x)$ do not exist in general
(see \cite{gerard3}). The following result shows that, if $u$ solves \eqref{TWc}, then they do exist whenever $u$ presents no vortices and $\eta=1-|u|^2$ has finite mass.

\begin{proposition}\label{prop:limitsatinfinity}
	Let $c> 0$ and let $u=\rho e^{i\theta}\in \boN\boE(\R)$ be a solution to \eqref{TWc}. Assume that $\eta\in L^1(\R)$. Then the following limits exist and are finite:
	\begin{equation}\label{eq:limitstheta}
		\theta(\pm\infty)=\theta(0)+\frac{c}{2}\int_0^{\pm\infty}\frac{\eta}{1-\eta}.
	\end{equation}
	In particular, 
	\begin{equation}\label{eq:limitsu}
		u(+\infty)=e^{i\theta(+\infty)},\quad u(-\infty)=e^{i\theta(-\infty)}.
	\end{equation}
\end{proposition}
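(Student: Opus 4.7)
The plan is to combine the ODE for $\theta'$ derived in Proposition~\ref{prop:hydrodynamic} with the assumed integrability of $\eta$ and the uniform lower bound on $\rho$ furnished by the hypothesis $u\in\boN\boE(\R)$.

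First, I would recall from Proposition~\ref{prop:hydrodynamic} that, since $u=\rho e^{i\theta}\in\boN\boE(\R)$ solves \eqref{TWc} with $c>0$, we have $\theta'=\frac{c}{2}\bigl(\frac{1}{\rho^2}-1\bigr)$ on $\R$. Using $\rho^2=1-\eta$, this rewrites as
\[
\theta'(x)=\frac{c}{2}\,\frac{\eta(x)}{1-\eta(x)},\qquad x\in\R,
\]
so that for every $x\in\R$,
\[
\theta(x)=\theta(0)+\frac{c}{2}\int_0^x\frac{\eta(y)}{1-\eta(y)}\,dy.
\]

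The main step is then to show that the integrand belongs to $L^1(\R)$. Since $u\in\boN\boE(\R)$, Lemma~\ref{lemma:regularity} provides $\delta\in(0,1)$ such that $\rho\geq \delta$ on $\R$, hence $1-\eta=\rho^2\geq \delta^2$. Therefore $|1/(1-\eta)|\leq 1/\delta^2$ pointwise, and by the hypothesis $\eta\in L^1(\R)$,
\[
\Big|\frac{\eta}{1-\eta}\Big|\leq \frac{|\eta|}{\delta^2}\in L^1(\R).
\]
This makes $\eta/(1-\eta)$ integrable on $\R$, so the improper integrals $\int_0^{\pm\infty}\eta/(1-\eta)$ converge absolutely, yielding \eqref{eq:limitstheta}.

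Finally, \eqref{eq:limitsu} follows at once: from Lemma~\ref{lemma:regularity} we have $\rho(\pm\infty)=1$, and combining this with the finite limits $\theta(\pm\infty)$ just obtained gives $u(\pm\infty)=\lim_{x\to\pm\infty}\rho(x)e^{i\theta(x)}=e^{i\theta(\pm\infty)}$. There is no serious obstacle here; the only thing to verify carefully is that the nonvanishing condition is really what keeps the denominator $1-\eta$ bounded away from zero uniformly, so that $L^1$-integrability of $\eta$ transfers to $\eta/(1-\eta)$.
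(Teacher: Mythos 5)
Your proposal is correct and follows essentially the same route as the paper: integrate the phase equation $\theta'=\frac{c}{2}\bigl(\frac{1}{\rho^2}-1\bigr)$ from Proposition~\ref{prop:hydrodynamic}, use that $\inf_\R(1-\eta)=\inf_\R\rho^2>0$ (from $u\in\boN\boE(\R)$) together with $\eta\in L^1(\R)$ to get absolute convergence of the integrals, and then conclude \eqref{eq:limitsu} from $\rho(\pm\infty)=1$. No gaps.
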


\begin{remark}
	On the one hand, we recall that Lemma~\ref{lemma:finitemass} provides sufficient conditions that assure that $\eta\in L^1(\R)$. On the other hand, we stress the fact that the limits $u(\pm\infty)$, if they exist, may be different from each other. In fact, from Proposition~\ref{prop:limitsatinfinity} it is easy to see that $u(+\infty)=u(-\infty)$ if and only if $\int_\R \frac{\eta}{1-\eta}=0$.
\end{remark}

\begin{proof}[Proof of Proposition~\ref{prop:limitsatinfinity}]
By  Proposition~\ref{prop:hydrodynamic}, $\theta$ satisfies \eqref{eq:theta}. By integrating,  we have
	\[\theta(x)-\theta(0)=\frac{c}{2}\int_0^x\left(\frac{1}{\rho(y)^2}-1\right)dy=\frac{c}{2}\int_0^x\frac{\eta(y)}{1-\eta(y)}dy,\quad\text{ for all } x\in\R.\]
	Therefore, 
	\[\theta(+\infty)\coloneqq\lim_{x\to\infty}\theta(x)=\theta(0)+\int_0^\infty\frac{\eta}{1-\eta},\quad \theta(-\infty)\coloneqq\lim_{x\to -\infty}\theta(x)=\theta(0)-\int_{-\infty}^0\frac{\eta}{1-\eta}.\]
	Since $\inf_\R (1-\eta)=\inf_\R \rho^2>0$ and $\eta\in L^1(\R)$, it follows that both limits $\theta(+\infty)$ and $\theta(-\infty)$ are finite. Hence, we deduce directly \eqref{eq:limitsu} from the fact that $\rho(\pm \infty)=1$.
\end{proof}

In the rest of the subsection we will adapt the Bona--Li theory in \cite{BonaLi1,BonaLi2}
to our equation. First, we recall the following technical result proved in \cite{Pei}.
\begin{lemma}[\hspace{1sp}\cite{Pei}]
\label{lemma:Pei}
	For any $0<\ell<m$ and $\varepsilon>0$, the following inequality holds,
	\begin{equation}\label{convolutionineq}
		\int_{\mathbb{R}}\frac{e^{\ell|x|}}{(1+\varepsilon e^{|x|})^m e^{m|x-y|}}dx\leq B\frac{e^{\ell|y|}}{(1+\varepsilon e^{|y|})^m},\quad\text{ for all } y\in\mathbb{R},
	\end{equation}
	where $B=(\min\{\ell,m-\ell\})^{-1}$.
\end{lemma}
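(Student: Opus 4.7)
The plan is to reduce the inequality to a pointwise bound on the ratio $f(x)/f(y)$, where $f(t):=e^{\ell|t|}(1+\varepsilon e^{|t|})^{-m}$ is the even profile appearing on both sides, and then integrate against the kernel $e^{-m|x-y|}$. First, by the substitution $x\mapsto -x$ together with the evenness of $f$, I would reduce without loss of generality to the case $y\geq 0$, so the claim becomes $\int_{\R} f(x)\,e^{-m|x-y|}\,dx\leq B\,f(y)$.

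The key elementary tool is that $g(t):=1+\varepsilon e^{t}$ is log-Lipschitz with constant $1$, since
\[
\bigl|\log g(a)-\log g(b)\bigr|=\Bigl|\int_{b}^{a}\frac{\varepsilon e^{t}}{1+\varepsilon e^{t}}\,dt\Bigr|\leq|a-b|,\qquad\text{for all } a,b\in\R,
\]
because the integrand lies in $[0,1]$. Hence $g(a)/g(b)\leq e^{|a-b|}$. Writing
\[
\frac{f(x)}{f(y)}=e^{\ell(|x|-|y|)}\Bigl(\tfrac{g(|y|)}{g(|x|)}\Bigr)^{m},
\]
I would then distinguish two cases. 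When $|x|\geq|y|$ we have $g(|y|)/g(|x|)\leq 1$, whence $f(x)/f(y)\leq e^{\ell(|x|-|y|)}\leq e^{\ell|x-y|}$. When $|x|\leq|y|$ the log-Lipschitz bound gives $g(|y|)/g(|x|)\leq e^{|y|-|x|}$, so $f(x)/f(y)\leq e^{(m-\ell)(|y|-|x|)}\leq e^{(m-\ell)|x-y|}$. Combining both (using $\bigl||x|-|y|\bigr|\leq|x-y|$) yields the uniform pointwise bound $f(x)/f(y)\leq e^{r|x-y|}$ with $r:=\max\{\ell,m-\ell\}<m$.

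Substituting this into the integral and setting $u=x-y$ gives
\[
\int_{\R}\frac{f(x)}{f(y)}\,e^{-m|x-y|}\,dx\leq\int_{\R}e^{-(m-r)|u|}\,du=\frac{2}{m-r}=\frac{2}{\min\{\ell,m-\ell\}},
\]
which proves the inequality, modulo an extraneous factor of $2$ in the constant. The main obstacle I foresee is precisely shaving this factor $2$ to reach the sharp constant $B=(\min\{\ell,m-\ell\})^{-1}$ stated in the lemma. This requires a finer analysis in which one partitions $\R$ into the four regions $(-\infty,-y]$, $[-y,0]$, $[0,y]$, $[y,\infty)$, and on each piece evaluates the exponential integral directly. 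On the two same-sign regions $[0,y]$ and $[y,\infty)$ the log-Lipschitz inequality can be used with equality in its sharpest form (since there $||x|-|y||=|x-y|$), yielding contributions $\phi(y)/\ell$ and $\phi(y)/(m-\ell)$ respectively; on the opposite-sign regions $(-\infty,-y]\cup[-y,0]$ one uses the stronger bound $|x-y|=|x|+y>\bigl||x|-y\bigr|$ to show that the contribution decays like $e^{-2my}$ relative to $\phi(y)$ and can be absorbed without inflating the constant. Carrying this balancing argument out carefully is where the delicate work lies.
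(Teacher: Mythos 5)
The paper never proves this lemma---it is quoted verbatim from \cite{Pei}---so there is no internal argument to compare with, and your self-contained elementary proof is welcome. Its core is correct: the log-Lipschitz bound $g(a)/g(b)\le e^{|a-b|}$ for $g(t)=1+\varepsilon e^{t}$, the two-case estimate $f(x)/f(y)\le e^{r|x-y|}$ with $r=\max\{\ell,m-\ell\}<m$, and the integration in $u=x-y$ are all valid and give the inequality with constant $2(\min\{\ell,m-\ell\})^{-1}=2B$, uniformly in $\varepsilon>0$ and $y\in\mathbb{R}$. That is all the paper ever needs: in the proof of Theorem~\ref{thm:exponentialdecay} (and likewise for the algebraic-decay analogue) the constant enters only through a smallness choice of the form $\delta<1/(2BC_1)$, so any finite constant independent of $\varepsilon$ and $y$ serves.

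The point to correct is your final paragraph: the factor $2$ cannot be shaved, because the inequality with the stated constant $B=(\min\{\ell,m-\ell\})^{-1}$ is actually false. Take $m=1$, $\ell=1/2$, $y=0$: the left-hand side is $2\int_0^\infty e^{-x/2}(1+\varepsilon e^{x})^{-1}dx$, which tends to $4$ as $\varepsilon\to0^{+}$ by monotone convergence, while the right-hand side is $2/(1+\varepsilon)\to 2$; already for $\varepsilon=10^{-2}$ one gets roughly $3.4$ versus $1.98$. More generally, letting $\varepsilon\to 0$ at $y=0$ shows that any admissible constant must be at least $2/(m-\ell)$, which exceeds the stated $B$ whenever $\ell>m/3$. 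So the sharp-constant claim is a misstatement (the constant should be read as, say, $2(\min\{\ell,m-\ell\})^{-1}$), the four-region refinement you sketch cannot succeed, and your proof as it stands is complete for every use made of the lemma in the paper.
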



\begin{proof}[Proof of Theorem~\ref{thm:exponentialdecay}]
	First, we point out that the result holds true for $\ell=0$. Indeed, \eqref{expLccondition} and H\"older's inequality yield  $\boL_c\in L^1(\R)$. Thus, by virtue of Lemmas~\ref{lemma:regularity} and \ref{lemma:finitemass}, $D^k\eta\in L^1(\R)\cap L^\infty(\R)$ for every $k\in\N$. We will then focus only on the case $\ell\in (0,m)$.
	
	From \eqref{eq:convolution}, H\"older's inequality and \eqref{expLccondition}, we deduce the following estimate, 
	\begin{equation}\label{convolutionestimate}
		|\eta(x)|\leq\int_\mathbb{R}|\mathcal{L}_c(x-y)|e^{m|x-y|}\frac{|F(y)|}{e^{m|x-y|}}dy\leq C_1^\frac{1}{q}\left(\int_\mathbb{R}\frac{|F(y)|^q}{e^{qm|x-y|}}dy\right)^\frac{1}{q},
	\end{equation}
	where $C_1=\|e^{m|\cdot|}\boL_c\|_{L^p(\R)}^q$.
			We will prove next that $e^{\ell|\cdot|}\eta\in L^q(\mathbb{R})$ and $e^{\ell|\cdot|}\eta'\in L^q(\mathbb{R})$ for all $\ell\in (0,m)$. In order to do so, let $\ell\in (0,m)$ and, for all $\varepsilon\in (0,1]$, let us consider the functions
	\[h_\varepsilon(x)=\frac{e^{\ell|x|}}{(1+\varepsilon e^{|x|})^m}|\eta(x)|,\quad \tilde{h}_\varepsilon(x)=\frac{e^{\ell|x|}}{(1+\varepsilon e^{|x|})^m}|\eta'(x)|.\]
	Since $\eta, \eta'\in L^\infty(\mathbb{R})$ and $\ell<m$, it is clear that $h_\varepsilon, \tilde{h}_\varepsilon\in L^q(\mathbb{R})$. Let us take now $r\in (0,q)$ and $R>1$. Using \eqref{convolutionestimate} and H\"{o}lder's inequality with exponents $\frac{q}{q-r}$ and $\frac{q}{r}$, we deduce that
	\begin{align*}
		\int_{\{|x|>R\}} &|h_\varepsilon(x)|^q dx =\int_{\{|x|>R\}}|h_\varepsilon(x)|^{q-r}\frac{e^{\ell r|x|}}{(1+\varepsilon e^{|x|})^{rm}}|\eta(x)|^r dx
		\\
		&\leq C_1^\frac{r}{q}\int_{\{|x|>R\}}|h_\varepsilon(x)|^{q-r}\frac{e^{\ell r|x|}}{(1+\varepsilon e^{|x|})^{rm}}\left(\int_\mathbb{R}\frac{|F(y)|^q}{e^{qm|x-y|}}dy\right)^{\frac{r}{q}}dx
		\\
		&\leq C_1^\frac{r}{q}\left(\int_{\{|x|>R\}} |h_\varepsilon(x)|^q dx\right)^{\frac{q-r}{q}}\left(\int_{\{|x|>R\}} \frac{e^{\ell q|x|}}{(1+\varepsilon e^{|x|})^{qm}}\left(\int_\mathbb{R}\frac{|F(y)|^q}{e^{qm|x-y|}}dy\right)dx\right)^\frac{r}{q}.
	\end{align*}
	From the previous inequality, one gets directly
	\[\int_{\{|x|>R\}}|h_\varepsilon(x)|^q dx\leq C_1\int_{\{|x|>R\}}\frac{e^{\ell q|x|}}{(1+\varepsilon e^{|x|})^{qm}}\left(\int_\mathbb{R}\frac{|F(y)|^q}{e^{qm|x-y|}}dy\right)dx.\]
	Now, by Fubini's theorem and Lemma~\ref{lemma:Pei}, we derive
	\begin{align*}
		\int_{\{|x|>R\}}&|h_\varepsilon(x)|^q dx\leq C_1 \int_\mathbb{R} |F(y)|^q \left(\int_{\{|x|>R\}}\frac{e^{\ell q|x|}}{(1+\varepsilon e^{|x|})^{qm} e^{qm|x-y|}}dx\right)dy
		\\
		\leq C_1 &\int_{\{|y|>R\}}|F(y)|^q\frac{B e^{\ell q|y|}}{(1+\varepsilon e^{|y|})^{qm}} dy 
		\\
		+ C_1 &\int_{\{|y|\leq R\}}|F(y)|^q\int_{\{|x|>R\}}\frac{e^{\ell q|x|}}{(1+\varepsilon e^{|x|})^{qm} e^{qm|x-y|}} dxdy.
	\end{align*}
	We will now estimate the last two integrals. On the one hand, using the inequality $||x|-|y||\leq |x-y|$, we obtain
	\begin{align*}
		\int_{\{|y|\leq R\}}&|F(y)|^q\int_{\{|x|>R\}}\frac{e^{\ell q|x|}}{(1+\varepsilon e^{|x|})^{qm} e^{qm|x-y|}} dxdy\leq \|F\|_{L^\infty(\mathbb{R})}^q\int_{\{|y|\leq R\}}\int_{\{|x|>R\}}\frac{e^{\ell q|x|}}{e^{qm|x-y|}}dxdy
		\\
		&\leq \|F\|_{L^\infty(\mathbb{R})}^q\left(\int_{\{|y|\leq R\}}e^{qm|y|}dy\right)\left(\int_{\{|x|>R\}}e^{-(m-\ell)q|x|}dx\right)\coloneqq C_2/C_1.
	\end{align*}
	On the other hand, recall that, by Lemma~\ref{lemma:regularity}, 
	$\eta(\pm\infty)=\eta'(\pm\infty)=(\W\ast\eta)(\pm\infty)=0.$
	Hence, equation \eqref{eq:localequation} in Proposition~\ref{prop:etakidentities} implies that, for any fixed $\delta>0$ we may choose $R>1$ large enough so that
	\begin{equation}
		\label{est:F}
		|F(y)|^q\leq \delta|\eta(y)|^q+\delta|\eta'(y)|^q,\quad\text{ for all } |y|>R.
	\end{equation}
	Therefore,
	\[\int_{\{|y|>R\}}|F(y)|^q\frac{B e^{\ell q|y|}}{(1+\varepsilon e^{|y|})^{qm}} dy\leq \delta B\int_{\{|x|>R\}}|h_\varepsilon(x)|^q dx+\delta B\int_{\{|x|>R\}}|\tilde{h}_\varepsilon(x)|^q dx.\]
	In sum,
	\begin{equation}\label{hvarepsiloneq}
		\int_{\{|x|>R\}}|h_\varepsilon(x)|^q dx\leq \delta BC_1\int_{\{|x|>R\}}|h_\varepsilon(x)|^q dx+\delta BC_1\int_{\{|x|>R\}}|\tilde{h}_\varepsilon(x)|^q dx+ C_2.
	\end{equation}
	
	We will now derive a similar estimate for $\tilde{h}_\varepsilon$. Indeed, from \eqref{eq:convolution} with $k=1$ and using \eqref{eq:kprime}, it follows that
	\begin{equation*}\label{etaprimeeq}
		\eta'=\mathcal{L}_c\ast\left(4\eta'\mathcal{W}\ast\eta+2\eta\mathcal{W}\ast\eta'\right).
	\end{equation*}
	Notice that $(\mathcal{W}\ast\eta')(\pm\infty)=0$ too. Hence, taking $R>1$ larger if necessary, we deduce that 
	\[\left|4\eta'(y)\mathcal{W}\ast\eta(y)+2\eta(y)\mathcal{W}\ast\eta'(y)\right|^q\leq\delta|\eta(y)|^q+\delta|\eta'(y)|^q,\quad\text{ for all } |y|>R.\]
	This estimate allows us to follow the same arguments as we did for $h_\varepsilon$ in order to deduce
	\begin{equation}\label{htildeeq}
		\int_{\{|x|>R\}}|\tilde{h}_\varepsilon(x)|^q dx\leq \delta B C_1\int_{\{|x|>R\}}|\tilde{h}_\varepsilon(x)|^q dx + \delta B C_1\int_{\{|x|>R\}}|h_\varepsilon(x)|^q dx + C_3,
	\end{equation}
	where $C_3=C_2\|F'\|^q_{L^\infty(\R)}/\|F\|^q_{L^\infty(\R)}$. Taking now $\delta\in(0,1/2BC_1)$, it follows directly from \eqref{hvarepsiloneq} and \eqref{htildeeq} that
	\[\int_{\{|x|>R\}}|h_\varepsilon(x)|^q dx+\int_{\{|x|>R\}}|\tilde{h}_\varepsilon(x)|^q dx\leq C_4,\]
	where $C_4=(C_2+C_3)/(1-2\delta BC_1)$.
	By virtue of Fatou's lemma, we take limits as $\varepsilon$ tends to zero and obtain
	\[\int_{\{|x|>R\}}e^{\ell q|x|}|\eta(x)|^q dx + \int_{\{|x|>R\}}e^{\ell q|x|}|\eta'(x)|^q dx\leq C_4.\]
	In conclusion, $e^{\ell|\cdot|}\eta\in L^q(\R)$, $e^{\ell|\cdot|}\eta'\in L^q(\R)$.
	
	Notice that both $e^{\ell x}\eta(x)$ and $e^{-\ell x}\eta(x)$ belong to $W^{1,q}(\mathbb{R})$. Indeed, from what we have already proved, it is clear that
	\[e^{\pm \ell x}\eta(x)\in L^q(\mathbb{R}),\quad\left(e^{\pm \ell x}\eta(x)\right)'=\left(\pm \ell\eta(x)+\eta'(x)\right)e^{\pm \ell x}\in L^q(\mathbb{R}).\]
	Hence, the Sobolev's embedding theorem implies that $e^{\ell|\cdot|}\eta\in L^\infty(\mathbb{R})$ and $\lim_{x\to\pm\infty}e^{\ell|x|}\eta(x)=0$. 
	
	We have just proved the result for $k=0$. Taking $k=2$ in \eqref{eq:convolution}, we deduce analogously as before that $\eta''=\mathcal{L}_c\ast F''$, where $F''$ satisfies
	\[|F''(y)|^q\leq\delta|\eta(y)|^q +\delta|\eta'(y)|^q+\delta|\eta''(y)|^q,\quad\text{ for all } |y|>R.\]
	Following the same process as above, and using the estimates we already have for $e^{\ell|\cdot|}\eta$ and $e^{\ell|\cdot|}\eta'$, we prove the result for $k=1$. The complete proof follows easily by induction.
\end{proof}

Conditions \eqref{expLccondition} in Theorem~\ref{thm:exponentialdecay} is not easy to check, since the operator $\boL_c$ is not simple to compute in general. For this reason, we recall the following Paley--Wiener theorem that provides sufficient conditions on $L_c$ that we will use when applying  Theorem~\ref{thm:exponentialdecay} to our examples in Section~\ref{sec:examples}.
We refer to Theorem~5.4.2 in \cite{krantz-parks} or Theorem IX.13 in~\cite{reed-simonII} for details.
 \begin{theorem}
 	\label{thm:reed}
 Let $T\in L^2(\R)$. Then 	$e^{b \abs{x}}T \in L^2(\R)$ for all $b<a$, 
 if and only if $\wh T$ has an analytical continuation to the strip $\{z\in\C : \abs{z}<a\}$
 with the property that for each $\zeta \in \R$
 with $\abs{\zeta}<a$, $\wh T(\cdot +i\zeta)\in L^2(\R)$
 and for any $b< a$,
 \begin{equation}
 	\label{cond:reed}
 	\sup_{\abs{\zeta}\leq b}\norm{\wh  T(\cdot +i\zeta)}_{L^2(\R)}<\infty.
 \end{equation}
 \end{theorem}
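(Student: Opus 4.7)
The theorem is a classical Paley--Wiener equivalence. My plan is to treat the two implications separately: for the forward direction I would construct the analytic extension of $\wh T$ explicitly, and for the converse I would identify $e^{\pm bx}T$ as inverse Fourier transforms of $\wh T(\cdot \pm ib)$ via a contour-shift argument.

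For the forward implication, suppose $e^{b|\cdot|}T\in L^2(\R)$ for every $b<a$. Given $\zeta\in\R$ with $|\zeta|<a$, choose $b$ with $|\zeta|<b<a$; since $|e^{x\zeta}|\leq e^{b|x|}$, the function $x\mapsto e^{x\zeta}T(x)$ lies in $L^2(\R)$, and I would define
\[
\wh T(\xi+i\zeta)\coloneqq\boF\bigl(x\mapsto e^{x\zeta}T(x)\bigr)(\xi).
\]
This extends $\wh T$, and Plancherel's identity gives $\|\wh T(\cdot+i\zeta)\|_{L^2(\R)}=\sqrt{2\pi}\,\|e^{x\zeta}T\|_{L^2(\R)}\leq\sqrt{2\pi}\,\|e^{b|\cdot|}T\|_{L^2(\R)}$, which is exactly \eqref{cond:reed}. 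Analyticity in the strip then follows from Morera's theorem: for any closed triangular contour $\Delta\subset\{|\Im z|<a\}$, Fubini's theorem legitimises
\[
\oint_\Delta \wh T(z)\,dz=\int_\R T(x)\Bigl(\oint_\Delta e^{-ixz}\,dz\Bigr)dx=0,
\]
since the inner contour integral vanishes by Cauchy's theorem for each fixed $x$.

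For the converse, assume $\wh T$ extends analytically to $\{|\Im z|<a\}$ satisfying \eqref{cond:reed}. Fix $0<b<a$; the hypothesis gives $\wh T(\cdot\pm ib)\in L^2(\R)$, so let $h_\pm\in L^2(\R)$ denote their $L^2$ inverse Fourier transforms. The key identity is $h_\pm(x)=e^{\pm bx}T(x)$, which by Plancherel yields $e^{\pm bx}T\in L^2(\R)$ and thus, via $e^{b|x|}\leq e^{bx}+e^{-bx}$, the desired conclusion $e^{b|\cdot|}T\in L^2(\R)$. To establish this identity I would apply Cauchy's theorem to $z\mapsto \wh T(z)e^{ixz}$ on the rectangle with vertices $\pm R,\pm R+ib$: the two horizontal sides recover, in the limit $R\to\infty$, the Fourier inversion formulas $2\pi T(x)$ and $2\pi e^{-bx}h_+(x)$ respectively, so their equality follows once the vertical side contributions are shown to vanish. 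The lower half-strip is handled by an analogous rectangle to obtain $h_-$.

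The main obstacle, and the subtle point of the whole argument, is precisely this last step: \eqref{cond:reed} gives only $L^2$ control on horizontal lines, not pointwise decay of $\wh T(\pm R+it)$ as $R\to\infty$. The classical remedy, which I would adopt, is to average in $R$: Fubini combined with \eqref{cond:reed} yields
\[
\int_{R_0}^{2R_0}\!\!\int_0^b\bigl(|\wh T(R+it)|^2+|\wh T(-R+it)|^2\bigr)\,dt\,dR<\infty,
\]
so there exists a sequence $R_n\to\infty$ along which the double integral tends to zero. The Cauchy--Schwarz inequality then forces the vertical segment integrals at $\xi=\pm R_n$ to vanish along $\{R_n\}$, and passing to the limit in the rectangular Cauchy identity completes the argument.
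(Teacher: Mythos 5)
Note first that the paper does not prove this statement at all: Theorem~\ref{thm:reed} is quoted as a classical Paley--Wiener theorem, with the proof delegated to Theorem~5.4.2 in \cite{krantz-parks} and Theorem~IX.13 in \cite{reed-simonII}. So there is no in-paper argument to compare with; what you have written is a self-contained sketch of the standard proof, and it is essentially correct, including the correct bookkeeping for the paper's convention $\wh f(\xi)=\int e^{-ix\xi}f$ (so that $\wh T(\cdot+i\zeta)$ is the transform of $e^{x\zeta}T$, and Plancherel carries the factor $\sqrt{2\pi}$). You also correctly isolate the one genuinely delicate point of the converse --- the absence of pointwise decay of $\wh T(\pm R+it)$ --- and resolve it by the usual averaging/pigeonhole selection of a sequence $R_n\to\infty$ along which $\int_0^b\bigl(|\wh T(R_n+it)|^2+|\wh T(-R_n+it)|^2\bigr)dt\to 0$, after which Cauchy--Schwarz kills the vertical sides for each fixed $x$.

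Two small points deserve to be made explicit if this were written out in full. In the forward direction, Fubini/Morera (and the continuity of the extension required by Morera) need the integrand $T(x)e^{-ixz}$ to be in $L^1_x$ uniformly on compact subsets of the strip; this follows from Cauchy--Schwarz, writing $e^{x\zeta}|T(x)|=e^{(\zeta x-b'|x|)}\,e^{b'|x|}|T(x)|$ with $|\zeta|<b'<a$, which also shows that your $L^2$-Fourier definition of $\wh T(\cdot+i\zeta)$ agrees with the absolutely convergent integral. In the converse, the horizontal-side integrals $\int_{-R_n}^{R_n}\wh T(\xi)e^{ix\xi}d\xi$ and $\int_{-R_n}^{R_n}\wh T(\xi+ib)e^{ix\xi}d\xi$ converge only in $L^2$ (to $2\pi T$ and $2\pi h_+$), so one should pass to a further subsequence to get a.e.\ convergence before identifying $h_+(x)=e^{bx}T(x)$ pointwise a.e.; the vertical sides already converge for every fixed $x$. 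With these routine insertions your argument is complete and matches the classical proof in the cited references.
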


We now tackle the  algebraic decay,  whose proof will follow similar lines to that of Theorem~\ref{thm:exponentialdecay}. We will employ the following lemma proved in \cite{BonaLi2}.
\begin{lemma}
	\label{lemma:BonaLi}
	For every $m,\ell\in\R$ such that $m>1$ and $0<\ell<m-1$, there exists $B>0$ such that the following inequality holds:
	\begin{equation}\label{eq:BonaLi}
		\int_\R \frac{|x|^\ell}{(1+\varepsilon |x|)^m(1+|x-y|)^m} dx\leq \frac{B|y|^\ell}{(1+\varepsilon|y|)^m},\quad\text{ for all } y\in\R \text{ and  for all } \varepsilon\in (0,1].
	\end{equation}
\end{lemma}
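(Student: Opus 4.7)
The plan is to reduce to a manageable form by the substitution $u = x-y$, rewriting the integral as
\[
I(y,\varepsilon) = \int_\R \frac{|y+u|^\ell}{(1+\varepsilon|y+u|)^m\,(1+|u|)^m}\,du,
\]
and then splitting $\R = A \cup B$ with $A = \{|u|\leq |y|/2\}$ and $B = \{|u|> |y|/2\}$. On $A$ the profile $|y+u|$ is comparable to $|y|$, while on $B$ the decay of $(1+|u|)^{-m}$ will do the work. I expect the work on $A$ to be easy and the work on $B$ to be the main obstacle.

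On $A$, the triangle inequality yields $|y+u|\in [|y|/2,3|y|/2]$, so $|y+u|^\ell\leq (3|y|/2)^\ell$ and $1+\varepsilon|y+u|\geq 1+\varepsilon|y|/2\geq \tfrac{1}{2}(1+\varepsilon|y|)$. Therefore
\[
\int_A \frac{|y+u|^\ell}{(1+\varepsilon|y+u|)^m(1+|u|)^m}\,du \leq \frac{C\,|y|^\ell}{(1+\varepsilon|y|)^m}\int_\R \frac{du}{(1+|u|)^m},
\]
and the last integral converges since $m>1$. This produces the right-hand side of \eqref{eq:BonaLi} with a suitable constant.

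On $B$, I would use two ingredients. First, $|y+u|\leq |y|+|u|\leq 3|u|$ gives $|y+u|^\ell\leq 3^\ell |u|^\ell$. Second, the reverse triangle inequality $1+\varepsilon|y|\leq (1+\varepsilon|y+u|)(1+\varepsilon|u|)$ together with $\varepsilon\leq 1$ yields
\[
\frac{1}{(1+\varepsilon|y+u|)^m}\leq \frac{(1+\varepsilon|u|)^m}{(1+\varepsilon|y|)^m}\leq \frac{(1+|u|)^m}{(1+\varepsilon|y|)^m}.
\]
Absorbing this bound loses too much of the $(1+|u|)^{-m}$ decay, so the trick is to split the exponent: write $(1+|u|)^{-m}=(1+|u|)^{-m/2}(1+|u|)^{-m/2}$ and apply the above estimate only to the first half, which leaves
\[
\int_B \frac{|u|^\ell}{(1+\varepsilon|y+u|)^m(1+|u|)^m}\,du \leq \frac{1}{(1+\varepsilon|y|)^m}\int_{|u|>|y|/2}\frac{|u|^\ell}{(1+|u|)^{m/2}}\,du.
\]
A sharper variant, iterating this splitting or taking $(1+|u|)^{-(m-\theta)}$ for small $\theta$, uses the hypothesis $m>\ell+1$ to bound the tail integral by a constant multiple of $|y|^\ell$ when $|y|$ is bounded away from $0$. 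Combining the two regions finishes the estimate.

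The delicate point is preserving both the factor $|y|^\ell$ and the factor $(1+\varepsilon|y|)^{-m}$ on region $B$ uniformly in $\varepsilon\in (0,1]$, because a crude estimate either loses the $\varepsilon$-dependence (by bounding $(1+\varepsilon|y+u|)^{-m}\leq 1$) or destroys the integrability in $u$. The precise bookkeeping needed — balancing the split of the decay $(1+|u|)^{-m}$ between the two purposes — is the essential content of the Bona--Li argument, and the threshold $m>\ell+1$ is exactly what is needed to make the remaining tail integral on $B$ converge.
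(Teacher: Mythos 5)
The paper does not actually prove this lemma (it is quoted from \cite{BonaLi2}), so your attempt stands on its own; unfortunately it has a genuine gap on region $B$. Your region-$A$ estimate is fine, but on $B=\{|u|>|y|/2\}$ the displayed inequality does not follow from the ingredients you state: the bound $(1+\varepsilon|y+u|)^{-m}\le (1+|u|)^m(1+\varepsilon|y|)^{-m}$ produces the \emph{full} factor $(1+|u|)^m$, so you cannot ``apply it to half the exponent'' and keep $(1+|u|)^{-m/2}$ of decay; moreover the right-hand side you write, $\frac{1}{(1+\varepsilon|y|)^m}\int_{|u|>|y|/2}|u|^\ell(1+|u|)^{-m/2}\,du$, can even be infinite under the stated hypotheses (it requires $\ell<m/2-1$, whereas only $\ell<m-1$ is assumed). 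The ``sharper variant'' you sketch does not close the argument either: if you use the $\varepsilon$-inequality only with an exponent $\theta$ and keep $(1+|u|)^{-(m-\theta)}$ for integrability, convergence of the tail forces $\theta<m-\ell-1$, and you end up with a prefactor $(1+\varepsilon|y|)^{-\theta}$ with $\theta<m$, not the required $(1+\varepsilon|y|)^{-m}$ --- this is exactly the uniform-in-$\varepsilon$ difficulty you flag in your last paragraph, and your proposal never resolves it.

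A workable treatment of region $B$ needs an additional case analysis rather than a single interpolation of exponents. For instance, when $\varepsilon|y|\le 1$ the factor $(1+\varepsilon|y|)^{-m}$ is comparable to $1$, so it suffices to bound the region-$B$ integral by $C|y|^\ell$ (drop $(1+\varepsilon|x|)^{-m}\le 1$ and split according to $|x|\le 2|y|$ or $|x|>2|y|$, using $\ell<m-1$); when $\varepsilon|y|\ge 1$ one must keep track of $\varepsilon$, e.g.\ by splitting according to $|x|\le|y|$ (where $(1+|x-y|)^{-m}\lesssim |y|^{-m}$ and $\int_{|x|\le|y|}|x|^\ell(1+\varepsilon|x|)^{-m}dx\lesssim \varepsilon^{-\ell-1}$) and $|x|>|y|$ (where $|x|^\ell(1+\varepsilon|x|)^{-m}\le \varepsilon^{-m}|x|^{\ell-m}\le\varepsilon^{-m}|y|^{\ell-m}$), both of which are controlled by $\varepsilon^{-m}|y|^{\ell-m}\lesssim |y|^\ell(1+\varepsilon|y|)^{-m}$. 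Note also that, as you implicitly acknowledge (``$|y|$ bounded away from $0$''), with the weight $|x|^\ell$ and $\ell>0$ the inequality cannot hold for $|y|$ small (the left side has a positive limit as $y\to 0$ while the right side vanishes), so any proof must either restrict to $|y|\ge 1$ --- which is all that is used in the paper, where the lemma is applied with $|y|>R>1$ --- or replace $|y|^\ell$ by $(1+|y|)^\ell$.
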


\begin{theorem}\label{thm:algebraicdecay}
	Let $c\geq 0$ and let $u\in\mathcal{E}(\mathbb{R})$ be a solution to \eqref{TWc}. Assume that $L_c\in\mathcal{S}'(\mathbb{R})$ and
	\begin{equation}
		\label{eq:algdecaycond}
		(1+|\cdot|)^s\mathcal{L}_c\in L^p(\mathbb{R}) \text{ for some }p\in (1,\infty],\,\, s>1-\frac{1}{p}.
	\end{equation}
Setting 	$q=p'$ and $\eta=1-|u|^2$, we have
	\[|\cdot|^\ell D^k\eta\in L^q(\R)\cap L^\infty(\mathbb{R}),\quad \lim_{x\to\pm\infty}|x|^\ell D^k \eta(x)=0,\quad\text{ for all } \ell\in\Big(0,s-1+\frac{1}{p}\Big),\,\,k\in\N.\]
\end{theorem}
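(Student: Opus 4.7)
The plan is to adapt verbatim the scheme used in the proof of Theorem~\ref{thm:exponentialdecay}, replacing the exponential weight $e^{\ell|x|}$ by the algebraic weight $|x|^\ell$ and using Lemma~\ref{lemma:BonaLi} (of Bona--Li) in place of Lemma~\ref{lemma:Pei}. The assumption $s>1-1/p=1/q$ with $q=p'$ serves two purposes: first, by H\"older's inequality
\[\|\boL_c\|_{L^1(\R)}\leq \|(1+|\cdot|)^s\boL_c\|_{L^p(\R)}\|(1+|\cdot|)^{-s}\|_{L^q(\R)}<\infty,\]
so $\boL_c\in L^1(\R)$ and Lemmas~\ref{lemma:regularity} and \ref{lemma:finitemass} handle the case $\ell=0$, giving $D^k\eta\in L^1(\R)\cap L^\infty(\R)$ for every $k\in\N$; second, it ensures $sq>1$, which is the requirement $m>1$ for applying Lemma~\ref{lemma:BonaLi} with $m=sq$.

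For $\ell\in (0,s-1+1/p)$, I would apply H\"older in the convolution identity $\eta=\boL_c * F$ to obtain the pointwise estimate
\[|\eta(x)|^q\leq C_1\int_\R (1+|x-y|)^{-sq}|F(y)|^q\,dy,\qquad C_1=\|(1+|\cdot|)^s\boL_c\|_{L^p(\R)}^q,\]
and then introduce the $\varepsilon$-truncated weights
\[h_\varepsilon(x)=\frac{|x|^\ell}{(1+\varepsilon|x|)^s}|\eta(x)|,\qquad \tilde h_\varepsilon(x)=\frac{|x|^\ell}{(1+\varepsilon|x|)^s}|\eta'(x)|,\]
which lie in $L^q(\R)$ because $\ell<s-1/q$ (though with bounds a priori depending on $\varepsilon$). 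As in the exponential proof, for some $r\in(0,q)$ and $R>1$, I would split $|h_\varepsilon|^q=|h_\varepsilon|^{q-r}\cdot(|x|^\ell/(1+\varepsilon|x|)^s)^r|\eta|^r$, apply H\"older with exponents $q/(q-r)$ and $q/r$, and absorb one factor to arrive at
\[\int_{|x|>R}|h_\varepsilon|^q\,dx\leq C_1\int_\R|F(y)|^q\int_{|x|>R}\frac{|x|^{\ell q}}{(1+\varepsilon|x|)^{sq}(1+|x-y|)^{sq}}\,dx\,dy.\]
Splitting the $y$-integral over $\{|y|>R\}$ and $\{|y|\leq R\}$, the first piece is controlled by Lemma~\ref{lemma:BonaLi} with $m=sq>1$ and $\ell q<sq-1$, yielding $B|y|^{\ell q}/(1+\varepsilon|y|)^{sq}$; on this region I use that by \eqref{eq:localequation} together with $\eta,\eta',\W*\eta\to 0$ at infinity (Lemma~\ref{lemma:regularity}), one has $|F(y)|^q\leq \delta(|\eta|^q+|\eta'|^q)$ for $R$ large, so the contribution is absorbed into $\delta BC_1\int_{|x|>R}(h_\varepsilon^q+\tilde h_\varepsilon^q)$. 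The second piece, over $|y|\leq R$, is a finite constant $C_2$ depending on $\|F\|_{L^\infty(\R)}$ and $R$ but independent of $\varepsilon$.

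The analogous argument for $\tilde h_\varepsilon$ starts from $\eta'=\boL_c*F'$ with $F'=4\eta'(\W*\eta)+2\eta(\W*\eta')\to 0$ at infinity, and produces
\[\int_{|x|>R}\tilde h_\varepsilon^q\leq \delta BC_1\int_{|x|>R}(h_\varepsilon^q+\tilde h_\varepsilon^q)+C_3.\]
Choosing $\delta<1/(2BC_1)$, summing both inequalities, and passing to the limit $\varepsilon\to 0$ by Fatou's lemma gives $|\cdot|^\ell\eta,\,|\cdot|^\ell\eta'\in L^q(\R)$. Then $(1+|x|)^\ell\eta\in W^{1,q}(\R)$ (using that $\ell<s$ keeps $|x|^{\ell-1}\eta$ integrable) and the Sobolev embedding $W^{1,q}(\R)\hookrightarrow \boC_0(\R)$ produces $|x|^\ell\eta\in L^\infty(\R)$ together with $\lim_{|x|\to\infty}|x|^\ell\eta(x)=0$. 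The higher-derivative cases $k\geq 1$ follow by induction, differentiating \eqref{eq:convolution} to get $D^k\eta=\boL_c*D^kF$, bounding $|D^kF|^q$ in terms of $|\eta|^q+\cdots+|D^k\eta|^q$ outside a large ball, and using the estimates already established for lower orders. The main technical obstacle, as in the exponential case, is checking that $D^kF$ indeed decays so that the ``smallness at infinity'' absorption argument closes; this is handled by iterating \eqref{eq:ellipticequation}--\eqref{eq:kprime} and using the $W^{k,p}$-regularity of $\eta$ from Lemma~\ref{lemma:regularity}.
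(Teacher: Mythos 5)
Your proposal is correct and follows essentially the same route as the paper's proof: the pointwise H\"older estimate with constant $C_1=\|(1+|\cdot|)^s\boL_c\|_{L^p(\R)}^q$, the $\varepsilon$-truncated weights $h_\varepsilon,\tilde h_\varepsilon$, the splitting over $\{|y|>R\}$ and $\{|y|\leq R\}$ with Lemma~\ref{lemma:BonaLi} applied with $m=sq$ and $\ell q<sq-1$, absorption via \eqref{est:F}, Fatou as $\varepsilon\to 0$, the Sobolev embedding step (your weight $(1+|x|)^\ell$ plays the role of the paper's smooth $\varphi$), and induction on $k$. No gaps worth noting.
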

\begin{proof}
	First, from \eqref{eq:convolution} with $k=0$, H\"older's inequality and \eqref{eq:algdecaycond}, we deduce that
	\[|\eta(x)|\leq C_1^\frac{1}{q}\left(\int_\R\frac{|F(y)|^q}{(1+|x-y|)^{sq}}dy\right)^\frac{1}{q},\]
	where $C_1=\|(1+|\cdot|)^s\boL_c\|_{L^p(\R)}^q$. Now, for $\ell\in\left(0,s-1+1/p\right)$ and $\varepsilon\in (0,1]$, we consider the functions
	\[h_\varepsilon(x)=\frac{|x|^\ell}{(1+\varepsilon|x|)^s}|\eta(x)|,\quad \tilde{h}_\varepsilon(x)=\frac{|x|^\ell}{(1+\varepsilon|x|)^s}|\eta'(x)|.\]
	Let us take $R>1$. Arguing as in the proof of Theorem~\ref{thm:exponentialdecay} and using Lemma~\ref{lemma:BonaLi}, we obtain the estimate
	\begin{align}
		\label{alg1}
		\int_{\{|x|>R\}}|h_\varepsilon(x)|^q dx &\leq C_1\int_{\{|y|>R\}}|F(y)|^q\frac{B|y|^{\ell q}}{(1+\varepsilon|y|)^{sq}}dy
		\\
		\nonumber &+C_1\int_{\{|y|\leq R\}}|F(y)|^q\int_{\{|x|>R\}}\frac{|x|^{\ell q}}{(1+\varepsilon|x|)^{sq}(1+|x-y|)^{sq}}dxdy.
	\end{align}
	On the one hand, since $\ell q<sq-1$, then the function $x\mapsto\frac{|x|^{\ell q}}{(|x|+1-R)^{sq}}$ belongs to $L^1((R,\infty))$. Hence, the inequality $||x|-|y||\leq|x-y|$ leads to
	\begin{align*}
		\int_{\{|y|\leq R\}}&|F(y)|^q\int_{\{|x|>R\}}\frac{|x|^{\ell q}}{(1+\varepsilon|x|)^{sq}(1+|x-y|)^{sq}}dxdy
		\\
		&\leq 2R\|F\|_{L^\infty(\R)}^q\int_{\{|x|>R\}}\frac{|x|^{\ell q}}{(|x|+1-R)^{sq}}dx\coloneqq C_2/C_1.
	\end{align*}
	On the other hand, a shown in the proof of Theorem~\ref{thm:exponentialdecay},  for any fixed $\delta>0$, we may choose $R>1$ large enough so that \eqref{est:F} holds.
	Therefore,
	\[\int_{\{|y|>R\}}|F(y)|^q\frac{B |y|^{\ell q}}{(1+\varepsilon |y|)^{sq}} dy\leq \delta B\int_{\{|x|>R\}}|h_\varepsilon(x)|^q dx+\delta B\int_{\{|x|>R\}}|\tilde{h}_\varepsilon(x)|^q dx.\]
	In sum, by \eqref{alg1},
	\begin{equation*}
		\int_{\{|x|>R\}}|h_\varepsilon(x)|^q dx\leq \delta BC_1\int_{\{|x|>R\}}|h_\varepsilon(x)|^q dx+\delta BC_1\int_{\{|x|>R\}}|\tilde{h}_\varepsilon(x)|^q dx+ C_2.
	\end{equation*}
	Reasoning as in the proof of Theorem~\ref{thm:exponentialdecay}, we derive the analogous estimate for $\tilde{h}_\varepsilon$:
	\begin{equation*}
		\int_{\{|x|>R\}}|\tilde{h}_\varepsilon(x)|^q dx\leq \delta BC_1\int_{\{|x|>R\}}|h_\varepsilon(x)|^q dx+\delta BC_1\int_{\{|x|>R\}}|\tilde{h}_\varepsilon(x)|^q dx+ C_3,
	\end{equation*}
	where $C_3=C_2\|F'\|_{L^\infty(\R)}^q/\|F\|_{L^\infty(\R)}^q$. Combining the last two inequalities and taking $\delta\in(0,1/2BC_1)$ yields
	\[\int_{\{|x|>R\}}|h_\varepsilon(x)|^q dx+\int_{\{|x|>R\}}|\tilde{h}_\varepsilon(x)|^q dx\leq C_4,\]
	where $C_4=(C_2+C_3)/(1-2\delta BC_1)$. By virtue of Fatou's lemma, we take limits as $\varepsilon$ tends to zero, and obtain
	\[\int_{\{|x|>R\}}|x|^{\ell q}|\eta(x)|^q dx + \int_{\{|x|>R\}}|x|^{\ell q}|\eta'(x)|^q dx\leq C_4.\]
	Equivalently, $|\cdot|^\ell\eta\in L^q(\R)$ and $|\cdot|^\ell\eta'\in L^q(\R)$.
	
	Let us now consider a function $\varphi:\R\to (0,\infty)$ that is of class $\boC^1$ and satisfies that $\varphi(x)=|x|^\ell$ for every $|x|>1$. At this point, it is clear that $\varphi\eta\in W^{1,q}(\R)$. Hence, the Sobolev's embedding implies that $\varphi\eta\in L^\infty(\mathbb{R})$ and $\lim_{x\to\pm\infty}\varphi(x)\eta(x)=0$. This proves the result for $k=0$. As in the proof of Theorem~\ref{thm:exponentialdecay}, the rest of the proof follows  by induction.
\end{proof}

Conditions \eqref{eq:algdecaycond} in Theorem~\ref{thm:algebraicdecay} can be difficult  to verify. The next corollary provides sufficient (and easy to check) conditions on $\W$ that guarantee \eqref{eq:algdecaycond} and, in turn, algebraic decay of finite energy traveling waves. 

\begin{corollary}
	\label{cor:algebraicdecay}
Assume  that $\W$ satisfies \ref{h:lowerboundWbis} and that weakly differentiable up to order $s\in\N\setminus\{0\}$, with
	\begin{equation}
		\label{eq:corollarycond}
		D^s\wh\W\in L^\infty(\R).
	\end{equation} 
	Let $c\in [0,\sqrt{2\sigma})$ and let $u\in\boE(\R)$ be a solution to \eqref{TWc}.
	Then,
	\[|\cdot|^\ell D^k\eta\in L^2(\R)\cap L^\infty(\mathbb{R}),\quad \lim_{x\to\pm\infty}|x|^\ell D^k \eta(x)=0,\quad\text{ for all } \ell\in(0,s-{1}/{2}),\,\,k\in\N.\]
\end{corollary}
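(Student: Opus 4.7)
The plan is to apply Theorem~\ref{thm:algebraicdecay} with the choice $p = q = 2$. Under this choice, the exponent condition $s > 1 - 1/p$ reduces to $s > 1/2$, which is automatic since $s\geq 1$, and the decay range $\ell \in (0, s - 1 + 1/p)$ becomes exactly $(0, s - 1/2)$. The only nontrivial point is to verify the hypothesis $(1 + |\cdot|)^s \boL_c \in L^2(\R)$. For integer $s$, this is equivalent, via Plancherel and the identity $\widehat{x^j \boL_c(x)}(\xi) = i^j L_c^{(j)}(\xi)$, to the two conditions $L_c \in L^2(\R)$ and $L_c^{(s)} \in L^2(\R)$, where $L_c(\xi) = 1/M_c(\xi)$ and $M_c(\xi) = \xi^2 + 2\wh\W(\xi) - c^2$. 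The whole corollary thus reduces to proving $L_c \in H^s(\R)$.

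I would begin by using \ref{h:lowerboundWbis} together with $c^2 < 2\sigma$ to obtain the uniform lower bound $M_c(\xi) \geq (1-2\kappa)\xi^2 + (2\sigma - c^2) \gtrsim 1 + \xi^2$, which immediately gives $|L_c(\xi)| \lesssim (1+\xi^2)^{-1}$ and hence $L_c \in L^2(\R)$. Next, since $\wh\W \in L^\infty(\R)$ by \ref{H0} and $D^s\wh\W \in L^\infty(\R)$ by hypothesis, a Landau--Kolmogorov interpolation on the whole real line yields $D^j\wh\W \in L^\infty(\R)$ for every $0 \leq j \leq s$. Consequently, $|M_c'(\xi)| \leq 2|\xi| + C$, whereas $|M_c^{(j)}(\xi)| \leq C_j$ for all $2 \leq j \leq s$. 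Applying Faà di Bruno's formula to $L_c = 1/M_c$, one writes
\[
L_c^{(s)}(\xi) = \sum_{k=1}^{s} \frac{(-1)^k k!}{M_c(\xi)^{k+1}} \sum_{\vec{j}} a_{\vec{j}} \prod_{i\geq 1} \bigl(M_c^{(i)}(\xi)\bigr)^{j_i},
\]
where the inner sum runs over tuples $\vec{j} = (j_1, j_2, \ldots)$ of nonnegative integers with $\sum_i j_i = k$ and $\sum_i i j_i = s$. Combining the pointwise estimates above with $M_c(\xi)^{-(k+1)} \lesssim (1+|\xi|)^{-2(k+1)}$, each summand is controlled by $C(1+|\xi|)^{j_1 - 2(k+1)} = C(1+|\xi|)^{-(2k+2-j_1)}$. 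Since $j_1 \leq k$, one has $2k+2-j_1 \geq k+2 \geq 3$, so each term lies in $L^2(\R)$; summing the finitely many terms gives $L_c^{(s)} \in L^2(\R)$.

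Combining with $L_c \in L^2(\R)$, we conclude that $(1+|\cdot|)^s \boL_c \in L^2(\R)$, and Theorem~\ref{thm:algebraicdecay} applies to yield the stated decay for all $\ell \in (0, s-1/2)$ and $k\in \N$. The main (purely technical) obstacle is the combinatorial bookkeeping in Faà di Bruno's formula combined with the fact that $M_c'$ is the sole derivative growing at infinity; the structural constraint $j_1 \leq k$ from the partial Bell polynomials is precisely what allows the linear growth of $M_c'$ to be absorbed by the quadratic decay carried by each factor of $1/M_c$.
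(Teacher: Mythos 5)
Your proposal is correct and follows essentially the same route as the paper: verify hypothesis \eqref{eq:algdecaycond} of Theorem~\ref{thm:algebraicdecay} with $p=2$ by showing $L_c\in L^2(\R)$ (from the lower bound $M_c(\xi)\geq(1-2\kappa)\xi^2+2\sigma-c^2$) and $D^sL_c\in L^2(\R)$, then transfer to $(1+|\cdot|)^s\boL_c\in L^2(\R)$ via Plancherel. The Landau--Kolmogorov interpolation and Fa\`a di Bruno bookkeeping you supply simply make explicit the step the paper dispatches with ``one may verify that $D^sL_c\in L^1(\R)\cap L^\infty(\R)$''.
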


\begin{proof}
By \eqref{Lc:int}, since \ref{h:lowerboundWbis} holds, we see $L_c$ is bounded with 
$L_c \in L^1(\R)$, so that  $L_c \in L^2(\R)$.	Using  also \eqref{eq:corollarycond}, one may verify  that $D^s L_c\in L^1(\R)\cap L^\infty(\R)$ too. In particular, $D^s L_c\in L^2(\R)$. Then, applying Fourier transform, $|\cdot|^s\boL_c\in L^2(\R)$. Hence, since $\boL_c\in L^\infty(\R)$, it follows that \eqref{eq:algdecaycond} holds for $p=2$ and we can  apply Theorem~\ref{thm:algebraicdecay}.
\end{proof}

\subsection{Analyticity}
\label{sec:analyticity}

Let us recall that for  $H\in\boS'(\R)$, the associated multiplier operator $\boH$ is  defined by
\[\wh{\boH(\varphi)}(\xi)=H(\xi)\wh\varphi(\xi), \quad\text{ for all } \varphi \in \boS(\R).\]
We say that $H$ is an $L^p$-\emph{multiplier}, with $p\in [1,\infty]$, if there exists $\alpha>0$ such that
\[\|\boH(\varphi)\|_{L^p(\R)}\leq \alpha\|\varphi\|_{L^p(\R)}, \quad\text{ for all }\varphi\in L^p(\R).\]
The smallest $\alpha>0$ for which the previous inequality holds is the norm of the multiplier, and it is denoted by $\|\boH\|_p$. 
For instance, by assumption \ref{H0}, $\W$ is an $L^2$-multiplier and, by
\eqref{W-22}, 
$\|\W\|_2=\|\wh\W\|_{L^\infty(\R)}$.

We recall the so-called H\"ormander--Mikhlin multiplier theorem in \cite{hormander,mikhlin} (see also \cite{littman}) adapted to our one-dimensional setting, as follows.

\begin{theorem}[\hspace{1sp}\cite{hormander,mikhlin}]\label{thm:hormandermikhlin}
	Let $H:\R\to\R$ be a weakly differentiable function and  suppose that there exists $M>0$ such that 
	\begin{equation}\label{thm-M}
		\sup\{ \abs{ \xi^k	D^k H(\xi) } \ :\ \xi\in \R\setminus\{0\},\ k\in \{0,1\}  \}\leq M.
	\end{equation}
	Then $H$ is an $L^p$-multiplier for every $p\in(1,\infty)$. Moreover, 
	there exists a constant $C_p>0$, depending only on $p$, such that
	$$\norm{\mathcal H}_p\leq C_p M.$$
\end{theorem}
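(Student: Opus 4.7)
The plan is to follow the classical Calderón--Zygmund approach, passing from the frequency-side Mikhlin bound to a physical-side Hörmander kernel condition, and then running the standard interpolation/duality machinery. As a first easy step, $L^2$-boundedness is immediate from Plancherel: the $k=0$ case of \eqref{thm-M} gives $\|H\|_{L^\infty(\R)}\le M$, hence $\|\boH\|_2\le M$. The $L^p$ bound for $p\in(1,2)$ would then follow from Marcinkiewicz interpolation between $L^2$ and the weak-type $(1,1)$ estimate $|\{|\boH f|>\lambda\}|\le C M\|f\|_{L^1(\R)}/\lambda$, and the range $p\in(2,\infty)$ would follow by duality, since the transpose multiplier $\overline{H}$ satisfies the same hypothesis with the same constant $M$.

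The heart of the argument is to show that the tempered distribution $K=\check H$, restricted to $\R\setminus\{0\}$, is a Calderón--Zygmund kernel in the sense that it satisfies Hörmander's integral condition
\[
\sup_{y\neq 0}\int_{|x|\ge 2|y|}|K(x-y)-K(x)|\,dx \le C M.
\]
Once this holds, the standard Calderón--Zygmund decomposition of $f\in L^1(\R)$ at height $\lambda$---writing $f=g+b$ with $g$ of size $O(\lambda)$ and $b$ supported on a disjoint union of intervals with vanishing integral on each---combined with the $L^2$ bound on $\boH g$ via Chebyshev, and Hörmander's condition applied to $\boH b$ away from the bad intervals, delivers the weak $(1,1)$ estimate.

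To verify Hörmander's condition I would use a Littlewood--Paley decomposition. Fix a smooth $\varphi$ with $\supp\varphi\subset\{1/2\le|\xi|\le 2\}$ and $\sum_{j\in\Z}\varphi(2^{-j}\xi)=1$ for $\xi\neq 0$, and set $H_j(\xi)=H(\xi)\varphi(2^{-j}\xi)$, $K_j=\check H_j$. The hypothesis \eqref{thm-M} yields, on the annulus $\{|\xi|\sim 2^j\}$, the pointwise bounds $|H_j|\le CM$ and $|DH_j|\le CM\,2^{-j}$, so Plancherel gives the $L^2$ estimates $\|K_j\|_{L^2(\R)}\le CM\,2^{j/2}$ and $\|xK_j\|_{L^2(\R)}\le CM\,2^{-j/2}$. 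A Cauchy--Schwarz argument then controls $\int_{|x|\ge R}|K_j(x)|\,dx\le CM(2^jR)^{-1/2}$, and a similar computation on the difference $K_j(\cdot-y)-K_j$ (using $\|DK_j\|_{L^2(\R)}\le CM\,2^{3j/2}$ from the same support estimate) gives
\[
\int_{|x|\ge 2|y|}|K_j(x-y)-K_j(x)|\,dx \le CM\min\bigl(1,\,2^j|y|\bigr).
\]
Summing over $j$ after splitting at $2^j|y|\sim 1$ yields a bound uniform in $y$, which is precisely Hörmander's condition.

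The main obstacle is precisely this passage from the Mikhlin hypothesis on the multiplier to the Hörmander smoothness estimate on the kernel, which requires the dyadic bookkeeping above and is the technical core of the theorem; everything else---$L^2$ boundedness, Calderón--Zygmund decomposition, Marcinkiewicz interpolation, and duality---is then routine.
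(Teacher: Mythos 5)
The paper does not prove this statement at all: Theorem~\ref{thm:hormandermikhlin} is quoted as the classical H\"ormander--Mikhlin multiplier theorem with references to H\"ormander, Mikhlin and Littman, so there is no in-paper proof to compare against. Your sketch is the standard Calder\'on--Zygmund route used in those sources: Plancherel for the $L^2$ bound, a Littlewood--Paley verification of H\"ormander's integral condition for the kernel, the Calder\'on--Zygmund decomposition for weak $(1,1)$, then Marcinkiewicz interpolation and duality. The dyadic estimates you derive ($\|K_j\|_{L^2}\lesssim M2^{j/2}$, $\|xK_j\|_{L^2}\lesssim M2^{-j/2}$, $\|DK_j\|_{L^2}\lesssim M2^{3j/2}$) are correct consequences of \eqref{thm-M}, and the overall architecture is right.

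One displayed bound is wrong as written, and it sits at the only delicate point. You claim $\int_{|x|\ge 2|y|}|K_j(x-y)-K_j(x)|\,dx\le CM\min\bigl(1,2^j|y|\bigr)$ and then sum over $j$ after splitting at $2^j|y|\sim 1$; but $\sum_{j}\min\bigl(1,2^j|y|\bigr)$ diverges, since every $j$ with $2^j|y|\ge 1$ contributes a constant. The summation only closes if you have decay for $2^j|y|\ge 1$, and your own tail estimate supplies it: on $\{|x|\ge 2|y|\}$ one has $|x-y|\ge|x|/2\ge|y|$, so bounding the difference by the two tails and using $\int_{|x|\ge R}|K_j|\,dx\le CM(2^jR)^{-1/2}$ gives $CM(2^j|y|)^{-1/2}$ in that range, while for $2^j|y|\le 1$ the gradient argument gives $CM\,2^j|y|$ (to get $\|DK_j\|_{L^1}\lesssim M2^{j}$ you need not only $\|DK_j\|_{L^2}\lesssim M2^{3j/2}$ but also $\|xDK_j\|_{L^2}\lesssim M2^{j/2}$, which again follows from \eqref{thm-M} applied to $D(\xi H_j(\xi))$). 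With $\min\bigl(2^j|y|,(2^j|y|)^{-1/2}\bigr)$ the sum over $j$ is uniformly bounded and H\"ormander's condition follows with constant $CM$. Apart from this correction, and the routine remark that $K=\check{H}$ is a priori only a tempered distribution and must be identified with $\sum_j K_j$ away from the origin to justify the kernel representation off the support of $f$, your argument is sound and yields $\norm{\mathcal{H}}_p\le C_pM$ as stated.
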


Assume that $\W$ satisfies  \ref{h:lowerboundWbis}, and including also the limit case  $\kappa=1/2,$ and that  $\wh\W$ is (weakly) differentiable.
We will apply this theorem to 
the function 
\[H_c(\xi)=\frac{-\xi^2}{\xi^2+2\wh\W(\xi)-c^2}, \quad \text{ for }  c\in [0,\sqrt{2\sigma}) .\]
Observe that then $H_c\in L^\infty(\R)$ and that 
\[\xi H'_c(\xi)=\frac{2\xi^3\big(\wh\W\big)'(\xi)-4\xi^2\wh\W(\xi)+2c^2\xi^2}{(\xi^2+2\wh\W(\xi)-c^2)^2}.\]
Therefore, $\xi\mapsto\xi H'_c(\xi)$ is a bounded function if 	
\begin{equation}
	\label{eq:analyticcond}
	\big|\big(\wh\W\big)'(\xi)\big|\leq C(|\xi|+1)\quad\text{a.e. }\xi\in\R,
\end{equation}
for some $C>0$. In this case, using Theorem~\ref{thm:hormandermikhlin}  we conclude that  $H_c$ is an $L^p$-multiplier for every $p\in (1,\infty)$. More precisely, for every $p\in(1,\infty)$, there exists a constant $\alpha_p>0$ such that 
\begin{equation}
	\label{eq:multiplier}
	\|\boH_c(\varphi)\|_{L^p(\R)}\leq\alpha_p\|\varphi\|_{L^p(\R)},\quad\text{ for all } \varphi\in L^p(\R),
\end{equation}
where $\wh \boH_c=H_c$.

Let $u\in\boE(\R)$ be a solution to \eqref{TWc}. 
We will exploit \eqref{eq:convolution} in order to prove that $\eta$ is a real analytic function.
First, we prove a technical lemma.


\begin{lemma}
	\label{lemma:recursivederivatives}
	Assume that there exist $\sigma\in (0,1]$ and  $\kappa\in [0,1/2]$ such that  $\wh\W(\xi)\geq \sigma-\kappa \xi^2$ a.e.\ on $\R$.
	 Assume in addition that $\wh\W$ is weakly differentiable and that there exists $C>0$ such that \eqref{eq:analyticcond} holds. Let $c\in[0,\sqrt{2\sigma})$ and let $u\in\boE(\R)$ be a solution to \eqref{TWc}. Let us denote
	\[\mu_k\coloneqq\max\{\|D^j F'\|_{L^q(\R)}:j=0,\dots,k\},\quad\text{ for all } k\in\N.\]
	Then there exist $\beta,\gamma>0$, depending on $\eta$ only through $\|D^j\eta\|_{L^2(\R)}$ and $\|D^j\eta\|_{L^\infty(\R)}$ for $j=0,1,2$, such that 
	\begin{align}
		\|D^j\eta\|_{L^\infty(\R)}\leq\beta \mu_k \quad &\text{ for all } k\in\N,\,\,\text{ for all } j=0,\dots,k+2,
		\label{eq:Linftyesestimate}
		\\
		\|D^j\eta\|_{L^2(\R)}\leq\beta \mu_k \quad &\text{ for all } k\in\N,\,\,\text{ for all } j=0,\dots,k+2,
		\label{eq:Lqesestimate}
		\\
		\mu_k\leq\gamma^{k}k^{k-1} \quad &\text{ for all } k\in\N\setminus\{0\},\qquad\mu_0\leq \frac{\gamma}{24\omega\beta^2}-1,
		\label{eq:muestimate}
	\end{align}
	where $\omega=\|\wh\W\|_{L^\infty(\R)}$.
\end{lemma}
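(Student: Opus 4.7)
The plan is to follow a Bona--Li type argument adapted to our nonlocal setting, with the multiplier $H_c(\xi) = -\xi^2/M_c(\xi)$ associated with equation \eqref{eq:ellipticequation} as the main tool. I would first establish the recursive identity
\[D^{j+2}\eta = \boH_c(D^j F), \quad j \geq 0,\]
which, for $j \geq 1$, also reads as $D^{j+2}\eta = \boH_c(D^{j-1} F')$. This follows from $\wh F = M_c \wh\eta$ together with the computation $(i\xi)^{j+2}/M_c(\xi) = (i\xi)^j H_c(\xi)$. The assumption \eqref{eq:analyticcond} ensures that $H_c$ satisfies condition \eqref{thm-M}, so Theorem~\ref{thm:hormandermikhlin} gives that $\boH_c$ is a bounded $L^q$-multiplier for every $q \in (1,\infty)$, with some norm $\alpha_q$.

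Next I would address \eqref{eq:Linftyesestimate} and \eqref{eq:Lqesestimate}. The cases $j = 0, 1, 2$ are handled directly by Lemma~\ref{lemma:regularity}, which yields a constant $\beta_0$ bounding both $\|D^j \eta\|_{L^2}$ and $\|D^j\eta\|_{L^\infty}$. For $3 \leq j \leq k+2$, the multiplier identity together with the boundedness of $\boH_c$ gives $\|D^j\eta\|_{L^q} \leq \alpha_q \|D^{j-3}F'\|_{L^q} \leq \alpha_q \mu_k$; the $L^\infty$ estimate then follows from the one-dimensional embedding $W^{1,q}(\R) \hookrightarrow L^\infty(\R)$, applied to $D^j\eta$ and using the analogous bound for $D^{j+1}\eta$, which is still available as long as $j+1 \leq k+3$. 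Choosing $\beta$ large enough so that $\beta\mu_0 \geq \beta_0$ (possible since $\mu_0 > 0$ for any nontrivial $\eta$, as $F' \not\equiv 0$) unifies the two cases.

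The final and most delicate step is the inductive bound \eqref{eq:muestimate}. Starting from $F' = 4\eta'(\W*\eta) + 2\eta(\W*\eta')$, which follows from \eqref{eq:kprime}, the Leibniz rule yields
\[D^k F' = \sum_{j=0}^k \binom{k}{j}\bigl[4 D^{j+1}\eta \cdot (\W * D^{k-j}\eta) + 2 D^j\eta \cdot (\W * D^{k-j+1}\eta)\bigr],\]
and Plancherel combined with \eqref{eq:Linftyesestimate}--\eqref{eq:Lqesestimate} (and $\|\W*g\|_{L^2}\leq \omega\|g\|_{L^2}$) produces an inequality of the form
\[\mu_k \leq 6\omega\beta^2 \sum_{j=0}^k \binom{k}{j}\, \mu_{\max(j-1,0)}\,\mu_{\max(k-j-1,0)}.\]
The base case $\mu_0 \leq \gamma/(24\omega\beta^2) - 1$ will be secured by choosing $\gamma \geq 24\omega\beta^2(\mu_0 + 1)$. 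For the inductive step, assuming $\mu_i \leq \gamma^i i^{i-1}$ for $i < k$, the sum reduces to a combinatorial estimate of Cayley/Abel type, roughly
\[\sum_{j=1}^{k-1}\binom{k}{j}(j-1)^{j-2}(k-j-1)^{k-j-2} \leq C\, k^{k-1},\]
which I expect to be the principal obstacle. Careful bookkeeping at the boundary indices $j \in \{0,1,k-1,k\}$, where the natural index in the induction hypothesis drops below zero and must be replaced by $\mu_0$, is required; the specific constant $24 = 6 \cdot 4$ appearing in the base-case constraint on $\mu_0$ reflects precisely the accounting needed to absorb all prefactors and close the induction.
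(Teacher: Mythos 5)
Your proposal follows essentially the same route as the paper's proof: the multiplier identity $D^{j+2}\eta=\boH_c(D^{j}F)$ combined with the H\"ormander--Mikhlin theorem via \eqref{eq:analyticcond}, the base cases $j=0,1,2$ from Lemma~\ref{lemma:regularity} with $\beta$ chosen relative to $\mu_0$, the Sobolev embedding for the $L^\infty$ bounds, and a Leibniz-rule induction on the $\mu_k$ (the paper estimates $\|D^{k+1}F'\|_{L^2}$ with the coarser factors $\mu_j\mu_{k-j}$, which is only an immaterial index shift from your bookkeeping). The one ingredient you leave open, the Abel/Cayley-type combinatorial sum, is precisely the lemma of Kahane cited in the paper, i.e.\ $\sum_{j=0}^{k}\binom{k}{j}j^{(j-1)^+}(k-j)^{(k-j-1)^+}\leq 4k^{k-1}$, which is what closes the induction and produces the constant $24\omega\beta^2$ in the constraint on $\mu_0$.
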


\begin{proof}
	We start by proving \eqref{eq:Linftyesestimate} and \eqref{eq:Lqesestimate}. These estimates hold true for $k=0$ by simply choosing 
	\[\beta\geq\frac{\max\{\|D^j\eta\|_{L^\infty(\R)},\|D^j\eta\|_{L^2(\R)}:j=0,1,2\}}{\|F'\|_{L^2(\R)}}.\]
	Let us take $k\geq 1$ and $j\in\{3,\dots,k+2\}$. 
By \eqref{eq:convolution}, we have 
\begin{equation}\label{eq:analytic}
	\eta'''=\boH_c(F')\quad\text{ on }\R, \text{ with } 		F'=4\eta'(\W\ast\eta)+2\eta(\W\ast\eta').
\end{equation}
By using also  \eqref{eq:multiplier} with $p=2$, it follows that
	\[\|D^j\eta\|_{L^2(\R)}\leq \alpha_2\|D^{j-3}F'\|_{L^2(\R)}\leq\alpha_2\mu_k.\]
	Moreover, by invoking  the Sobolev's embedding (see Remark~\ref{remark:burenkov}), we obtain 
	\begin{align*}
		\|D^j\eta\|_{L^\infty(\R)} &\leq \frac{1}{2}\left(\|D^j\eta\|_{L^2(\R)}+\|D^{j+1}\eta\|_{L^2(\R)}\right)
		\\
		&\leq \frac{\alpha_2}{2}\left(\|D^{j-3}F'\|_{L^2(\R)}+\|D^{j-2}F'\|_{L^2(\R)}\right)\leq \alpha_2\mu_k.
	\end{align*}
	Therefore, we take $\beta\geq\alpha_2$ so that \eqref{eq:Linftyesestimate} and \eqref{eq:Lqesestimate} follow.
	
	As far as \eqref{eq:muestimate} is concerned, we will prove it by induction. Indeed, it holds true for $k=1$ if one chooses $\gamma\geq\mu_1$. Let us assume as induction hypothesis that there exists $\tilde{k}\in\N\setminus\{0,1\}$ such that \eqref{eq:muestimate} holds for every $k\leq\tilde{k}$. Next we compute for $k=\tilde{k}$, taking \eqref{eq:convolution} into account,	
	\begin{align*}
		\|D^{k+1}&F'\|_{L^2(\R)}=\|D^{k+1}\left(2\eta(\W\ast\eta')+4\eta'(\W\ast\eta)\right)\|_{L^2(\R)}
		\\
		&= \|2 D^k\left(\eta'(\W\ast\eta')+\eta(\W\ast\eta'')\right)+4 D^k\left(\eta'(\W\ast\eta')+\eta'(\W\ast\eta'')\right)\|_{L^2(\R)}
		\\
		&\leq 2\sum_{j=0}^k {k\choose j}(\|D^{j+1}\eta (\W\ast D^{k-j+1}\eta)\|_{L^2(\R)}+\|D^j\eta (\W\ast D^{k-j+2}\eta)\|_{L^2(\R)})
		\\
		&+4\sum_{j=0}^k{k\choose j}(\|(\W\ast D^{j+1}\eta) D^{k-j+1}\eta\|_{L^2(\R)}+\|(\W\ast D^j\eta) D^{k-j+2}\eta\|_{L^2(\R)}).
	\end{align*}
	Using \eqref{W-22}, \eqref{eq:Linftyesestimate} and  \eqref{eq:Lqesestimate}, we deduce that
	\begin{align*}
		\|D^{k+1}&F'\|_{L^2(\R)}\leq 2\omega\sum_{j=0}^k{k\choose j}(\|D^{j+1}\eta\|_{L^\infty(\R)} \|D^{k-j+1}\eta\|_{L^2(\R)}+\|D^j\eta\|_{L^\infty(\R)} \|D^{k-j+2}\eta\|_{L^2(\R)})
		\\
		&+4\omega\sum_{j=0}^k{k\choose j}(\|D^{j+1}\eta\|_{L^2(\R)} \|D^{k-j+1}\eta\|_{L^\infty(\R)}+\|D^j\eta\|_{L^2(\R)} \|D^{k-j+2}\eta\|_{L^\infty(\R)})
		\\
		&\leq 12\omega\beta^2\sum_{j=0}^k{k\choose j}\mu_j\mu_{k-j}.
	\end{align*}
	The induction hypothesis leads to
	\begin{align*}
		\|D^{k+1}&F'\|_{L^2(\R)}\leq 12\omega\beta^2\Bigg(2\mu_0\mu_k +\gamma^k \sum_{j=1}^{k-1}{k\choose j}j^{j-1}(k-j)^{k-j-1}\Bigg)
		\\
		&=12\omega\beta^2\Bigg(2(\mu_0\mu_k-\gamma^k k^{k-1}) +\gamma^k \sum_{j=0}^{k}\frac{k!}
		{j!(k-j)!}j^{(j-1)^+}(k-j)^{(k-j-1)^+}\Bigg),
	\end{align*}
	where we adopt the convention $0^0=1$. Now, a combinatorial lemma due to Kahane \cite{Kahane} implies that
	\[\|D^{k+1}F'\|_{L^2(\R)} \leq 12\omega\beta^2\left(2(\mu_0\mu_k-\gamma^k k^{k-1}) + 4\gamma^k k^{k-1}\right).\]
	Using the induction hypothesis again and choosing $\gamma>0$ large enough so that $\mu_0\leq\frac{\gamma}{24\omega\beta^2}-1$, we deduce that
	\[\|D^{k+1}F'\|_{L^2(\R)} \leq 24\omega\beta^2(\mu_0+1)\gamma^k k^{k-1}\leq\gamma^{k+1}k^{k-1}.\]
	In conclusion,
	\[\mu_{k+1}=\max\{\mu_k,\|D^{k+1}F'\|_{L^2(\R)}\}\leq\max\{\gamma^k k^{k-1},\gamma^{k+1}k^{k-1}\}=\gamma^{k+1}k^{k-1}\leq\gamma^{k+1}(k+1)^k,\]
	which completes the proof.
\end{proof}

We are thus led to the following analyticity of $\eta$ as follows.

\begin{theorem}\label{thm:analyticity}
	Under the hypotheses of Lemma~\ref{lemma:recursivederivatives}, for every solution $u\in\boE(\R)$ to \eqref{TWc} with $c\in [0,\sqrt{2\sigma})$, there exists 
	$r>0$ such that  $\eta=1-|u|^2$ and $u$ have  analytic extensions to
	the strip $\boS_r =\{z\in\C : \abs{\Im z}<r\}$. If $c \in (0,\sqrt{2\sigma})$, 
	then $u$ is real   analytic on $\R$, in the sense that $\Re(u)$ and  $\Im(u)$
	are  real   analytic  on $\R$.
	
\end{theorem}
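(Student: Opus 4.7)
The plan is to combine the uniform derivative bounds furnished by Lemma~\ref{lemma:recursivederivatives} with Stirling's formula to deduce that $\eta$ has a holomorphic extension to a strip, then bootstrap this regularity to $u$ via \eqref{TWc}.

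First, Lemma~\ref{lemma:recursivederivatives} together with \eqref{eq:Linftyesestimate} (applied with $k=j-2$ for each $j\geq 3$) gives
\[
\|D^j \eta\|_{L^\infty(\R)}\leq \beta\gamma^{j-2}(j-2)^{j-3},\quad j\geq 3,
\]
while the cases $j\in\{0,1,2\}$ are handled by Lemma~\ref{lemma:regularity}. Using the inequality $k^k\leq e^k k!/\sqrt{2\pi k}$, these estimates  can be rewritten in the classical analyticity form
\[
\|D^j \eta\|_{L^\infty(\R)}\leq C_0 R_0^{-j}\, j!,\quad j\geq 0,
\]
for some $C_0>0$ and $R_0>0$ (concretely, $R_0$ a fixed multiple of $(\gamma e)^{-1}$). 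Since this bound is uniform in $x\in\R$, the Taylor series of $\eta$ centered at any $x_0\in\R$ converges on the disk $\{z\in\C:|z-x_0|<R_0\}$; by uniqueness of analytic continuation, these local expansions glue to a holomorphic extension of $\eta$ to the strip $\boS_{r}$ for any $r\in(0,R_0)$.

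Next I would transfer this to $u$ by exploiting the equation \eqref{TWc} written as
\[
u''=-icu' - u\,(\W\ast\eta),
\]
and its derivatives via Leibniz's rule,
\[
D^{k+2}u=-ic\, D^{k+1}u - \sum_{j=0}^{k}\binom{k}{j} D^j u\cdot D^{k-j}(\W\ast\eta).
\]
The coefficients $D^{k-j}(\W\ast\eta)=\W\ast D^{k-j}\eta$ are controlled in $L^\infty$ by the Sobolev inequality \eqref{sobolev:sharp} together with \eqref{W-22}:
\[
\|\W\ast D^{k}\eta\|_{L^\infty(\R)}\lesssim \|\wh\W\|_{L^\infty(\R)}\big(\|D^k\eta\|_{L^2(\R)}+\|D^{k+1}\eta\|_{L^2(\R)}\big),
\]
and the $L^2$ bounds from Lemma~\ref{lemma:recursivederivatives} yield, after the same Stirling manipulation, $\|D^{k}(\W\ast\eta)\|_{L^\infty(\R)}\leq C_1 R_1^{-k} k!$. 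A standard induction on $k$, choosing $R_u>0$ small enough so that the Leibniz sum $\sum_{j=0}^{k}\binom{k}{j} R_u^{-j}j!\,R_1^{-(k-j)}(k-j)!\leq k!\,(R_u^{-1}+R_1^{-1})^{k}/\text{(comparison)}$ is absorbed into $C_u R_u^{-(k+2)}(k+2)!$, then gives
\[
\|D^k u\|_{L^\infty(\R)}\leq C_u R_u^{-k} k!,\quad k\geq 0,
\]
where $C_u,R_u$ may depend on $c$, $\|u\|_{L^\infty(\R)}$ and $\|u'\|_{L^\infty(\R)}$ (both finite by Lemma~\ref{lemma:regularity}). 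As before, this uniform bound provides a holomorphic extension of $u$ to $\boS_{r'}$ for some $r'>0$; taking $r=\min(R_0,r')$ yields the common strip of analyticity for both $\eta$ and $u$.

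For the last assertion, when $c>0$ Proposition~\ref{prop:etakidentities} ensures $u\in\boN\boE(\R)$, so that $\rho=\sqrt{1-\eta}>0$ on $\R$ is a composition of the real-analytic square root with a strictly positive real-analytic function, hence real-analytic on $\R$. Proposition~\ref{prop:hydrodynamic} then expresses the phase as $\theta(x)=\theta(0)+\tfrac{c}{2}\int_0^x(\rho(y)^{-2}-1)\,dy$, the antiderivative of a real-analytic function, so $\theta$ is also real-analytic on $\R$; consequently $\Re u=\rho\cos\theta$ and $\Im u=\rho\sin\theta$ are real-analytic. The main technical hurdle is the inductive step for $u$: the constants $C_u,R_u$ must be fixed once and for all so that the combinatorial identity $\binom{k}{j}j!\,(k-j)!=k!$ can be reabsorbed into a bound of the form $C_u R_u^{-(k+2)}(k+2)!$ at every stage, which forces $R_u$ to be strictly smaller than $\min(R_0,R_1)$.
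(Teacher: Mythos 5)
Your argument for $\eta$ is essentially the paper's: you take the bounds $\|D^j\eta\|_{L^\infty(\R)}\leq\beta\mu_k\leq\beta\gamma^k k^{k-1}$ from Lemma~\ref{lemma:recursivederivatives}, convert them (via Stirling, the paper via the root test on the Lagrange remainder) into a Taylor expansion with radius of convergence comparable to $(\gamma e)^{-1}$ uniformly in the base point, and glue the local extensions into a holomorphic extension of $\eta$ to a strip; and your treatment of the last assertion for $c>0$ (write $\rho=\sqrt{1-\eta}$ using $\sup_\R\eta<1$ from Proposition~\ref{prop:etakidentities}, take $\theta$ from Proposition~\ref{prop:hydrodynamic} as the antiderivative of the real-analytic function $\tfrac{c}{2}(\rho^{-2}-1)$, then $\Re u=\rho\cos\theta$, $\Im u=\rho\sin\theta$) is exactly the paper's. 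Where you genuinely diverge is the strip extension of $u$ itself: the paper's proof only carries out the Taylor argument for $\eta$ and leaves the claim about $u$ implicit, whereas you bootstrap factorial bounds $\|D^k u\|_{L^\infty(\R)}\leq C_u R_u^{-k}k!$ directly from the equation $u''=-icu'-u(\W\ast\eta)$, using $\|\W\ast D^k\eta\|_{L^\infty(\R)}\lesssim\|\wh\W\|_{L^\infty(\R)}(\|D^k\eta\|_{L^2(\R)}+\|D^{k+1}\eta\|_{L^2(\R)})$ together with \eqref{eq:Lqesestimate}, and a Leibniz induction in which $\binom{k}{j}j!(k-j)!=k!$ is absorbed by a geometric series, which requires fixing $R_u$ strictly smaller than $\min(R_0,R_1)$ (and small relative to $c$) once and for all. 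This extra step is correct (the base cases are covered by Lemma~\ref{lemma:regularity}), and its payoff is a self-contained proof of the analytic extension of $u$ to a strip valid also at $c=0$, where the polar lifting used by the paper is unavailable; the cost is the somewhat delicate bookkeeping in the induction, which you have only sketched but which closes in the standard Cauchy-majorant way.
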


\begin{proof}
	We need to prove that the Taylor series expansion
	about any point $x_0\in \R$ converges with radius of convergence $r >0$ independent of $x_0$. Indeed, let $I_r=[x_0-r,x_0+r]$, then,
%
	 by  Taylor's theorem,
	\begin{equation}\label{taylor}\eta(x)-\sum_{k=0}^{n}\frac{D^k\eta(x_0)}{k!}(x-x_0)^k=\frac{D^{n+1}\eta(\zeta)}{(n+1)!}(x-x_0)^{n+1}
	\end{equation}
	for every $x\in I_r$ and for some $\zeta\in I_r$. Now we deduce from Lemma~\ref{lemma:recursivederivatives} that
	\[|D^k\eta(\zeta)|\leq\beta\mu_k\leq\beta\gamma^k k^{k-1}\]
	for every $k\in\N\setminus\{0\}$. 
	Since 
	$$
	\big(\frac{\gamma^k k^{k-1}}{k!}\big)^{1/k}\to \gamma e,\quad \text{as } k\to\infty,
	$$
	we conclude that  the left-hand side of \eqref{taylor} goes to zero as $n\to\infty$ and that the radius of convergence satisfies $r\geq (\gamma e)^{-1}$.
	In addition, $\eta$ has an
	analytic extensions to the strip $\boS_r$.
	
	In the case that $c\in (0,\sqrt{2\sigma})$, by Proposition~\ref{prop:etakidentities}, we have $\sup_\R \eta <1$, so that 
	$\rho=\abs{u}=\sqrt{1-u^2}$ is a real analytic function, as a composition of real analytic functions. Also,  $\theta$ given by \eqref{eq:deftheta} 
	is real analytic as the integral of a real analytic function.
	Consequently, $\Re(u)=\rho \cos(\theta)$ and $\Im(u)=\rho \sin(\theta)$ are 
	 real analytic functions on $\R$.
 \end{proof}
	

\begin{remark}
	As pointed out by Corollary 4.1.5 in \cite{BonaLi2}, the fact that  $\eta$ has an
	analytic extension to the strip $\boS_r$ implies the following exponential decay of its Fourier transform:
	$$
	\int_\R \abs{\hat \eta(\xi)}^2 e^{2\mu\abs{\xi}}d\xi< \infty, \quad \text{ for all }\mu \in (0,r).
	$$
\end{remark}

We end this section by proving Corollary~\ref{thm:symmetry} as a consequence of  Theorem~\ref{thm:analyticity}.

%
%
%

\begin{proof}[Proof of  Corollary~\ref{thm:symmetry}]
Let $\gq\in (0,\gq_*)$ and let  $u=\rho e^{i\theta}\in \boN\boE(\R)$ be 
the nontrivial solution to  \eqref{TWc}, given by Theorem~{1} in \cite{delaire-mennuni}, satisfying  
\begin{equation}
	\label{dem:u:min}
	E(u)=E_{\min}(\gq).
\end{equation}

Arguing as in the proof of Proposition 3.12 in   \cite{delaire-mennuni}, we see that 
there exists $a_0\in\R$ such that $$\frac 1 2 \int_{a_0}^\infty (1-\rho^2)\theta'=\frac \gq 2,$$
which allows us 	to define the following function
	\[\tilde{u}(x):=\tilde{\rho}(x)e^{i\tilde{\theta}(x)}=\rho(x-a_0)e^{i(\theta(x-a_0)-\theta(-a_0))}.\]
	Notice that $\tilde{u}$ is nothing but $u$ multiplied by the constant of modulus one $e^{i\theta(-a_0)}$ and translated in the space variable, so $\tilde{u}$ is still satisfies \eqref{dem:u:min}, i.e.\ it is a solution to the minimization problem.  Moreover, $\tilde{u}$ satisfies that $\tilde{\theta}(0)=0$ and
	\begin{equation}\label{uppermomentum}
		\frac 1 2 \int_0^\infty (1-\tilde{\rho}^2)\tilde{\theta}'=\frac \gq 2.
	\end{equation}
	Furthermore,
	\begin{equation}\label{lowermomentum}
		\frac 1 2 \int_{-\infty}^0 (1-\tilde{\rho}^2)\tilde{\theta}'=p(v)-\frac 1 2 \int_0^\infty (1-\tilde{\rho}^2)\tilde{\theta}'=\frac \gq 2.
	\end{equation}
	For notational simplicity,  we continue to write $u$, $\rho$ and $\theta$ for $\tilde u$, $\tilde \rho$ and $\tilde \theta$.
By using the reflection operators $T^\pm$ and $S^\pm$ introduced in  the proof of Proposition 3.12 in   \cite{delaire-mennuni},  and the fact that $\rho$ and $\theta$ are continuous, it follows that the functions
 $$u^{\pm}=(T^\pm\rho)e^{iS^\pm\theta}$$ belong to $\boN\boE(\R)$. Bearing in mind \eqref{uppermomentum} and \eqref{lowermomentum}, we obtain that
$	p(u^{\pm})=\gq,$
	which implies that 
$\Emin(\gq)\leq E(u^\pm).$

On the other hand, as in Proposition 3.12 in   \cite{delaire-mennuni}, we get 
$$
E(u^+)+E(u^-)=2E_\k(u)+E_\p(u^+)+E_\p(u^-)\quad 
\text{ and }\quad 
	E_\p(u^+)+E_\p(u^-)\leq 2E_\p(u).
$$
Since  $u$ satisfies \eqref{dem:u:min}, we deduce that
	\[\Emin(\gq)\leq \frac{E(u^+)+E(u^-)}{2}\leq E(u)=\Emin(\gq).\]
	Hence,
	\[\frac{E(u^+)+E(u^-)}{2}= E(u)=\Emin(\gq).\]
	Observe that
	\[\Emin(\gq)\leq E(u^+)=2E(u)-E(u^-)\leq \Emin(\gq).\]
	In consequence, $E(u^+)=E(u^-)=E(u)=\Emin(\gq)$. This shows that $u^\pm$ and $u$ are solutions to the minimization problem \eqref{Emin} and therefore, $u^\pm$ and $u$ satisfy \eqref{TWc}, for some $c$ depending on $\gq$. By virtue of Theorem~\ref{thm:analyticity}, we have that $|u^\pm|^2$ and $|u|^2$ are real analytic functions. Thus, since $|u^+|=|u|$ in $\R_+$, then $|u^+|=|u|$ in $\R$. This proves that $\rho=|u|$ is even. On the other hand, from \eqref{eq:theta},  from the symmetry of $\rho$ and from the fact that $\theta(0)=0$, we derive
	\begin{equation*}
		\theta(x)=\frac c 2\int_0^x \left(\frac{1}{\rho(y)^2}-1\right)dy=-\frac c 2\int_0^{-x} \left(\frac{1}{\rho(y)^2}-1\right)dy=-\theta(-x).
	\end{equation*}
	This concludes the proof.
\end{proof}

\section{Proofs of the examples}
\label{sec:examples}

\begin{proof}[Proof of Theorem~\ref{thm:ex1}]
	The existence of a solution $u$ for every $c\in(0,\sqrt 2)$ is an immediate consequence of Theorem~\ref{thm:existenceallc}. Also, since $\wh \W_{\alpha,\beta}$ fulfills \eqref{eq:algdecaycond}, $\eta=1-\abs{u}^2$
	is real analytic by Theorem~\ref{thm:analyticity}.
	 	The nonexistence of finite energy solutions follows from Theorem~\ref{thm:ex2} and the fact that 
	$(\wh \W_{\alpha,\beta})'' (0)=4 \alpha\beta^{-2}(\beta-2\alpha)^{-1}\neq -1$.
	
It remains to prove the exponential decay. By explicit computations, we can find for some $\beta_1,\beta_2>0$,  depending 
	only on $\alpha$, $\beta$ and $c$ such that 
	\[\mathcal{L}_c(x)=\alpha_1 e^{-\beta_1|x|}+\alpha_2 e^{-\beta_2|x|},\quad\text{ for all } x\in\mathbb{R},
	\quad \text{with }
	\alpha_1=\frac{\beta^2-\beta_1^2}{2\beta_1(\beta_2^2-\beta_1^2)},\ \alpha_2=\frac{\beta_2^2-\beta^2}{2\beta_2(\beta_2^2-\beta_1^2)}.
	\]
Thus, $\mathcal{L}_c$ satisfies the condition \eqref{expLccondition} in Theorem~\ref{thm:exponentialdecay} with $m=\min\{\beta_1,\beta_2\}$ and $p=\infty$. 
\end{proof}

\begin{proof}[Proof of Theorem~\ref{thm:ex2}]
It is clear that \ref{H0} holds for the three potentials. Notice that  
$2-\cos(\lambda\xi)\geq 1$, for $\xi\in \R$, and
using the  elementary inequalities $e^{x}\geq 1+x$ and $\sin(x)/x\geq 1-x^2/6$, for $x\in\R$,  
\begin{align}
	e^{-\lambda \xi^2}\geq 1-{\lambda}\xi^2 \quad \text{ and }\quad 
	\frac{\sin(\lambda \xi)}{\lambda \xi}\geq 1-\frac{\lambda^2 \xi^2}{6}, \quad \text{ for all }\xi\in\R.
\end{align}
Hence,  \ref{h:lowerboundWbis} is satisfied, with 
 $(\sigma,\kappa)=(1,0)$, $(\sigma,\kappa)=(1,\lambda)$ and $(\sigma,\kappa)=(1,\lambda^2/3)$, in case (i), (ii) and (iii), respectively.
 In particular, in the three cases we have
 \begin{equation}
 	\label{dem:Mc}
 M_c(\xi)=\xi^2 +2\wh\W_\lambda(\xi)-c^2\geq 2-c^2+\xi^2(1-2\kappa)>0, \ \text{ for all }\xi\in\R \text{ and } c\in (0,\sqrt 2),
 \end{equation}
 and the existence of solutions is given by Theorem~\ref{thm:existenceae}.
 The analyticity of  $\eta=1-\abs{u}^2$ follows from Theorem~\ref{thm:analyticity}.

The nonexistence of finite energy solutions follows from Theorem~\ref{thm:nonexistence:intro} and the  fact that 
in the case (i) we have
$(\wh\W_\lambda)'' (0)=\lambda^2$, while in the case (ii),
$(\wh\W_\lambda)''(0)=-2\lambda$.

To prove the exponential decay, in view of \eqref{dem:Mc}, we deduce  that in all the cases  $\wh\W_\lambda$ can be extended as an 
 analytic function on $\C$. Hence, we only need to verify that 
for fixed $c\in(0,\sqrt 2)$ and $\lambda$, we can find 
a constant $\delta=\delta(c,\lambda)>0$ such that 
$M_{c}(z)=z^2 +2\wh\W_\lambda(z)-c^2$ does not vanish on the strip 
 $\boS_\delta:=\{z\in\C : \abs{\Im z}<\delta\}$
 and that $L_c(z)=(M_c(z))^{-1}$ satisfies the integrability condition in \eqref{cond:reed}. This will imply that $e^{\delta\abs{\cdot}}\boL_c \in L^2(\R)$, so that 
 the decay follows by invoking Theorem~\ref{thm:exponentialdecay} with $p=2$.
 
Let us show that there is  $\delta=\delta(c,\lambda)\in(0,1)$ such that 
\begin{equation}
	\label{cota:Mc}
	\abs{M_c(\xi+i w )}\geq \delta,\quad \text{ for all }\abs{w}\leq \delta,
	\text{ for all }\xi\in \R.
\end{equation}
 Arguing by contradiction, we get the existence of  sequences
 $\delta_n\in(0,1)$, $\abs{w_n}\leq 1$,  $\xi_n\in \R$, with 
 $\delta_n\to 0$, $w_n\to 0$, and 
  such that  
\begin{align}
	\label{dem:Re}
T(\xi_n,w_n)&=o_n(1), \quad \text{ with } T(\xi,w):=\xi^2-w^2+2 \Re(\wh\W_\lambda(\xi+iw))-c^2,\\
G(\xi_n,w_n)&=o_n(1),
 \quad \text{ with }
G(\xi,w) :=2\xi w +2\Im(\wh\W_\lambda(\xi+iw)).
\end{align}
By using the explicit expressions for $\wh\W_\lambda$, it is easy to check that 
in the case (i), we have
 \begin{align}
\abs{\wh\W_\lambda (\xi+iw)}=\abs{2-\cos(\lambda \xi)\cosh(\lambda w)+i \sin(\lambda \xi)\sinh(\lambda w)}
\leq 2 +\cosh(\lambda w)+\sinh(\lambda \abs{w}). 
 \end{align}
In the case (ii), we get $\abs{\wh\W_\lambda (\xi+iw)}=\abs{e^{-\lambda(\xi^2-w^2+2i\xi w)}}\leq e^{\lambda w^2}$, 
while in the case (iii),
$$\abs{\wh\W_\lambda (\xi+iw)}=\frac{\abs{ \sin(\lambda \xi)\cosh(\lambda w)+i \cos(\lambda \xi)\sinh(\lambda w)  }}{\lambda\sqrt{\xi^2+w^2}}\leq \cosh(\lambda w)+\frac{\sinh(\lambda \abs{w})}{\lambda \abs{w}}. $$
Hence, $\wh \W_\lambda$ is bounded on the strip $\boS_1$, that is, there is 
$K>0$ such that $\abs{\wh\W_\lambda (\xi+iw)}\leq K$, for all $\xi+iw \in \boS_1$.
 Therefore, we infer from \eqref{dem:Re} that $\{\xi_n\}$ is bounded, so that
 there are $\xi_*\in \R$ and  subsequence, that we do not relabel, such that 
  $\xi_n\to \xi_*$. In this manner, passing to the limit in \eqref{dem:Re}, we conclude that $T(\xi_*,0)=0$, i.e.\
$M_c(\xi_*)=0$, which contradicts \eqref{dem:Mc}. The proof of \eqref{cota:Mc} is completed.

By \eqref{cota:Mc}, the function
 $$L_c(\xi+ iw)=\frac{1}{M_c(\xi+ iw)}=
 \frac{1 }{T(\xi,w)+i G (\xi,w)}
 $$ defines an analytic function on the strip $\boS_\delta$. Also, for all $\abs{w}\leq \delta\leq 1$, we  infer the estimate
 $$
  \abs{L_c(\xi+ iw)}\leq\begin{cases}
\dfrac{1 }{\xi^2-3-2K}, &\text{ if }\xi^2\geq 4+2K,\\
  	\delta^{-1}, &\text{ otherwise}.
  \end{cases} 
 $$
 Consequently,
$ \sup_{\abs{w}\leq \delta }\norm{ L_c(\cdot +i w)}_{L^2(\R)}<\infty,$
which completes the proof of the exponential decay.

It is left to prove the existence of $u_c$ for every $c\in (0,\sqrt{2})$ in the case \ref{case:deltas} for $\lambda\leq\sqrt{2/3}.$ 
To do so, it is enough to verify that the hypotheses of Theorem~\ref{thm:existenceallc} hold.
It is clear that \ref{h:derivative} and \ref{W:infty} are satisfied. In order to check \ref{W:apriori}, let us denote $\mu_\lambda=-\frac{1}{4}\big(\delta_{-\lambda}+\delta_\lambda\big)$, so that $\W_\lambda=2(\delta_0+\mu_\lambda$). Thus $\mu_\lambda^+=0$ and $\|\mu_\lambda^-\|=1/2<1$, so that Proposition~\ref{prop:universalestimate} implies that \ref{W:apriori} holds with $V_0(c)=\sqrt{1+c^2/4}$.
	
Finally, we show that  \ref{h:restrictive} is fulfilled, at least for $\lambda\in (0,\sqrt{2/3}]$. 
Indeed, in this case, let us set $\mathfrak{s}=\min_{x\in\R}(\sin(x)/x)\in (-1,0)$ and $m_\lambda=-\mathfrak{s}\lambda^2 \in (0,2/3)$. Thus
\[\big(\wh\W\big)'(\xi)=\lambda\sin(\lambda\xi)\geq -m_\lambda\xi, \quad \text{ for all }\xi\geq 0.\]
Furthermore, with this choice of $m_\lambda$, we get, for all $c\in (0,\sqrt 2)$, 
$$m_\lambda V_0(c)^2\leq (3/2)m_\lambda<1.
$$
Hence, we can apply Theorem~\ref{thm:existenceallc}.
\end{proof}

\begin{remark}
A careful study of the functions $T(\xi,w)$ and $G(\xi,w)$ in the proof of 
 Theorem~\ref{thm:ex2}
should lead to 
the sharp exponential decay of the solitons.
\end{remark}

\begin{remark}
Notice that if $\W_\lambda$ is given by \eqref{ex:softcore}, then $(\wh\W_\lambda)''(0)=-\lambda^2/3$, so that $(\wh\W_\lambda)''(0)\neq -1$, for all
$\lambda\in (0,\sqrt 3)$. However, we cannot apply Theorem~\ref{thm:ex2} due to the change of sign of $\W_\lambda$.
\end{remark}

\begin{proof}[Proof of Theorem~\ref{thm:ex4}]
For $\kappa\in(0,1/2)$,	it is obvious that $\W_{\kappa}$ satisfies \ref{H0} and \ref{h:lowerboundWbis}, so we can use Theorem~\ref{thm:existenceae}.
For $\kappa=1/2$, we can apply Corollary \ref{cor:condition12}. Therefore, 
for  any $\kappa\in(0,1/2)$, 
 we conclude the existence of nontrivial solutions to $(S(\W_\kappa,c))$ in $\boN\boE(\R)$ for almost every $c\in (0,\sqrt{2})$. Moreover, since   $(\wh \W_{\kappa})'(\xi)= -2\kappa\abs{\xi}$ 
 for  $\abs{\xi}<1/\sqrt{\kappa}$, and  $(\wh \W_{\kappa})'(\xi)=0$, for 
 $\abs{\xi} > 1/\sqrt{\kappa}$, we can apply Theorem~\ref{thm:analyticity} to obtain that 
 $\eta=1-\abs{u}^2$ is real analytic.
 In addition, since $\W_{\kappa}$ fulfills  condition (i) if $\kappa=1/2$ in Theorem \ref{thm:nonexistence},  and condition (ii) otherwise, we get the nonexistence for $c=\sqrt 2$.

	It is left to prove the algebraic decay of $\eta$. Remark that we can apply Corollary~\ref{cor:algebraicdecay} with $s=1$, but we can get a better decay by computing explicitly $\boL_c$. In fact, since $L_c\in L^1(\R)$, then	
	\[\mathcal{L}_c(x)=\frac{1}{2\pi}\int_\mathbb{R} e^{ix\xi}L_c(\xi)d\xi=\frac{1}{2\pi}\int_{-\alpha}^{\alpha}\frac{\cos(x\xi)}{(1-2\kappa)\xi^2+2-c^2}d\xi+\frac{1}{2\pi}\int_{\{|\xi|>\alpha\}}\frac{\cos(x\xi)}{\xi^2-c^2}d\xi,\]
	where we have used that $L_c$ is even. Now, after  applying integration by parts twice, we get
	\[\boL_c(x)=\frac{1}{x^2}(A\cos(\alpha x)+g(x))\quad\text{ for all }x\not=0,
	\ \text{where } A=\frac{4\alpha\kappa}{\pi(\alpha^2-c^2)^2}
	\]
	 and
	\[g(x)=-\frac{2}{\pi}\int_\alpha^\infty\frac{(3\xi^2+c^2)\cos(x\xi)}{(\xi^2-c^2)^3}d\xi-\frac{2(1-2\kappa)}{\pi}\int_0^\alpha\frac{(3(1-2\kappa)\xi^2+2-c^2)\cos(x\xi)}{((1-2\kappa)\xi^2+2-c^2)^3}d\xi.\] 
	Observe that $g\in L^\infty(\R)$. 
	We are not interested in the value $\boL_c(0)$. We simply remark that, since $L_c\in L^1(\R)$, it follows that $\boL_c\in  L^\infty(\R)$ and therefore,
	$(1+|\cdot|^2)\boL_c\in L^\infty(\R).$
Consequently, the decay in  \eqref{eq:decay3} follows by  applying Theorem~\ref{thm:algebraicdecay} with $s=2$ and $p=\infty$.
\end{proof}

\begin{merci}
	The authors acknowledge support from the Labex CEMPI (ANR-11-LABX-0007-01).
	A.~de Laire was also supported by the ANR project ODA (ANR-18-CE40-0020-01). S.~L\'opez-Mart\'inez was also supported by PGC2018-096422-B-I00 (MCIU/AEI/FEDER, UE) and Junta de Andaluc\'ia FQM-116. S.~L\'opez-Mart\'inez would like to thank the members of the Laboratoire Paul Painlev\'e (Universit\'e de Lille) and of the team PARADYSE (Inria Lille - Nord Europe) for their support and hospitality during his postdoc stay, where this work was carried out. 
\end{merci}

\section*{Appendix}

We include here the proof of the deformation lemma.

\begin{proof}[Proof of Lemma~\ref{lemma:deformation}]
	For $j=1,2,3$, let us denote
	\[A_j=J_c^{-1}([\gamma-2\varepsilon,\gamma+2\varepsilon])\cap Z_{\delta_j}.\]
	Since these are closed sets in $H^1(\R)$, we may define a functional $\psi:H^1(\R)\to\R$ of class $\boC^1$ such that $0\leq\psi(v)\leq 1$ for every $v\in H^1(\R)$ and
	\[\psi(v)=\left\{
	\begin{array}{l l}
		1 & \text{ for all } v\in A_2,
		\\
		0 & \text{ for all } v\in H^1(\R)\setminus A_1.
	\end{array}
	\right.
	\]
	Let us now consider the vector field $\varphi:H^1(\R)\to H^1(\R)$ given by
	\[\varphi(v)=\left\{
	\begin{array}{l l}
		\displaystyle -\psi(v)\frac{J'_c(v)}{\|J'_c(v)\|_{H^{-1}(\R)}} & \text{ for all } v\in A_1,
		\\
		0 & \text{ for all } v\in H^1(\R)\setminus A_1.
	\end{array}
	\right.
	\]
	Clearly, $\varphi\in \boC^1(H^1(\R))$ (see Lemma~\ref{lemma:Jsmooth}) and $\|\varphi(v)\|_{H^1(\R)}\leq 1$ for every $v\in H^1(\R)$. 
	
	For any $v\in H^1(\R)$,  we consider the Cauchy problem
	\begin{equation*}
		\begin{cases}
			w'(t)=\varphi(w(t)),\quad\text{ for all } t\geq 0,
			\\
			w(0)=v.
		\end{cases}
	\end{equation*}
	The classical ODE theory,  the Cauchy problem has a unique solution $w(\cdot,v)\in H^1(\R)$ defined in $[0,+\infty)$. Let us show that $w(t,\NV)\subset\NV$ for every $t\geq 0$. Indeed, let $v\in\NV$. Clearly, $w(0,v)=v\in\NV$. Moreover, since $w(\cdot,v)$ is continuous and $v\in\NV$, there exists $\tilde{t}>0$ such that $w(t,v)\in \NV$ for every $t\in [0,\tilde{t})$. Let us assume by contradiction that $s\coloneqq\sup\{\tilde{t}>0: w(t,v)\in\NV\,\,\forall t\in [0,\tilde{t})\}<+\infty$. Then, $w(s,v)\in\partial\NV$. In particular, $w(s,v)\in H^1(\R)\setminus A_1$, so $\varphi(w(s,v))=0$. Actually, since $H^1(\R)\setminus A_1$ is open, then there exists $\tilde{s}\in (0,s)$ such that $w(t,v)\in H^1(\R)\setminus A_1$ for every $t\in [s-\tilde{s},s]$. Therefore, $\varphi(w(t,v))=0$ for every $t\in[s-\tilde{s},s]$. That is, $w'(t,v)=0$ for every $t\in[s-\tilde{s},s]$, so $w$ must be constant in $[s-\tilde{s},s]$ and, in consequence, $w(s-\tilde{s},v)=w(s,v)$. But this is a contradiction since, by definition of $s$, it is necessary that $w(s-\tilde{s},v)\in \NV$.

	On the other hand, for any $v\in H^1(\R)$ and $t\geq 0$, we have 
	\[\|w(t,v)-v\|_{L^\infty(\R)}\leq \|w(t,v)-v\|_{H^1(\R)}\leq \left\|\int_0^t\varphi(w(s,v)) ds\right\|_{H^1(\R)}\leq t.\]
	Observe that, in the previous inequality, we have used that the norm of the continuous embedding $H^1(\R)\subset L^\infty(\R)$ is equal to one (see Remark~\ref{remark:burenkov}). Hence, we deduce that $w(t,Z_{\delta_3})\subset Z_{\delta_2}$ for every $t\leq \delta_3-\delta_2$.
	
	Let us define $h:[0,1]\times \NV\to H^1(\R)$ by
	\[h(t,v)=w\left((\delta_3-\delta_2) t,v\right),\quad\text{ for all } (t,v)\in [0,1]\times \NV.\]
	We have already verified that $h([0,1]\times\NV)\subset\NV$. Furthermore, with this definition, items \ref{def1} and \ref{def2bis} are obviously satisfied. On the other hand, if $v\in \NV\setminus A_1$, then $\varphi(v)=0$, so $w(t)=v$ is the unique solution to the Cauchy problem and item \ref{def2} is satisfied too.
	
	As far as item \ref{def3} is concerned, let $v\in \NV$. Since $w(t,v)\in\NV$ for every $t\geq 0$, then the function $J_c(w(\cdot,v))$ is differentiable and
	\[\frac{d}{dt} J_c(w(t,v))=\langle J'_c(w(t,v)),w'(t,v)\rangle= \langle J'_c(w(t,v)),\varphi(w(t,v))\rangle\leq 0.\]
	Thus, $J_c(w(\cdot,v))$ is nonincreasing and item \ref{def3} holds true.
	
	Lastly, we check item \ref{def4}. Indeed, let $v\in J_c^{\gamma+\varepsilon}\cap Z_{\delta_3}$. If there exists $t\in \left[0,\delta_3-\delta_2\right)$ such that $J_c(w(t,v))<\gamma-\varepsilon$, then item \ref{def3} implies that $J_c\left(w\left(\delta_3-\delta_2,v\right)\right)<\gamma-\varepsilon$, so $w\left(\delta_3-\delta_2,v\right)\in J_c^{\gamma-\varepsilon}\cap Z_{\delta_2}$. Otherwise, for every $t\in\left[0,\delta_3-\delta_2\right)$, one has 
	\[\gamma-\varepsilon\leq J_c(w(t,v))\leq J_c(w(0,v))=J_c(v)\leq \gamma+\varepsilon.\]
	In particular, $w(t,v)\in A_2$ for every $t\in\left[0,\delta_3-\delta_2\right)$. In addition, by the definition of $\varphi$, we derive 
	\begin{align*}
		J_c\left(w\left(\delta_3-\delta_2,v\right)\right)&=J_c(v)+\int_0^{\delta_3-\delta_2} \frac{d}{dt} J_c(w(t,v))dt
		\\
		&=J_c(v)+ \int_0^{\delta_3-\delta_2} \langle J_c'(w(t,v)),\varphi(w(t,v))\rangle dt	
		\\	
		&=J_c(v)- \int_0^{\delta_3-\delta_2} \|J_c'(t,v))\|_{H^{-1}(\R)} dt	
		\\	
		&\leq \gamma+\varepsilon-(\delta_3-\delta_2)\frac{2\varepsilon}{\delta_3-\delta_2}=\gamma-\varepsilon.
	\end{align*}
	This concludes the proof.	
\end{proof}

We include also a technical lemma needed in the proof of Proposition~\ref{prop:key}. Even though the proof of the lemma is elementary, it is not straightforward and a sort of uniform continuity of the functional $J_c$ is required.

\begin{lemma}
\label{lemma:technical}
	Let $c>0$ and fix $\delta_c\in (0,1)$, $R_c>0$ and $\gamma_c>0$. For every $\alpha>0$ and $\delta\in (0,1)$, we consider the set 
	\[X_{\alpha,\delta}=J_c^{-1}((\gamma_c-\alpha,\gamma_c+\alpha))\cap Z_\delta,\]
	where $Z_\delta$ is defined by \eqref{Z}, i.e.
	\[Z_\delta=\{v\in\NV:\,\,\|v\|_{H^1(\R)}\leq R_c+1-\delta,\,\, v\leq 1-\delta\text{ on }\R\}.\]
	Let us also denote
	\[I_{\alpha,\delta}=\inf\{\|J_c'(v)\|_{H^{-1}(\R)}:\,\, v\in X_{\alpha,\delta}\}.\]
	Assume that there exists $\alpha>0$ such that $I_{\alpha,\delta_c}>0$. Then there exists $\bar{\delta}\in (0,\delta_c)$ such that $I_{\alpha,\bar{\delta}}>0$.
\end{lemma}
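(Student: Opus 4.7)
My plan is a contradiction argument driven by a scaling trick. Suppose $I_{\alpha,\bar\delta}=0$ for every $\bar\delta\in(0,\delta_c)$. For each $n$ large, set $\bar\delta_n=\delta_c-1/n$ and pick, using the hypothesis, $v_n\in X_{\alpha,\bar\delta_n}$ with $\|J_c'(v_n)\|_{H^{-1}(\R)}\leq 1/n$. The aim is to exhibit $\{w_n\}\subset X_{\alpha,\delta_c}$ with $\|J_c'(w_n)\|_{H^{-1}(\R)}\to 0$, thereby contradicting $I_{\alpha,\delta_c}>0$.

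The central idea is to rescale $v_n$ so that both constraints defining $Z_{\delta_c}$ become simultaneously satisfied. Set $\lambda_n=(1-\delta_c)/(1-\bar\delta_n)\in(0,1)$ and $w_n=\lambda_n v_n$. Since $v_n\leq 1-\bar\delta_n$ on $\R$, we have $w_n\leq\lambda_n(1-\bar\delta_n)=1-\delta_c$, and because $\lambda_n\leq 1$,
\[
\|w_n\|_{H^1(\R)}=\lambda_n\|v_n\|_{H^1(\R)}\leq\lambda_n(R_c+1-\bar\delta_n)=\lambda_n R_c+(1-\delta_c)\leq R_c+1-\delta_c,
\]
so $w_n\in Z_{\delta_c}$. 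For $n$ large, both $v_n$ and $w_n$ lie in $B:=\{v\in H^1(\R):\|v\|_{H^1(\R)}\leq R_c+1,\ v\leq 1-\delta_c/2\}\subset\NV$, on which $J_c$ and $J_c'$ are Lipschitz thanks to the $C^2$-regularity of $J_c$ on $\NV$ (Lemma~\ref{lemma:Jsmooth}) and the uniform lower bound $1-v\geq\delta_c/2$. Combined with $\|w_n-v_n\|_{H^1(\R)}=(1-\lambda_n)\|v_n\|_{H^1(\R)}=O(\delta_c-\bar\delta_n)\to 0$, this yields
\[
\|J_c'(w_n)\|_{H^{-1}(\R)}\to 0\qquad\text{and}\qquad J_c(w_n)-J_c(v_n)\to 0.
\]
Passing to a subsequence, $J_c(w_n)\to J^*\in[\gamma_c-\alpha,\gamma_c+\alpha]$. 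When $J^*$ lies in the open interior $(\gamma_c-\alpha,\gamma_c+\alpha)$, we immediately obtain $w_n\in X_{\alpha,\delta_c}$ for $n$ large, contradicting $I_{\alpha,\delta_c}>0$ and finishing the argument.

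The main obstacle is the boundary case $J^*=\gamma_c\pm\alpha$, where $J_c(w_n)$ could approach the endpoint from outside the open interval. To resolve it, I would refine the initial choice of $v_n$: exploiting the freedom afforded by $I_{\alpha,\bar\delta_n}=0$, one selects $v_n$ so that $J_c(v_n)$ lies in a fixed closed sub-interval $[\gamma_c-\alpha/2,\gamma_c+\alpha/2]$ strictly inside $(\gamma_c-\alpha,\gamma_c+\alpha)$, which forces $J^*\in[\gamma_c-\alpha/2,\gamma_c+\alpha/2]$ and reduces to the interior case. The admissibility of this refined selection is justified by combining the continuity of $\mu\mapsto J_c(\mu v_n)$ along the ray $[0,1]$ (which sweeps $J_c$ continuously from $0$ to $J_c(v_n)$) with the monotonicity of the restricted infimum in the $J_c$-constraint. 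This step is the most delicate part of the proof, since in principle the vanishing of $\|J_c'\|_{H^{-1}(\R)}$ over $X_{\alpha,\bar\delta_n}$ could a priori concentrate entirely near the endpoints $\gamma_c\pm\alpha$.
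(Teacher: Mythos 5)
Your main mechanism is the same as the paper's: the rescaling $w=\lambda v$ with $\lambda=(1-\delta_c)/(1-\bar\delta)$ to force membership in $Z_{\delta_c}$, combined with the boundedness of $J_c''$ (hence Lipschitz continuity of $J_c$ and $J_c'$) on convex bounded subsets of $\NV$ that stay below $1-\delta_c/2$. The paper uses it directly: it first propagates the lower bound $I_{\alpha,\delta_c}/2$ for $\|J_c'\|_{H^{-1}(\R)}$ from $X_{\alpha,\delta_c}$ to a $\beta$-neighborhood of it inside $Z_\delta$, and then shows $X_{\alpha,\bar\delta}$ is contained in that neighborhood by checking that $\lambda v\in X_{\alpha,\delta_c}$ and $\|v-\lambda v\|_{H^1(\R)}<\beta$; you run the contrapositive with a diagonal sequence. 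Your interior case ($J^*\in(\gamma_c-\alpha,\gamma_c+\alpha)$) is correct.

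The boundary case, however, is a genuine gap, and the repair you sketch does not work. The hypothesis $I_{\alpha,\bar\delta_n}=0$ produces points with small $\|J_c'\|_{H^{-1}(\R)}$ somewhere in $X_{\alpha,\bar\delta_n}$, but gives no control on where their $J_c$-values sit in the open interval; they may accumulate only at $\gamma_c\pm\alpha$. You cannot ``select $v_n$ with $J_c(v_n)\in[\gamma_c-\alpha/2,\gamma_c+\alpha/2]$'': shrinking the value window replaces $X_{\alpha,\bar\delta_n}$ by the smaller set $X_{\alpha/2,\bar\delta_n}$, over which the infimum of $\|J_c'\|_{H^{-1}(\R)}$ can only be larger, so the monotonicity you invoke points the wrong way and does not yield almost-critical points with values in the sub-interval. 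Sweeping $J_c$ along the ray $\mu\mapsto\mu v_n$ does not help either: it moves the value continuously, but nothing controls $\|J_c'(\mu v_n)\|_{H^{-1}(\R)}$ for $\mu$ bounded away from $1$, so the relocated point loses exactly the almost-criticality needed to contradict $I_{\alpha,\delta_c}>0$. Concretely, if $J_c(v_n)\to\gamma_c+\alpha$ from below, then $J_c(w_n)$ may lie at or above $\gamma_c+\alpha$ for every $n$; then $w_n\notin X_{\alpha,\delta_c}$ and your argument produces no contradiction, since $I_{\alpha,\delta_c}>0$ says nothing about points whose $J_c$-value leaves the open interval. This is precisely the delicate point the paper also has to address (it must check $J_c(\lambda v)\in(\gamma_c-\alpha,\gamma_c+\alpha)$); there it is handled within the direct argument by a uniform-continuity claim giving a $\lambda_0$ independent of $v$, not by re-selecting the points, an option unavailable in your contradiction set-up. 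To close your proof you would need an additional quantitative input beating the distance of $J_c(v_n)$ to the endpoints, which the hypotheses do not supply.
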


\begin{proof}
	Let us take $\delta\in (0,\delta_c)$. Observe that $Z_{\delta_c}\subset Z_\delta$. Recall that $J_c\in\boC^2(\NV)$, see Lemma~\ref{lemma:Jsmooth}. It is simple to check from \eqref{eq:Jderivative2} that $\|J''_c(v)\|\leq C$ for every $v\in Z_\delta$, where $C>0$ depends only on $R_c$ and $\delta$. Hence, since $Z_\delta$ is connected and convex, the Mean Value theorem implies that $J_c'$ is Lipschitz in $Z_\delta$, with Lipschitz constant denoted by $l_\delta>0$. In consequence, for every $\varepsilon>0$, if we take $\beta=\varepsilon/l_\delta$, then the following holds for  any $v,w\in Z_\delta$ satisfying $\|v-w\|_{H^1(\R)}<\beta$,
	\begin{equation*}
		\big|\|J'_c(v)\|_{H^{-1}(\R)}-\|J'_c(w)\|_{H^{-1}(\R)}\big|\leq\|J'_c(v)-J'_c(w)\|_{H^{-1}(\R)}\leq l_\delta\|v-w\|_{H^1(\R)}<\varepsilon.
	\end{equation*}
	In the previous inequality, we take $\varepsilon=I_{\alpha,\delta_c}/2$. Therefore, if $v\in Z_\delta$, $w\in X_{\alpha,\delta_c}$ and $\|v-w\|_{H^1(\R)}\leq\beta$, then
	\[\|J_c'(v)\|_{H^{-1}(\R)}>\|J_c'(w)\|_{H^{-1}(\R)}-\frac{I_{\alpha,\delta_c}}{2}\geq \frac{I_{\alpha,\delta_c}}{2}>0.\]
	We have proved that
	\begin{equation}
	\label{positiveinX}
		\|J'_c(v)\|_{H^{-1}(R)}\geq\frac{I_{\alpha,\delta_c}}{2}>0\quad\text{for all } v\in \boX,
	\end{equation}
	where
	\[\boX=\{v\in X_{\alpha,\delta}:\,\,\text{dist}(v,X_{\alpha,\delta_c})<\beta\}.\]
	Notice that
	\[X_{\alpha,\delta_c}\subset\boX\subset X_{\alpha,\delta}.\]
	Using \eqref{positiveinX}, the proof of the lemma will be finished as soon as we show that there exists $\bar{\delta}\in (\delta,\delta_c)$ such that
	\[X_{\alpha,\delta_c}\subset X_{\alpha,\bar{\delta}}\subset\boX\subset X_{\alpha,\delta}.\]
	In order to do so, let $\bar\delta\in(\delta,\delta_c)$ to be chosen later, and let $v\in X_{\alpha,\bar\delta}$. For some $\lambda>0$ to be chosen later too, we aim to prove that 
	\begin{equation}
	\label{distance}
		\lambda v\in X_{\alpha,\delta_c}\quad \text{ and }\quad \|v-\lambda v\|_{H^1(\R)}<\beta,
	\end{equation}
	which implies that $v\in\boX$. 
	
	On the one hand, simple computations show that a sufficient condition for $\lambda v\in Z_{\delta_c}$ and $\|v-\lambda v\|_{H^1(\R)}<\beta$ is
	\begin{equation}
	\label{sufcond}
		1-\frac{\beta}{R_c+1-\bar\delta}<\lambda<\frac{1-\delta_c}{1-\bar\delta}.
	\end{equation}
	Observe that $\lambda>0$ can be chosen so that \eqref{sufcond} holds whenever $\bar\delta$ is close enough to $\delta_c$.
	
	On the other hand, it is left to prove that $J_c(\lambda v)\in (\gamma_c-\alpha,\gamma_c+\alpha)$. Indeed, since $J_c$ is uniformly continuous in $Z_\delta$ and $J_c(v)\in(\gamma_c-\alpha,\gamma_c+\alpha)$, it follows that there exists $\lambda_0\in (0,1)$, independent of $v$, such that $J_c(\lambda v)\in (\gamma_c-\alpha,\gamma_c+\alpha)$ for every $\lambda\in (\lambda_0,1]$. Now we take $\bar\delta$ even closer to $\delta_c$ so that $\lambda_0<\frac{1-\delta_c}{1-\bar\delta}$. Thus, for every $\lambda\in (\lambda_0,1)$ satisfying \eqref{sufcond}, we have that \eqref{distance} holds. The proof is finished.
\end{proof}

\bibliographystyle{abbrv}

\end{document}